\pdfoutput=1
\newif\ifpersonal
\documentclass[11pt,a4paper,dvipsnames,x11names]{amsart} 

\usepackage{amsmath,amsthm,amssymb,mathrsfs,mathtools,tensor,eucal} 
\usepackage[all,cmtip]{xy}

\usepackage[LGR,T1]{fontenc} 
\usepackage[utf8]{inputenc} 
\usepackage{CJKutf8}

\usepackage{xr}
\externaldocument[beyond-]{beyond}
\externaldocument[exodromy-]{exodromy}
\externaldocument[day-]{day}
\externaldocument[Geometric_Stokes-]{Geometric_Stokes}

\usepackage{appendix}


\usepackage{microtype,inconsolata} 
\usepackage{enumerate,comment,braket,xspace,tikz,tikz-cd,csquotes} 
\usetikzlibrary{shapes.geometric}
\usepackage[centering,vscale=0.7,hscale=0.7]{geometry}
\usepackage{thmtools}
\usepackage[hidelinks,colorlinks=true,linkcolor=Red4,citecolor=RoyalBlue]{hyperref}
\usepackage[capitalize]{cleveref}
\usepackage{xcolor}

\usepackage{mathpazo}
\usepackage{euler}
\usepackage{tikzarrows}

\tikzcdset{
	cells={font=\everymath\expandafter{\the\everymath\displaystyle}},
}

\numberwithin{equation}{subsection}
\theoremstyle{plain}
\newtheorem{thm}[equation]{Theorem}
\newtheorem{thm-intro}{Theorem}
\newtheorem{lem}[equation]{Lemma}
\newtheorem{prop}[equation]{Proposition}
\newtheorem{conj}[equation]{Conjecture}
\newtheorem{cor}[equation]{Corollary}
\newtheorem{cor-intro}[thm-intro]{Corollary}

\theoremstyle{definition}
\newtheorem{defin}[equation]{Definition}
\newtheorem{notation}[equation]{Notation}
\newtheorem{eg}[equation]{Example}
\newtheorem{rem}[equation]{Remark}
\newtheorem{rem-intro}[thm-intro]{Remark}
\newtheorem{observation}[equation]{Observation}
\newtheorem{recollection}[equation]{Recollection}
\newtheorem{warning}[equation]{Warning}
\newtheorem{construction}[equation]{Construction}

\DeclareMathOperator{\LChyp}{LC^{\hyp}} 
\DeclareMathOperator{\LC}{LC}
\DeclareMathOperator{\Sing}{Sing}

\DeclareMathOperator{\exc}{exc}

\DeclareMathOperator{\hyp}{hyp}


\ifpersonal
\newcommand{\personal}[1]{\textcolor[rgb]{0,0,1}{(Personal: #1)}}
\newcommand{\discussion}[1]{\textcolor{violet}{(Discussion: #1)}}
\else
\newcommand{\personal}[1]{\ignorespaces}
\newcommand{\discussion}[1]{\ignorespaces}
\fi

\newcommand{\C}{\mathbb C}

\newcommand{\R}{\mathbb R}

\newcommand{\cA}{\mathcal A}
\newcommand{\cB}{\mathcal B}
\newcommand{\cC}{\mathcal C}
\newcommand{\cD}{\mathcal D}
\newcommand{\cE}{\mathcal E}
\newcommand{\cF}{\mathcal F}
\newcommand{\cH}{\mathcal H}
\newcommand{\cG}{\mathcal G}

\newcommand{\cM}{\mathcal M}

\newcommand{\cT}{\mathcal T}

\newcommand{\cW}{\mathcal W}
\newcommand{\cX}{\mathcal X}
\newcommand{\cY}{\mathcal Y}
\newcommand{\cZ}{\mathcal Z}
\DeclareFontFamily{U}{BOONDOX-calo}{\skewchar\font=45 }
\DeclareFontShape{U}{BOONDOX-calo}{m}{n}{<-> s*[1.05] BOONDOX-r-calo}{}
\DeclareFontShape{U}{BOONDOX-calo}{b}{n}{<-> s*[1.05] BOONDOX-b-calo}{}
\DeclareMathAlphabet{\mathcalboondox}{U}{BOONDOX-calo}{m}{n}


\makeatletter
\let\save@mathaccent\mathaccent
\newcommand*\if@single[3]{%
	\setbox0\hbox{${\mathaccent"0362{#1}}^H$}%
	\setbox2\hbox{${\mathaccent"0362{\kern0pt#1}}^H$}%
	\ifdim\ht0=\ht2 #3\else #2\fi
}
\newcommand*\rel@kern[1]{\kern#1\dimexpr\macc@kerna}
\newcommand*\widebar[1]{\@ifnextchar^{{\wide@bar{#1}{0}}}{\wide@bar{#1}{1}}}
\newcommand*\wide@bar[2]{\if@single{#1}{\wide@bar@{#1}{#2}{1}}{\wide@bar@{#1}{#2}{2}}}
\newcommand*\wide@bar@[3]{%
	\begingroup
	\def\mathaccent##1##2{%
		\let\mathaccent\save@mathaccent
		\if#32 \let\macc@nucleus\first@char \fi
		\setbox\z@\hbox{$\macc@style{\macc@nucleus}_{}$}%
		\setbox\tw@\hbox{$\macc@style{\macc@nucleus}{}_{}$}%
		\dimen@\wd\tw@
		\advance\dimen@-\wd\z@
		\divide\dimen@ 3
		\@tempdima\wd\tw@
		\advance\@tempdima-\scriptspace
		\divide\@tempdima 10
		\advance\dimen@-\@tempdima
		\ifdim\dimen@>\z@ \dimen@0pt\fi
		\rel@kern{0.6}\kern-\dimen@
		\if#31
		\overline{\rel@kern{-0.6}\kern\dimen@\macc@nucleus\rel@kern{0.4}\kern\dimen@}%
		\advance\dimen@0.4\dimexpr\macc@kerna
		\let\final@kern#2%
		\ifdim\dimen@<\z@ \let\final@kern1\fi
		\if\final@kern1 \kern-\dimen@\fi
		\else
		\overline{\rel@kern{-0.6}\kern\dimen@#1}%
		\fi
	}%
	\macc@depth\@ne
	\let\math@bgroup\@empty \let\math@egroup\macc@set@skewchar
	\mathsurround\z@ \frozen@everymath{\mathgroup\macc@group\relax}%
	\macc@set@skewchar\relax
	\let\mathaccentV\macc@nested@a
	\if#31
	\macc@nested@a\relax111{#1}%
	\else
	\def\gobble@till@marker##1\endmarker{}%
	\futurelet\first@char\gobble@till@marker#1\endmarker
	\ifcat\noexpand\first@char A\else
	\def\first@char{}%
	\fi
	\macc@nested@a\relax111{\first@char}%
	\fi
	\endgroup
}
\makeatother




\newcommand{\PSh}{\mathrm{PSh}}
\newcommand{\Sh}{\mathrm{Sh}}
\newcommand{\HSh}{\mathrm{Sh}^{\mathrm{hyp}}}

\newcommand{\Mod}{\mathrm{Mod}}



\newcommand{\CAlg}{\mathrm{CAlg}}

\newcommand{\Cat}{\categ{Cat}}
\newcommand{\cS}{\categ{Spc}}


\newcommand{\categ}[1]{\textbf{\textup{#1}}}

\newcommand{\cHom}{\cH \mathrm{om}}

\newcommand{\PrL}{\categ{Pr}^{\mathrm{L}}}

\newcommand{\PrLomega}{\categ{Pr}^{\mathrm L,\omega}}

\DeclareMathOperator{\Cons}{Cons}
\newcommand{\ConsPhyp}{\Cons_P^{\mathrm{hyp}}}

\newcommand{\ConsQhyp}{\Cons_Q^{\mathrm{hyp}}}
\newcommand{\ConsShyp}{\Cons_S^{\mathrm{hyp}}}
\newcommand{\Conshyp}{\Cons^{\mathrm{hyp}}}




\newcommand{\Exit}{\mathrm{Exit}}




\newcommand{\an}{^\mathrm{an}}

\newcommand{\inv}{^{-1}}
\newcommand{\id}{\mathrm{id}}

\newcommand{\op}{^\mathrm{op}}

\newcommand*{\longhookrightarrow}{\ensuremath{\lhook\joinrel\relbar\joinrel\rightarrow}}

\usetikzlibrary{decorations.markings} 
\tikzset{
  closed/.style = {decoration = {markings, mark = at position 0.5 with { \node[transform shape, xscale = .8, yscale=.4] {/}; } }, postaction = {decorate} },
  open/.style = {decoration = {markings, mark = at position 0.5 with { \node[transform shape, scale = .7] {$\circ$}; } }, postaction = {decorate} }
}


\DeclareMathOperator{\Fun}{Fun}
\DeclareMathOperator{\FunR}{Fun^R}
\DeclareMathOperator{\FunL}{Fun^L}

\DeclareMathOperator{\Hom}{Hom}

\DeclareMathOperator{\Map}{Map}

\DeclareMathOperator{\Sp}{Sp}

\DeclareMathOperator*{\colim}{colim}

\newcommand{\category}{$\infty$-category}
\newcommand{\categories}{$\infty$-categories}

\newcommand{\PrLotimes}{\mathcal{P}\mathrm{r}^{\mathrm L, \otimes}}

\newcommand{\bZ}{\mathbb Z}

\newcommand{\HomR}{\Hom^{\mathrm{R}}}

\setcounter{tocdepth}{1}

\begin{document}
\title{Topological exodromy with coefficients}

\author{Mauro PORTA}
\address{Mauro PORTA, Institut de Recherche Mathématique Avancée, 7 Rue René Descartes, 67000 Strasbourg, France}
\email{porta@math.unistra.fr}

\author{Jean-Baptiste Teyssier}
\address{Jean-Baptiste Teyssier, Institut de Mathématiques de Jussieu, 4 place Jussieu, 75005 Paris, France}
\email{jean-baptiste.teyssier@imj-prg.fr}

\subjclass[2020]{32S60,32S40,55P65,55U25}
\keywords{Monodromy, exodromy, constructible sheaves, hyperconstructible, hypersheaves, stratified spaces, exit paths}

\begin{abstract}
	The exodromy equivalence relates the derived $\infty$-category of constructible sheaves on a stratified space $(X,P)$ with the $\infty$-category of representations of the \emph{exit path} $\infty$-category of $(X,P)$.
	Originally envisioned by MacPherson, it has been rigorously developed by Treumann and later improved by Lurie.
	This paper provides a new proof of the strongest version of this equivalence.
	This allows us to remove several limitations from Lurie's treatement; for instance we prove that the exodromy equivalence is functorial in arbitrary morphism of stratified spaces. We also remove all noetherianity assumptions, consider more general coefficients (e.g.\ compactly assembled or stable presentable $\infty$-categories), and we allow stratified spaces that have locally weakly contractible strata, rather than being locally of singular shape.
\end{abstract}

\maketitle


\tableofcontents

\section{Introduction}

\paragraph{\textbf{Context.}}

Let $X$ be a locally contractible topological space.
The monodromy correspondence is an equivalence between the abelian category $\LChyp(X;\mathrm{Ab})$ of locally constant sheaves of abelian groups on $X$ and the abelian category $\Fun(\Pi_1(X), \mathrm{Ab})$ of representations of the first fundamental groupoid of $X$.
This equivalence can be seen as a concrete way of bridging the topological world, incarnated by the locally constant sheaves, and the algebraic world, incarnated by the representations of the first fundamental groupoid of $X$.
Working with Verdier duality, it is very often desirable to work with a derived category of locally constant sheaves, but some care has to be applied: already when $X = S^2$ is the $2$-sphere, the monodromy correspondence shows that $\LC(S^2;\mathrm{Ab}) \simeq \mathrm{Ab}$, and so its derived category is unable to make the difference between $S^2$ and $\ast$.
Historically this has been fixed by defining the derived category of local systems as the full subcategory of the derived category of all sheaves spanned by those complexes with locally constant homologies.

\medskip

The advent of higher categorical techniques \cite{HTT} allowed for a more streamlined approach to this issue.
Replacing $\mathrm{Ab}$ by its derived category $\cD(\mathrm{Ab})$, it makes sense to consider the $\infty$-category of local systems with values in $\cD(\mathrm{Ab})$.
The resulting $\infty$-category $\LC(X;\cD(\mathrm{Ab}))$ allows to recover the singular cohomology of $X$ (see for instance \cite[Corollary 3.31]{HPT}), and it is therefore a far more complete invariant than $\LChyp(X;\mathrm{Ab})$.
It is equally possible to improve the monodromy correspondence, by replacing representations of the ($1$-)groupoid $\Pi_1(X)$ by representations of the $\infty$-groupoid $\Pi_\infty(X)$ (a.k.a.\ the homotopy type of $X$).
More precisely, there is a canonical equivalence
\[ \Fun(\Pi_\infty(X), \cD(\mathrm{Ab})) \simeq \LChyp(X;\cD(\mathrm{Ab})) \ , \]
where the right hand side denotes the $\infty$-category of hypersheaves that are locally the hypersheafification of a constant presheaf (the word ``hyper'' can be disregarded when $X$ is sufficiently finite dimensional).

\medskip

There is a second natural generalization of the monodromy correspondence, whose fundamental idea is due to MacPherson.
When $X$ is equipped with a stratification $P$, that is a continuous morphism $X \to P$ where $P$ is a poset endowed with the Alexandroff topology (cf.\ \cref{subsec:strat_spaces}), one can introduce a stratified variant of $\Pi_1(X)$, noted $\Pi_1(X,P)$.
This is a non-full subcategory of $\Pi_1(X)$ which is no longer a groupoid; its objects are the same (that is, the points of $X$), but the only morphisms that are allowed are those that \emph{exit} from lower strata to go to upper ones.
For this reason, the category $\Pi_1(X,P)$ is called the \emph{category of exit paths of $(X,P)$}.
In \cite[Theorem 5.7]{Treumann_Exit_paths}, D.\ Treumann studied this category and established an equivalence
\[ \Fun\big(\Pi_1(X,P), \mathrm{Ab}\big) \simeq \Cons_P(X;\mathrm{Ab}) \ , \]
where the right hand side denotes the abelian category of $\mathrm{Ab}$-valued constructible sheaves.
Following the influential work \cite{Barwick_Exodromy}, this equivalence is nowadays referred to as the \emph{exodromy equivalence}.
It was generalized to the higher categorical world by J.\ Lurie in \cite[Appendix A]{Lurie_Higher_algebra}.
His first step is to define a version $\Pi_\infty(X,P)$ of $\Pi_1(X,P)$ and establish a major result (Theorem A.6.4 in \emph{loc.\ cit.}) asserting that if the stratification $(X,P)$ is \emph{conical}, then $\Pi_\infty(X,P)$ is indeed an $\infty$-category.
Then, he proves:

\begin{thm-intro}[{J.\ Lurie, \cite[Theorem A.9.3 \& Proposition A.9.6]{Lurie_Higher_algebra}}]
	Let $X$ be a paracompact topological space which is locally of singular shape and equipped with a conical $P$-stratification, where $P$ is a partially ordered set satisfying the ascending chain condition.
	Then there is a natural equivalence
	\[ \Psi_{X,P} \colon \Fun\big(\Pi_\infty(X,P), \cS\big) \to \Cons_P(X;\cS) \ , \]
	where $\cS$ denotes the $\infty$-category of homotopy types (a.k.a.\ $\infty$-groupoids or animated sets).
	Moreover, this equivalence is functorial in maps of conically stratified spaces $f \colon (X,P) \to (Y,Q)$ such that $P = f\inv(Q)$.
\end{thm-intro}

\paragraph{\textbf{Main result.}}

The goal of this paper is to improve the above theorem as follows:

\begin{thm-intro}[{See \cref{thm:exodromy} \& \cref{cor:induced_stratification}}] \label{thm-intro:exodromy}
	Let $(X,P)$ be a conically stratified space with locally weakly contractible strata.
	Let $\cE$ be presentable $\infty$-category which is either stable or compactly assembled.
	Then there is a natural equivalence
	\[ \Psi_{X,P} \colon \Fun\big( \Pi_\infty(X,P), \cE \big) \stackrel{\sim}{\to} \Conshyp_P(X;\cE) \ , \]
	which is functorial in arbitrary maps of conically stratified spaces.
\end{thm-intro}

Let us briefly comment the differences between the two theorems.
To begin with, we replace the condition that $X$ is locally of singular shape with the weaker condition that the strata of $(X,P)$ are locally weakly contractible.
This is typically easier to check in situations arising from complex or $\R$-analytic geometry, as checking the former condition typically requires to build an explicit CW complex structure, while the local weak contractibility is simply a property that a space can have or not, and that is relatively easy to test.
More than an improvement, this is a \emph{trading}: the cost is to replace $\Cons_P(X;\cS)$ with its hypercomplete counterpart $\Conshyp_P(X;\cS)$.
As already observed by D.\ Lejay in \cite{Lejay_Constructible}, working with hypersheaves also allows to remove the ascending chain condition on the poset $P$.

\medskip

The two major improvements that our version offers are:
\begin{itemize}\itemsep=0.2cm
	\item allowing more general coefficients than just $\cS$;
	
	\item proving an unconditional functoriality in maps of conically stratified spaces.
\end{itemize}

\noindent Both these statements are consequence of a \emph{different proof} than the one given in \cite[Theorem A.9.3]{Lurie_Higher_algebra}.
Indeed, the proof in question relies on model categories, and is heavily built on the fact that functors from $\Pi_\infty(X,P)$ to $\cS$ can be realized as left fibrations over $\Pi_\infty(X,P)$.
These two facts together make it difficult to adapt the proof \emph{verbatim} to different coefficients than $\cS$.
It is also difficult to bootstrap on Lurie's result to deduce the same result for arbitrary presentable $\infty$-categories, although it \emph{is} possible to deal with the case of compactly generated $\infty$-categories in this way (see e.g.\ \cite[Theorem B.9]{Jansen_Stratified_Borel_Serre}).
We offer instead a \emph{purely $\infty$-categorical proof}, that is summarized below.

\medskip

\paragraph{\textbf{Summary of the proof.}}

We construct a correspondence interpolating between $\mathrm{Open}(X)\op$ and $\Pi_\infty(X,P)$ as follows.
Given a conically stratified space $(X,P)$, let $\mathrm E_X$ be the $\infty$-category informally defined as:
\begin{itemize}\itemsep=0.2cm
	\item its objects are pairs $(U,x)$, where $U$ is an open subset of $X$ and $x$ is an object in $\Pi_\infty(U,P)$, where $U$ is seen as a $P$-stratified space in the natural way;
	
	\item given two objects $(U,x)$ and $(V,y)$, the mapping space $\Map_{\mathrm E_X}((U,x), (V,y))$ is non-empty if and only if $V \subset U$, and in that case it coincides with $\Map_{\Pi_\infty(U,P)}(x,y)$.
\end{itemize}
Put otherwise,  $\mathrm E_X$ is the Grothendieck construction of the functor $\Pi_\infty \colon \mathrm{Open}(X) \to \Cat_\infty$.
We refer  to \cref{sec:categorical_framework} and to \cref{notation:key_correspondence} for the precise definition of $\mathrm E_X$.
This $\infty$-category comes with natural functors
\[ \begin{tikzcd}[column sep = small]
	{} & \mathrm E_X \arrow{dl}[swap]{\pi_X} \arrow{dr}{\lambda_X} \\
	\mathrm{Open}(X)\op & & \Pi_\infty(X,P) \ .
\end{tikzcd} \]
Then the functor $\Psi_{X,P}$ of Lurie can be identified with $\pi_{X,\ast} \circ \lambda_X^\ast$, where $\pi_{X,\ast}$ denotes the right Kan extension along $\pi_{X,\ast}$, and $\lambda_X^\ast$ denotes the restriction along $\lambda_X$.
The main bulk of the paper is dedicated to show, for more general coefficients than just $\cS$ and independently from Lurie's result, that this functor and its left adjoint $\Phi_{X,P} \coloneqq \lambda_{X,!} \circ \pi_X^\ast$ realize the exodromy equivalence.

\medskip

From a \emph{technical} viewpoint, we first study the functoriality behavior of both $\Psi_{X,P}$ and $\Phi_{X,P}$ under the operations of restriction to the strata.
The most challenging step is to prove that the formation of $\Phi_{X,P}$ is compatible with the restriction to open union of strata.
This is achieved in \cref{cor:Phi_restriction_open_strata}, and it depends crucially on \cref{technical_lemma_for_exit} which contains the key geometrical argument of the paper.

\medskip

Nevertheless, it is easier and perhaps more instructive to appreciate the difficulty in proving \cref{thm-intro:exodromy} reasoning along the following lines: given $F \in \HSh(X;\cE)$, a simple inspection reveals that $\Phi_{X,P}(F)$ evaluated at one point $x \in X$ seen as an object in $\Pi_\infty(X,P)$ can be written as
\[ \Phi_{X,P}(F)(x) \simeq \colim_{(U,y,\gamma)} F(U) \ , \]
where the colimit ranges over all morphisms $\gamma \colon y \to x$ in $\Pi_\infty(X,P)$ and over all possible open neighborhood $U$ of $y$.
The following drawing should help visualizing the situation:

	\begin{center}
	\begin{tikzpicture}
		\node (A) at (-2,3) {} ;
		\node (B) at (2,0) {} ;
		\node (C1) at (0.2,2.1) {} ;
		\node (C2) at (4,3.8) {} ;
		\node (C3) at (6,2.6) {} ;
		\node (C4) at (6,0.2) {} ;
		\node (C5) at (3.5,0.8) {} ;
		\draw (-2,3) .. controls (C1) .. node[pos=0.3,label=240:{\tiny $y$}] (y) {} (2,0) ;
		\draw[rounded corners] (-2,3) .. controls (C2) and (5.8,3.1) .. (6,2.6) .. controls (C4) and (C5) .. (2,0) ;
		
		\node (D1) at (-3.7,2.8) {} ;
		\node (D2) at (-4,2) {} ;
		\node (D3) at (-2,1.5) {} ;
		\node (D4) at (0,-1) {} ;
		\draw[rounded corners] (-2,3) .. controls (D1) .. (-3.8,2.3) .. controls (D3) .. (0,-1) .. controls (0.6,-0.6) .. (2,0) ;
		
		\filldraw[black] (y) circle (1pt) ;
		\draw[dashed] (y) circle (20pt) ;
		\draw[dashed] (y) circle (16pt) ;
		
		\node[label={[label distance=0.2pt]300:{\tiny$x$}}] (x) at (4,2.5) {} ;
		\filldraw[black] (x) circle (1pt) ;
		
		\draw[dashed] (x) circle(20pt) ;
		\draw[dashed] (x) circle (15pt) ;
		
		\draw[->] (y) .. controls (1,2) and (3,3) .. node[pos=0.5,above] {\tiny$\gamma$} (x) ;
		
		\node[label={[label distance=0.2pt]270:{\tiny$y'$}}] (y') at (2,1) {} ;
		\filldraw[black] (y') circle (1pt) ;
		\draw[dashed] (y') circle (15pt) ;
		\draw[->] (y') .. controls (2.6,2) and (3.6,1.8) ..  node[pos=0.5,below]{\tiny$\gamma'$}(x) ;
	\end{tikzpicture}
\end{center}

\noindent If we could simply limit ourselves to the case where $\gamma$ is the identity of $x$, this would then produce the stalk $F_x$ of $F$ at the point $x$ (informally, \cref{technical_lemma_for_exit} states that we can always restrict to paths $\gamma$ entirely contained in the stratum of $x$).
However, this is typically false and $\Phi_{X,P}(F)(x)$ should be rather thought of as an ``average over all the possible stalks at points close to $x$''.
Remarkably, when $F$ is $P$-hyperconstructible, we can prove that there is indeed an equivalence
\[ \Phi_{X,P}(F)(x) \simeq F_x \ . \]
See \cref{cor:Phi_formula}.
Although this is formally obtained as a consequence of our results, in many ways one should think of this equivalence as the key technical ingredient needed in the proof of our main theorem.
Interestingly, the left hand side only depends on the equivalence class of $x$ inside $\Pi_\infty(X,P)$, a statement that neatly encodes the \emph{parallel transport} on the right hand side.

\medskip

\paragraph{\textbf{Applications to stratified spaces.}}

In the rest of the paper, we explore the consequences of the exodromy correspondence.
First, we obtain a couple of structural results for constructible hypersheaves:

\begin{thm-intro}[See Corollaries \ref{cor:constructible_limits_colimits}, \ref{cor:criterion_constructibility}, \ref{cor:tensor_decomposition} and \ref{cor:categorical_Kunneth}]\label{thm-intro:structural_results}
	Let $(X,P)$ be a conically stratified space with locally weakly contractible strata.
	Let $\cE$ be a presentable $\infty$-category satisfying the assumptions of \cref{thm:exodromy}.
	Then:
	\begin{enumerate}\itemsep=0.2cm
		\item \emph{Stability under limits and colimits:} the $\infty$-category $\ConsPhyp(X;\cE)$ of $P$-hyperconstructible hypersheaves is presentable and closed under limits and colimits inside $\HSh(X;\cE)$;
		
		\item \emph{Recognition criterion:} a hypersheaf $F \in \HSh(X;\cE)$ is $P$-hyperconstructible if and only if for every pair of open subsets $U \subseteq V$ of $X$ for which the induced morphism $\Pi_\infty(U,P) \to \Pi_\infty(V,P)$ is a categorical equivalence, the restriction map
		\[ F(V) \to F(U) \]
		is an equivalence in $\cE$.
		
		\item \emph{Tensor decomposition:} the canonical equivalence $\HSh(X;\cE) \simeq \HSh(X) \otimes \cE$ restricts to an equivalence
		\[ \ConsPhyp(X;\cE) \simeq \ConsPhyp(X;\cS) \otimes \cE \ ; \]
		
		\item \emph{Categorical K\"unneth formula:} given a second conically stratified space $(Y,Q)$ with locally weakly contractible strata, there is a canonical equivalence
		\[ \Conshyp_{P \times Q}(X \times Y;\cS) \simeq \ConsPhyp(X;\cS) \otimes \Conshyp_Q(Y;\cS) \ . \]
	\end{enumerate}
\end{thm-intro}

Points (2), (3) and (4) are generalizations to the constructible setting of the analogous results already obtained in \cite{HPT} for locally hyperconstant hypersheaves.
Point (1) deserves a particular attention, as it is tightly related to more recent developments in stratified homotopy theory.
In the series of paper \cite{Jansen_Stratified_Borel_Serre,Clausen_Jansen,Jansen_Mgn,Jansen_toolbox} {\O}rsnes-Jansen and Clausen studied exit paths $\infty$-categories in certain examples very relevant in algebraic geometry (Borel-Serre compactifications and the moduli stack of stable curves $\overline{\cM}_{g,n}$).
In this situation, the topological framework of \cite{Lurie_Higher_algebra} does not immediately apply, and so they devised a way to \emph{define and control} exit paths out of the sole knowledge of the $\infty$-category of constructible sheaves.
We refer to \cite[Definition 3.5]{Clausen_Jansen} for the precise definition of what ``admitting an exit path'' means, but in a nutshell it amounts to ask that (i) $\ConsPhyp(X;\cS)$ is a category of presheaves (this is an intrinsic property that a category might or might not have), and that (ii) it is closed under limits and colimits in $\HSh(X)$.
In other words, the \cref{thm-intro:exodromy} and \cref{thm-intro:structural_results}-(1) say together that any \emph{conically} stratified space $(X,P)$ admits an exit path in the sense of Clausen and {\O}rsnes-Jansen.

\medskip

The general philosophy that emerges from their work and also highlighted by Ayala-Francis-Rozenblyum \cite[Problem 0.0.9]{Ayala_Francis_Rozenblyum_Stratified} is that one can trade the geometrically defined conical condition for a more abstract regularity condition on the $\infty$-category of constructible sheaves (namely, the closure under limits and colimits in the whole $\infty$-category of sheaves).
Adopting this point of view, in collaboration with P.\ Haine we introduced in \cite{Beyond_conicality} the notion of \emph{exodromic} stratified space, and we proved that every algebraic variety equipped with an algebraic stratification or every real analytic variety equipped with a locally finite subanalytic stratification are exodromic (notice that there are many such stratifications that fail to be Whitney).
We refer to the introduction of \cite{Beyond_conicality} for a more thorough discussion of the ideas involved.

\medskip

As a consequence of the unconditional functoriality of the exodromy equivalence obtained in \cref{thm-intro:exodromy}, we can prove the following structural result for the $\infty$-categories of exit paths:

\begin{cor-intro}[See \cref{cor:localization}]
	Let $(X,P)$ be a conically stratified space and let $(X,Q)$ be a conical refinement of $P$.
	Assume that the strata of $(X,P)$ and $(X,Q)$ are locally weakly contractible.
	Then the natural map
	\[ f \colon \Pi_\infty(X,Q) \to \Pi_\infty(X,P) \]
	is a localization.
\end{cor-intro}

\noindent Taking $P$ to be the trivial stratification, we deduce that $\Pi_\infty(X)$ is a localization of $\Pi_\infty(X,Q)$.
This result, which is of course extremely intuitive at the geometric level, was only known to the best of our knowledge in the conically smooth situation \cite[Proposition 1.2.13]{Ayala_Francis_Tanaka_Local_structures}.

\medskip

Finally, as further consequence of these results, we recover and generalize some of the results on Morita cohomology of J.\ V.\ Holstein \cite{Holstein_Morita_cohomology_I,Holstein_Morita_cohomology_II}:

\begin{cor-intro}[See \cref{thm:LC_modules_over_chains}]
	Let $X$ be a locally weakly contractible topological space and let $A$ be a $\mathbb E_\infty$-ring spectrum.
	Assume that $X$ is connected and let $x \in X$ be a point.
	Then there exists a canonical equivalence
	\[ \LChyp(X;\Mod_A) \simeq \Mod_{\mathrm C_\ast(\Omega_x(X);A)} \ , \]
	where $\mathrm C_\ast(\Omega_x(X);A) \coloneqq \Pi_\infty(\Omega_x(X)) \otimes A$ are the $A$-valued chains on the based loop space of $X$.
\end{cor-intro}

As further geometric consequences of this work, let us mention the construction of the derived stacks of perverse sheaves by Haine-Porta-Teyssier \cite{Haine_Porta_Teyssier_perverse} and their lagrangian structures by Christ-Lampetti \cite{Christ_Lampetti_Lagrangian},  as well as the construction  by Lampetti of good moduli spaces for the stacks of perverse sheaves \cite{Lampetti_Good_moduli} and constructible  sheaves \cite{Lampetti_Stokes}.

\medskip



\paragraph{\textbf{Acknowledgments.}}

We are grateful to Clark Barwick, Federico Binda, Damien Calaque, Denis-Charles Cisinski, Andrea Gagna, Julian V.\ S.\ Holstein, Damien Lejay, Alexandru Oancea, Marco Robalo, Francesco Sala, Olivier Schiffmann and Alberto Vezzani for useful conversations about this paper.
We especially thank Peter J.\ Haine, Enrico Lampetti, Guglielmo Nocera, Tony Pantev and Marco Volpe for their continuous support and interest in this work.
We wish to thank the Oberwolfach MFO institute that hosted the Research in Pairs ``2027r: The geometry of the Riemann-Hilbert correspondence'', where the initial bulk of this paper was conceived.
We also wish to thank the CNRS for the fundings PEPS ``Jeunes Chercheurs Jeunes Chercheuses'' and the ANR CatAG from which both authors benefited during the writing of this paper.

\section{Stratified spaces and hyperconstructible hypersheaves}

\subsection{Stratified spaces} \label{subsec:strat_spaces}

The main reference for the definitions below is \cite[Appendix A]{Lurie_Higher_algebra}.

\medskip

\begin{recollection}
If $P$ be a poset, we endow $P$ with the topology whose open subsets are the closed upward subsets $Q\subset P$.
     That is for every $a\in Q$ and $b\in P $ such that $b\geqslant a$, we have $b\in Q$.
\end{recollection}

\begin{defin}\label{notation_defin}
	Let $X$ be a topological space.
	Let $P$ be a poset.
	A  \textit{stratification of $X$ by $P$} is a continuous morphism $ X\to P$.
	For a subset $S\subset P$, we let $X_S\to S$ be the induced stratification and we denote by $i_S : X_S \to X$ the natural inclusion.
	For $a\in P$, the subset $X_a $ is the stratum of $(X,P)$ over $a$.
\end{defin}

\begin{rem}
	We abuse notations by denoting a stratification of $X$ by $P$ as $(X,P)$ instead of $ X\to P$ and refer to $(X,P)$ as a stratified space.
	The collection of stratified spaces organize into a category in an obvious manner.
\end{rem}

\begin{eg}
	Let $f \colon Y\to Q$ be a stratified space.
	Put $C(Y) \coloneqq \ast \sqcup \left( Y\times \mathbb{R}_{>0} \right)$.
	The set $C(Y)$ is endowed with the topology whose open subsets are the subsets $U\subset C(Y)$ such that $U\cap \left( Y\times \mathbb{R}_{>0}\right)$ is open and if $\ast \in U$, then 
	\[ C_{\varepsilon}(Y)\coloneqq \{\ast\} \bigsqcup \left( Y\times (0,\varepsilon) \right)\subset U \]
	for some $\varepsilon >0$.
	Let $P$ be the poset obtained from $Q$ by adding a smallest element $-\infty$.
	We define a continuous map $g \colon C(Y)\to P$ by sending $\ast$ to $-\infty$  and $(y,t)\in Y\times \mathbb{R}_{>0} $ to $f(y)$.
	We refer to $(C(Y),P)$ as the cone of $(Y,Q)$.
\end{eg}

Following \cite[A.6.2]{Lurie_Higher_algebra}, we introduce the

\begin{defin}
	Let $(X,P)$ be a stratified space.
	We define $\Exit(X,P)$ as the simplicial subset of $\Sing X$ formed by the simplices $\sigma \colon |\Delta^n|\to X$ such that there exists a chain $a_1\leqslant\cdots\leqslant a_n$ of elements of $P$ such that for every $(t_0,\dots, t_i,0,\dots, 0)\in |\Delta^n|$ with $t_i> 0$, we have
	$\sigma(t_0,\dots, t_i,0,\dots, 0)\in X_{a_i}$.
	
	\smallskip
	
	The simplicial set $\Exit(X,P)$ is the  \textit{exit-path simplicial set of $(X,P)$}.
\end{defin}

\begin{notation}\label{small_rem}
	If $X$ is trivially stratified $\Pi_\infty(X)$ is the homotopy type  of $X$.
	For this reason, if $\Exit(X,P)$ is an $\infty$-category, we write $\Pi_\infty(X,P)$ instead of $\Exit(X,P)$.
\end{notation}

The following lemma follows immediately from the definition of the exit-paths.

\begin{lem}\label{fully_faith_XS}
	Let $(X,P)$ be a stratified space.
	Let $S\subset P$ be a subset.
	Assume that $\Exit(X,P)$ and $\Exit(X_S,S)$ are $\infty$-categories.
	Then, the natural inclusion $\Exit(X_S,S)\to \Exit(X,P)$ is fully-faithful.
\end{lem}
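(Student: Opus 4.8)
The plan is to recognize the inclusion $\Exit(X_S,S)\hookrightarrow\Exit(X,P)$ as the inclusion of a \emph{full} subcategory, for which fully faithfulness is a general fact about $\infty$-categories \cite{HTT}. The proof is short, but it genuinely uses the exit-path condition (it would fail for $\Sing$ in place of $\Exit$).

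First I would record the easy part. Since $X_S$ carries the subspace topology, the map $\Sing X_S\to\Sing X$ is a monomorphism of simplicial sets whose image consists exactly of those simplices $|\Delta^n|\to X$ that factor through $X_S$. Intersecting with $\Exit(X,P)\subseteq\Sing X$ shows that $\Exit(X_S,S)\to\Exit(X,P)$ is a monomorphism, and by hypothesis both source and target are $\infty$-categories.

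Next comes the main point: identifying the image. Let $\cF\subseteq\Exit(X,P)$ be the full subcategory spanned by the vertices lying in $X_S$, i.e.\ the simplicial subset of those simplices $\sigma$ with $f(\sigma(v))\in S$ for every vertex $v$. I claim $\Exit(X_S,S)\to\Exit(X,P)$ is an isomorphism onto $\cF$. That it lands in $\cF$ is immediate: a simplex of $\Exit(X_S,S)$ factors through $X_S$, and the stratum of $(X_S,S)$ over $a\in S$ equals $X_a$, so its witnessing chain, now read in $P$, still certifies membership in $\Exit(X,P)$. For the converse, take $\sigma\colon|\Delta^n|\to X$ in $\cF$ with witnessing chain $a_0\leqslant\cdots\leqslant a_n$ in $P$, so that $\sigma$ sends the locally closed piece $\{(t_0,\dots,t_i,0,\dots,0):t_i>0\}$ into $X_{a_i}$ for each $i$. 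Evaluating at the $i$-th vertex gives $f(\sigma(v_i))=a_i$, and since $\sigma(v_i)\in X_S$ this forces $a_i\in S$ for all $i$. As $|\Delta^n|$ is the disjoint union of those $n+1$ pieces, $\sigma(|\Delta^n|)\subseteq\bigcup_i X_{a_i}\subseteq X_S$, so $\sigma$ corestricts to a continuous map $|\Delta^n|\to X_S$, which is an exit path for $(X_S,S)$ with the same witnessing chain and maps to $\sigma$. Hence $\Exit(X_S,S)\cong\cF$.

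Finally, a full subcategory of an $\infty$-category is an $\infty$-category and its inclusion is fully faithful, so the composite $\Exit(X_S,S)\cong\cF\hookrightarrow\Exit(X,P)$ is fully faithful, as desired; en passant this shows the hypothesis that $\Exit(X_S,S)$ be an $\infty$-category is implied by the one on $\Exit(X,P)$. I do not anticipate any real obstacle here: the only step requiring care is the elementary topological observation — valid for $\Exit$ precisely because each open face of an exit simplex is carried into a single stratum, and false for $\Sing$ — that an exit simplex whose vertices sit in the strata indexed by $S$ must have its entire image inside $X_S$, and this is exactly what upgrades the image from a wide to a full subcategory.
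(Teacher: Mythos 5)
Your proof is correct: the observation that an exit simplex whose vertices all lie in $X_S$ has its entire image in $X_S$ (because $|\Delta^n|$ is the disjoint union of the pieces carried into the strata $X_{a_i}$ with $a_i=f(\sigma(v_i))$) is exactly the point, and it identifies $\Exit(X_S,S)$ with the full simplicial subset of $\Exit(X,P)$ spanned by the vertices in $X_S$, whence fully faithfulness. The paper omits the proof, asserting the lemma "follows immediately from the definition of the exit-paths"; your argument is precisely the intended elaboration.
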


\begin{lem}\label{Exit_cone_initial_object}
	Let $(Y,Q)$ be a stratified space and set $P \coloneqq Q^{\vartriangleleft}$.
	If $\Exit(C(Y),P)$ is an $\infty$-category, then $\ast$  is an initial object in $\Exit(C(Y),P)$.
\end{lem}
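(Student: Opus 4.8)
The plan is to exhibit $\ast$ as initial by producing a ``radial contraction of $C(Y)$ onto its cone point'' \emph{as a morphism of stratified spaces}, and then to transport it through $\Exit$ to obtain a natural transformation $\mathrm{const}_\ast\Rightarrow\id$ on $\cC:=\Exit(C(Y),Q^{\vartriangleleft})$ which is the identity at $\ast$; this suffices, since such a natural transformation always witnesses the object in question as an initial object. (One should resist the temptation to cone off each simplex of $\cC$ at its barycentre: that operation does not respect the standard stratification of $\Delta^n$, so the result need not be an exit path, as one already sees on $\Delta^1$.)

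First I would set $\mu\colon[0,1]\times C(Y)\to C(Y)$, $\mu(\lambda,\ast)=\ast$, $\mu(0,(y,t))=\ast$ and $\mu(\lambda,(y,t))=(y,\lambda t)$ for $\lambda>0$, and verify that $\mu$ is continuous against the explicit description of the topology of $C(Y)$---a routine but somewhat delicate point, which I expect to be the most error-prone step. Then I would stratify $[0,1]$ by $R=\{0<1\}$ via $0\mapsto 0$, $(0,1]\mapsto 1$ and introduce the monotone map $\phi\colon R\times Q^{\vartriangleleft}\to Q^{\vartriangleleft}$ with $\phi(0,-)=-\infty$ and $\phi(1,a)=a$. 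A short check shows that $\mu$ covers $\phi$, so that $\mu$ underlies a morphism of stratified spaces $\bigl([0,1]\times C(Y),\,R\times Q^{\vartriangleleft}\bigr)\to(C(Y),Q^{\vartriangleleft})$.

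Since $\Exit$ takes products to products (immediate from the definition; see also \cite[\S A.6]{Lurie_Higher_algebra}), applying $\Exit$ to $\mu$ gives a map of simplicial sets $\mu_\ast\colon\Exit([0,1],R)\times\cC\to\cC$. Precomposing with the map $\Delta^1\to\Exit([0,1],R)$ that selects the vertices $0,1\in[0,1]$ and the exit path $t\mapsto t$ between them (it is an exit path because the standard stratification of $\Delta^1$, composed with $\id_R$, is exactly the stratification of $[0,1]$) yields $\alpha\colon\Delta^1\times\cC\to\cC$. Unwinding the definition of $\mu$ one reads off $\alpha|_{\{0\}\times\cC}=\mathrm{const}_\ast$, $\alpha|_{\{1\}\times\cC}=\id_\cC$ and $\alpha|_{\Delta^1\times\{\ast\}}=\id_\ast$ (the degenerate edge). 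Hence $\alpha$ is a natural transformation $\mathrm{const}_\ast\Rightarrow\id_\cC$ whose component at $\ast$ is $\id_\ast$.

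It then remains to deduce initiality of $\ast$, which is a general fact: a natural transformation $\mathrm{const}_c\Rightarrow\id_\cC$ whose component at $c$ is an equivalence exhibits $c$ as an initial object. Concretely, such a datum is the same as a section of the left fibration $\cC_{c/}\to\cC$, equivalently a natural transformation $\beta$ from the constant functor $\cC\to\cS$ at a point to the corepresentable functor $\Map_\cC(c,-)$ (with $\beta_x$ the point $\alpha_x$). Composing the canonical map from $\Map_\cC(c,-)$ to the terminal functor with $\beta$ produces an endomorphism of $\Map_\cC(c,-)$; by the co-Yoneda lemma (evaluation at $\id_c$) it is classified by $\alpha_c=\id_c$, hence is the identity, while the composite in the other order is visibly the identity of the constant functor. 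Thus $\Map_\cC(c,-)$ is equivalent to the constant functor at a point, i.e.\ $c$ is initial; applying this with $c=\ast$ completes the argument. Apart from the continuity of $\mu$ and the standard bookkeeping of this last step, there is no real difficulty: all of the geometric content---that the cone scales onto its vertex through stratified maps---is packaged into the pair $(\mu,\phi)$.
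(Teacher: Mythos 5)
Your proof is correct, but it takes a genuinely different route from the paper's. The paper argues directly on mapping spaces: for each point $(y,\varepsilon)$ it models $\Map_{\Exit(C(Y),Q^{\vartriangleleft})}(\ast,(y,\varepsilon))$ as $\Hom^R(\ast,(y,\varepsilon))$ and builds, simplex by simplex, an explicit deformation retraction onto the radial exit path $\gamma(t)=(y,t\varepsilon)$, the only real work being the continuity check for the resulting maps $H\colon Z\star\Delta^0\to C(Y)$. You instead package the same radial rescaling into a single stratified homotopy $\mu\colon\bigl([0,1]\times C(Y),R\times Q^{\vartriangleleft}\bigr)\to(C(Y),Q^{\vartriangleleft})$, push it through $\Exit$ using the (easily verified) compatibility of $\Exit$ with products of stratified spaces, and conclude by the standard criterion that a natural transformation $\mathrm{const}_c\Rightarrow\id_\cC$ whose component at $c$ is an equivalence exhibits $c$ as initial. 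The two continuity checks are of the same nature, but yours is done once for a single explicit map rather than for a family indexed by the simplices of each mapping space; in exchange you invoke more categorical machinery (straightening of $\cC_{c/}\to\cC$ and the Yoneda lemma) at the end, whereas the paper's argument is self-contained and moreover identifies explicitly the point onto which $\Map(\ast,(y,\varepsilon))$ retracts. Note that your components $\alpha_{(y,\varepsilon)}$ are exactly the paper's paths $\gamma$, so the geometric content of the two proofs is identical; all the steps you flag as routine (continuity of $\mu$, the stratified-map condition over $\phi$, the product formula for $\Exit$, and the initiality criterion) do indeed go through.
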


\begin{proof}
	Since $\ast$ is the only point of its stratum, $\Map_{\Exit(C(Y),P)}(*,*)$ is contractible.
	Let $(y,\varepsilon)\in Y \times \mathbb{R}_{>0}$.
	We are going to construct a deformation retract of
	\[  \Map_{\Exit(C(Y),P)}(*,(y,\varepsilon)) \]
	on the simplicial subset spanned by the exit path $\gamma \colon [0,1]\to C(Y)$ sending $0$ to $\ast$ and $t$ to $(y,t\varepsilon)$.
	Since $\Exit(C(Y), P)$ is an $\infty$-category by assumption, \cite[Proposition 2.2.2.13 \& Corollary 4.2.1.8]{HTT} imply that we can model the above mapping space via the right simplicial hom $\HomR(\ast, (y,\varepsilon))$ (see \cite[\S1.2.2]{HTT} for the notation).
	We will therefore construct a homotopy
	\[ H \colon \HomR(\ast,(y,\varepsilon)) \times \Delta^1 \to \HomR(\ast,(y,\varepsilon)) \]
	between the map constant to $\gamma $ and the identity of $\HomR(\ast,(y,\varepsilon))$.
	At the cost of writing $\HomR(\ast,(y,\varepsilon))$ as a colimit of its simplices $\sigma \colon \Delta^n \to \HomR(\ast,(y,\varepsilon))$, it is enough to construct an homotopy $H$ between the $n$-simplex constant to $\gamma$ and $\sigma$. 
	That is, we have to construct
	\[ H \colon \Delta^n\times \Delta^1 \to \HomR(\ast,(y,\varepsilon)) \]
	such that $H|_{\Delta^n\times \Delta^{\{0\}}}$ is constantly equal to $\gamma$ and $H|_{\Delta^n\times \Delta^{\{1\}}}=\sigma$.
	Set $Z \coloneqq |\Delta^n|\times [0,1]$ and $Z_\lambda \coloneqq |\Delta^n|\times \{\lambda\}$ for every $\lambda \in [0,1]$.
	By definition, the above construction amounts to the construction of a continuous map
	\[ H \colon Z\star \Delta^0 \to C(Y) \]
	satisfying the following conditions :
	\begin{enumerate}\itemsep=0.2cm
		\item $H|_{Z_0\star \Delta^0}$ is constantly equal to $\gamma$ and $H|_{Z_1\star \Delta^0} = \sigma$. 
		\item $H$ sends the base of $Z\star \Delta^0$ to $\ast$ and the tip of $Z\star \Delta^0$ to $(y,\varepsilon)$.
		\item $H$ sends $Z\times ]0,1]$ in the stratum of $(y,\varepsilon)$.
	\end{enumerate}
	For $(u,t)\in Z_1\times ]0,1]$, put
	\[ \sigma(u,t)=(y(u,t),\varepsilon(u,t)) \in Y \times \mathbb{R}_{>0} \]
	By definition $\sigma(u,1)=(y,\varepsilon)$.
	For  $(u,\lambda,t)\in Z\star \Delta^0$, we define
	\[ H(u,\lambda,t)= \begin{cases}
		\big( y(u,t/\lambda), \lambda \varepsilon(u,t/\lambda) \big) & \text{if } 0 < t \leqslant \lambda \ , \\
		\gamma(t) & \text{if } \lambda \leqslant t \ , \\
		\ast & \text{if } t = 0 \ .
	\end{cases} \]
	Observe that $H$ satisfies conditions (1), (2), (3).
	To conclude the proof of \cref{Exit_cone_initial_object}, we have to show that $H$ is continuous.
	Away from $t=0$, the map $H$ is continuous as the glueing of two continuous maps along $t=\lambda$.
	Let $(u_0,\lambda_0,0) \in Z\star \Delta^0$.
	We want to check that $H$ is continuous at 	$(u_0,\lambda_0,0)$.
	If $\lambda_0>0$, we have to check that $H$ is continuous on the open set $\lambda >t$, which is true since $\sigma$ is continuous.
	The continuity of $H$ at $(u_0,0,0)$ follows from the definition of the topology of $C(Y)$ and the observation that $\varepsilon$ is bounded. 
\end{proof}

\begin{defin}\label{def_conical}
	Let $(X,P)$ be a stratified space.
	We say that $(X,P)$ is \textit{conically stratified} if for every point $x\in X$ lying over $a\in P$, there exists a topological space $Z$, a stratified space $(Y,P_{> a})$ and a morphism of stratified spaces  $(Z \times C(Y), P_{\geqslant a}) \to (X,P)$ inducing an homeomorphism between  $Z \times C(Y)$ and an open neighbourhood of $x$ in $X$.
\end{defin}

\begin{rem}
	The open sub-stratified space $(Z \times C(Y), P_{\geqslant a})$ of $(X,P)$ in \cref{def_conical} is a conical chart of $(X,P)$ at $x$.
	By definition of the topology of $C(Y)$, the set of conical charts of  $(X,P)$ at $x$ form a fundamental system of open neighbourhoods of $x$ in $X$.
\end{rem}

Conically stratified spaces enjoy the following stability property:

\begin{lem}\label{union_of_strata_and_conicality}
	Let $(X,P)$ be a conically stratified space.
	Let $S\subset P$ be a subset.
	Then $(X_S,S)$ is a conically stratified space.
\end{lem}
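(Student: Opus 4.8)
The plan is to fix a point $x \in X_S$ lying over some element $a \in S$ and to build a conical chart for $(X_S, S)$ at $x$ by intersecting a conical chart for $(X,P)$ at $x$ with the strata indexed by $S$. Concretely, choose by conicality of $(X,P)$ (see \cref{def_conical}) a morphism of stratified spaces $\varphi \colon (Z \times C(Y), P_{\geqslant a}) \to (X,P)$ restricting to a homeomorphism of $Z \times C(Y)$ onto an open neighbourhood $V$ of $x$; here $Y$ is stratified by $P_{>a}$ and $C(Y)$ by $(P_{>a})^{\vartriangleleft} \cong P_{\geqslant a}$, with the cone point $\ast$ lying over $a$. Set $S_{\geqslant a} \coloneqq S \cap P_{\geqslant a}$ and $S_{>a} \coloneqq S \cap P_{>a}$; since $a \in S$, the element $a$ is the minimum of $S_{\geqslant a}$, and there is a canonical identification $(S_{>a})^{\vartriangleleft} \cong S_{\geqslant a}$ (sending $-\infty$ to $a$).

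First I would identify the sub-stratified space $(Z \times C(Y))_{S_{\geqslant a}}$. Its strata are $Z \times \{\ast\}$, which lies over $a \in S$ and is therefore retained, together with the strata $Z \times Y_b \times \mathbb{R}_{>0}$ for $b \in P_{>a}$, which are retained exactly when $b \in S_{>a}$. Hence as a set $(Z \times C(Y))_{S_{\geqslant a}} = Z \times \big( \ast \sqcup (Y_{S_{>a}} \times \mathbb{R}_{>0}) \big)$. The main point — and the only place where one does something more than formally chasing definitions — is to check that this identification is a homeomorphism
\[ (Z \times C(Y))_{S_{\geqslant a}} \;\cong\; Z \times C(Y_{S_{>a}}) \]
compatible with the stratifications by $S_{\geqslant a} \cong (S_{>a})^{\vartriangleleft}$. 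Since $Z$ plays no role, this reduces to comparing the subspace topology on $\ast \sqcup (Y_{S_{>a}} \times \mathbb{R}_{>0}) \subset C(Y)$ with the intrinsic cone topology of $C(Y_{S_{>a}})$: an open $U \subset C(Y)$ with $\ast \in U$ contains some $C_\varepsilon(Y)$, hence its trace contains $C_\varepsilon(Y_{S_{>a}})$; conversely any open $V \subset C(Y_{S_{>a}})$ with $\ast \in V$ is the trace of $\ast \sqcup \big( W \cup (Y \times (0,\varepsilon)) \big)$ for a suitable $W$ open in $Y \times \mathbb{R}_{>0}$ and $\varepsilon > 0$ with $C_\varepsilon(Y_{S_{>a}}) \subset V$. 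This uses only that $Y_{S_{>a}}$ carries the subspace topology from $Y$ and that $C_\varepsilon(Y) \cap (Y_{S_{>a}} \times \mathbb{R}_{>0}) = Y_{S_{>a}} \times (0,\varepsilon)$.

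Finally I would restrict $\varphi$ along $X_S \hookrightarrow X$. Since the composite $Z \times C(Y) \xrightarrow{\varphi} X \to P$ factors through $P_{\geqslant a}$, the preimage of $X_S = f^{-1}(S)$ under $\varphi$ is precisely $(Z \times C(Y))_{S_{\geqslant a}}$, so $\varphi$ induces a morphism of stratified spaces
\[ \big( Z \times C(Y_{S_{>a}}),\, S_{\geqslant a} \big) \longrightarrow (X_S, S) \ , \]
which, because open subsets of $X_S$ are exactly the traces of open subsets of $X$, restricts to a homeomorphism of $Z \times C(Y_{S_{>a}})$ onto the open neighbourhood $V \cap X_S$ of $x$ in $X_S$. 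Taking $Z' = Z$ and the stratified space $(Y_{S_{>a}}, S_{>a})$ — note that no conicality hypothesis on $Y$ is needed for this — we obtain a conical chart of $(X_S, S)$ at $x$. Since $x$ was arbitrary, $(X_S, S)$ is conically stratified. The only mildly technical obstacle is the topological identification in the second paragraph; everything else is a bookkeeping of posets and of the defining property of conical charts.
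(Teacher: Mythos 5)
Your proof is correct and follows essentially the same route as the paper's: restrict to a conical chart $Z \times C(Y)$ at $x$ and identify its intersection with $X_S$ as $Z \times C(Y_{S_{>a}})$. The only difference is that you carefully verify the topological identification of the subspace $\ast \sqcup (Y_{S_{>a}} \times \mathbb{R}_{>0}) \subset C(Y)$ with the intrinsic cone $C(Y_{S_{>a}})$, a point the paper's proof asserts without comment.
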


\begin{proof}
	Let $x\in X_S$ and let us show that $(X_S,S)$ is conical at $x$.
	Since $X$ is conical, we can suppose  that $(X,P)$ is of the form $(Z\times C(Y), P)$ where $Z$ is a topological space and where $(Y,Q)$ is a stratified space with $P = Q^{\lhd}$.
	Then $X_S=Z\times C(Y_{Q\cap S})$ and \cref{union_of_strata_and_conicality} is proved.
\end{proof}

The following theorem due to Lurie \cite[A.6.4]{Lurie_Higher_algebra} provides a wide range of stratified spaces whose exit-paths form an $\infty$-category:

\begin{thm}\label{Lurie_Thm}
	Let $(X,P)$ be a conically stratified space.
	Then $\Exit(X,P)$ is an $\infty$-category.
\end{thm}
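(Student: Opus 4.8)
The plan is to verify directly that $\Exit(X,P)$ has the right lifting property against every inner horn inclusion $\Lambda^n_i\hookrightarrow\Delta^n$ with $0<i<n$. Observe first that an $n$-simplex $\sigma$ of $\Exit(X,P)$ determines the (necessarily unique) chain $a_0\leqslant\cdots\leqslant a_n$ with $\sigma(v_j)\in X_{a_j}$, so that there is a forgetful map $p\colon\Exit(X,P)\to N(P)$ to the nerve of $P$; since $N(P)$ is an \infcat, it is enough to prove that $p$ is an inner fibration, for then $\Exit(X,P)\to\ast$ is a composite of inner fibrations. So fix an inner horn $h_0\colon\Lambda^n_i\to\Exit(X,P)$ together with an $n$-simplex $\tau=(a_0\leqslant\cdots\leqslant a_n)$ of $N(P)$ extending $p\circ h_0$. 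Unwinding the definitions, $h_0$ is a continuous map $g_0\colon|\Lambda^n_i|\to X$ subject to the stratification condition defining $\Exit$ --- a point of $|\Lambda^n_i|$ whose last nonzero barycentric coordinate sits in position $j$ is sent into $X_{a_j}$ --- and a filler $h$ is exactly a continuous extension $g\colon|\Delta^n|\to X$ of $g_0$ such that $g$ sends the interior of $|\Delta^n|$ into $X_{a_n}$; the remaining conditions (those concerning points with $t_n=0$) are already recorded by $g_0$, since for $0<i<n$ the face $d_n$ lies in $\Lambda^n_i$.

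The first real step is to localise this extension problem. Since $|\Lambda^n_i|$ is compact, $g_0(|\Lambda^n_i|)$ is contained in a finite union of conical charts of $(X,P)$; a Lebesgue-number argument then lets one triangulate the pair $(\Delta^n,\Lambda^n_i)$ finely enough that every simplex maps into a single chart, and one fills the horn one small simplex at a time. One must check that the triangulation of $\Delta^n$ relative to $\Lambda^n_i$ can be traversed using only \emph{inner} horn attachments compatibly with the map to $N(P)$; granting this, a compatible family of local fillers assembles into a filler of the original horn. In this way the theorem is reduced to the case of a single conical chart $X=Z\times C(Y)$, $P=P_{\geqslant a}$ attached to a stratified space $(Y,P_{>a})$.

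For a conical chart I would argue by induction on a complexity measure of the stratification, the point being that $Y$ carries strictly fewer strata near any of its points than $X$ does. Writing points of $C(Y)$ as $\ast$ or as $(y,t)$ with $t\in\R_{>0}$, the radial scaling $(y,t)\mapsto(y,\lambda t)$ --- precisely the move used in the proof of \cref{Exit_cone_initial_object} --- deforms any exit simplex towards large $t$. Applying this deformation to $g_0$ separates the problem into two: either the relevant part of the horn is pushed entirely into $Z\times(Y\times\R_{>0})$, where filling is governed by $\Sing(Z)\times\Exit(Y,P_{>a})$ --- an \infcat by the inductive hypothesis, $\Sing(Z)$ being in addition a Kan complex --- or it collapses onto the cone locus $Z\times\{\ast\}$, where one invokes \cref{Exit_cone_initial_object} to the effect that $\ast$ is initial. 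Reassembling the two pieces along the radial homotopy (exactly as the homotopy $H$ is assembled in the proof of \cref{Exit_cone_initial_object}, with one end constant to a distinguished exit path and the other end equal to the given data) produces the required extension $g$, which by construction carries the interior of $|\Delta^n|$ into the open stratum $X_{a_n}$.

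The step I expect to be genuinely delicate is the local-to-global reduction: filling an inner horn is \emph{not} a homotopy-invariant condition, so one cannot simply invoke a sheaf (or hypersheaf) property of $U\mapsto\Exit(U,P)$, and the triangulation has to be set up so that the combinatorial bookkeeping --- which simplex lands in which chart, and in which order the small simplices are filled --- stays compatible with the strata recorded by the map to $N(P)$. The most economical way to organise all of this is to prove, by a single induction, the stronger assertion that $p\colon\Exit(X,P)\to N(P)$ is an inner fibration, taking the conical chart as the base case and using the stability of inner fibrations under the relevant gluings in the inductive step. This is in substance the argument of Lurie in \cite[A.6.4]{Lurie_Higher_algebra}.
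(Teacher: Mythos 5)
The paper does not actually prove this statement: it is quoted directly from \cite[Theorem A.6.4]{Lurie_Higher_algebra}, so your sketch has to be measured against Lurie's argument. Your global frame is correct (show that $\Exit(X,P)\to N(P)$ is an inner fibration; identify a filler with a continuous extension $g\colon|\Delta^n|\to X$ of $g_0$ sending the new points --- note these are the interiors of $\Delta^n$ \emph{and} of the missing face $d_i\Delta^n$, not just the interior of $\Delta^n$ --- into $X_{a_n}$; exploit conical charts and the radial scaling on the cone factor, as in \cref{Exit_cone_initial_object}). But the two steps that carry the weight of the proof are both genuinely gapped, and the first would fail as stated.

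The local-to-global reduction by Lebesgue-number subdivision does not work. A small simplex $\tau$ of the triangulation lying in the interior of $\Delta^n$ is required to map entirely into the single stratum $X_{a_n}$; once its boundary has been filled at earlier stages of your traversal, extending over $\tau$ is an extension problem $S^{k-1}\to X_{a_n}$ over $D^k$ \emph{inside a fixed stratum}, which is obstructed in general (the strata are arbitrary topological spaces --- no local contractibility is assumed in \cref{Lurie_Thm}, and even for nice strata such extensions are obstructed unless the boundary data is controlled). The reason the original inner horn \emph{can} be filled is global: the filler on the open part is obtained by deforming data already present on the horn via the cone coordinate, not by solving sphere-filling problems stratum by stratum. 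Lurie accordingly avoids subdivision entirely: the reduction to a single conical chart is achieved by a stratum-compatible deformation of the whole lifting problem towards the image of the initial vertex $v_0$ (whose stratum is minimal among those met by the simplex), after which the filler is written down by an explicit formula in the chart $Z\times C(Y)$. The second gap is the induction ``on a complexity measure of the stratification.'' In a conical chart the link $(Y,P_{>a})$ is only required to be a stratified space (\cref{def_conical}), not a conically stratified one, so ``$\Exit(Y,P_{>a})$ is an \infcat by the inductive hypothesis'' is not available; indeed the paper deduces that $\Exit(C_\varepsilon(Y),Q^{\vartriangleleft})$ is an \infcat \emph{from} \cref{Lurie_Thm} in the proof of \cref{prop:locally_contractible_strata}, not conversely. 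Even when the links happen to be conical, $P$ may contain infinite chains, so there is no well-founded measure to induct on --- removing exactly such finiteness hypotheses is one of the stated aims of this paper. Lurie's cone lemma is proved directly for an arbitrary stratified link, with no induction.
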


\begin{rem}
	In view of \cref{Lurie_Thm}  and \cref{small_rem}, the $\infty$-category of exit-paths of a conically stratified space $(X,P)$ will be denoted as $\Pi_\infty(X,P)$.
\end{rem}

\begin{defin}
	Let $(X,P)$ be a conically stratified space.
	We say that $(X,P)$ is  \textit{locally weakly contractible} if every point $x\in X$ admits a fundamental system of open neighbourhoods $U$ such that  $x$ is an initial object of $\Pi_\infty(U,P)$.
\end{defin}

\begin{lem} \label{lem:categorical_homotopy_invariance}
	Let $f \colon X \to X'$ be a weak homotopy equivalence and let $(Y,Q)$ be a stratified space.
	Then the induced map
	\[ \Exit(X \times Y, Q) \to \Exit(X' \times Y, Q) \]
	is a categorical equivalence of simplicial sets.
\end{lem}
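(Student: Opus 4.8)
The plan is to reduce the lemma to two standard inputs: a tautological description of $\Exit(X \times Y, Q)$ as a cartesian product, and the fact that taking products with a fixed simplicial set preserves categorical equivalences. First I would make the product decomposition explicit. Since the stratification of $X \times Y$ is the one pulled back from $(Y,Q)$ along the projection to $Y$, the $a$-stratum of $X\times Y$ is $X \times Y_a$; consequently a singular simplex $\sigma\colon |\Delta^n| \to X\times Y$ is precisely a pair $(\sigma_X,\sigma_Y)$ with $\sigma_X \in \Sing(X)_n$ arbitrary and $\sigma_Y\in \Sing(Y)_n$, and unwinding the definition of the exit-path simplicial set, $\sigma$ is an exit simplex for $X\times Y$ if and only if $\sigma_Y$ is an exit simplex for $(Y,Q)$ — the first coordinate carries no constraint. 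This yields a natural isomorphism of simplicial sets
\[ \Exit(X \times Y, Q) \;\cong\; \Sing(X) \times \Exit(Y,Q)\ , \]
natural in the unstratified factor, under which the morphism of the lemma is identified with $\Sing(f)\times \id_{\Exit(Y,Q)}$.

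Next I would record that $\Sing(f)$ is a categorical equivalence of simplicial sets. By hypothesis $f$ is a weak homotopy equivalence, so $\Sing(f)\colon \Sing(X)\to\Sing(X')$ is a weak homotopy equivalence, and it is a map between Kan complexes. A weak homotopy equivalence between Kan complexes is automatically a weak equivalence for the Joyal model structure: regarding the Kan complexes as $\infty$-groupoids, such a map is bijective on isomorphism classes of objects and a weak equivalence on all mapping spaces, hence essentially surjective and fully faithful, i.e.\ a categorical equivalence.

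It then remains to see that $-\times \Exit(Y,Q)$ sends the categorical equivalence $\Sing(f)$ to a categorical equivalence. I would deduce this from the fact that the Joyal model structure on simplicial sets is a cartesian model category in which every object is cofibrant: the functor $-\times K$ preserves monomorphisms and, by the pushout–product axiom, preserves trivial cofibrations, so it is left Quillen for every simplicial set $K$, and hence preserves all weak equivalences by Ken Brown's lemma. Taking $K=\Exit(Y,Q)$ finishes the argument. The only genuinely non-formal ingredient is this last fact (equivalently, the pushout–product axiom for the Joyal model structure); the rest is bookkeeping with the definition of exit paths. I would also flag the pitfall that there is no quicker route via bare homotopies: $\Sing(f)\times\id_{\Exit(Y,Q)}$ is visibly a $\Delta^1$-homotopy equivalence, but a $\Delta^1$-homotopy equivalence need not be a categorical equivalence (for instance $\Delta^0\hookrightarrow\Delta^1$ is the former but not the latter), so one really must exploit that $\Sing(X)$ and $\Sing(X')$ are $\infty$-groupoids — either through the cartesianness of the Joyal model structure above, or by first upgrading the $\Delta^1$-homotopies on these Kan complexes to $J$-homotopies.
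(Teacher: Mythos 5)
Your proposal is correct and follows the same route as the paper: both identify $\Exit(X\times Y,Q)$ with $\Sing(X)\times\Exit(Y,Q)$, note that $\Sing(f)$ is a categorical equivalence, and conclude because $-\times K$ preserves categorical equivalences (the paper cites \cite[Corollary 2.2.5.4]{HTT} for this last step, which is exactly the cartesianness of the Joyal model structure you invoke). The extra details you supply — why weak homotopy equivalences of Kan complexes are categorical equivalences, and the warning about $\Delta^1$-homotopy equivalences — are correct but not needed beyond what the citation covers.
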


\begin{proof}
	Unraveling the definitions, we find canonical \emph{isomorphisms} of simplicial sets
	\[ \Exit(X \times Y, Q) \simeq \Sing(X) \times \Exit(Y,Q) \quad \text{and} \quad \Exit(X' \times Y, Q) \simeq \Sing(X') \times \Exit(Y,Q) \ . \]
	Since the map $f$ induces a categorical equivalence $\Sing(X) \to \Sing(X')$, the conclusion now follows from \cite[Corollary 2.2.5.4]{HTT}.
\end{proof}

\begin{lem}\label{infty_cat_and_products}
Let $K$ and $L$ be non empty simplicial sets such that $K\times L$ is an \category.
Then $K$ and $L$ are \categories.
\end{lem}

\begin{lem}\label{Exit_chart_is_category}
Let $Z$ be a non empty topological space and let $(Y,Q)$ be a stratified space.
Put $P \coloneqq Q^{\lhd}$ and assume that the stratified space $(Z\times C(Y),P)$ is conical.
Then, 
    \begin{enumerate}\itemsep=0.2cm
    \item For every $\varepsilon \in (0,+\infty]$, the simplicial set $\Exit(C_{\varepsilon}(Y),P)$ is an \category,
    \item The simplicial set $\Exit(Y,Q)$ is an \category.
    \end{enumerate}
\end{lem}
\begin{proof}
Let us prove (1).
Let $\varepsilon>0$.
There is an isomorphism of simplicial sets 
\[
\Exit(Z\times C_{\varepsilon}(Y),P)\simeq \Sing(Z)\times \Exit(C_{\varepsilon}(Y),P) \ .
\]
From Lurie's \cref{Lurie_Thm}, the left-hand side is an \category.
Hence, so is $\Exit(C_{\varepsilon}(Y),P)$ in virtue of \cref{infty_cat_and_products}.
Let us prove (2).
Since $(Z\times C(Y),P)$ is conical, so is the open subset complement to the stratum lying above the initial element of $P$.
Thus, $(Z\times Y \times (0,\varepsilon),Q)$ is conical.
Hence, the left-hand side of the following isomorphism of simplicial sets 
\[
\Exit(Z\times Y \times (0,\varepsilon),Q)\simeq \Sing(Z\times (0,\varepsilon))\times \Exit(Y,Q)
\]
is an \category.
Item (2) then follows from \cref{infty_cat_and_products}.
\end{proof}

\begin{prop} \label{prop:locally_contractible_strata}
	Let $(X,P)$ be a conically stratified space.
	The following conditions are equivalent:
	\begin{enumerate}\itemsep=0.2cm
	\item $(X,P)$ is locally weakly contractible.
    \item The strata of $(X,P)$ are locally weakly contractible.
	\end{enumerate}
\end{prop}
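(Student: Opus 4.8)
The plan is to prove the two implications separately, with the bulk of the work in $(2)\Rightarrow(1)$; the reverse direction should follow quickly from \cref{Exit_cone_initial_object} applied to suitable cones. For $(1)\Rightarrow(2)$: fix a stratum $X_a$ and a point $x\in X_a$. By hypothesis $x$ has a fundamental system of neighbourhoods $U$ in $X$ with $x$ initial in $\Pi_\infty^\Sigma(U,P)$. I would take such a $U$ that is moreover a conical chart, so $U\cong Z\times C(Y)$ with $a$ the minimum of $P_{\ge a}$ and $x=(z,\ast)$. Then $U\cap X_a=Z$ (the cone point stratum), and I claim $x$ is initial in $\Pi_\infty^\Sigma(Z)=\Pi_\infty(Z)$. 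This should follow because the inclusion $\Pi_\infty^\Sigma(Z)\hookrightarrow \Pi_\infty^\Sigma(U,P)$ is fully faithful by \cref{fully_faith_XS}, so initiality of $x$ in the big category forces $\Map_{\Pi_\infty(Z)}(x,x')$ to be the mapping space in $\Pi_\infty^\Sigma(U,P)$, which is contractible; but one must check $x$ is \emph{initial} in $Z$ itself, i.e. that every object $x'$ of $Z\subset U$ receives such a map — here the point is that exit paths out of $x=(z,\ast)$ landing in the stratum $Z$ are precisely paths in $Z$, so the mapping space computed in $\Pi_\infty^\Sigma(U,P)$ agrees with the one in $\Pi_\infty(Z)$. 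Since such charts $Z$ form a fundamental system of neighbourhoods of $x$ in $X_a$, the stratum $(X_a,\{a\})$ is locally weakly contractible.

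For $(2)\Rightarrow(1)$: fix $x\in X$ over $a\in P$ and a conical chart $U\cong Z\times C(Y)$ around $x$ with $x=(z,\ast)$; after shrinking I may assume (by $(2)$, applied to the stratum through $x$, which locally is $Z$) that $z$ is initial in $\Pi_\infty(Z)$. By \cref{lem:categorical_homotopy_invariance} the projection $Z\times C(Y)\to C(Y)$ induces a categorical equivalence $\Pi_\infty^\Sigma(Z\times C(Y),P_{\ge a})\simeq \Pi_\infty^\Sigma(C(Y),Q^{\vartriangleleft})$ where $Q=P_{>a}$; more precisely one gets $\Exit(Z\times C(Y),P_{\ge a})\cong \Sing(Z)\times \Exit(C(Y),Q^{\vartriangleleft})$, and since $z$ is initial in $\Sing(Z)$ (as an object of the $\infty$-category $\Pi_\infty(Z)$) while $\ast$ is initial in $\Pi_\infty^\Sigma(C(Y),Q^{\vartriangleleft})$ by \cref{Exit_cone_initial_object}, the pair $(z,\ast)$ is initial in the product. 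As conical charts of $(X,P)$ at $x$ form a fundamental system of open neighbourhoods, this shows $(X,P)$ is locally weakly contractible at $x$.

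The main obstacle I anticipate is the bookkeeping in $(1)\Rightarrow(2)$: one needs that shrinking a neighbourhood $U$ of $x$ with $x$ initial in $\Pi_\infty^\Sigma(U,P)$ can be done \emph{within conical charts} while preserving initiality, which requires knowing that initiality is inherited by smaller conical subcharts. This is where \cref{fully_faith_XS} and \cref{union_of_strata_and_conicality} do the work: if $U'\subseteq U$ is a conical subchart through $x$, the restriction functor $\Pi_\infty^\Sigma(U',P)\to \Pi_\infty^\Sigma(U,P)$ need not be fully faithful in general, so one cannot directly transport initiality downward — instead I would argue that the conical charts through $x$ that \emph{are} of the contractible form furnished by hypothesis $(1)$ already form a fundamental system (intersecting the two fundamental systems), and restrict attention to those. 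A secondary technical point is identifying $U\cap X_a$ with $Z$ and checking the mapping-space comparison computes the right thing; this is routine once the conical chart structure is written out, using that the only exit paths within a single stratum are ordinary paths.
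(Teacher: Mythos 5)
Your $(2)\Rightarrow(1)$ argument is essentially the paper's proof: shrink the conical chart $Z\times C(Y)$ so that the factor in the stratum is weakly contractible (using $(2)$), use the product decomposition $\Exit(W\times C_\varepsilon(Y),P)\cong \Sing(W)\times \Exit(C_\varepsilon(Y),Q^{\vartriangleleft})$, and conclude by \cref{Exit_cone_initial_object}. The only point you elide is that \cref{Exit_cone_initial_object} requires $\Exit(C_\varepsilon(Y),Q^{\vartriangleleft})$ to be an $\infty$-category; the paper checks this by observing that a factor of a product which is an $\infty$-category is itself an $\infty$-category and invoking \cref{Lurie_Thm}. That is a minor omission.

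The real problem is in $(1)\Rightarrow(2)$, and it is one you create for yourself. You want a neighbourhood $U$ of $x$ that simultaneously lies in the fundamental system furnished by $(1)$ \emph{and} is a conical chart. There is no reason such a $U$ exists: two fundamental systems of neighbourhoods of a point need not share a single member, and, as you yourself note, initiality does not pass to smaller opens, so you also cannot shrink a ``good'' $U$ into a conical chart. Your proposed repair --- ``intersecting the two fundamental systems'' --- is not a meaningful operation and does not close this gap. The fix is to drop the conical chart entirely, since it plays no role in your actual argument: for \emph{any} open $U\ni x$ with $x$ initial in $\Pi_\infty^\Sigma(U,P)$, the functor $\Pi_\infty(U\cap X_a)\to \Pi_\infty^\Sigma(U,P)$ is fully faithful by \cref{fully_faith_XS}, so $\Map_{\Pi_\infty(U\cap X_a)}(x,y)\simeq \Map_{\Pi_\infty^\Sigma(U,P)}(x,y)\simeq \ast$ for every $y\in U\cap X_a$, i.e.\ $x$ is initial in $\Pi_\infty(U\cap X_a)$; and the sets $U\cap X_a$ form a fundamental system of neighbourhoods of $x$ in $X_a$. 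This is exactly the paper's one-line deduction of $(2)$ from $(1)$, and once adopted the rest of your first paragraph becomes superfluous.
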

\begin{proof}
	\cref{fully_faith_XS} shows that $(1)$ implies $(2)$.
	Assume now that $(2)$ holds.
	Let $x \in X$ be a point.
	Since $X$ is conical, we can suppose that $X=Z \times C(Y)$  where $(Y,Q)$ is a stratified space with $P=Q^{\vartriangleleft}$.
    Let $\cW_x$ be a fundamental system of weakly contractible open neighborhoods of $x$ in $Z$.
	Then $\{ W \times C_\varepsilon(Y) \}_{W \in \mathcal W_x, \varepsilon>0}$ is a fundamental system of open neighborhoods of $x$ inside $X$.
	For $W\in  \mathcal W_x$ and $\varepsilon>0$, we  have an isomorphism of simplicial sets
	\[ \Exit(W \times C_\varepsilon(Y), P) \simeq   \Sing(W)\times \Exit(C_{\varepsilon}(Y),P) \]
	where each factor of the right hand side is an \category \ in virtue of \cref{Exit_chart_is_category}.
	Since $W$ is weakly contractible, we obtain an equivalence of $\infty$-categories 
	\[ \Exit(W \times C_\varepsilon(Y), P) \simeq  \Exit(C_{\varepsilon}(Y),P) \]
	sending $x$ to $\ast \in C(Y)$.
	We conclude using \cref{Exit_cone_initial_object}.
\end{proof}

\begin{rem}
	Let $(X,P)$ be a conically stratified space whose strata are CW-complexes.
	Then \cref{prop:locally_contractible_strata} states that $(X,P)$ is locally weakly contractible if and only if its strata are locally contractible.
\end{rem}

\subsection{Finiteness conditions on Exit Paths}

\begin{defin}\label{categorically_compact}
	For a stratified space $(X,P)$, we say that:
	\begin{enumerate}\itemsep=0.2cm
		\item $(X,P)$ is \emph{categorically compact} if $\Exit(X,P)$ is an $\infty$-category and a compact object in $\Cat_\infty$ ;

		\item $(X,P)$ is \emph{locally categorically compact} if $X$ admits a fundamental system of open subsets $U$ such that $(U,P)$ is categorically compact ;
		
		\item $(X,P)$ is \emph{of finite stratified type} if the poset $P$ is finite and for every $p \in P$, the homotopy type $\Pi_\infty(X_p)$  is a compact object in $\cS$.
	\end{enumerate}
\end{defin}

\begin{rem}
	In \cite{Beyond_conicality} it is established the existence of a large class of stratified spaces for which $\Exit(X,P)$ is not an $\infty$-category, but that nevertheless admit a stratified homotopy type $\Pi_\infty(X,P) \in \Cat_\infty$ making the exodromy correspondence \cref{thm:exodromy} true.
	It would be more sensible to formulate the above finiteness conditions in terms of this abstract stratified homotopy type.
	Nevertheless, this paper is first and foremost concerned with the conical situation, and in this case the subtlety between these two definitions disappear.
\end{rem}

When the stratification is trivial, there is an abundance of examples :

\begin{eg}\label{eg:Stein_are_compact}
Let $X$ be a smooth Stein complex space.
		Then classical results of Remmert \cite{Remmert_Stein_embeddings}, Bishop \cite{Bishop_Stein_embeddings} and Narasimhan \cite{Narasimhan_Stein_embeddings} show that $X$ can be realized as a closed subvariety of $\mathbb C^n$ for $n \gg 0$.
		In particular, Andreotti-Frankel's theorem \cite{Andreotti_Frankel} shows that $X$ has the homotopy type of a finite CW complex, and in particular $\Pi_\infty(X)$ is compact.
\end{eg}

\begin{eg}\label{eg:algebraic_varieties_are_compact}
Let  $X$ be a smooth algebraic variety.
		If $X$ is affine, then its analytification $X\an$ is smooth and Stein and therefore $\Pi_\infty(X\an)$ is compact by the previous point.
		In general, $X$ admits a finite cover by affine smooth open subvarieties, whose intersections are again smooth and affine.
		Thus, $\Pi_\infty(X\an)$ can be realized as a finite colimit of compact objects in $\cS$, and henceforth that $\Pi_\infty(X\an)$ is compact itself.
\end{eg}	
	
\begin{eg}\label{eg:compact_topological_manifolds_are_compact}
Let $X$ be a \emph{compact} topological manifold.
		Then the work of Kirby and Siebenmann \cite[Theorem III]{Kirby_Siebenmann_Hauptvermutung} implies that $X$ has the homotopy type of a \emph{finite} CW complex (although $X$ might not be triangulable itself).
		In particular, $\Pi_\infty(X)$ is a compact object in $\cS$.
\end{eg}	

The works of Volpe \cite{Volpe_Verdier_duality} and Nocera and Volpe \cite{Nocera_Volpe_Whitney_stratifications} provide many stratified examples of interest in singularity theory :

\begin{eg}[\cite{Volpe_Verdier_duality, Nocera_Volpe_Whitney_stratifications}]
	Whitney stratified spaces are locally categorically compact.
	Compact Whitney stratified spaces are categorically compact.
\end{eg}

\begin{eg}[\cite{Beyond_conicality}]
	It follows from \cite[Theorems 0.4.2 \& 0.4.3]{Beyond_conicality} that every compact real analytic manifold equipped with a conical locally finite stratification by subanalytic subsets is categorically compact in the above sense.
	The same holds for real algebraic varieties equipped with conical stratifications by Zariski locally closed subsets.
	In fact the conicality assumption can be dropped, at the cost of replacing $\Exit(X,P)$ with the stratified homotopy type $\Pi_\infty(X,P)$ constructed in \emph{loc.\ cit.}
\end{eg}

In the original version of this paper, we formulated the following conjecture:

\begin{conj}
	Let $(X, P)$ be a conically smooth stratified space (see \cite[\S3.2]{Ayala_Francis_Tanaka_Local_structures} for the definition of conical smoothness). If $(X, P)$ is of finite stratified type, then $(X,P)$ is also categorically compact.
\end{conj}

\noindent In the recent work of M.\ Volpe \cite{Volpe_Finiteness}, this has been answered positively.
In fact he shows that every conically stratified space whose strata and whose homotopy links are compact is categorically compact in the above sense.

\medskip

The goal of what follows is to prove the following concrete reformulation of local categorical compactness in the conical situation with locally weakly contractible strata.

\begin{prop}\label{local_cat_compact_Prop_def}
	Let  $(X,P)$ be a conically stratified space with locally weakly contractible strata.
	Then the following are equivalent :
	\begin{enumerate}\itemsep=0.2cm
	\item $(X,P)$ is locally categorically compact.
	\item For every $p \in P$ and every $x \in X_p$, there exists a conical chart of the form $Z \times C(Y)$ containing $x$ such that $(Y,P_{>p})$ is categorically compact and $Z$ is weakly contractible and locally weakly contractible.
	\end{enumerate}	
\end{prop}

To this end, some preparation is needed.

\begin{construction}\label{Exit_cone_exit_initial_object}
Let $(Y,Q)$ be a stratified space and put $P\coloneqq Q^{\lhd}$.
Let $\varepsilon \in (0,+\infty]$. 
Assume that $\Exit(Y,Q)$ and $\Exit(C_{\varepsilon}(Y),P)$ are \categories.
    Let $[0,\varepsilon)\to \Delta^1$ be the stratification with $\{0\}$ as closed stratum and $(0,\varepsilon)$ as open stratum.
    In particular, $([0,\varepsilon), \Delta^1)$ is conically stratified and the induced functor 
 \[
 \Exit([0,\varepsilon), \Delta^1)\to \Delta^1
 \]
 is an equivalence.
    Consider the commutative square of continuous maps 
	\[
		\begin{tikzcd}
		\text{$[0,\varepsilon )$}  \times Y  \arrow{r}  \arrow{d}& C_{\varepsilon}(Y)\arrow{d} \\
	\Delta^1 \times Q	\arrow{r} & P 
	\end{tikzcd}
	\]
where the top arrow sends $\{0\}\times Y$  to the vertex of $C_{\varepsilon}(Y)$ and induces an isomorphism above $Q\subset P$.
Looking at exit-paths thus yields a functor between \categories 
 \[
 \Delta^1\times \Exit(Y,Q)  \to \Exit(C_{\varepsilon}(Y),P)
 \]
sending $\Delta^0\times \Exit(Y,Q)$ to the vertex of $C_{\varepsilon}(Y)$ and inducing an equivalence on the full subcategories of objects lying over $Q\subset P$.
This in turn yields a functor over $P$
 \[
 \Exit(Y,Q)^{\lhd}  \to \Exit(C_{\varepsilon}(Y),P)
 \]
sending the initial object to the vertex of $C_{\varepsilon}(Y)$ and inducing an equivalence on the full subcategories of objects lying over $Q\subset P$.
\end{construction}

\begin{lem}\label{lem_comparison_exit}
Let $(Y,Q)$ be a stratified space and put $P\coloneqq Q^{\lhd}$.
Let $\varepsilon \in (0,+\infty]$.
Assume that $\Exit(Y,Q)$ and $\Exit(C_{\varepsilon}(Y),P)$ are \categories.
Then, the functor over $P$
\begin{equation}\label{construction_comparison_exit}
 \Exit(Y,Q)^{\lhd}  \to \Exit(C_{\varepsilon}(Y),P)
\end{equation}
 is an equivalence of \categories.
\end{lem}
\begin{proof}
The essential surjectivity of (\ref{construction_comparison_exit}) follows from the surjectivity of $[0,\varepsilon) \times Y  \to C_{\varepsilon}(Y)$.
We are thus left to show that (\ref{construction_comparison_exit}) is fully faithful.
As noted in \cref{Exit_cone_exit_initial_object}, the functor (\ref{construction_comparison_exit}) induces an equivalence on the full subcategories of objects lying over $Q\subset P$.
Let $x,y\in \Exit(Y,Q)^{\lhd}$ such that $x$ or $y$ lies over the initial object of $P$.
Then,  $x$ is the initial object of $ \Exit(Y,Q)^{\lhd}$.
Thus  $x$ is mapped to the vertex of $C_{\varepsilon}(Y)$.
Since the vertex of $C_{\varepsilon}(Y)$ is   initial in $\Exit(C_{\varepsilon}(Y),P)$ in virtue of \cref{Exit_cone_initial_object}, fully faithfulness follows.
\end{proof}

\begin{proof}[Proof of \cref{local_cat_compact_Prop_def}]
Let $p\in P$ and let $x\in X_p$ be a point. 
At the cost of shrinking $X$, we can suppose that $p$ is the minimal element of $P$, that is $P=P_{\geq p}$.
Let $U \coloneqq Z \times C(Y)$ be a conical chart at $x$  where $Z$ is contractible weakly contractible and where $(Y,Q)$ is a stratified  space with $Q=P_{>p}$.
From  \cref{Exit_chart_is_category}-(2),  the simplicial set $\Exit(Y,Q)$ is an \category.
We are going to show that $\Exit(Y,Q)$  is a compact object in $\Cat_{\infty}$.
Let $V \subset U$ be an open neighbourhood of $x$ such that $(V,P)$ is categorically compact.
Let $Z' \subset Z$ contractible weakly contractible and $\varepsilon >0$ such that $U' \coloneqq Z' \times C_{\varepsilon}(Y)\subset V$.
The inclusions $U' \subset V \subset U$ yield functors
\[
\Exit(Z' \times C_{\varepsilon}(Y),P) \to \Exit(V,P)  \to \Exit(Z \times C(Y),P) \ .
\]
Since $Z$ and $Z'$ are contractible,  \cref{lem:categorical_homotopy_invariance} implies that the above chain of simplicial sets is categorically equivalent to 
\[
\Exit(C_{\varepsilon}(Y),P)  \to \Exit(V,P)  \to\Exit(C(Y),P)  \ .
\]
Since $\Exit(C(Y),P)$,  $\Exit(C_{\varepsilon}(Y),P) $ and $\Exit(Y,Q)$ are \categories  \ by \cref{Exit_chart_is_category}, 
\cref{lem_comparison_exit} implies that the above chain of functors is equivalent to 
\[
 \Exit(Y,Q)^{\lhd}   \to\Exit(V,P) \to \Exit(Y,Q)^{\lhd}  \ .
\]
Hence,  $\Exit(Y,Q)^{\lhd}$  is a retract of a compact object of $\Cat_{\infty}$.
Thus  $\Exit(Y,Q)^{\lhd}$ is compact.
Hence $\Exit(Y,Q)$ is compact as a consequence of \cite[A.3.20]{Beyond_conicality}.
\end{proof}

\subsection{Local finality at strata}


\begin{defin}\label{Excellent_at_S_open}
	Let $(X,P)$ be a conically stratified space and let $S \subset P$ be a subposet.
	We say that an open neighbourhood $U$ of $X_S$ inside $X$ is \emph{final at $S$} if the functor
		\[ \Exit(X_S,S) \to \Exit(U,P) \]
		is a final.
\end{defin}

\begin{defin}\label{Excellent_at_S_stratified space}
		Let $(X,P)$ be a conically stratified space and let $S \subset P$ be a subposet.
	We say that $(X,P)$ is \emph{final at $S$} if the collection of final at $S$ open neighbourhoods of $X_S$ inside $X$ forms a fundamental system of neighbourhoods of $X_S$ inside $X$. 
\end{defin}

\begin{defin}
	Let $(X,P)$ be a conically stratified space and let $S \subset P$ be a subposet.
	We say that $(X,P)$ is \emph{locally final at $S$} if every point $x \in X_S$ admits a fundamental system of open neighbourhoods $U$ such that the stratified space $(U,P)$ is final at $S$.
\end{defin}

\begin{prop} \label{prop:conical_implies_local_excellency}
	 A conically stratified space $(X,P)$  is locally final at every stratum.
\end{prop}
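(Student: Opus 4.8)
The plan is to localise the problem, reduce to a single conical chart, and then, inside an arbitrary neighbourhood of $X_S$, build an explicit ``radially saturated'' sub-neighbourhood which I can recognise as excellent at $S$ by comparing it with the deepest stratum via \cref{Exit_cone_initial_object}. Since the property is local on $X_S$, fix $x\in X_S$ with image $a:=f(x)\in S$. Conical charts at $x$ form a fundamental system of open neighbourhoods of $x$, so it suffices to prove that one such chart, viewed as a stratified space, is excellent at $S$. Thus I may assume $X=Z\times C(Y)$ is a conical chart, that $a=\min P$, and --- discarding the elements of $S$ that do not meet the chart --- that $S=\{a\}\sqcup S'$ with $S'\subseteq Q:=P_{>a}$. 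By \cref{union_of_strata_and_conicality} (together with the fact that cones over conically stratified spaces are conically stratified) the spaces $(Y,Q)$, $(Y_{S'},S')$ and the cones below them are conically stratified, so their exit simplicial sets are $\infty$-categories by \cref{Lurie_Thm}. Inspecting the strata one finds $X_a=Z\times\{\ast\}$ and $X_S=Z\times C(Y_{S'})$, hence $\Exit(X_a,\{a\})=\Sing Z$ and $\Exit(X_S,S)=\Sing Z\times\Exit\bigl(C(Y_{S'}),(S')^{\vartriangleleft}\bigr)$. It therefore remains to show: for every open neighbourhood $\Omega$ of $Z\times C(Y_{S'})$ in $Z\times C(Y)$ there is an open $V$ with $Z\times C(Y_{S'})\subseteq V\subseteq\Omega$ that is excellent at $S$.

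The neighbourhood I would take is the \emph{radial saturation of $X_S$ inside $\Omega$}:
\[ V\;:=\;\bigl(Z\times\{\ast\}\bigr)\;\cup\;\bigl\{(z,y,t)\in Z\times Y\times\R_{>0}\ :\ (z,y,t')\in\Omega\text{ for all }0<t'\le t\bigr\}. \]
A routine point-set check gives $Z\times C(Y_{S'})\subseteq V\subseteq\Omega$ and that $V$ is open; the only non-formal ingredient is the tube lemma applied to the \emph{compact} radial segments $\{(z_0,\ast)\}\cup\{z_0\}\times\{y_0\}\times(0,t_0]\subseteq\Omega$, which produces a basic open box around any point of $V$ that is still contained in $V$. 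By construction $V$ is \emph{radially star-shaped}: whenever $(z,y,t)\in V$ and $0<t'\le t$, the whole segment $\{(z,\ast)\}\cup\{z\}\times\{y\}\times(0,t']$ lies in $V$. (Note that no compactness of $Z$ is used, only compactness of the radial segments.)

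To see that $V$ is excellent at $S$, I factor the comparison functor through the trivially stratified deepest stratum $X_a=Z\times\{\ast\}$:
\[ \Exit(X_a,\{a\})\ \overset{\alpha}{\lra}\ \Exit(X_S,S)\ \overset{\beta}{\lra}\ \Exit(V,P). \]
The functor $\alpha$ is $\id_{\Sing Z}$ times the inclusion of the cone point into $\Exit(C(Y_{S'}),(S')^{\vartriangleleft})$; by \cref{Exit_cone_initial_object} this is the inclusion of an initial object, hence coinitial, and coinitiality is preserved after crossing with $\Sing Z$ since slices of products are products of slices. The composite $\beta\alpha$ is induced by the strata-preserving inclusion $Z\times\{\ast\}\hookrightarrow V$; because $V$ is radially star-shaped, the radial contraction $(z,y,t)\mapsto(z,y,st)$ (for $s\in[0,1]$, with $s=0$ collapsing everything onto $Z\times\{\ast\}$) never leaves $V$, and a $Z$-parametrised version of the argument proving \cref{Exit_cone_initial_object} shows that for every $d\in V$ the space of exit paths of $V$ from $Z\times\{\ast\}$ into $d$ deformation retracts onto the slice $(\Sing Z)_{/\pi(d)}$ over the $Z$-coordinate $\pi(d)$ of $d$, which is contractible; hence $\beta\alpha$ is coinitial. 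By right cancellation for coinitial functors (formal from the definition), $\beta$ is coinitial, i.e.\ $V$ is excellent at $S$ in the sense of \cref{Excellent_at_S_open}. As $\Omega$ was arbitrary, the excellent-at-$S$ neighbourhoods of $X_S$ are cofinal, so $(X,P)$ is excellent at $S$, which proves the proposition. The genuinely delicate point is this last step --- the $Z$-parametrised refinement of \cref{Exit_cone_initial_object}, contracting an arbitrary exit path of the radially star-shaped open set $V$ radially and uniformly in the $Z$-parameter onto the cone-point locus so as to identify the relevant comma $\infty$-category with $(\Sing Z)_{/\pi(d)}$; the reduction and the construction of $V$ are straightforward.
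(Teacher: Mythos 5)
Your reduction to a conical chart, the construction of the radial saturation $V$ inside an arbitrary neighbourhood $\Omega$ of $X_S$, and the verification that $V$ is open (via compactness of the radial segments) and radially star-shaped are all correct, as is the factorization through $\Exit(X_a,\{a\})$ and the right-cancellation step for coinitial functors. Your instinct to work inside an arbitrary $\Omega$ is sound: the neighbourhoods $Z\times C_\varepsilon(Y)$ used in the paper's own argument are not cofinal among neighbourhoods of $Z\times\{\ast\}$ when $Z$ is non-compact, and your $V$ is exactly the right object to restore cofinality. But the price of this generality is that $V$ is no longer a product, and the coinitiality of $\Sing Z\to\Exit(V,P)$ can no longer be deduced from \cref{Exit_cone_initial_object} by crossing with $\Sing Z$. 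That step is the entire content of the proof, and the argument you offer for it does not work.

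Concretely: a simplex of the comma category at $d=(z_d,y_d,t_d)$ is a map $\tau(u,s)$ with $\tau(u,0)=(\sigma(u),\ast)$ in the closed stratum, $\tau(u,1)=d$, and $\tau(u,s)$ in the stratum of $d$ for \emph{all} $s>0$. The homotopy of \cref{Exit_cone_initial_object}, transported verbatim, is $H(u,\lambda,s)=(z(u,s/\lambda),y(u,s/\lambda),\lambda\varepsilon(u,s/\lambda))$ for $0<s\le\lambda$, followed by the radial path $(z_d,y_d,st_d)$ for $s\ge\lambda$. For $\lambda>0$ this stays in $V$ by star-shapedness, but at $\lambda=0$ the limit along $s\to 0^+$ is $(z_d,\ast)$ while the prescribed value at $s=0$ is $(\sigma(u),\ast)$: the map is discontinuous at $(u,0,0)$ unless $\sigma\equiv z_d$. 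In \cref{Exit_cone_initial_object} this issue is invisible because the closed stratum is a single point. Nor can you repair it by letting the $\lambda=0$ slice first traverse the $Z$-shadow of $\tau$ inside $Z\times\{\ast\}$ and then exit radially: the exit condition forbids a simplex of the comma category from lingering in the closed stratum for $s>0$, so $(\Sing Z)_{/\pi(d)}$ does not even embed into the comma category by concatenation, and there is no ``uniform radial contraction onto the cone-point locus'' of the kind you describe. The statement you need is true, but it requires a different argument: for instance, view the comma category as a right fibration over $\Sing Z$ with fibres $\Map_{\Exit(V,P)}((z,\ast),d)$, compare it with the corresponding comma category for the full chart $Z\times C(Y)$ (which is weakly contractible by the product argument), and prove fibrewise that the inclusion of $V$-constrained exit paths into all exit paths of the chart is a weak equivalence --- the latter by compressing one compact simplex at a time radially into $V$, which cannot be done by a single formula uniform over all simplices. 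As written, the crux of your proof is missing.
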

\begin{proof}
	Let $a\in P$ and let $x\in X_a$.
	We can suppose that $P=P_{\geqslant a}$.
	Since $(X,P)$ is conically stratified, $x$ admits a fundamental system of open neighbourhoods of the form $Z \times C(Y)$  where $(Y,Q)$ is a stratified space, where $Z$ is an open set of $X_a$ containing $x$ and where $P=Q^{\vartriangleleft}$.
	To prove \cref{prop:conical_implies_local_excellency}, it is thus enough to prove that the conically stratified space $(Z\times C(Y),P)$ is final at $a$.
	In that case, $\{ Z \times C_\varepsilon(Y) \}_{\varepsilon>0}$ is a fundamental system of open neighborhoods of $Z$.
	On the other hand, there is an isomorphism of simplicial sets
	\[ \Exit(Z \times C_\varepsilon(Y), P) \simeq \Sing(Z)\times \Exit(C_{\varepsilon}(Y),P) \]
	Hence, the functor
	\[ \Exit(Z\times \ast,\{a\}) \to  \Exit(Z \times C_\varepsilon(Y),P) \]
	reads as a product
	\begin{equation}\label{final_functor}
		\Sing(Z)\times  \ast \to \Sing(Z)\times  \Exit(C_\varepsilon(Y),P) 
	\end{equation}
	From \cref{Exit_cone_initial_object}, the functor $\ast \to  \Exit(C_\varepsilon(Y),P)$  is final.
	Thus, (\ref{final_functor}) is final  as a consequence of \cite[4.1.1.13]{HTT}.
\end{proof}

\subsection{Hypersheaves, hyperconstancy and hyperconstructibility}

\begin{defin}
     For a topological space  $X$, we denote by
\[
\HSh(X) \subseteq \PSh(X)
\]     
the full subcategory spanned by hypersheaves on $X$.
    For $\cE\in \PrL$, we put
\[ 
\HSh(X;\cE) \coloneqq \HSh(X)\otimes \cE\simeq \FunR(\HSh(X)\op,\cE)  \ .
\]
\end{defin}

\begin{rem}
      The canonical inclusion $\HSh(X)\subset \PSh(X)$  admits a left adjoint $(-)^{\hyp} : \PSh(X)\to \HSh(X)$ referred to as the hypersheafification functor. 
      Tensoring with $\cE$ thus induces a functor 
\[ 
(-)^{\hyp} \colon \PSh(X)\to \HSh(X;\cE) 
\]
admitting a fully-faithful right adjoint.
Hence, we can think of objects of $\HSh(X;\cE)$ as $\cE$-valued preseheaves satisfying hyperdescent.
\end{rem}

    Let $f \colon X\to Y$ be a morphism of topological spaces and let $\cE\in \PrL$.
    Let
\[ 
f^\ast \colon \PSh(Y;\cE) \leftrightarrows \PSh(X;\cE) \colon f_\ast 
\] 
be the canonical adjunction.
     The functor $f_*$ preserves hypersheaves and thus restricts to a functor
\[ 
f_* \colon \HSh(X;\cE) \to \HSh(Y;\cE)  \ .
\]
     Furthermore, $f_*$  admits
\[ 
f^{\ast,\hyp} \coloneqq (-)^{\hyp} \circ f^* : \HSh(Y;\cE) \to \HSh(X;\cE) \ . 
\]
as left adjoint.

\medskip

In the next definition, $\Gamma_X : X\to \ast$ denotes the tautological morphism.

\begin{defin}
     Fix $F\in \HSh(X;\cE)$.
     We say that $F$ is  \textit{hyperconstant} if $F$ lies in the essential image of $\Gamma_X^{\ast,\hyp} \colon \cE \to \HSh(X;\cE)$.
     We say that $F$ is  \textit{locally hyperconstant} if there exists a cover of $X$ by open subsets $ i \colon U\hookrightarrow X$ such that $i^{\ast,\hyp}(F)$ is hyperconstant. 
	We denote by 
	\[
	\mathrm{LC}(X;\cE) \subset \HSh(X;\cE)
       \]	
	 the full subcategory of  $\HSh(X;\cE)$ spanned by locally hyperconstant hypersheaves.
\end{defin}

Following \cite{Lejay_Constructible}, we now introduce the main player of this paper:

\begin{defin}
	Let $(X,P)$ be a  stratified space and let $\cE\in \PrL$.
	An hypersheaf $F \in \HSh(X;\cE)$ is \textit{hyperconstructible} if for every $p \in P$, the hypersheaf $i_p^{\ast,\hyp}(F)$ is locally hyperconstant on $X_p$, where $i_p \colon X_p \hookrightarrow X$ is the canonical inclusion.
	Let 
	\[
	\ConsPhyp(X;\cE) \subset \HSh(X;\cE)
	\] 
	 be the full subcategory of  $\HSh(X;\cE)$ spanned by hyperconstructible hypersheaves on $(X,P)$.
\end{defin}

\begin{rem}
      When $\cE=\cS$, we will write $\ConsPhyp(X)$  for $\ConsPhyp(X;\cS)$.
\end{rem}

\begin{rem}
	Given a morphism of stratified spaces $ f \colon (X,P) \to (Y,Q) $ and $\cE\in \PrL$, the hypersheaf pullback $f^{\ast,\hyp} : \HSh(Y;\cE) \to \HSh(X;\cE)$ restricts to a functor
	\begin{equation*}
	\ConsQhyp(Y,\cE)\to \ConsPhyp(X,\cE) \ .	\end{equation*}
\end{rem}

\subsection{Change of coefficients and hyperconstructibility}\label{change_coeff_general}
    Let $ X $ be a topological space. 
    Let $ L \colon \cE \to \cD $ be a morphism in $ \PrL $ and let $ R  : \cD \to \cE $ its right adjoint.
	We denote by 
	\begin{equation*}
		L^{\hyp} \coloneqq \id \otimes L \colon \HSh(X;\cE) \to \HSh(X;\cD)
	\end{equation*}
	the induced  morphism in $ \PrL $.
	Concretely, $L^{\hyp}$  is obtained via the composition of  
\[
L \circ - : \PSh(X, \cE)  \to \PSh(X, \cD) 
\]
followed by the hypersheafification functor 
\[
(-)^{\hyp } : \PSh(X, \cD)   \to \HSh(X;\cD) \ .
\]
     Via the identification $\HSh(X;\cE) \simeq \FunR(\HSh(X)\op,\cE) $ supplied by \cite[Proposition 4.8.1.17]{Lurie_Higher_algebra}, the right adjoint $R^{\hyp}$ of $L^{\hyp}$  is given by  composing with $R$.

\begin{rem}\label{hyp_coeff_change_inverse_image}
	The formation of $ L^{\hyp} $ tautologically commutes with hypersheaf pullback. 
\end{rem}

As an immediate consequence of \cref{hyp_coeff_change_inverse_image}, we have the following 

\begin{lem}\label{hyp_change_coeff_preserves_Conshyp}
    Let $(X,P)$ be a stratified space and let $L \colon \cE \to \cD$ be a morphism in $\PrL$.
    Then, $L^{\hyp } : \HSh(X;\cE) \to  \HSh(X;\cD)$ restricts to a functor 
\[ L^{\hyp } \colon \ConsPhyp(X;\cE) \to \ConsPhyp(X;\cD) \ . \]
\end{lem}

\begin{rem}\label{no_right_adjoint}
	Note that $L^{\hyp }$ may not have a right adjoint in general since $R^{\hyp}$ may not preserve hyperconstructibility.
	We will show that when Exodromy holds, it does.
	See \cref{change_coef_Exodromy}.
\end{rem}  
  
\begin{lem}\label{g_preserves_LC}
	Let $X$ be a locally weakly contractible topological space.
    Let $L \colon \cE \to \cD$ be a morphism in $\PrL$ with right adjoint $R : \cD \to \cE$.
    Then, the commutative diagram
    \[ \begin{tikzcd}
    	\HSh(X;\cD) \arrow{r}{R^{\hyp}} \arrow{d}{\Gamma_{X,\ast}} & \HSh(X;\cE)\arrow{d}{\Gamma_{X,\ast}}  \\
		\cD	\arrow{r}{R} & \cE 
	\end{tikzcd} \]
	is vertically left adjointable.	  
	That is, the Beck-Chevalley transformation
     \[ \Gamma_X^{\ast,\hyp} \circ R\to  R^{\hyp}\circ \Gamma_X^{\ast,\hyp} \ \]
	is an equivalence. 
	In particular, $R^{\hyp} : \HSh(X;\cD) \to \HSh(X;\cE)$ restricts to a  functor
	\[ R^{\hyp}  \colon \LChyp(X,\cD)\to \LChyp(X,\cE)\ \]
\end{lem}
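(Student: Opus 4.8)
The plan is to first reduce the last assertion to the Beck--Chevalley statement and then establish the latter for every locally weakly contractible $X$. Indeed, if $F \in \mathrm{LC}(X;\cD)$, choose an open cover $\{i_U \colon U \hookrightarrow X\}$ on which $F$ becomes hyperconstant, say $i_U^{\ast,\mathrm{hyp}} F \simeq \Gamma_U^{\ast,\mathrm{hyp}}(D_U)$. Each $U$ is again locally weakly contractible, so combining the commutative square of \cref{change_coeff_general} (for the open immersion $i_U$) with the Beck--Chevalley equivalence for $U$ gives $i_U^{\ast,\mathrm{hyp}}(g \circ F) \simeq g \circ i_U^{\ast,\mathrm{hyp}} F \simeq g \circ \Gamma_U^{\ast,\mathrm{hyp}}(D_U) \simeq \Gamma_U^{\ast,\mathrm{hyp}}(g D_U)$, whence $g \circ F$ is locally hyperconstant.

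To prove that $\beta \colon \Gamma_X^{\ast,\mathrm{hyp}} \circ g \to g \circ \Gamma_X^{\ast,\mathrm{hyp}}$ is an equivalence, I would first reduce to the case where $X$ is moreover weakly contractible. A morphism of hypersheaves on $X$ is an equivalence as soon as it restricts to one along the members of an open cover, and since $X$ is locally weakly contractible I may take this cover to consist of weakly contractible opens $U$. Using $i_U^{\ast,\mathrm{hyp}} \circ \Gamma_X^{\ast,\mathrm{hyp}} \simeq \Gamma_U^{\ast,\mathrm{hyp}}$ and the fact that restriction to an open commutes both with post-composition by $g$ and with hyper-sheafification (so that $i_U^{\ast,\mathrm{hyp}}$ commutes with $g \circ -$), naturality of the Beck--Chevalley transformation under the open immersion $i_U$ identifies $i_U^{\ast,\mathrm{hyp}}\beta$ with the analogous transformation for $(U,g)$. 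Hence we may assume $X$ weakly contractible and locally weakly contractible.

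In that case I would invoke the unstratified monodromy equivalence for locally hyperconstant hypersheaves of \cite{HPT}: there is an equivalence $\mathrm{LC}(X;\cE) \simeq \Fun(\Pi_\infty(X),\cE)$, natural in $\cE$, under which $\Gamma_X^{\ast,\mathrm{hyp}}$ corresponds to the constant-diagram functor and $\Gamma_{X,\ast}$ to the limit functor; since $\Pi_\infty(X) \simeq \ast$, the unit $\id_\cE \to \Gamma_{X,\ast} \circ \Gamma_X^{\ast,\mathrm{hyp}}$ is an equivalence, so $\Gamma_X^{\ast,\mathrm{hyp}}$ is fully faithful and $\Gamma_{X,\ast}$ is conservative on the essential image of $\Gamma_X^{\ast,\mathrm{hyp}}$. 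Two points then remain. The first is that $g \circ \Gamma_X^{\ast,\mathrm{hyp}}(D)$ is still hyperconstant: evaluating on a weakly contractible open $V \subseteq X$ and using the unit equivalence for $V$, one gets $\bigl(g \circ \Gamma_X^{\ast,\mathrm{hyp}}(D)\bigr)(V) \simeq g\bigl(\Gamma_{V,\ast}\Gamma_V^{\ast,\mathrm{hyp}}(D)\bigr) \simeq g(D)$ with all transition maps equivalences, so $g \circ \Gamma_X^{\ast,\mathrm{hyp}}(D)$ is locally hyperconstant and hence, via the equivalence above and $\Pi_\infty(X) \simeq \ast$, hyperconstant. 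The second is that, both the source and the target of $\beta_D$ now lying in the essential image of $\Gamma_X^{\ast,\mathrm{hyp}}$, it suffices to check that $\Gamma_{X,\ast}(\beta_D)$ is an equivalence; unwinding the definition of the mate, this map is identified, via the unit equivalence, with $\id_{g(D)}$, and is therefore an equivalence.

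I expect the genuine obstacle to be the step showing that $g \circ \Gamma_X^{\ast,\mathrm{hyp}}(D)$ remains hyperconstant, i.e.\ that post-composition with the merely limit-preserving functor $g$ commutes with the hyper-sheafification of a constant presheaf. This is precisely where local weak contractibility enters, through the triviality of the hypercomplete shape of a weakly contractible, locally weakly contractible space, and it is also what makes the statement fail for an arbitrary space $X$. The remaining steps --- the reductions along open covers and the mate bookkeeping --- are routine once this input is in place.
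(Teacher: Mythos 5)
Your proposal is correct and rests on the same essential input as the paper's proof, namely that for a weakly contractible open $U$ of a locally weakly contractible space the constant hypersheaf satisfies $(\Gamma_X^{\ast,\mathrm{hyp}}(d))(U)\simeq d$ (the paper cites \cite[Theorem 2.13]{HPT} and simply evaluates both sides of the Beck--Chevalley map on such $U$, obtaining $g(d)$ for each). Your route through the monodromy equivalence, hyperconstancy of $g\circ\Gamma_X^{\ast,\mathrm{hyp}}(D)$, and the mate bookkeeping is a more explicit (and slightly longer) packaging of the same argument; if anything it is more careful than the paper in verifying that the comparison map itself, and not merely its source and target, is the expected equivalence.
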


\begin{proof}
	Let $d\in \cD$ be an object.
	We have to show that 
	\[\Gamma_X^{\ast,\hyp} R(d)\to R^{\hyp}\circ \Gamma_X^{\ast,\hyp}(d) \]
	is an equivalence of hypersheaves on $X$.
	It is enough to check that the above morphism is an equivalence above any weakly contractible open subset $U$ of $X$.
	From \cite[Theorem 2.13]{HPT}, we have 
	\[(\Gamma_X^{\ast,\hyp} \circ R(d) )(U) \coloneqq\big(\Gamma_X^{-1}( R(d) )\big)^{\hyp}(U)  \simeq (\Gamma_X^{-1} \circ R(d) )(U)\simeq R(d) \]
	From \cite[Theorem 2.13]{HPT} again, we have 
	\[(R^{\hyp}\circ \Gamma_X^{\ast,\hyp}(d))(U)=R^{\hyp}\big( (\Gamma_X^{\ast,\hyp}(d))(U)\big)\simeq R(d)\]
	and \cref{g_preserves_LC} follows.
	\personal{There is no reason for $g\circ -$ to preserve hyperconstructible since there is no reason for $g\circ -$ to commute with hyperrestriction in general. 
	This comes from the fact that the colimit going in the definition of restriction is not constant unless one uses exodromy.}
\end{proof}

\begin{notation}\label{closed_immersion_upper_shriek}
     Let $\cE$ be a presentable stable $\infty$-category.
     For  a closed immersion $i : Y \to X$ between topological spaces, the functor 
\[
i_\ast : \HSh(Y;\cE) \to\HSh(X;\cE) 
\]
commutes with colimits, and thus admits a right adjoint
\[
i^{!,\hyp} : \HSh(X;\cE) \to\HSh(Y;\cE)  \  .
\]
\end{notation}

\begin{notation}
     Let $\cE$ be a presentable stable $\infty$-category.
     For  an open immersion $j : U \to X$ between topological spaces, denote by 
\[
j_! : \HSh(U,\cE) \to \HSh(X,\cE) 
\]
the left adjoint to $j^{\ast, \hyp} :  \HSh(X,\cE) \to \HSh(U,\cE)$.
\end{notation}

\begin{lem}\label{hyp_coeff_change_open_lower_shriek}
     Let $L \colon \cE \to \cD$ be a morphism in $\PrL$ with $\cE,\cD$ stable and let  $R : \cD \to \cE$ be its right adjoint.   
      For every open immersion $j : U\to X$ between topological spaces, the square
\[ 
\begin{tikzcd}
		\HSh(U;\cE)  \arrow{r}{j_!} \arrow{d}{L^{\hyp}} & \HSh(X;\cE)\arrow{d}{L^{\hyp}} \\
		\HSh(U;\cD)\arrow{r}{j_!}& \HSh(X;\cD) 
\end{tikzcd} \]
commutes.
\end{lem}
\begin{proof}
      By passing to right adjoints, it is enough to show that $R^{\hyp}\circ j^{\ast, \hyp}  =j^{\ast, \hyp}  \circ R^{\hyp}   $.
      Since $j$ is an open immersion, $j^{\ast, \hyp}$ identifies canonically with the presheaf inverse image $j^{*}$.
      The conclusion thus follows from the equality $R\circ j^{\ast}  =j^{\ast}  \circ R$.
\end{proof}

\begin{notation}\label{locally_closed_lower_shriek}
      Let $\cE$ be a presentable stable $\infty$-category.
      For  a locally closed immersion $i : Y \to X$ between topological spaces presented as the composition of a closed immersion $\iota : Y\to U$ followed by an open immersion $j : U\to X$, we put 
\[
i^{!,\hyp} \coloneqq \iota^{!,\hyp} \circ j^{\ast, \hyp}   : \HSh(X;\cE) \to\HSh(Y;\cE)  \  .
\]
Observe that $i^{!,\hyp}$  admits $j_! \circ \iota_\ast : \HSh(Y;\cE) \to \HSh(X;\cE)$ as left adjoint.
\end{notation}

\begin{eg}\label{i_S_shrieck}
	Let $(X,P)$ be a stratified space. 
	Let $S \subset P$ be a locally closed subset.
	Put
	\[ \geq S \coloneqq\{p\in P | \exists s\in S \text{ with } p\geq s\} \]
	The set $\geq S$ is open in $P$ and $S$ is closed in $\geq S$.
	Thus, the inclusion $i_S \colon X_S \to X$ factors as
	\[ \begin{tikzcd}[column sep = small]
		X_S \arrow{r}{\iota_S} & X_{\geqslant S} \arrow{r}{i_{\geqslant S}} & X \ ,
	\end{tikzcd} \]
	where $\iota_S$ is a closed immersion and $i_{\geqslant S}$ is an open immersion.
	Let $\cE$ be a presentable  $\infty$-category.
	Following \cref{locally_closed_lower_shriek}, we thus have a right adjoint
	\[ 
	i_S^{!, \hyp} \coloneqq \iota_S^{!,\hyp} \circ i_{\geqslant S}^{\ast,\hyp} \ \colon \HSh(X;\cE) \to \HSh(X_S;\cE)   \ .
	\]
	admitting admits $i_{\geq S,!} \circ \iota_{S,\ast} : \HSh(X_S ;\cE) \to \HSh(X;\cE)$ as left adjoint.
\end{eg}

\begin{warning}
The functor $i_S^{!, \hyp}$ may not preserve $P$-hyperconstructibility without any further assumption on $(X,P)$.
\end{warning}

\begin{lem}\label{hyp_and_closed_direct_image}
Let $i : Y \to X$ be a closed immersion between topological spaces.
Let $\cE$ be a presentable $\infty$-category.
Then, the commutative square
\[ 
\begin{tikzcd}
		 \PSh(X;\cE)  &\HSh(X;\cE) \arrow{l}\\
		\PSh(Y;\cE)\arrow{u}{i_\ast}&  \HSh(Y;\cE)\arrow{l}\arrow{u}{i_\ast}
\end{tikzcd} 
\]
is horizontally left adjointable.
	That is, the Beck-Chevalley transformation
	\[ \gamma \colon \hyp\circ i_\ast \to i_\ast \circ \hyp \]
	is an equivalence.
\end{lem}
\begin{proof}
Since $i : Y \to X$ be a closed immersion, the functor $i_\ast$ commutes with colimits.
By universal property of the tensor product of presentable $\infty$-categories, we are thus left to treat the case where $\cE=\cS$.
Observe that for every $x\in X$, the transformation $x^{\ast,\hyp }(\gamma)$ is an equivalence.
Since the hypersheaf stalk functors are jointly conservative on hypersheaves with values in $\cS$, the conclusion follows.
\end{proof}

\begin{lem}\label{right_adjoint_and_!_closed_immersion}
     Let $i : Y \to X$ be a locally closed immersion between topological spaces presented as the composition of a closed immersion $\iota : Y\to U$ followed by an open immersion $j : U\to X$.
     Let $L: \cE\to \cD$ be a morphism in $\PrL$ with $\cE,\cD$ stable and let  $R :\cD\to \cE$ be its right adjoint.
Then, the squares
\[ 
\begin{tikzcd}
		\HSh(Y;\cE)  \arrow{r}{L^{\hyp}} \arrow{d}{j_! \circ \iota_\ast} & \HSh(Y;\cD)\arrow{d}{j_! \circ \iota_\ast} \\
		\HSh(X;\cE)\arrow{r}{L^{\hyp}}& \HSh(X;\cD)  
		\end{tikzcd} 
		\text{ and }
		\begin{tikzcd}
		\HSh(Y;\cE)  &\HSh(Y;\cD)\arrow{l}[swap]{R^{\hyp}}\\
		\HSh(X;\cE)\arrow{u}{i^{!,\hyp}}&  \HSh(X;\cD)\arrow{l}[swap]{R^{\hyp}}\arrow{u}{i^{!,\hyp}}
\end{tikzcd}  
\]
commute.
\end{lem}
\begin{proof}
The commutativity of the right square is deduced from that of the left square by passing to right adjoints.
Since $f^{\hyp}= \hyp \circ f$ with $f$ trivially commuting to $\iota_\ast$, the commutativity of the left square follows from \cref{hyp_and_closed_direct_image} and \cref{hyp_coeff_change_open_lower_shriek}. 
\end{proof}

\section{The categorical framework} \label{sec:categorical_framework}

Let $\cX$ be an $\infty$-category and let $\mathsf A \colon \cX \to \Cat_\infty$ be a functor with values in small $\infty$-categories.
Throughout this section we either assume $\cX$ to be a small $\infty$-category with a terminal object $\mathbf 1_\cX$ or to be presentable.
In the latter case, we assume $\mathsf A$ to be an accessible functor.
We let
\[ \pi_{\mathsf A} \colon \cA \to \cX\op \]
be the associated \emph{cartesian} fibration.
Writing $\mathbf 1_\cX$ for the terminal object of $\cX$, \cite[Corollary 3.3.4.3]{HTT} provides a canonical \emph{localization} functor
\[ \lambda_{\mathsf A} \colon \cA \to \mathsf A(\mathbf 1_\cX) . \]

\begin{eg}\label{eg:0}
Let $X$ be a topological space.
		Take $\cX \coloneqq \mathrm{Open}(X)$ and $\mathsf A \coloneqq \Pi_\infty$  the functor $\mathsf{Open}(X) \to \cS$ sending an open set $U$ to its homotopy type $\Pi_\infty(U)$.
		We will also take $\cX \coloneqq \PSh(X)$ and $A \colon \PSh(X)\to \cS$ the colimit-preserving functor obtained as left Kan extension of $\Pi_\infty$ along the Yoneda embedding.
		From \cite[A.3.10]{Lurie_Higher_algebra}, $\mathsf A$ carries $\infty$-connective morphisms in $\PSh(X)$ to equivalences in $\cS$.
		If we finally put $\cX \coloneqq \HSh(X)$, the functor $\mathsf A$ thus factors as a colimit-preserving functor issued from $\HSh(X)$.
\end{eg}

\begin{eg}\label{eg:I}
Let $(X,P)$ be a conically stratified topological space.
		Take $\cX \coloneqq \mathrm{Open}(X)$ and $\mathsf A \coloneqq  \Pi_\infty$  the functor $\mathsf{Open}(X) \to \Cat_{\infty}$ sending an open set $U$ to $\Pi_\infty(U,P|_U)$.
		We will also take $\cX \coloneqq \PSh(X)$ and $\mathsf A \colon \PSh(X)\to \cS$ the colimit-preserving functor obtained as left Kan extension of $\Pi_\infty$ along the Yoneda embedding.
		Similarly as in \cref{eg:0}, $\mathsf A$ carries $\infty$-connective morphisms in $\PSh(X)$ to equivalences in $\Cat_{\infty}$.
		If we finally put $\cX \coloneqq \HSh(X)$, the functor $\mathsf A$ thus factors as a colimit-preserving functor issued from $\HSh(X)$.		
		We will abuse notations by using the notation $\Pi_\infty$ to also refer to the functors $\mathsf A$ and its factorization through $\HSh(X)$.
\end{eg}

\begin{rem}
	\cref{eg:0} is a special case of \cref{eg:I} in  the same way monodromy is a special case of exodromy.
	Still, the proof of the latter is a reduction to the trivial stratification case.
	It will therefore be useful to analyze \cref{eg:0} on its own.
\end{rem}

\subsection{Size issues}

The main point of this paper is to show that the exodromy adjunction is realized by push-pull along the correspondence $\pi_{\mathsf A} \times \lambda_{\mathsf A} \colon \cA \to \cX\op \times \mathsf A(\mathbf 1_\cX)$.
When $\cX$ is a presentable $\infty$-category, there may be size-theoretical issues in defining the right Kan extension along $\pi_{\mathsf A}$ and the left Kan extension along $\lambda_{\mathsf A}$.
The following lemmas show  this is not an actual problem:

\begin{lem}\label{lem:RKE}
	For every $x \in \cX$, the canonical functor
	\[ \mathsf A(x) \to \cA \times_{\cX\op} (\cX\op)_{x/} \]
	is limit-final.
	In particular, for every presentable $\infty$-category $\cE$, the pullback functor
	\[ \pi_{\mathsf A}^\ast \colon \Fun(\cX\op, \cE) \to \Fun(\cA, \cE) \]
	admits a right adjoint, given by right Kan extension along $\pi_{\mathsf A}$.
	\personal{Notice this applies also when $\cX$ is not presentable.}
\end{lem}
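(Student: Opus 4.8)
The plan is to identify the functor in question with the inclusion of the fibre of a cartesian fibration over an initial object of its base. Indeed, by construction $\cA\times_{\cX\op}(\cX\op)_{x/}$ is the pullback of the cartesian fibration $\pi_{\mathsf A}\colon\cA\to\cX\op$ along the forgetful functor $u_x\colon(\cX\op)_{x/}\to\cX\op$, and pullbacks of cartesian fibrations are cartesian fibrations (\cite[\S2.4.2]{HTT}). Writing $\cB\coloneqq(\cX\op)_{x/}$ and $q\colon\cP\coloneqq\cA\times_{\cX\op}\cB\to\cB$, the object $\id_x\in\cB$ is initial and $u_x(\id_x)=x$, so the fibre $\cP_{\id_x}$ is $\pi_{\mathsf A}^{-1}(x)=\mathsf A(x)$ and the canonical functor of the statement is precisely the fibre inclusion $\iota\colon\mathsf A(x)=\cP_{\id_x}\hookrightarrow\cP$.

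I would then prove the following general fact: \emph{if $q\colon\cP\to\cB$ is a cartesian fibration and $0\in\cB$ is initial, then the inclusion $\iota\colon\cP_0\hookrightarrow\cP$ of the fibre over $0$ admits a right adjoint; in particular it is limit-final, i.e.\ $\iota\op$ is final.} The candidate right adjoint $R$ sends $p\in\cP$, lying over $b\coloneqq q(p)$, to the source $p^\flat$ of a $q$-cartesian lift $\widetilde u\colon p^\flat\to p$ of the essentially unique morphism $0\to b$. The one thing to verify is that $\widetilde u$ is a \emph{terminal} object of the comma $\infty$-category $\cP_0\times_{\cP}\cP_{/p}$; granting this, every such comma $\infty$-category is weakly contractible, so $\iota\op$ is final by the comma-category criterion for finality (\cite[\S4.1]{HTT}) and the same terminal objects exhibit $R$ as a right adjoint to $\iota$. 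To check terminality, observe that since $0$ is initial the mapping spaces $\Map_\cB(0,0)$ and $\Map_\cB(0,b)$ are contractible; feeding this into the homotopy-pullback square characterising the cartesian morphism $\widetilde u$ shows that $\widetilde u\circ(-)\colon\Map_{\cP}(p_0,p^\flat)\to\Map_{\cP}(p_0,p)$ is an equivalence for every $p_0\in\cP_0$, and the same computation shows $\iota$ is fully faithful. Hence for $(p_0,\alpha\colon p_0\to p)$ an object of $\cP_0\times_{\cP}\cP_{/p}$, the space of morphisms from $(p_0,\alpha)$ to $(p^\flat,\widetilde u)$ — which is the fibre over $\alpha$ of the equivalence $\Map_{\cP_0}(p_0,p^\flat)\to\Map_{\cP}(p_0,p)$, $\beta\mapsto\widetilde u\circ\iota(\beta)$ — is contractible, as desired.

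Finally I would deduce the assertion about $\pi_{\mathsf A}$. Let $\cE$ be presentable, hence complete. For any $F\in\Fun(\cA,\cE)$ and any $x\in\cX\op$, the pointwise formula for the right Kan extension of $F$ along $\pi_{\mathsf A}$ evaluates at $x$ to the limit of the composite $\cA\times_{\cX\op}(\cX\op)_{x/}\to\cA\xrightarrow{F}\cE$ (\cite[\S4.3]{HTT}). By the limit-finality proved above, this limit agrees with the limit of the restriction of that diagram along $\iota$, that is, with a limit indexed by $\mathsf A(x)$. Since $\mathsf A$ takes values in small $\infty$-categories, $\mathsf A(x)$ is small and this limit exists in $\cE$; hence the pointwise right Kan extension of $F$ exists at every object of $\cX\op$, so $\pi_{\mathsf A,\ast}F$ exists for all $F$ and $\pi_{\mathsf A,\ast}$ is right adjoint to $\pi_{\mathsf A}^\ast$. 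This argument is insensitive to whether $\cX$ is presentable or small with a terminal object.

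There is no genuine obstacle here; the argument is essentially bookkeeping. The only points requiring care are keeping the variances straight — cartesian versus cocartesian fibrations, the distinction between final and limit-final, and the identification $(\cX\op)_{x/}\simeq(\cX_{/x})\op$ — and, conceptually, ensuring that the a priori large comma $\infty$-category $\cA\times_{\cX\op}(\cX\op)_{x/}$ is genuinely controlled, which is exactly what exhibiting the small limit-final subcategory $\mathsf A(x)$ inside it accomplishes.
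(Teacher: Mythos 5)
Your proposal is correct and is essentially the paper's argument: both proofs reduce limit-finality, via Quillen's Theorem A, to showing that each comma $\infty$-category $\mathsf A(x)\times_{\cD}\cD_{/\mathbf a}$ is weakly contractible because a cartesian lift provides a terminal object, and both deduce the second half from the smallness of $\mathsf A(x)$ and the pointwise formula for right Kan extensions. Your extra packaging (fibre over an initial object of a cartesian fibration, existence of an actual right adjoint to the inclusion) is a slightly stronger but equivalent way of organizing the same verification, which the paper performs by citing \cite[2.4.1.9]{HTT} instead of the explicit mapping-space computation.
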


\begin{proof}
	The second half of the statement is a direct consequence of the first half and the fact that $\mathsf A(x)$ is a small $\infty$-category, which implies that the right Kan extension along $p$ is indeed well-defined.
	To prove the first half, we use Quillen's theorem A \cite[Theorem 4.1.3.1]{HTT}.
	Write $\cD \coloneqq \cA \times_{\cX\op} (\cX\op)_{x/}$.
	We can represent an object $\mathbf a \in \cD$ as a pair $\mathbf a = (a,f)$ where $a$ belongs to $\cA$ and $f \colon x \to \pi_{\mathsf A}(a)$ is a morphism in $\cX\op$.
	We have to prove that for every object $\mathbf a \in \cD$, the $\infty$-category
	\[ \mathsf A(x) \times_{\cD} \cD_{/ \mathbf a} \]
	is weakly contractible.
	By definition, an object of $\mathsf A(x) \times_{\cD} \cD_{/ \mathbf a}$ is a lift of $f$ with target $a$.
	In particular, $\mathsf A(x) \times_{\cD} \cD_{/ \mathbf a}$ identifies with $\cA_{/a} \times_{\cX\op_{/\pi_{\mathsf A}(a)}} \{f\}$.
	From \cite[2.4.1.9]{HTT}, the latter category admits a cartesian lift of $f$ as final object.
	It is thus weakly contractible.
\end{proof}

It is slightly more technical to deal with the left Kan extension along $\lambda_{\mathsf A}$.
It is useful to introduce the following definition:

\begin{defin}\label{def:cofiltered}
	Let $\kappa$ be a regular cardinal.
	We say that a functor $f \colon \cC \to \cD$ is $\kappa$-cofiltered if for every essentially $\kappa$-small $\infty$-category $I$, the functor $f$ has the right lifting property against the inclusion $I \hookrightarrow I^{\lhd}$.
	When $\kappa = \omega$, we simply say that $f$ is cofiltered instead of $\omega$-cofiltered.
\end{defin}

The proof of the following lemma is straightforward and it is left to the reader:

\begin{lem} \label{lem:filtered_functor}
	Let $\kappa$ be a regular cardinal.
	Then:
	\begin{enumerate}\itemsep=0.2cm
		\item the collection of $\kappa$-filtered functors is closed under composition and pullback in $\Cat_\infty$;
		
		\item the functor $\Gamma_\cC \colon \cC \to *$ is $\kappa$-filtered if and only if $\cC$ is $\kappa$-filtered.
	\end{enumerate}
\end{lem}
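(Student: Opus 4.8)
The plan is to treat both assertions as formal consequences of the elementary fact that, for any set $\cJ$ of morphisms of simplicial sets, the class of morphisms having the right lifting property against every member of $\cJ$ is closed under composition and under base change. By \cref{def:cofiltered}, a functor $f \colon \cC \to \cD$ of $\infty$-categories is $\kappa$-filtered precisely when it has the right lifting property against the set $\cJ_\kappa$ of inclusions $I \hookrightarrow I^{\lhd}$, with $I$ ranging over essentially $\kappa$-small $\infty$-categories. Granting the formal fact, part (1) is then immediate, and part (2) is the instance in which the target is the terminal $\infty$-category $\ast$.

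For the closure under composition I would argue as follows. Let $f \colon \cC \to \cD$ and $g \colon \cD \to \cE$ be $\kappa$-filtered, and consider a lifting problem for $g \circ f$ against some $I \hookrightarrow I^{\lhd}$, consisting of a diagram $u \colon I \to \cC$ together with an extension $v \colon I^{\lhd} \to \cE$ of $g \circ f \circ u$. Post-composing $u$ with $f$ gives a lifting problem for $g$ with data $f \circ u$ and $v$; since $g$ is $\kappa$-filtered it admits a solution $\widetilde v \colon I^{\lhd} \to \cD$ restricting to $f \circ u$ on $I$. The pair $(u, \widetilde v)$ is then a lifting problem for $f$, and a solution of it — which exists because $f$ is $\kappa$-filtered — is the desired filler for the original problem. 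For the closure under base change, let $f' \colon \cC' \to \cD'$ be the pullback of a $\kappa$-filtered $f \colon \cC \to \cD$ along some $p \colon \cD' \to \cD$, so that $\cC' = \cC \times_\cD \cD'$. A lifting problem for $f'$ against $I \hookrightarrow I^{\lhd}$ maps, via the projection $\cC' \to \cC$, to a lifting problem for $f$; a solution of the latter, paired with the prescribed extension $I^{\lhd} \to \cD'$, lands in $\cC \times_\cD \cD' = \cC'$ and is a filler for the original problem. Both verifications are short diagram chases at the level of quasicategories, with no homotopical subtlety, which is why I do not expect any genuine obstacle.

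For part (2), the functor in question is $\Gamma_\cC \colon \cC \to \ast$. A commutative square with $I \hookrightarrow I^{\lhd}$ on the left and $\Gamma_\cC$ on the right is simply the datum of a functor $I \to \cC$, and a diagonal filler is exactly an extension of it along $I \hookrightarrow I^{\lhd}$. Hence $\Gamma_\cC$ is $\kappa$-filtered if and only if every essentially $\kappa$-small diagram in $\cC$ extends over the cone point, which is the defining property of a $\kappa$-filtered $\infty$-category (cf.\ \cite[\S 5.3.1]{HTT}). The only point needing care is bookkeeping: at each stage one must check that the filler produced is compatible with the edges prescribed by the original lifting problem, but this is routine, and nothing beyond the two closure properties recorded above is required.
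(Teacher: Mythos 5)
Your proof is correct and is exactly the formal right-lifting-property argument the authors intended when leaving this lemma to the reader: closure of the class of maps with the RLP against $\{I \hookrightarrow I^{\lhd}\}$ under composition and base change, plus the observation that a lifting problem over the terminal object is nothing but a cone-extension problem in $\cC$. Two cosmetic remarks: since the liftings are against the \emph{left} cone $I^{\lhd}$, the property characterized in (2) is $\kappa$-\emph{co}filteredness (the paper's own statement already conflates ``filtered'' and ``cofiltered'' relative to \cref{def:cofiltered}, so this is not your error), and in the base-change step the pullback in $\Cat_\infty$ should be read as a strict pullback along an isofibration (as it is in every application in the paper, where one pulls back along cartesian fibrations), since the strict lifting property is not literally invariant under replacing the square by an equivalent one.
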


\begin{lem}\label{lem:LKE}
	Choose a regular cardinal $\kappa \gg 0$ such that $\cX$ is $\kappa$-presentable and $\mathsf A$ is $\kappa$-accessible.
	Let
	\[ \cA^{(\kappa)} \coloneqq (\cX^\kappa)\op \times_{\cX\op} \cA . \]
	Then for every $\alpha \in \mathsf A(\mathbf 1_\cX)$ the canonical functor
	\[ \cA^{(\kappa)} \times_{\mathsf A(\mathbf 1_\cX)} \mathsf A(\mathbf 1_\cX)_{/\alpha} \to \cA \times_{\mathsf A(\mathbf 1_\cX)} \mathsf A(\mathbf 1_\cX)_{/\alpha} \]
	is colimit-final.
	In particular, for every presentable $\infty$-category $\cE$, the pullback functor
	\[ \lambda_{\mathsf A}^\ast \colon \Fun(\mathsf A(\mathbf 1_\cX), \cE) \to \Fun(\cA, \cE)  \]
	admits a left adjoint, given by left Kan extension along $\lambda_{\mathsf A}$.
\end{lem}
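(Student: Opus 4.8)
The plan is to show that, for each $\alpha \in \mathsf A(\mathbf 1_\cX)$ (abbreviate $\cM \coloneqq \mathsf A(\mathbf 1_\cX)$, $\cB_\alpha \coloneqq \cA \times_\cM \cM_{/\alpha}$ and $\cB_\alpha^{(\kappa)} \coloneqq \cA^{(\kappa)} \times_\cM \cM_{/\alpha}$), the inclusion $\iota_\alpha \colon \cB_\alpha^{(\kappa)} \hookrightarrow \cB_\alpha$ is colimit-final, and then to deduce the statement about the left adjoint as follows: $\cB_\alpha^{(\kappa)}$ is small — $\cX^\kappa$ is essentially small and all the $\mathsf A(x)$ and $\cM$ are small — so $\colim_{\cB_\alpha^{(\kappa)}}$ of any diagram valued in a presentable $\cE$ exists and, by colimit-finality, computes $\colim_{\cB_\alpha}$; hence $\lambda_{\mathsf A,!}$ is defined pointwise by these colimits and is left adjoint to $\lambda_{\mathsf A}^\ast$. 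To prove colimit-finality I would invoke Quillen's theorem A \cite[Theorem 4.1.3.1]{HTT}: it suffices to prove that for every $d \in \cB_\alpha$, setting $x \coloneqq \pi_{\mathsf A}(d)$, the $\infty$-category $\cB_\alpha^{(\kappa)} \times_{\cB_\alpha} (\cB_\alpha)_{d/}$ is weakly contractible.

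The first step is the fibration bookkeeping. The projection $\cB_\alpha \to \cA$ is pulled back from the right fibration $\cM_{/\alpha} \to \cM$ along $\lambda_{\mathsf A}$, hence is a right fibration; composing with $\pi_{\mathsf A}$ exhibits $\cB_\alpha \to \cX\op$ as a cartesian fibration, with cartesian transport along $h \colon y' \to y$ in $\cX$ induced by $\mathsf A(h)$. Passing to coslices, $q \colon (\cB_\alpha)_{d/} \to (\cX\op)_{x/} \simeq (\cX_{/x})\op$ is again a cartesian fibration, and its fiber $\cF_h$ over the object named by $h \colon y \to x$ is the comma $\infty$-category of $d$ along $h^\ast \colon (\cB_\alpha)_y \to (\cB_\alpha)_x$, i.e.\ $(\cB_\alpha)_{x,d/} \times_{(\cB_\alpha)_x} (\cB_\alpha)_y$. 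Moreover $\cB_\alpha^{(\kappa)} \times_{\cB_\alpha} (\cB_\alpha)_{d/}$ is exactly the restriction of $q$ along the full inclusion $((\cX^\kappa)_{/x})\op \hookrightarrow (\cX_{/x})\op$. Using that the homotopy type of the total space of a cartesian fibration is the colimit, over the opposite of the base, of the homotopy types of the fibers (the dual of the standard fact for cocartesian fibrations, via $\Pi_\infty$ commuting with colimits and with $(-)\op$), the problem becomes: the canonical map $\colim_{(\cX^\kappa)_{/x}} D|_{(\cX^\kappa)_{/x}} \to \colim_{\cX_{/x}} D$ is an equivalence, where $D \colon \cX_{/x} \to \cS$ sends $(y \to x)$ to $\Pi_\infty(\cF_h)$. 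Since $(\cB_\alpha)_{d/}$ has $\id_d$ as an initial object, the target is contractible, so this will also give the weak contractibility we want.

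I would obtain the equivalence by showing that $D$ is the left Kan extension of $D|_{(\cX^\kappa)_{/x}}$ along $(\cX^\kappa)_{/x} \hookrightarrow \cX_{/x}$, since colimits commute with such left Kan extensions. The key input is $\kappa$-accessibility of $\mathsf A$, which gives $\mathsf A(y) \simeq \colim_{(x_i \to y) \in (\cX^\kappa)_{/y}} \mathsf A(x_i)$, a $\kappa$-filtered colimit in $\Cat_\infty$. As $\kappa$-filtered colimits in $\Cat_\infty$ commute with finite limits, the pullbacks $- \times_\cM \cM_{/\alpha}$ and then $(\cB_\alpha)_{x,d/} \times_{(\cB_\alpha)_x} -$ preserve this colimit, and carrying along the cartesian transport one gets $\cF_h \simeq \colim_{(x_i \to y) \in (\cX^\kappa)_{/y}} \cF_{h \circ (x_i \to y)}$ in $\Cat_\infty$; applying the colimit-preserving functor $\Pi_\infty$ and noting $(\cX^\kappa)_{/x} \times_{\cX_{/x}} (\cX_{/x})_{/(y \to x)} \simeq (\cX^\kappa)_{/y}$ identifies $D(y \to x)$, naturally in $(y \to x)$, with the left Kan extension evaluated there.

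The step I expect to be the main obstacle is this last one: checking that forming the fibers $\cF_h$ genuinely commutes with the $\kappa$-filtered colimit presenting $\mathsf A(y)$ — that is, that each pullback appearing is along a morphism flat enough for $\kappa$-filtered colimits to distribute over it, and that these identifications are natural in $(y \to x)$ — together with the more clerical but error-prone task of keeping the variance straight through the two applications of $(-)\op$ and the slice/coslice manipulations. A secondary point to pin down carefully is the homotopy-type-of-total-space formula for cartesian fibrations, obtained from the cocartesian case by passing to opposite categories.
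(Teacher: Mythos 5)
Your proposal is correct, and while it starts from the same reduction as the paper (Quillen's theorem A, so that everything hinges on the weak contractibility of $\cA^{(\kappa)}\times_{\cA}\cA_{d/}$ taken inside the slice over $\alpha$), the way you establish that contractibility is genuinely different. The paper never computes a classifying space: it shows the comma category is \emph{cofiltered}, hence weakly contractible, by exhibiting it as a pullback of the functor $\cA^{(\kappa)}_{\beta/}\to\mathsf A(x)_{\beta/}$ against a category with an initial object, and then checking the lifting property against finite $I\hookrightarrow I^{\lhd}$ reduces to $\colim_{y\in\cX^{\kappa}_{/x}}\Fun(I,\mathsf A(y)_{\beta/})\xrightarrow{\ \sim\ }\Fun(I,\mathsf A(x)_{\beta/})$, which is exactly $\kappa$-accessibility. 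You instead fiber the coslice over $(\cX_{/x})\op$ as a cartesian fibration, write its homotopy type as the colimit of the fiberwise homotopy types, and show the diagram is left Kan extended from $(\cX^{\kappa})_{/x}$; the contractibility of the full coslice (initial object) then transfers to the restriction. Both arguments ultimately consume the same input ($\kappa$-presentability of $\cX$, $\kappa$-accessibility of $\mathsf A$, and the commutation of $\kappa$-filtered colimits with the finite-limit constructions producing the fibers), so your approach is viable; what the paper's route buys is brevity and the avoidance of two technical points you correctly flag as delicate but which you should not gloss over: the colimit $\colim_{\cX_{/x}}D$ is indexed by a \emph{large} category and so must be taken in large spaces (harmless here since the target is contractible and the restricted total category is small, but it needs saying), and the cartesian-fibration structure on coslices together with the identification of its fibers as comma categories has to be set up carefully. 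What your route buys in exchange is slightly more information: it identifies the homotopy type of the restricted comma category with that of the full one functorially, rather than only proving contractibility.
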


\begin{proof}
	The second half of the statement is a direct consequence of the first half and the fact that $\cA^{(\kappa)} \times_{\mathsf A(\mathbf 1_\cX)} \mathsf A(\mathbf 1_\cX)_{/\alpha}$ is a small $\infty$-category, which implies that the left Kan extension along $\lambda_{\mathsf A}$ is indeed well-defined.
	To prove the first half, we use Quillen's theorem A.
	Write for simplicity
	\[ \cA_{/\alpha} \coloneqq \cA \times_{\mathsf A(\mathbf 1_\cX)} \mathsf A(\mathbf 1_\cX)_{/\alpha} \quad \text{and} \quad \cA^{(\kappa)}_{/\alpha} \coloneqq \cA^{(\kappa)} \times_{\mathsf A(\mathbf 1_\cX)} \mathsf A(\mathbf 1_\cX)_{/\alpha} . \]
	Fix an object $\mathbf \beta \in \cA_{/\alpha}$, which we represent as a pair $\mathbf \beta = (\beta,f)$, where $\beta \in \cA$ and $f \colon \lambda_{\mathsf A}(\beta) \to \alpha$ is a morphism in $\mathsf A(\mathbf 1_\cX)$.
	Write $\cA_{\mathbf \beta /\!\!/\alpha} \coloneqq (\cA_{/\alpha})_{\mathbf \beta /}$.
	Then, we have to prove that $\infty$-category
	\[ \cA^{(\kappa)}_{\mathbf \beta /\!\!/\alpha} \coloneqq \cA^{(\kappa)}_{/\alpha} \times_{\cA_{/\alpha}} \cA_{\mathbf \beta /\!\!/\alpha} \]
	is weakly contractible.
	We claim that it is cofiltered.
	To see this, start by writing $x \coloneqq p(\beta)$, so that $\beta$ can be seen as an element of $\mathsf A(x)$, and $\mathbf \beta$ as an element of $\mathsf A(x)_{/\alpha} \coloneqq \mathsf A(x) \times_{\mathsf A(\mathbf 1_\cX)} \mathsf A(\mathbf 1_\cX)_{/\alpha}$.
	We can thus set $\mathsf A(x)_{\mathbf \beta /\!\!/\alpha} \coloneqq (\mathsf A(x)_{/\alpha})_{\mathbf \beta/}$.
	Observe now that since $\pi_{\mathsf A}$ is a cartesian fibration, there is an induced commutative diagram
	\[ \begin{tikzcd}
		\cA^{(\kappa)}_{\mathbf \beta/\!\!/\alpha} \arrow{d} \arrow{r} & \cA^{(\kappa)}_{\beta /} \arrow{d} \\
		\mathsf A(x)_{\mathbf \beta/\!\!/\alpha} \arrow{r} & \mathsf A(x)_{\beta /} \ .
	\end{tikzcd} \]
	Inspection immediately reveals that it is a pullback square.
	Moreover, $\mathsf A(x)_{\mathbf \beta/\!\!/\alpha}$ has an initial object, and it is in particular cofiltered.
	Thus, \cref{lem:filtered_functor} reduces us to check that the map $\cA^{(\kappa)}_{\beta/} \to \mathsf A(x)_{\beta/}$ is cofiltered.
	Unraveling the definitions, we see that it is enough to check that for every finite category $I$ the canonical map
	\[ \colim_{y \in \cX^\kappa_{/x}} \Fun(I, \mathsf A(y)_{\beta/}) \to \Fun(I, \mathsf A(x)_{\beta/}) \]
	is an equivalence.
	This, however, is guaranteed from the assumption that $\cX$ is $\kappa$-filtered and that the functor $\mathsf A$ is $\kappa$-accessible.
\end{proof}

Fix a presentable $\infty$-category $\cE$.
\Cref{lem:RKE} provides  a  functor
\[ \Psi_{\cX, \mathsf A}^{\cE} \coloneqq \pi_{\mathsf A, \ast} \circ \, \lambda_{\mathsf A}^\ast \colon \Fun(\mathsf A(\mathbf 1_\cX), \cE) \to \Fun(\cX\op, \cE) . \]
From \cref{lem:RKE}, this functor is concretely described by the formula
\begin{equation}\label{eq:Psi_formula}
	\Psi_{\cX,\mathsf A}^{\cE}(F)(x) \simeq \lim_{\mathsf A(x)} F |_{\mathsf A(x)} \ ,
\end{equation}
where the restriction is performed along the map $\mathsf A(x) \to \mathsf A(\mathbf 1_\cX)$ induced by the canonical morphism $x \to \mathbf 1_\cX$.
On the other hand, \cref{lem:LKE} shows the existence of a second  functor
\[ \Phi_{\cX,\mathsf A}^\cE \coloneqq \lambda_{A,!} \circ \pi^\ast_{\cA} \colon \Fun(\cX\op, \cE) \to \Fun(\mathsf A(\mathbf 1_\cX),\cE) . \]
By construction, $\Phi_{\cX,\mathsf A}^\cE$ is left adjoint to $\Psi_{\cX,\mathsf A}^\cE$.
To increase readability, we will drop the superscript $\cE$ when there is no place for confusion.

\subsection{Change of coefficients}

Let $\cE$ be a presentable $\infty$-category.
For every presentable $\infty$-category $\cE^{\prime}$  and for every $\cC\in \Cat_{\infty}$, there is a canonical equivalence
\[ \Fun(\cC, \cE)\otimes \cE^{\prime} \simeq \Fun(\cC, \cE \otimes \cE^{\prime} ) \ . \]
This gives  an evident functoriality in $\cE^{\prime} $ with respect to morphisms in $\PrL$.
To see that the exodromy adjunction is compatible with this functoriality, consider first the following lemma:

\begin{lem}\label{lem:Phi_with_coefficients}
	Let $\cX$ be an $\infty$-category with a terminal object $1_\cX$.
	Let $\mathsf A \colon \cX \to \Cat_\infty$ be a functor.
	For every presentable $\infty$-categories $\cE$ and $\cE^{\prime}$, there is a canonical equivalence
	\[ \Phi_{\cX,\mathsf A}^{\cE\otimes \cE^{\prime}} \simeq \Phi_{\cX,\mathsf A}^{\cE} \otimes \id_{\cE^{\prime}} \ . \]
\end{lem}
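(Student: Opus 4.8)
The statement to prove is that $\Phi_{\cX,\mathsf A}^{\cE \otimes \cE'} \simeq \Phi_{\cX,\mathsf A}^{\cE} \otimes \id_{\cE'}$ as functors $\Fun(\cX\op, \cE \otimes \cE') \to \Fun(\mathsf A(\mathbf 1_\cX), \cE \otimes \cE')$, where we are implicitly using the equivalences $\Fun(\cC,\cE) \otimes \cE' \simeq \Fun(\cC, \cE \otimes \cE')$ on both source and target. The plan is to unwind the definition $\Phi_{\cX,\mathsf A}^{\cE} = \lambda_{\mathsf A,!} \circ \pi_{\mathsf A}^\ast$ and check the claim factor by factor: that $\pi_{\mathsf A}^\ast$ commutes with $- \otimes \id_{\cE'}$, and then that $\lambda_{\mathsf A,!}$ does as well. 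Since $\cX$ here has a terminal object (we are in the ``small'' case of the setup, so no accessibility hypothesis is needed), \cref{lem:RKE} and \cref{lem:LKE} apply and $\lambda_{\mathsf A,!}$ is genuinely a left Kan extension, which is the key point that makes the argument go through.

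First I would treat $\pi_{\mathsf A}^\ast$. Restriction along any functor of $\infty$-categories is, in the presentable world, a morphism in $\PrL$ as a functor of the coefficient category: for a fixed $p \colon \cA \to \cX\op$ the square
\[ \begin{tikzcd}
\Fun(\cX\op, \cE) \otimes \cE' \arrow{r}{\sim} \arrow{d}{p^\ast \otimes \id} & \Fun(\cX\op, \cE \otimes \cE') \arrow{d}{p^\ast} \\
\Fun(\cA, \cE) \otimes \cE' \arrow{r}{\sim} & \Fun(\cA, \cE \otimes \cE')
\end{tikzcd} \]
commutes, because the horizontal equivalences are natural in the domain category and $p^\ast$ is the map they induce on that slot; equivalently, $p^\ast$ is computed pointwise on objects of $\cA$, and the equivalence $\Fun(\cC,-)\otimes\cE' \simeq \Fun(\cC,-\otimes\cE')$ is compatible with evaluation at each object. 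The same pointwise/naturality reasoning shows the analogous statement for $\lambda_{\mathsf A}^\ast$, which we will need for the adjoint formulation. Then I would pass to left adjoints: the right adjoint $\pi_{\mathsf A, \ast}$ (which exists by \cref{lem:RKE}) is given by the pointwise limit formula $\eqref{eq:Psi_formula}$, and for a \emph{fixed finite-or-arbitrary} diagram the limit functor $\lim_{\mathsf A(x)} \colon \Fun(\mathsf A(x), \cE) \to \cE$ commutes with $-\otimes\cE'$ because $-\otimes \cE' \colon \PrL \to \PrL$ preserves all limits (it is a right adjoint, or more directly because $\Fun(\cC, \cE)\otimes \cE' \simeq \Fun(\cC,\cE\otimes\cE')$ intertwines the two $\lim_{\cC}$ functors). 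Hence $\pi_{\mathsf A,\ast}$ commutes with $-\otimes\id_{\cE'}$ up to canonical equivalence, and dually — using that $\lambda_{\mathsf A,!}$ is the left adjoint of a functor compatible with $-\otimes \id_{\cE'}$ — the functor $\lambda_{\mathsf A,!}$ commutes with $-\otimes\id_{\cE'}$ as well. Composing the two compatibilities yields the claimed equivalence for $\Phi_{\cX,\mathsf A} = \lambda_{\mathsf A,!}\circ \pi_{\mathsf A}^\ast$.

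The conceptually cleanest way to organize all of this — and the route I would actually write up — is to observe that both $\Phi_{\cX,\mathsf A}^{\cE\otimes\cE'}$ and $\Phi_{\cX,\mathsf A}^\cE \otimes \id_{\cE'}$ are left adjoints of the \emph{same} functor, namely $\Psi_{\cX,\mathsf A}^{\cE\otimes\cE'}$, which under the identifications equals $\Psi_{\cX,\mathsf A}^{\cE}\otimes\id_{\cE'}$ by the limit-formula argument above; by uniqueness of adjoints the two left adjoints agree, canonically. This reduces everything to the single claim that $\Psi$ is compatible with change of coefficients, which in turn reduces to: (i) $\lambda_{\mathsf A}^\ast$ is compatible (pointwise evaluation + naturality of $\Fun(\cC,-)\otimes\cE'\simeq\Fun(\cC,-\otimes\cE')$), and (ii) $\pi_{\mathsf A,\ast}$ is compatible (the pointwise limit formula $\eqref{eq:Psi_formula}$ plus the fact that $-\otimes\cE'$ preserves limits in $\PrL$). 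The main obstacle is bookkeeping rather than mathematics: one must make sure the chosen equivalence $\Fun(\cX\op,\cE)\otimes\cE' \simeq \Fun(\cX\op,\cE\otimes\cE')$ is compatible, simultaneously on source and target, with the two ways of building $\Phi$, i.e.\ that no incoherent choice of equivalence sneaks in — this is handled by phrasing the identification through evaluation functors $\ev_x$, which detect everything jointly and are manifestly natural in the coefficients.
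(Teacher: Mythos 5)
Your proof is correct, but it takes a genuinely different route from the paper's. The paper argues by direct computation on generators: since $\Phi_{\cX,\mathsf A}^{\cE\otimes\cE'}$ preserves colimits and $\Fun(\cX\op,\cE)\otimes\cE'$ is generated under colimits by pure tensors $F\otimes E'$ with $F\in\Fun(\cX\op,\cE)$ and $E'\in\cE'$, it suffices to check the formula on such objects; there $\pi_{\mathsf A}^\ast(F\otimes E')\simeq\pi_{\mathsf A}^\ast(F)\otimes E'$ is immediate, and $\lambda_{\mathsf A,!}$ commutes with $-\otimes E'$ because the external tensor product $\cE\times\cE'\to\cE\otimes\cE'$ preserves colimits in the first variable and the left Kan extension is computed by colimits. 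You instead work with the whole functor $-\otimes\id_{\cE'}$: you establish compatibility of the restriction functors $\pi_{\mathsf A}^\ast$ and $\lambda_{\mathsf A}^\ast$ via naturality of $\Fun(\cC,\cE)\otimes\cE'\simeq\Fun(\cC,\cE\otimes\cE')$ in $\cC$ and evaluation, and then transport this to $\lambda_{\mathsf A,!}$ (or go all the way to $\Psi_{\cX,\mathsf A}$ and back by uniqueness of left adjoints) by passing to adjoints across squares whose vertical edges are equivalences. Both arguments are sound; yours trades the generation-by-pure-tensors step for the equally standard facts that $-\otimes\cE'$ preserves adjunctions and that adjoints are unique, and it has the mild advantage of simultaneously identifying $\Psi_{\cX,\mathsf A}^{\cE\otimes\cE'}$ with $\Psi_{\cX,\mathsf A}^{\cE}\otimes\id_{\cE'}$, which the paper only obtains a posteriori. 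One justification should be repaired: $-\otimes\cE'\colon\PrL\to\PrL$ is a \emph{left} adjoint, so ``it preserves limits because it is a right adjoint'' is not the right reason; the correct justification is the alternative one you give, namely that the equivalence $\Fun(I,\cE)\otimes\cE'\simeq\Fun(I,\cE\otimes\cE')$ intertwines the constant-diagram functors and hence, upon passing to right adjoints, the limit functors as in \eqref{eq:Psi_formula}.
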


\begin{proof}
	Since $\Phi_{\cX,\mathsf A}^{\cE\otimes \cE^{\prime}}$ commutes with colimits, it is enough to observe that for every $F \in \Fun(\cX\op, \cE)$ and $E^{\prime} \in \cE^{\prime} $, one has
	\[ \Phi_{\cX,\mathsf A}^{\cE}( F \otimes E^{\prime}  ) \coloneqq \lambda_{\mathsf A,!}( \pi_{\mathsf A}^\ast( F \otimes E^{\prime}  ) ) \simeq \lambda_{\mathsf A,!}( \pi_{\mathsf A}^\ast(F) \otimes E^{\prime}  ) \simeq \lambda_{\mathsf A,!}(\pi_{\mathsf A}^\ast(F)) \otimes E^{\prime}  \ , \]
	where the last equivalence follows from the fact that the external tensor product $\otimes \colon \cE\times \cE^{\prime}\to \cE\otimes \cE^{\prime}$ commutes with the colimits computing the left Kan extension in the first variable.
\end{proof}

\begin{cor} \label{cor:exodromy_change_of_coefficients}
Let $\cX$ be an $\infty$-category with a terminal object $1_\cX$.
Let $\mathsf A \colon \cX \to \Cat_\infty$ be a functor.
	Let $L \colon \cE \to \cE'$ be a morphism in $\PrL$.
	Then the diagram
	\[ \begin{tikzcd}
			\Fun(\cX\op, \cE) \arrow{d}[swap]{L\circ -} \arrow{r}{\Phi_{\cX,\mathsf A}^\cE } &\Fun(\mathsf A(\mathbf 1_\cX),\cE)\arrow{d}{L \circ -} \\
			\Fun(\cX\op, \cE') \arrow{r}{\Phi_{\cX,\mathsf A}^{\cE'} } &\Fun(\mathsf A(\mathbf 1_\cX),\cE')
	\end{tikzcd} \]
	is canonically commutative.
\end{cor}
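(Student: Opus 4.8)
The plan is to unwind $\Phi_{\cX,\mathsf A}^\cE = \lambda_{\mathsf A,!}\circ\pi_{\mathsf A}^\ast$ and to treat its two constituents separately. For the restriction $\pi_{\mathsf A}^\ast$ the compatibility is formal: $\pi_{\mathsf A}^\ast$ is precomposition with $\pi_{\mathsf A}\colon\cA\to\cX\op$ while $f\circ-$ is postcomposition with $f$, and precomposition and postcomposition are the two strictly commuting functorialities of $\Fun(-,-)$; hence $(f\circ-)\circ\pi_{\mathsf A}^\ast = \pi_{\mathsf A}^\ast\circ(f\circ-)$ with no further input. It therefore remains only to compare the left Kan extension $\lambda_{\mathsf A,!}$ with coefficients in $\cE$ and in $\cE'$, through $f\circ-$.

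For this I would use that the left Kan extension along $\lambda_{\mathsf A}$ is computed pointwise, with \emph{any} presentable target. Concretely, for every $\alpha\in\mathsf A(\mathbf 1_\cX)$ one has
\[ (\lambda_{\mathsf A,!}G)(\alpha) \simeq \colim_{\cA\times_{\mathsf A(\mathbf 1_\cX)}\mathsf A(\mathbf 1_\cX)_{/\alpha}} G \ , \]
and by \cref{lem:LKE} this colimit may be computed over a small cofinal subcategory (when $\cX$ is merely small this is automatic, since then $\cA$ and the relevant comma $\infty$-categories are small), so it exists both for $G\in\Fun(\cA,\cE)$ and over the target $\cE'$. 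Since $f$ preserves colimits, applying $f$ to the $\cE$-side formula and identifying the outcome with the $\cE'$-side formula for $f\circ G$ produces a canonical equivalence $f\circ(\lambda_{\mathsf A,!}G)\simeq\lambda_{\mathsf A,!}(f\circ G)$, natural in $\alpha$ and in $G$; in other words $(f\circ-)\circ\lambda_{\mathsf A,!}\simeq\lambda_{\mathsf A,!}\circ(f\circ-)$. Splicing this with the strictly commuting square for $\pi_{\mathsf A}^\ast$ yields $f\circ\Phi_{\cX,\mathsf A}^\cE\simeq\Phi_{\cX,\mathsf A}^{\cE'}\circ(f\circ-)$, which is the assertion.

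A more conceptual variant, closer to \cref{lem:Phi_with_coefficients}, is to identify $\Fun(\cX\op,\cE)\simeq\Fun(\cX\op,\cS)\otimes\cE$ and $\Fun(\mathsf A(\mathbf 1_\cX),\cE)\simeq\Fun(\mathsf A(\mathbf 1_\cX),\cS)\otimes\cE$ naturally in $\cE\in\PrL$ — under which $f\circ-$ becomes $\id\otimes f$ — and then to observe, by \cref{lem:Phi_with_coefficients} applied with $\cS$ in place of $\cE$ and $\cE$ in place of $\cE'$, that $\Phi_{\cX,\mathsf A}^\cE$ becomes $\Phi_{\cX,\mathsf A}^\cS\otimes\id_\cE$. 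The square in question is then the naturality square of the transformation $\Phi_{\cX,\mathsf A}^\cS\otimes(-)$ between two postcomposition functors on $\PrL$, which commutes because $\otimes\colon\PrL\times\PrL\to\PrL$ is a bifunctor. In either approach the only point requiring care is the size bookkeeping, i.e.\ making sure the pointwise description (respectively the $\otimes$-description of functor categories) applies simultaneously over $\cE$ and $\cE'$; once that is granted there is no remaining content, so I do not expect a genuine obstacle.
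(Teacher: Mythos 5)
Your proposal is correct, and in fact contains two proofs. The ``more conceptual variant'' at the end is precisely the paper's argument: the paper's proof consists of exactly the two ingredients you name there, namely \cref{lem:Phi_with_coefficients} (giving $\Phi_{\cX,\mathsf A}^{\cE}\simeq \Phi_{\cX,\mathsf A}^{\cS}\otimes\id_{\cE}$, naturally in $\cE$) together with the observation that under $\Fun(\cC,\cE)\simeq\Fun(\cC,\cS)\otimes\cE$ the functor $f\circ-$ corresponds to $\id\otimes f$; the square is then a naturality square for $-\otimes f$. Your primary argument is a genuinely different and more elementary route: you split $\Phi_{\cX,\mathsf A}^{\cE}=\lambda_{\mathsf A,!}\circ\pi_{\mathsf A}^{\ast}$, note that restriction commutes strictly with postcomposition, and verify that the Beck--Chevalley comparison for $\lambda_{\mathsf A,!}$ is an equivalence pointwise using the colimit formula (with the small cofinal subcategory supplied by \cref{lem:LKE} in the presentable case) and cocontinuity of $f$. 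This buys transparency and avoids invoking the symmetric monoidal structure on $\PrL$, at the cost of having to construct the global comparison transformation and redo the size bookkeeping; the paper's route gets naturality for free from bifunctoriality of $\otimes$ and reuses \cref{lem:Phi_with_coefficients}, which has already absorbed the colimit argument. Either is acceptable; the only point I would tighten in your first argument is that the equivalence $f\circ\lambda_{\mathsf A,!}G\simeq\lambda_{\mathsf A,!}(f\circ G)$ should be exhibited as the mate of the strict commutation of $\lambda_{\mathsf A}^{\ast}$ with $f\circ-$, and then checked to be an equivalence objectwise, rather than assembled pointwise.
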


\begin{proof}
	This follows from \cref{lem:Phi_with_coefficients} and the observation that under the identification $\Fun(\cC, \cE) \simeq \Fun(\cC, \cS) \otimes \cE$ for $\cC\in \Cat_{\infty}$, the functor $L \circ -$ corresponds to $\id_{\Fun(\cC, \cS)} \otimes L$.
\end{proof}

\subsection{The sheaf condition}

Assume now that $\cX$ is presentable and that the functor $\mathsf A$ commutes with colimits.
Fix a presentable $\infty$-category $\cE$.
Then the adjunction $\Phi_{\cX,\mathsf A} \dashv \Psi_{\cX,\mathsf A}$ can be refined thanks to the following observation:

\begin{lem} \label{lem:sheaf_condition}
	Assume that $\mathsf A \colon \cX \to \Cat_\infty$ commutes with colimits.
	Then $\Psi_{\cX,\mathsf A}$ factors through the full subcategory $\cX \otimes \cE \simeq \FunR(\cX\op, \cE)$ of $\Fun(\cX\op,\cE)$.
\end{lem}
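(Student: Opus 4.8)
The plan is to recognize $\FunR(\cX\op,\cE)$ as a full subcategory cut out by a limit-preservation property, and then to reduce the statement to a formal fact about limits indexed by a colimit of $\infty$-categories. First I would use that, $\cX$ being presentable, the equivalence $\cX\otimes\cE\simeq\FunR(\cX\op,\cE)$ realizes the target as the full subcategory of $\Fun(\cX\op,\cE)$ on the functors preserving small limits; so the lemma becomes the assertion that for every $F\in\Fun(\mathsf A(\mathbf 1_\cX),\cE)$ the functor $\Psi_{\cX,\mathsf A}^{\cE}(F)\colon\cX\op\to\cE$ carries small colimits of $\cX$ to limits of $\cE$. Since limits in $\cE$ are both preserved and jointly detected by the corepresentable functors $\Map_{\cE}(e,-)$ for $e\in\cE$, and since applying $\Map_{\cE}(e,-)$ to the formula \eqref{eq:Psi_formula} and commuting it past the limit yields $\Map_{\cE}\big(e,\Psi_{\cX,\mathsf A}^{\cE}(F)(x)\big)\simeq\Psi_{\cX,\mathsf A}^{\cS}\big(\Map_{\cE}(e,F(-))\big)(x)$, naturally in $x$, it suffices to treat $\cE=\cS$.

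For $\cE=\cS$ I would fix $G\colon\mathsf A(\mathbf 1_\cX)\to\cS$ with associated left fibration $\widetilde G\to\mathsf A(\mathbf 1_\cX)$, and write $f_x\colon\mathsf A(x)\to\mathsf A(\mathbf 1_\cX)$ for the functor induced by the terminal map $x\to\mathbf 1_\cX$, so that \eqref{eq:Psi_formula} reads $\Psi_{\cX,\mathsf A}^{\cS}(G)(x)\simeq\lim_{\mathsf A(x)}f_x^\ast G$. Unstraightening identifies this limit with the space of sections of the base change $f_x^\ast\widetilde G=\widetilde G\times_{\mathsf A(\mathbf 1_\cX)}\mathsf A(x)\to\mathsf A(x)$, which the adjunction between postcomposition and pullback for slices of $\Catinf$ rewrites in turn as $\Map_{(\Catinf)_{/\mathsf A(\mathbf 1_\cX)}}(f_x,\widetilde G)$. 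Assembling over $x$, the assignment $x\mapsto f_x$ is the canonical lift $\overline{\mathsf A}\colon\cX\to(\Catinf)_{/\mathsf A(\mathbf 1_\cX)}$ of $\mathsf A$ along the equivalence $\cX\simeq\cX_{/\mathbf 1_\cX}$, and the identifications above are natural in $x$, so $\Psi_{\cX,\mathsf A}^{\cS}(G)\simeq\Map_{(\Catinf)_{/\mathsf A(\mathbf 1_\cX)}}(\overline{\mathsf A}(-),\widetilde G)$. Since the forgetful functor $(\Catinf)_{/\mathsf A(\mathbf 1_\cX)}\to\Catinf$ creates small colimits, the hypothesis that $\mathsf A$ preserves small colimits shows $\overline{\mathsf A}$ does too; as any corepresentable functor turns colimits into limits, $\Psi_{\cX,\mathsf A}^{\cS}(G)\colon\cX\op\to\cS$ then preserves small limits, as desired. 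A variant of this last step avoids slices: $\Fun(-,\cS)$ sends the colimit $\mathsf A(x)\simeq\colim_i\mathsf A(x_i)$ to a limit of $\infty$-categories, and the formula for mapping spaces in a limit of $\infty$-categories, applied to the identification $\lim_{\cD}(-)\simeq\Map_{\Fun(\cD,\cS)}(\mathrm{const}_{\ast},-)$, gives $\lim_{\mathsf A(x)}f_x^\ast G\simeq\lim_i\lim_{\mathsf A(x_i)}f_{x_i}^\ast G$ directly.

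The conceptual content is slight: the whole argument is bookkeeping of coherences, and that is where I expect the only real friction. The three standard-but-fiddly points to pin down are: (i) that unstraightening carries $f_x^\ast G$ to the base change $f_x^\ast\widetilde G$, compatibly as $x$ varies over $\cX$; (ii) that sections of a left fibration over $\cC$ compute $\lim_{\cC}$ of its straightening, naturally in $\cC$; and (iii) that the forgetful functor out of a slice of $\Catinf$ creates small colimits. One must also check that the comparison maps manufactured along the way coincide with the canonical maps attached to the colimit cones of $\cX$; this is routine and I would not dwell on it. Finally, the argument automatically covers the empty colimit, hence the preservation of the terminal object, so that case needs no separate treatment.
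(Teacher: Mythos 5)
Your argument is correct, but your main route differs from the paper's. The paper works directly with a general presentable $\cE$: it observes that $\colim_{i}\mathsf A(x_i)\to\mathsf A(x)$ is an equivalence, hence $\Fun(\mathsf A(x),\cE)\simeq\lim_{i}\Fun(\mathsf A(x_i),\cE)$, and then invokes a decomposition result for limits indexed over a (co)limit of $\infty$-categories to get $\lim_{\mathsf A(x)}F|_{\mathsf A(x)}\simeq\lim_i\lim_{\mathsf A(x_i)}F|_{\mathsf A(x_i)}$ — essentially the ``variant'' you sketch in your closing lines, except that no reduction to $\cE=\cS$ is needed. Your primary argument instead reduces to $\cS$ via corepresentables and then exhibits $\Psi^{\cS}_{\cX,\mathsf A}(G)$ as the representable presheaf $\Map_{(\Catinf)_{/\mathsf A(\mathbf 1_\cX)}}(\overline{\mathsf A}(-),\widetilde G)$, using unstraightening and the fact that the forgetful functor from a slice creates colimits. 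What your route buys is that the limit-decomposition step is absorbed into pure corepresentability, so you never need the external lemma about limits over limits of diagram categories; what it costs is the corepresentable reduction to $\cS$ plus the (genuinely present, if standard) coherence bookkeeping for unstraightening under base change and for the identification of $\lim_{\cC}$ with the space of sections, naturally in $\cC$. Both proofs hinge on the same single hypothesis — that $\mathsf A$ carries colimits of $\cX$ to colimits of $\Cat_\infty$ — fed into formula \eqref{eq:Psi_formula}, and your treatment of the reduction step and of the empty diagram is sound.
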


\begin{proof}
	Fix a functor $F \colon \mathsf A(\mathbf 1_\cX) \to \cE$ and let $I \to \cX\op$ be a diagram, noted $i \mapsto x_i$, and let $x$ denote the limit of this diagram.
	By assumption, the canonical map
	\[ \colim_{i \in I\op} \mathsf A(x_i) \to \mathsf A(x) \]
	is an equivalence, the colimit being computed in $\Cat_\infty$.
	It follows that the canonical map
	\[ \Fun(\mathsf A(x), \cE) \to \lim_{i \in I} \Fun(\mathsf A(x_i), \cE) \]
	is an equivalence as well.
	Applying \cite[\S8.2]{Porta_Yu_Higher_analytic_stacks_2014}, we deduce a canonical equivalence
	\[ \lim_{\mathsf A(x)} F |_{\mathsf A(x)} \simeq \lim_{i \in I} \lim_{\mathsf A(x_i)} F |_{\mathsf A(x_i)} \ . \]
	Thus, $\Psi_{\cX,\mathsf A}(F)$ commutes with limits in $\cX\op$.
\end{proof}

\begin{cor} \label{cor:exodromy_adjunction}
	Assume that $\mathsf A \colon \cX \to \Cat_\infty$ commutes with colimits.
	Then for any presentable $\infty$-category $\cE$, the functors $\Phi_{\cX,\mathsf A}$ and $\Psi_{\cX,\mathsf A}$ induce an adjunction
	\[ \Phi_{\cX,\mathsf A} \colon \cX \otimes \cE \leftrightarrows \Fun(\mathsf A(\mathbf 1_\cX), \cE) \colon \Psi_{\cX,\mathsf A} \ . \]
\end{cor}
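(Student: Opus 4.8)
The plan is to deduce this corollary formally from \cref{lem:sheaf_condition} together with the adjunction already established. Recall first that, by construction (via \cref{lem:RKE,lem:LKE}), the functor $\Phi_{\cX,\mathsf A}^\cE = \lambda_{\mathsf A,!} \circ \pi_{\mathsf A}^\ast$ is left adjoint to $\Psi_{\cX,\mathsf A}^\cE = \pi_{\mathsf A,\ast} \circ \lambda_{\mathsf A}^\ast$ as an adjunction between $\Fun(\cX\op, \cE)$ and $\Fun(\mathsf A(\mathbf 1_\cX), \cE)$; this is just the composite of the adjoint pairs $\pi_{\mathsf A}^\ast \dashv \pi_{\mathsf A,\ast}$ and $\lambda_{\mathsf A,!} \dashv \lambda_{\mathsf A}^\ast$. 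Under the running hypothesis that $\cX$ is presentable and $\mathsf A$ commutes with colimits, \cref{lem:sheaf_condition} shows that $\Psi_{\cX,\mathsf A}^\cE$ factors as $\iota \circ \Psi_0$, where $\iota \colon \cX \otimes \cE \simeq \FunR(\cX\op, \cE) \hookrightarrow \Fun(\cX\op, \cE)$ is the canonical inclusion (which is fully faithful, being the standard description of the presentable tensor product as the $\infty$-category of limit-preserving functors, valid since $\cX$ is presentable), and $\Psi_0 \colon \Fun(\mathsf A(\mathbf 1_\cX), \cE) \to \cX \otimes \cE$ is the resulting factorization.

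Next I would invoke the elementary principle that an adjunction restricts along a fully faithful functor through which the right adjoint factors. Concretely, for $M \in \cX \otimes \cE$ and $G \in \Fun(\mathsf A(\mathbf 1_\cX), \cE)$ there are natural equivalences
\[ \Map_{\Fun(\mathsf A(\mathbf 1_\cX), \cE)}\big( (\Phi_{\cX,\mathsf A}^\cE \circ \iota)(M), G \big) \simeq \Map_{\Fun(\cX\op, \cE)}\big( \iota M, \Psi_{\cX,\mathsf A}^\cE(G) \big) \simeq \Map_{\cX \otimes \cE}\big( M, \Psi_0(G) \big), \]
the first by the ambient adjunction and the second by full faithfulness of $\iota$ together with $\Psi_{\cX,\mathsf A}^\cE = \iota \circ \Psi_0$; promoting this system of equivalences to a genuine adjunction is then formal (e.g.\ via the unit--counit characterization). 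Since, by our conventions, $\Phi_{\cX,\mathsf A}^\cE$ regarded as a functor out of $\cX \otimes \cE$ is exactly the composite $\Phi_{\cX,\mathsf A}^\cE \circ \iota$, and $\Psi_{\cX,\mathsf A}^\cE$ regarded as landing in $\cX \otimes \cE$ is $\Psi_0$, this yields precisely the asserted adjunction $\Phi_{\cX,\mathsf A}^\cE \colon \cX \otimes \cE \leftrightarrows \Fun(\mathsf A(\mathbf 1_\cX), \cE) \colon \Psi_{\cX,\mathsf A}^\cE$.

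I do not expect any genuine obstacle here: the mathematical content has already been isolated in \cref{lem:sheaf_condition}, and what remains is purely formal bookkeeping about restricting adjunctions to full subcategories. The only point deserving a word of care is the assertion that $\cX \otimes \cE \hookrightarrow \Fun(\cX\op,\cE)$ is fully faithful with essential image the limit-preserving functors — which is exactly the place where the presentability of $\cX$ is used.
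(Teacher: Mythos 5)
Your proposal is correct and is exactly the argument the paper intends: the corollary is stated as an immediate consequence of \cref{lem:sheaf_condition}, namely that the ambient adjunction $\Phi_{\cX,\mathsf A}^\cE \dashv \Psi_{\cX,\mathsf A}^\cE$ between $\Fun(\cX\op,\cE)$ and $\Fun(\mathsf A(\mathbf 1_\cX),\cE)$ restricts along the fully faithful inclusion $\cX \otimes \cE \simeq \FunR(\cX\op,\cE) \hookrightarrow \Fun(\cX\op,\cE)$ once the right adjoint is known to factor through it. Your mapping-space verification of this restriction principle is the standard formal bookkeeping and is sound.
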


\begin{eg}\label{eg:II}
	Let us place ourselves again in the context of \cref{eg:I}.
	Let $(X,P)$ be a conically stratified space.
	To lighten the notation, we write
	\[ \Phi_{X,P}^{\hyp} \coloneqq \Phi_{\HSh(X),\Pi_\infty} \quad \text{and} \quad \Psi_{X,P}^{\hyp} \coloneqq \Psi_{\HSh(X),\Pi_\infty} \ . \]
	When the stratification is trivial we further simplify these notations by removing the subscript $P$.
	When $X$ and $\cE$ are clear from the context we also remove the corresponding decoration.
	Given $F \in \Fun(\Pi_\infty(X,P), \cE)$, \cref{lem:sheaf_condition} shows that the functor $\Psi(F)$ belongs to $\FunR(\HSh(X)\op;\cE) \simeq \HSh(X;\cE)$.
	The previous discussion shows that for an open subset $U$ of $X$, $\Psi(F)(U)$ is canonically given by the formula
	\begin{equation}\label{formula_for_psi}
		\Psi(F)(U) \simeq \lim_{\Pi_\infty(U, P|_U)} F |_{\Pi_\infty(U,P|_U)} . 
	\end{equation}	
	Finally, we have obvious variants
	\begin{gather*}
		\Phi_{X,P}^{\mathrm{psh}} \coloneqq \Phi_{\PSh(X),\Pi_\infty} \ , \qquad \Psi_{X,P}^{\mathrm{psh}} \coloneqq \Psi_{\PSh(X),\Pi_\infty} \ , \\
		\Phi_{X,P}^{\cE} \coloneqq \Phi_{\mathrm{Open}(X),\Pi_\infty} \ , \qquad \Psi_{X,P} \coloneqq \Psi_{\mathrm{Open}(X),\Pi_\infty} \ ,
	\end{gather*}
	obtained replacing $\HSh(X)$ by $\PSh(X)$ and $\mathrm{Open}(X)$ respectively.
\end{eg}

\subsection{Functoriality} \label{subsec:functoriality}

Let $f \colon \cX \to \cY$ be a functor.
We assume that $\cX$ and $\cY$ are either small with final objects, or presentable (importantly, we allow $\cX$ to be of the first kind and $\cY$ to be of the second).
Furthermore, we assume that $f(\mathbf 1_\cX) \simeq \mathbf 1_\cY$, and if both $\cX$ and $\cY$ are presentable then we additionally assume that $f$ is a left adjoint.
Let
\[ \mathsf B \colon \cY \to \Cat_\infty \quad \text{and} \quad \mathsf A \colon \cX \to \Cat_\infty \]
be two functors and let
\[ \gamma \colon \mathsf B \circ f \to \mathsf A \]
be a natural transformation.
We can summarize this information in the following diagram:
\begin{equation}\label{eq:functoriality}
	\begin{tikzcd}
		{} & \mathsf B(\mathbf 1_\cY) \arrow{rr}{\gamma_{\mathbf 1}} & & \mathsf A(\mathbf 1_\cX) \\
		\cB \arrow{ur}{\lambda_{\mathsf B}} \arrow{d}{\pi_{\mathsf B}} & & \cB_f \arrow{ul}[swap]{\lambda_f} \arrow{ll}[swap]{q} \arrow{d}{\pi_f} \arrow{r}{p} & \cA \arrow{dl}{\pi_{\mathsf A}} 	\arrow{u}[swap]{\lambda_{\mathsf A}} \\
		\cY\op & & \cX\op \arrow{ll}[swap]{f} \ ,
	\end{tikzcd}
\end{equation}
where $\cB_f$ and the maps $\pi_f$ and $q$ are defined by declaring that the bottom left square is a pullback, $p$ is induced by the natural transformation $\gamma$ and $\gamma_{\mathbf 1} \coloneqq \gamma_{\mathbf 1_\cX}$ is the value of $\gamma$ on the final object of $\cX$.

\begin{lem} \label{lem:functoriality}
	For every presentable $\infty$-category $\cE$, the diagram
	\[ \begin{tikzcd}
		\Fun(\cY\op, \cE) \arrow{r}{f^\ast} \arrow{d}{\pi_{\mathsf B}^\ast} & \Fun(\cX\op, \cE) \arrow{d}{\pi_{f}^\ast} \\
		\Fun(\cB, \cE) \arrow{r}{q^\ast} & \Fun(\cB_f, \cE)
	\end{tikzcd} \]
	is vertically right adjointable.
\end{lem}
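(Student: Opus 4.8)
The plan is to reduce the statement to the pointwise formula for right Kan extensions furnished by \cref{lem:RKE}. Recall that vertical right adjointability of the square means that the vertical functors $\pi_{\mathsf B}^\ast$ and $\pi_f^\ast$ admit right adjoints $\pi_{\mathsf B,\ast}$ and $\pi_{f,\ast}$, and that the resulting Beck--Chevalley transformation relating $f^\ast \circ \pi_{\mathsf B,\ast}$ and $\pi_{f,\ast} \circ q^\ast$ is an equivalence. The key structural remark is that the bottom-left square of \eqref{eq:functoriality} exhibits $\cB_f$ as the pullback $\cX\op \times_{\cY\op} \cB$ along $f$, so that $\pi_f$ is the base change of the cartesian fibration $\pi_{\mathsf B}$; in particular $\pi_f$ is itself a cartesian fibration, classified by $\mathsf B \circ f$. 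First I would apply \cref{lem:RKE} to the pair $(\cY, \mathsf B)$, which is legitimate by the standing hypotheses, and to the pair $(\cX, \mathsf B \circ f)$, where one uses that $f$ is accessible --- being a left adjoint in the presentable case --- so that $\mathsf B \circ f$ is accessible. This produces the two right adjoints together with the pointwise formulas $(\pi_{\mathsf B,\ast}H)(y) \simeq \lim_{\mathsf B(y)} H|_{\mathsf B(y)}$ and $(\pi_{f,\ast}H')(x) \simeq \lim_{\mathsf B(f(x))} H'|_{\mathsf B(f(x))}$, the limits running over the fibers of $\pi_{\mathsf B}$ and $\pi_f$ respectively.

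Next I would evaluate the Beck--Chevalley map at an object $x \in \cX\op$. Since fibers are preserved by the base change $\cB_f = \cX\op \times_{\cY\op} \cB$, the fiber of $\pi_f$ over $x$ is canonically identified with the fiber $\mathsf B(f(x))$ of $\pi_{\mathsf B}$ over $f(x)$, and under this identification $q$ restricts to the tautological inclusion $\mathsf B(f(x)) \hookrightarrow \cB$. Hence for $G \in \Fun(\cB, \cE)$ we have $(q^\ast G)|_{\mathsf B(f(x))} \simeq G|_{\mathsf B(f(x))}$, and the two pointwise formulas both compute
\[ (\pi_{f,\ast} q^\ast G)(x) \simeq \lim_{\mathsf B(f(x))} G|_{\mathsf B(f(x))} \simeq (\pi_{\mathsf B,\ast} G)(f(x)) = (f^\ast \pi_{\mathsf B,\ast} G)(x) . \]

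What remains, and what I expect to be the main obstacle, is to verify that the Beck--Chevalley transformation is \emph{precisely} this comparison, and not merely that its source and target agree objectwise. For this I would unwind the pointwise description underlying \cref{lem:RKE}: both sides are computed as limits over the comma categories $\cB_f \times_{\cX\op} (\cX\op)_{x/} \simeq \cB \times_{\cY\op} (\cX\op)_{x/}$ and $\cB \times_{\cY\op} (\cY\op)_{f(x)/}$, the value of the mate at $x$ is the map on limits induced by the functor $(\cX\op)_{x/} \to (\cY\op)_{f(x)/}$, and this map is an equivalence because the inclusions of $\mathsf B(f(x))$ into these two comma categories are both limit-cofinal (by \cref{lem:RKE}) and compatible with that functor. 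Alternatively, one can sidestep this bookkeeping by phrasing the content as the assertion that right Kan extension along a cartesian fibration satisfies base change; this is exactly what the proof of \cref{lem:RKE} delivers once one notes that fibers are preserved under pullback, the rest being formal.
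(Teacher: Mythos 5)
Your proposal is correct and follows essentially the same route as the paper: both sides of the Beck--Chevalley transformation are identified, via the pointwise formula of \cref{lem:RKE}, with $\lim_{\mathsf B(f(x))} F|_{\mathsf B(f(x))}$. The paper's proof is in fact terser than yours and leaves implicit the point you flag at the end --- that the mate itself realizes this identification via the compatible limit-final inclusions of $\mathsf B(f(x))$ into the two comma categories --- so your additional verification is a welcome refinement rather than a divergence.
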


\begin{proof}
	The existence of the right adjoints to $\pi_{\mathsf A}^\ast$ and $\pi_{\mathsf B}^\ast$  follows from \cref{lem:RKE}.
	In order to check that the Beck-Chevalley transformation
	\[ f^\ast \circ \pi_{\mathsf B,\ast} \to \pi_{f,\ast} \circ q^\ast \]
	is an equivalence, it is enough to fix $F \in \Fun(\cB, \cE)$ and $x \in \cX\op$.
	Then \cref{lem:RKE}  identifies $f^\ast(\pi_{\mathsf B,\ast}(F))(x)$ and $\pi_{f,\ast}(q^\ast(F))(x)$ with
	\[ \lim_{\mathsf B(f(x))} F |_{\mathsf B(f(x))} . \]
	The conclusion follows.
\end{proof}

\begin{notation}
	Assume that  $\cX$ and $\cY$ are presentable and that  $f \colon \cX \to \cY$ commutes with colimits.
	For any presentable $\infty$-category $\cE$ set $f_\cE \coloneqq f \otimes \id_\cE \colon \cX \otimes \cE \to \cY \otimes \cE$.
	We let $g_\cE$ denote the right adjoint to $f_\cE$.
\end{notation}

\begin{cor} \label{cor:functoriality}
	Fix a presentable $\infty$-category $\cE$.
	\begin{enumerate}\itemsep=0.2cm
		\item Assume that $\cX$ and $\cY$ are small.
		Then the diagrams
		\[ \begin{tikzcd}[column sep = 0.1pt]
			{} & \Fun(\mathsf B(\mathbf 1_\cY), \cE) \arrow{dl}[swap]{\Psi_{\cY,\mathsf B}} \arrow{dr}{\Psi_{\cX,\mathsf B \circ f}} \\
			\Fun(\cY\op, \cE) \arrow{rr}{f^\ast} & & \Fun(\cX\op, \cE)
		\end{tikzcd} \quad \text{and} \quad \begin{tikzcd}[column sep = 0.1pt]
			\Fun(\cX\op, \cE) \arrow{rr}{f_!} \arrow{dr}[swap]{\Phi_{\cX,\mathsf B \circ f}} & & \Fun(\cY\op, \cE) \arrow{dl}{\Phi_{\cY, \mathsf B}} \\
			{} & \Fun(\mathsf B(\mathbf 1_\cY),\cE)
		\end{tikzcd} \]
		are canonically commutative.
		
		\item Assume that $\cX$ and $\cY$ are presentable and that $\mathsf B$  and $f$ commute with colimits.
		Then the diagrams
		\[ \begin{tikzcd}[column sep = small]
			{} & \Fun(\mathsf B(\mathbf 1_\cY), \cE) \arrow{dl}[swap]{\Psi_{\cY,\mathsf B}} \arrow{dr}{\Psi_{\cX,\mathsf B \circ f}} \\
			\cY \otimes \cE \arrow{rr}{g_\cE} & & \cX \otimes \cE
		\end{tikzcd} \quad \text{and} \quad \begin{tikzcd}[column sep = small]
			\cX \otimes \cE \arrow{rr}{f_\cE} \arrow{dr}[swap]{\Phi_{\cX,\mathsf B \circ f}} & & \cY \otimes \cE \arrow{dl}{\Phi_{\cY, \mathsf B}} \\
			{} & \Fun(\mathsf B(\mathbf 1_\cY),\cE)
		\end{tikzcd} \]
		are canonically commutative.
	\end{enumerate}
\end{cor}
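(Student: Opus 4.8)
The idea is to deduce both parts from \cref{lem:functoriality} by formal adjunction arguments, treating the $\Psi$-square first and then obtaining the $\Phi$-triangle by passing to left adjoints. For part (1), fix $F \in \Fun(\mathsf B(\mathbf 1_\cY), \cE)$. The natural transformation $\gamma \colon \mathsf B \circ f \to \mathsf A$ with $\mathsf A = \mathsf B \circ f$ taken to be the identity forces us to use the full diagram \eqref{eq:functoriality} with $\cA = \cB_f$ and $p = \id$, so that $\lambda_{\mathsf A} = \lambda_f$ and $\pi_{\mathsf A} = \pi_f$; in particular $\gamma_{\mathbf 1} = \id_{\mathsf B(\mathbf 1_\cY)}$ and $\lambda_{\cX,\mathsf B\circ f}^\ast$ is just $\lambda_f^\ast$. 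Then by definition $\Psi_{\cX, \mathsf B\circ f}^\cE = \pi_{f,\ast} \circ \lambda_f^\ast$ and $\Psi_{\cY,\mathsf B}^\cE = \pi_{\mathsf B,\ast} \circ \lambda_{\mathsf B}^\ast$. Using the identity $\lambda_{\mathsf B} \circ q \simeq \lambda_f$ visible from \eqref{eq:functoriality} (both localize $\cB_f$ onto $\mathsf B(\mathbf 1_\cY)$, since the triangle with $q$ commutes), we get $\lambda_f^\ast \simeq q^\ast \circ \lambda_{\mathsf B}^\ast$, and hence $\Psi_{\cX,\mathsf B\circ f}^\cE \simeq \pi_{f,\ast} \circ q^\ast \circ \lambda_{\mathsf B}^\ast$. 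Now \cref{lem:functoriality} supplies the equivalence $\pi_{f,\ast} \circ q^\ast \simeq f^\ast \circ \pi_{\mathsf B,\ast}$, so $\Psi_{\cX,\mathsf B\circ f}^\cE \simeq f^\ast \circ \pi_{\mathsf B,\ast} \circ \lambda_{\mathsf B}^\ast = f^\ast \circ \Psi_{\cY,\mathsf B}^\cE$, which is the left triangle.

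The right triangle in part (1) then follows by taking left adjoints throughout. Since $\cX$ and $\cY$ are small with terminal objects, $\Phi_{\cX,\mathsf B\circ f}^\cE$ and $\Phi_{\cY,\mathsf B}^\cE$ are genuinely defined as left adjoints of the corresponding $\Psi$'s (no size issue arises; alternatively one invokes \cref{lem:LKE}), the functor $f^\ast \colon \Fun(\cY\op,\cE) \to \Fun(\cX\op,\cE)$ has left adjoint $f_!$ (left Kan extension along $f$, which exists since $\cE$ is presentable), and passing to left adjoints in a commuting triangle of right adjoints gives a commuting triangle of left adjoints. Concretely, from $\Psi_{\cX,\mathsf B\circ f}^\cE \simeq f^\ast \circ \Psi_{\cY,\mathsf B}^\cE$ we obtain $\Phi_{\cY,\mathsf B}^\cE \circ f_! \simeq \Phi_{\cX,\mathsf B\circ f}^\cE$ by uniqueness of adjoints, which is exactly the asserted commutativity.

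For part (2), the only change is that $\cX$ and $\cY$ are presentable and $\mathsf B$, $f$ commute with colimits. \cref{lem:sheaf_condition} tells us that $\Psi_{\cY,\mathsf B}^\cE$ lands in $\cY\otimes\cE \simeq \FunR(\cY\op,\cE)$, and likewise $\Psi_{\cX,\mathsf B\circ f}^\cE$ lands in $\cX\otimes\cE$ (here one uses that $\mathsf A = \mathsf B\circ f$ also commutes with colimits, being a composite of colimit-preserving functors). The restriction of $f^\ast$ to these full subcategories is the right adjoint $g_\cE$ of $f_\cE = f\otimes\id_\cE$: indeed $f^\ast$ preserves hypercomplete/sheaf-type objects because $f$ commutes with colimits, so $f$ is a morphism of the relevant localizations and its induced $f^\ast$ on $\FunR$ is right adjoint to $f\otimes\id_\cE$. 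Thus the first triangle of part (2) is just the first triangle of part (1) restricted along the inclusions of $\cX\otimes\cE$ and $\cY\otimes\cE$, and the second triangle follows again by passing to left adjoints, now with $f_\cE$ in place of $f_!$.

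The only genuine subtlety — and the step I would write out most carefully — is the identity $\lambda_{\mathsf B}\circ q \simeq \lambda_f$ used above, i.e.\ that the localization functor of \cite[Corollary 3.3.4.3]{HTT} is compatible with the map $q \colon \cB_f \to \cB$ induced by pullback along $f$. This is where one must genuinely inspect the construction of $\lambda$: both sides send an object of $\cB_f$, which is a pair consisting of an object $x\in\cX$ and an element of $\mathsf B(f(x)) = \mathsf A(x)$, to its image under the structure map to $\mathsf B(\mathbf 1_\cY)$ coming from $f(x) \to \mathbf 1_\cY$ (equivalently from $x \to \mathbf 1_\cX$, since $f(\mathbf 1_\cX)\simeq \mathbf 1_\cY$). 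Everything else is a formal manipulation of adjoints, and no additional obstacle is expected.
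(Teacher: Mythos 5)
Your proposal is correct and follows essentially the same route as the paper: the paper also reduces everything to the left triangle of (1), factors $\Psi_{\cY,\mathsf B}^\cE$ as $\pi_{\mathsf B,\ast}\circ\lambda_{\mathsf B}^\ast$, and checks the two resulting squares, the left one via $\lambda_{\mathsf B}\circ q\simeq\lambda_f$ (which it declares immediate from the definitions) and the right one via \cref{lem:functoriality}. Your write-up merely makes explicit the reduction steps (passage to left adjoints for the $\Phi$-triangles and restriction along $\cX\otimes\cE\hookrightarrow\Fun(\cX\op,\cE)$ for part (2)) that the paper leaves to the reader.
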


\begin{proof}
	It is enough to prove the commutativity of the left triangle of $(1)$.
	Breaking $\Psi_{\cY,\mathsf B}$ into its components, we have to show that the squares of the following diagram commute:
	\[ \begin{tikzcd}
		\Fun(\mathsf B(\mathbf 1_\cY),\cE) \arrow{r}{\lambda_{\mathsf B}^\ast} \arrow[equal]{d} & \Fun(\cB, \cE) \arrow{r}{\pi_{\mathsf B, \ast}} \arrow{d}{q^\ast} & \Fun(\cY\op, \cE) \arrow{d}{f^\ast} \\
		\Fun(\mathsf B(\mathbf 1_\cY), \cE) \arrow{r}{\lambda_{f}^\ast} & \Fun(\cB_f, \cE) \arrow{r}{\pi_{f, \ast}} & \Fun(\cX\op, \cE) \ .
	\end{tikzcd} \]
	For the left square, this just follows obviously from the definition of the functors.
	For the right square, this is a consequence of \cref{lem:functoriality}.
\end{proof}

\begin{eg} \label{eg:functoriality}
	Let $(X,P)$ be a conically stratified topological space.
	Take $\cX \coloneqq \mathrm{Open}(X)$, $\cY \coloneqq \PSh(X)$ and take $f$ to be the Yoneda embedding.
	We take $\mathsf A \coloneqq \Pi_\infty$, the exit paths $\infty$-functor and we take $\mathsf B$ to be the left Kan extension of $\mathsf A$ along $f$.
	Then
	\[ f^\ast \colon \FunR(\PSh(X)\op, \cE) \to \Fun(\mathrm{Open}(X)\op, \cE) \]
	is an equivalence as a consequence of \cite[Theorem 5.1.5.6]{HTT}. 
	Thus \cref{lem:sheaf_condition} $(1)$ and \cref{cor:functoriality} yield a natural equivalence
	\[ \Psi_{X,P} \simeq \Psi_{X,P}^{\mathrm{psh}} \ . \]
	\Cref{cor:functoriality} also identifies $\Phi_{X,P}^{\mathrm{psh}}$ with the left Kan extension of $\Phi_{X,P}$ along the Yoneda embedding.
\end{eg}

\begin{eg} \label{eg:functoriality_bis}		
	Let $(X,P)$ be a conically stratified topological space. 
	Take $\cX \coloneqq \PSh(X)$, $\cY \coloneqq \HSh(X)$ and take $f \coloneqq (-)^{\hyp}$ to be the hypersheafification functor.
		In this case, \cref{cor:functoriality} yields  natural equivalences
		\[ \Psi_{X,P}^{\mathrm{psh}} \simeq \Psi_{X,P}^{\hyp} \qquad \text{and} \qquad \Phi_{X,P}^{\mathrm{psh}} \simeq \Phi_{X,P}^{\hyp} \circ (-)^{\hyp} \ . \]
		In particular, we see that for every $F \in \PSh(X;\cE)$, the functor $\Phi_{X,P}$ takes the natural transformation $F \to F^{\hyp}$ to an equivalence.
\end{eg}

\begin{construction} \label{construction:comparison_morphisms}
	Associated to the top right square in the diagram \eqref{eq:functoriality} there is, for every presentable $\infty$-category $\cE$, a Beck-Chevalley transformation
	\[ \mathsf{BC} \colon \lambda_{f,!} \circ p^\ast \to \gamma_{\mathbf 1}^\ast \circ \lambda_{\mathsf A, !} \ , \]
	which induces the following transformation
	\[ \phi_{\mathsf A, \mathsf B, f, \gamma} \colon \Phi_{\cY,\mathsf B}^\cE \circ f_\cE \simeq \Phi_{\cX,\mathsf B \circ f}^\cE \simeq \lambda_{f,!} \circ \pi_f^\ast \simeq \lambda_{f,!} p^\ast \pi_{\mathsf A}^\ast \xrightarrow{\mathsf{BC}} \gamma_{\mathbf 1}^\ast \circ \lambda_{A,!} \circ \pi_A^\ast \simeq \gamma_{\mathbf 1}^\ast \circ \Phi_{\cX,\mathsf A}^\cE \ . \]
	In turn, associated to $\phi_{\mathsf A, \mathsf B, f, \gamma}$ there is a natural exchange transformation
	\[ \psi_{\mathsf A, \mathsf B, f, \gamma} \colon f_\cE \circ \Psi_{\cX,\mathsf A}^\cE \xrightarrow{\eta_\cY} \Psi_{\cY,\mathsf B}^\cE \Phi_{\cY,\mathsf B}^\cE f_\cE \Psi_{\cX,\mathsf A}^\cE \xrightarrow{\phi_{\mathsf A, \mathsf B, f, \gamma}} \Psi_{\cY,\mathsf B}^\cE \gamma_{\mathbf 1}^\ast \Phi_{\cX,\mathsf A}^\cE \Psi_{\cX,\mathsf A}^\cE \xrightarrow{\varepsilon_\cX} \Psi_{\cY,\mathsf B}^\cE \circ \gamma_{\mathbf 1}^\ast \ , \]
	where $\eta_{\cY}$ denotes the unit of the adjunction $\Phi_{\cY,\mathsf B}^\cE \dashv \Psi_{\cY,\mathsf B}^\cE$ and $\varepsilon_{\cX}$ denotes the counit of the adjunction $\Phi_{\cX,\mathsf A}^\cE \dashv \Psi_{\cX,\mathsf A}^\cE$.
	In the rest of the paper we will often write $\phi_f$ and $\psi_f$ instead of $\phi_{\mathsf A, \mathsf B, f, \gamma}$ and $\psi_{\mathsf A, \mathsf B, f, \gamma}$, respectively.
\end{construction}

\begin{rem} \label{rem:phi_equiv_iff_psi_equiv}
	The triangular identities imply that the exchange transformation associated to $\psi_{f}$ is once again $\phi_{f}$.
	In particular, if one knows that both $\Phi_{\cY,\mathsf B}^\cE \dashv \Psi_{\cY,\mathsf B}^\cE$ and $\Phi_{\cX,\mathsf A}^\cE \dashv \Psi_{\cX,\mathsf A}^\cE$ are equivalences, then $\phi_{f}$ is an equivalence if and only if $\psi_{f}$ is one.
	Furthermore, triangular identities also imply that the square
	\[ \begin{tikzcd}
		f \arrow{r}{\eta_\cY f} \arrow{d}{f \eta_\cX} & \Psi_{\cY, \mathsf B} \Phi_{\cY, \mathsf B} f \arrow{d}{\Psi_\cY \phi_f} \\
		f \Psi_{\cX, \mathsf A} \Phi_{\cX, \mathsf A} \arrow{r}{\psi_f \Phi_{\cX,\mathsf A}} & \Psi_{\cY,\mathsf B} \gamma_{\mathbf 1}^\ast \Phi_{\cX,\mathsf B}
	\end{tikzcd} \]
	is canonically commutative.
\end{rem}
		
\begin{rem} \label{rem:phi_equiv_iff_psi_equiv_bis}
Suppose that the natural transformation $\psi_f$ is an equivalence, so that it renders the square
		\[ \begin{tikzcd}
			\Fun(\mathsf A(\mathbf 1_\cX), \cE) \arrow{r}{\Psi_{\cX,\mathsf A}} \arrow{d}{\gamma_{\mathbf 1}^\ast} & \cX \otimes \cE \arrow{d}{f_\cE} \\
			\Fun(\mathsf B(\mathbf 1_\cY), \cE) \arrow{r}{\Psi_{\cY, \mathsf B}} & \cY \otimes \cE
		\end{tikzcd} \]
		commutative.
		Then this square is horizontally right adjointable \emph{if and only if} the natural transformation $\phi_f$ is an equivalence as well.
		Thus, if both $\phi_f$ and $\psi_f$ are equivalences, it follows formally that $f_\cE$ takes the unit of the adjunction $\Phi_{\cX,\mathsf A} \dashv \Psi_{\cX,\mathsf A}$ to the unit of the adjunction $\Phi_{\cY,\mathsf B} \dashv \Psi_{\cY,\mathsf B}$.
		Similarly, $\gamma_{\mathbf 1}^\ast$ takes the counit of the adjunction $\Phi_{\cX,\mathsf A} \dashv \Psi_{\cX,\mathsf A}$ to the counit of the adjunction $\Phi_{\cY,\mathsf B} \dashv \Psi_{\cY,\mathsf B}$.
\end{rem}

\begin{cor} \label{cor:Psi_open_restriction}
	Assume that $\cX$ and $\cY$ are presentable, that $\mathsf B$ commutes with colimits, that $f \colon \cX \to \cY$ has a left adjoint $h \colon \cY \to \cX$ and that the induced transformation
	\[ \mathsf B \xrightarrow{\eta} \mathsf B \circ f \circ h \xrightarrow{\gamma} A \circ h \]
	is an equivalence, where $\eta$ is the unit of the adjunction $h \dashv f$.
	Then the natural transformation
	\[ \psi_{f} \colon f_\cE \circ \Psi_{\cX,\mathsf A} \to \Psi_{\cY,\mathsf B} \circ \gamma_{\mathbf 1}^\ast \]
	is an equivalence.
\end{cor}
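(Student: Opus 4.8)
The plan is to reduce the assertion that $\psi_{\mathsf A,\mathsf B,f,\gamma}$ is an equivalence to \cref{lem:functoriality} applied to the left adjoint $h$ — whose proof makes no use of the running hypothesis $f(\mathbf 1_\cX)\simeq\mathbf 1_\cY$ — modulo the bookkeeping of the terminal objects $h(\mathbf 1_\cY)$ versus $\mathbf 1_\cX$ and of the given equivalence $\delta\colon\mathsf B\xrightarrow{\eta}\mathsf B\circ f\circ h\xrightarrow{\gamma}\mathsf A\circ h$. The first observation is that $f_\cE$ admits a transparent description: since $f$ has the left adjoint $h\in\PrL$, the functor $h\otimes\id_\cE\colon\cY\otimes\cE\to\cX\otimes\cE$ lies in $\PrL$, and under $\cZ\otimes\cE\simeq\FunR(\cZ\op,\cE)$ its right adjoint $f_\cE$ is restriction along $h\op\colon\cY\op\to\cX\op$; equivalently, $f_\cE$ is the restriction to $\cX\otimes\cE$ of $h^\ast\coloneqq(-)\circ h\op$ on $\Fun(\cX\op,\cE)$. (This is the $\cE$-linear incarnation of the adjunction $h\dashv f$, obtained by tensoring the case $\cE=\cS$; cf.\ \cite[\S4.8]{Lurie_Higher_algebra}.) In particular $f_\cE\circ\Psi_{\cX,\mathsf A}^{\cE}=h^\ast\circ\Psi_{\cX,\mathsf A}^{\cE}$.

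Now apply \cref{lem:functoriality} to $h\colon\cY\to\cX$ and $\mathsf A\colon\cX\to\Cat_\infty$, obtaining $h^\ast\circ\pi_{\mathsf A,\ast}\simeq\pi_{h,\ast}\circ q_h^\ast$, where $\cA_h\coloneqq\cA\times_{\cX\op}\cY\op$ is the cartesian fibration classifying $\mathsf A\circ h$, with projections $\pi_h\colon\cA_h\to\cY\op$ and $q_h\colon\cA_h\to\cA$. Writing $\Psi_{\cX,\mathsf A}^{\cE}=\pi_{\mathsf A,\ast}\circ\lambda_{\mathsf A}^\ast$ and using the identity $\lambda_{\mathsf A}\circ q_h\simeq\mathsf A(c)\circ\lambda_h$, where $\lambda_h\colon\cA_h\to\mathsf A(h(\mathbf 1_\cY))$ is the localization functor and $c\colon h(\mathbf 1_\cY)\to\mathbf 1_\cX$ is the essentially unique map to the terminal object (both facts being immediate from the description of the $\lambda$'s as cartesian transport, together with uniqueness of maps to terminal objects), one gets
\[ h^\ast\circ\Psi_{\cX,\mathsf A}^{\cE}\simeq\pi_{h,\ast}\circ\lambda_h^\ast\circ\mathsf A(c)^\ast=\Psi_{\cY,\mathsf A\circ h}^{\cE}\circ\mathsf A(c)^\ast. \]
The equivalence $\delta$ induces an equivalence of cartesian fibrations $\cB\xrightarrow{\sim}\cA_h$ over $\cY\op$ respecting the structure maps and compatible with the localizations through $\delta_{\mathbf 1_\cY}\colon\mathsf B(\mathbf 1_\cY)\xrightarrow{\sim}\mathsf A(h(\mathbf 1_\cY))$, whence $\Psi_{\cY,\mathsf A\circ h}^{\cE}\simeq\Psi_{\cY,\mathsf B}^{\cE}\circ(\delta_{\mathbf 1_\cY})^\ast$. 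Finally, naturality of $\gamma$ at the morphism $c$ together with the relation $f(c)\circ\eta_{\mathbf 1_\cY}\simeq\id$ (uniqueness of maps into $f(\mathbf 1_\cX)\simeq\mathbf 1_\cY$, i.e.\ a triangle identity for $h\dashv f$) identifies $\mathsf A(c)\circ\delta_{\mathbf 1_\cY}$ with $\gamma_{\mathbf 1}$. Concatenating these, and using $(\delta_{\mathbf 1_\cY})^\ast\circ\mathsf A(c)^\ast=(\mathsf A(c)\circ\delta_{\mathbf 1_\cY})^\ast$, yields $f_\cE\circ\Psi_{\cX,\mathsf A}^{\cE}\simeq\Psi_{\cY,\mathsf B}^{\cE}\circ\gamma_{\mathbf 1}^\ast$.

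The remaining — and, I expect, most delicate — step is to verify that this concatenation of equivalences is precisely the comparison transformation $\psi_{\mathsf A,\mathsf B,f,\gamma}$ of \cref{construction:comparison_morphisms}, rather than merely an abstract equivalence between its endpoints. Since both $f_\cE\circ\Psi_{\cX,\mathsf A}^{\cE}$ and $\Psi_{\cY,\mathsf B}^{\cE}\circ\gamma_{\mathbf 1}^\ast$ take values in $\cY\otimes\cE\simeq\FunR(\cY\op,\cE)$ by \cref{lem:sheaf_condition}, it suffices to check $\psi_{\mathsf A,\mathsf B,f,\gamma}$ is an equivalence after evaluating at each $y\in\cY$; unwinding \cref{construction:comparison_morphisms} and \eqref{eq:Psi_formula}, the component of $\psi_{\mathsf A,\mathsf B,f,\gamma}$ at $y$ is the map
\[ \lim_{\mathsf A(h(y))}F|_{\mathsf A(h(y))}\longrightarrow\lim_{\mathsf B(y)}(\gamma_{\mathbf 1}^\ast F)|_{\mathsf B(y)} \]
induced by reindexing along $\delta_y\colon\mathsf B(y)\xrightarrow{\sim}\mathsf A(h(y))$ together with the coherence $\mathsf A(c)\circ\delta_{\mathbf 1_\cY}\simeq\gamma_{\mathbf 1}$, which is plainly invertible. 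The one point where genuine care is required is tracking the Beck–Chevalley transformations entering the definition of $\psi_{\mathsf A,\mathsf B,f,\gamma}$ through the identifications made above.
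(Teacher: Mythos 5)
Your proposal is correct and, at its core, takes the same route as the paper: the paper's entire proof consists of observing that $f_\cE$ corresponds to $h^\ast$ under $\cY\otimes\cE\simeq\FunR(\cY\op,\cE)$ and then computing, via \cref{lem:RKE} and \eqref{eq:Psi_formula}, that the component of $\psi_{\mathsf A,\mathsf B,f,\gamma}(F)$ at $y\in\cY\op$ is the reindexing map $\lim_{\mathsf A(h(y))}F\to\lim_{\mathsf B(y)}(F\circ\gamma_{\mathbf 1})$ along the equivalence $\mathsf B(y)\to\mathsf A(h(y))$, which is precisely your final paragraph. The intermediate scaffolding you build (the fibration $\cA_h$, the identification $\mathsf A(c)\circ\delta_{\mathbf 1_\cY}\simeq\gamma_{\mathbf 1}$) is a legitimate but optional elaboration of what the paper compresses into ``unwinding the definitions.''
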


\begin{proof}
	Indeed, since $f$ has a left adjoint $h$, the same goes for $f_\cE$ and moreover under the equivalences $\cX \otimes \cE \simeq \FunR(\cX\op, \cE)$ and $\cY \otimes \cE \simeq \FunL(\cY\op,\cE)$, the functor $f_\cE$ corresponds to $h^\ast$.
	Unwinding the definitions and using \cref{lem:RKE}, we see that for every $F \in \Fun(\mathsf A(\mathbf 1_\cX), \cE)$ and every $y \in \cY\op$, the map $\psi_{f}(F)$ at $y$ is given by
	\[ \lim_{\mathsf A(h(y))} F |_{\mathsf A(h(y))} \to \lim_{\mathsf B(y)} (F \circ u)|_{\mathsf B(y)} \ . \]
	Thus, the conclusion follows from the fact that the functor $\mathsf B(y) \to \mathsf A(h(y))$ is an equivalence.
\end{proof}

\begin{eg} \label{eg:functoriality_topological_spaces}
	Let $f \colon (Y,Q) \to (X,P)$ be a morphism of conically stratified spaces.
	Take $\cX \coloneqq \HSh(X)$ and $\cY \coloneqq \HSh(Y)$ and $f^{\ast,\hyp} \colon \HSh(X) \to \HSh(Y)$ the induced geometric morphism.
	We let $\mathsf A$ and $\mathsf B$ be the exit paths functors
	\[ \Pi_\infty \colon \HSh(X) \to \Cat_\infty \qquad \text{and} \qquad \Pi_\infty \colon \HSh(Y) \to \Cat_\infty , \]
	respectively.
	The functoriality of the exit path construction yields a natural transformation $\gamma \colon \mathsf B \circ f^{\ast,\hyp} \to \mathsf A$.
	In this setup, we denote by
	\[ \phi_f^{\hyp} \colon \Phi_Y \circ f^{\ast,\hyp} \to \Pi_\infty(f)^\ast \circ \Phi_X \quad \textrm{and} \quad \psi_f^{\hyp} \colon f^{\ast,\hyp} \circ \Psi_X \to \Psi_Y \circ \Pi_\infty(f)^\ast \]
	the natural transformations of \cref{construction:comparison_morphisms}.
	Taking $\cX = \PSh(X)$ and $\cY = \PSh(Y)$ or $\cX = \mathrm{Open}(X)$ and $\cY = \mathrm{Open}(Y)$ we obtain obvious variants of this construction, that we denote respectively by $\phi_f^{\mathrm{psh}}$, $\psi_f^{\mathrm{psh}}$, $\phi_f$ and $\psi_f$.
\end{eg}

\begin{eg} \label{eg:functoriality_topological_spaces_bis}
In the setting of \cref{eg:functoriality_topological_spaces}, assume furthermore that the underlying morphism of topological spaces $f \colon Y \to X$ is an open immersion.
		Then $f^{\ast, \hyp} \colon \HSh(X) \to \HSh(Y)$ admits a fully faithful \emph{left} adjoint, denoted $f_!^{\hyp}$.
		Furthermore, if $U$ is an open subset of $Y$, seen as an object in $\HSh(Y)$, then $f_!^{\hyp}(U) \in \HSh(X)$ represents again $U$, seen as an open in $X$.
		It follows that the assumptions of \cref{cor:Psi_open_restriction} are satisfied and therefore that $\psi_f^{\hyp}$ is an equivalence in this case.
\end{eg}

\subsection{A glimpse of higher-functoriality}

The natural transformations $\phi_f$ and $\psi_f$ of \cref{construction:comparison_morphisms} enjoy themselves a form of higher functoriality.
We fix two composable morphisms $f \colon \cX \to \cY$ and $h \colon \cY \to \cZ$ in $\Cat_\infty$.
We further fix three functors
\[ \mathsf A \colon \cX \to \Cat_\infty , \qquad \mathsf B \colon \cY \to \Cat_\infty , \qquad \mathsf C \colon \cZ \to \Cat_\infty \ , \]
together with natural transformations
\[ \gamma \colon \mathsf B \circ f \to \mathsf A \qquad \text{and} \qquad \delta \colon \mathsf C \circ h \to \mathsf B \ . \]
As in the previous section, we assume that $\cX$, $\cY$, $\cZ$ are either small with a final objects or presentable.
As for $f$ and $h$, we assume that they preserve final objects and, if both their source and target are presentable, that they commute with colimits.

\medskip

Fix a presentable $\infty$-category $\cE$.
In this situation, \cref{construction:comparison_morphisms} provides three natural transformations
\[ \phi_h \colon \Phi_{\cZ,\mathsf C} \circ h_\cE \to \delta_{\mathbf 1}^\ast \circ \Phi_{\cY, \mathsf B} \ , \qquad \phi_f \colon \Phi_{\cY,\mathsf B} \circ f_\cE \to \gamma_{\mathbf 1}^\ast \circ \Phi_{\cX,\mathsf A} \]
as well as
\[ \phi_{h\!f} \colon \Phi_{\cZ,\mathsf C} \circ h_\cE \circ f_\cE \to \delta_{\mathbf 1}^\ast \circ \gamma_{\mathbf 1}^\ast \circ \Phi_{\cX,\mathsf A} \ . \]
We have:

\begin{prop}\label{2-functoriality_phi}
	The triangle
	\begin{equation}\label{eq:2-functoriality}
		\begin{tikzcd}[column sep = 30pt]
			\Phi_{\cZ,\mathsf C} \circ h_\cE \circ f_\cE \arrow{r}{\phi_h \circ f_\cE} \arrow{dr}[swap]{\phi_{h\!f}} & \delta_{\mathbf 1}^\ast \circ \Phi_{\cY,\mathsf B} \circ f_\cE \arrow{d}{\phi_{f}} \\
			{} & \delta_{\mathbf 1}^\ast \circ \gamma_{\mathbf 1}^\ast \circ \Phi_{\cX,\mathsf A}
		\end{tikzcd}
	\end{equation}
	is canonically commutative.
\end{prop}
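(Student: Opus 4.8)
The plan is to deduce \eqref{eq:2-functoriality} from the elementary pasting calculus for Beck--Chevalley (mate) transformations, once the relevant commuting squares have been identified. By \cref{construction:comparison_morphisms}, each of $\phi_{f}$, $\phi_{h}$ and $\phi_{hf}$ is, after pre- and post-composition with canonical equivalences (those of \cref{cor:functoriality}(2), together with the tautological factorisations $\pi_{f}\simeq\pi_{\mathsf A}\circ p$ and its analogues), the mate of a canonically commuting square of $\infty$-categories; write $S_{f}$, $S_{h}$, $S_{hf}$ for these squares. Explicitly, $S_{f}$ is the top-right square of \eqref{eq:functoriality}: corners $\cB_{f}$, $\cA$, $\mathsf B(\mathbf 1_{\cY})$, $\mathsf A(\mathbf 1_{\cX})$, horizontal edges $p$ and $\gamma_{\mathbf 1}$, vertical edges $\lambda_{f}$ and $\lambda_{\mathsf A}$. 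The square $S_{h}$ is the corresponding one for $h$, with corners $\cC_{h}$, $\cB$, $\mathsf C(\mathbf 1_{\cZ})$, $\mathsf B(\mathbf 1_{\cY})$. The square $S_{hf}$ is attached to the composite natural transformation $\mathsf C\circ h\circ f\xrightarrow{\delta\ast f}\mathsf B\circ f\xrightarrow{\gamma}\mathsf A$, with corners $\cC_{hf}$, $\cA$, $\mathsf C(\mathbf 1_{\cZ})$, $\mathsf A(\mathbf 1_{\cX})$, where $\cC_{hf}\to\cX\op$ denotes the cartesian fibration of $\mathsf C\circ h\circ f$. So the whole statement is a statement about these three mates.

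The geometric step is to check that $S_{hf}$ is the horizontal pasting of the base change $f^{\ast}S_{h}$ (along $f\colon\cX\op\to\cY\op$) followed by $S_{f}$. This rests on three facts, each a routine instance of the compatibility of the Grothendieck construction and of the localisation functors of \cite[Corollary~3.3.4.3]{HTT} with precomposition and with base change: first, $\cC_{hf}\simeq\cC_{h}\times_{\cY\op}\cX\op$, with $\lambda_{hf}$ equal to the composite $\cC_{hf}\to\cC_{h}\xrightarrow{\lambda_{h}}\mathsf C(\mathbf 1_{\cZ})$; second, the map $\cC_{hf}\to\cA$ induced by $\gamma\circ(\delta\ast f)$ factors as $\cC_{hf}\to\cB_{f}\xrightarrow{p}\cA$, the first arrow being the base change along $f$ of the top edge $\cC_{h}\to\cB$ of $S_{h}$; third, $\lambda_{f}$ equals $\cB_{f}\xrightarrow{q}\cB\xrightarrow{\lambda_{\mathsf B}}\mathsf B(\mathbf 1_{\cY})$. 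Together these show that $S_{hf}$ is $S_{f}$ placed to the right of $f^{\ast}S_{h}$, the shared vertical edge being $\lambda_{f}$ and the bottom composite being $\gamma_{\mathbf 1}\circ\delta_{\mathbf 1}=\bigl(\gamma\circ(\delta\ast f)\bigr)_{\mathbf 1}$.

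The coherence step, which I expect to be the \emph{main obstacle}, is to verify that under the canonical equivalences the map $\phi_{h}\circ f_{\cE}$ becomes the mate of $f^{\ast}S_{h}$ and $\delta_{\mathbf 1}^{\ast}\cdot\phi_{f}$ becomes the mate of $S_{f}$ whiskered by $\delta_{\mathbf 1}^{\ast}$. The latter is immediate from \cref{construction:comparison_morphisms}. For the former, one needs the associativity of the identifications of \cref{cor:functoriality}(2) along composites, namely that the equivalence $\Phi^{\cE}_{\cZ,\mathsf C}\circ h_{\cE}\circ f_{\cE}\simeq\Phi^{\cE}_{\cX,\mathsf C\circ h\circ f}$ is obtained by first applying that corollary to $h$ and then whiskering by $f_{\cE}$; since both recipes exhibit the left-hand side as the left adjoint of the same composite of restriction functors, they coincide by uniqueness of adjoints. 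One then invokes \cref{lem:functoriality} and \cref{lem:RKE} to control how $\pi_{f,\ast}$ and $f^{\ast}$ act on the pullbacks appearing in the unwinding of $\phi_{h}$, which pins down the whiskered map as claimed. This part is conceptually clear but notationally the heaviest, precisely because one has to track all the $\pi^{\ast}$-whiskerings hidden in \cref{construction:comparison_morphisms}.

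The final step is the pasting law for mates: given two commuting squares sharing an edge, the mate of their horizontal pasting is the vertical composite of the two mates, each whiskered by the outer horizontal functor of the other square. Applied to $S_{hf}=S_{f}\cdot(f^{\ast}S_{h})$, and using that the top edge of $f^{\ast}S_{h}$ post-composed with $p$ is the top edge $p_{hf}\colon\cC_{hf}\to\cA$ of $S_{hf}$, this yields $\mathsf{BC}(S_{hf})\simeq\bigl(\delta_{\mathbf 1}^{\ast}\cdot\mathsf{BC}(S_{f})\bigr)\circ\bigl(\mathsf{BC}(f^{\ast}S_{h})\cdot p^{\ast}\bigr)$. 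Whiskering by $\pi_{\mathsf A}^{\ast}$ and transporting through the equivalences of the first two steps turns this equality into exactly the commutativity of the triangle \eqref{eq:2-functoriality}. (Alternatively, one could phrase everything inside the $2$-category of adjointable squares and quote the pasting lemma there, but the argument above is self-contained given the earlier results.)
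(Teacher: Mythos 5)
Your proposal is correct and follows essentially the same route as the paper: the paper likewise identifies $\phi_{hf}$, $\phi_h \circ f_\cE$ and $\delta_{\mathbf 1}^\ast \cdot \phi_f$ (via the equivalences of \cref{cor:functoriality}) with the Beck--Chevalley transformations of a composite rectangle and of its two constituent squares --- its $\sigma_1$ and $\sigma_2$ are exactly your $f^\ast S_h$ and $S_f$ --- and concludes by the pasting property of mates. Your explicit handling of the coherence of the identifications along composites is a point the paper leaves implicit, but it is not a different argument.
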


\begin{proof}
	Consider the following diagram
	\[ \begin{tikzcd}[column sep = 20pt]
		{} & & & \mathsf A(\mathbf 1_\cX) \\
		{} & & \mathsf B(\mathbf 1_\cY) \arrow{ur}{\gamma_{\mathbf 1}} & & & & \cA \arrow[bend right = 10pt]{ulll}[swap]{\lambda_{\mathsf A}} \arrow[bend left = 30pt]{dddll}{\pi_{\mathsf A}} \\
		{} & \mathsf C(\mathbf 1_\cZ) \arrow{ur}{\delta_{\mathbf 1}} & & \cB \arrow{ul}[swap]{\lambda_{\mathsf B}} \arrow[bend left = 15pt]{ddl}[near end]{\pi_{\mathsf B}} & & \cB_{f} \arrow[bend left = 15pt]{ddl}{\pi_{f}} \arrow[bend right = 10pt]{ulll}[swap]{\lambda_{f}} \arrow{ll}[swap]{r} \arrow{ur}{u} \\
		\cC \arrow{d}{\pi_{\mathsf C}} \arrow{ur}{\lambda_{\mathsf C}} & & \cC_h \arrow{ul}[swap]{\lambda_h} \arrow{ur}[near start]{p} \arrow{ll}[swap]{q} \arrow{d}{\pi_h} & & \cC_{h\!f} \arrow[bend right = 10pt, crossing over]{ulll}[swap,near end]{\lambda_{h\!f}} \arrow{d}{\pi_{h\!f}} \arrow[crossing over]{ll}[swap]{t} \arrow{ur}{s} \\
		\cZ\op & & \cY\op \arrow{ll}[swap]{h} & & \cX\op \arrow{ll}[swap]{f}
	\end{tikzcd} \]
	Using \cref{cor:functoriality} twice, we find canonical equivalences
	\[ \alpha \colon \Phi_{\cZ,\mathsf C} \circ h_\cE \circ f_\cE \simeq \lambda_{h\!f,!} \circ \pi_{h\!f}^\ast , \qquad \beta \colon \Phi_{\cY,\mathsf B} \circ f_\cE \simeq \lambda_{f,!} \circ \pi_f^\ast \ . \]
	Write $\sigma_1$ for the square whose edges are $(\lambda_f, \delta_{\mathbf 1}, \lambda_{h\!f}, s)$ and $\sigma_2$ for the square with edges $(\lambda_{\mathsf A}, \gamma_{\mathbf 1}, \lambda_f, u)$.
	We also let $\sigma_{12}$ denote the rectangle obtained juxtaposing $\sigma_1$ and $\sigma_2$.
	Write
	\[ \mathsf{BC}(\sigma_1) \colon \lambda_{h\!f,!} \circ s^\ast \to \delta_{\mathbf 1}^\ast \circ \lambda_{f,!} \]
	for the Beck-Chevalley transformation associated to the square $\sigma_1$, and define similarly $\mathsf{BC}(\sigma_2)$ and $\mathsf{BC}(\sigma_{12})$.
	Unwinding \cref{construction:comparison_morphisms} and under the equivalences $\alpha$ and $\beta$ above, we are called to prove that the diagram
	\[ \begin{tikzcd}
		\lambda_{h\!f,!} \circ \pi_{h\!f}^\ast \arrow{r}{\mathsf{BC}(\sigma_1)} \arrow{dr}[swap]{\mathsf{BC}(\sigma_{12})} & \lambda_{f,!} \circ \pi_f^\ast \arrow{d}{\mathsf{BC}(\sigma_2)} \\
		& \lambda_{\mathsf A,!} \circ \pi_{\mathsf A}^\ast
	\end{tikzcd} \]
	is canonically commutative.
	Since $\sigma_{12}$ is the composite square of $\sigma_1$ and $\sigma_2$, this follows from the well-known componibility property of Beck-Chevalley transformations.
\end{proof}

\begin{rem}
	The above proposition is just a shadow of the actual functorial dependence of $\Phi_{\cX,\mathsf A}$ on the pair $(\cX,\mathsf A)$.
	To make this construction into an actual functor requires, as clearly shown by the above proof, to use the $(\infty,2)$-functoriality property of the Beck-Chevalley transformation.
	Since this is not needed for the current paper, we leave the details for the interested reader.
\end{rem}

\begin{cor}\label{2-functoriality_psi}
	In the same setting of \cref{2-functoriality_phi}, the triangle
	\[ \begin{tikzcd}[column sep = 30pt]
		h_\cE \circ f_\cE \circ \Psi_{\cX,\mathsf A} \arrow{r}{h_\cE \circ \psi_f} \arrow{dr}[swap]{\psi_{h\!f}} & h_\cE \circ \Psi_{\cY,\mathsf B} \circ \gamma_{\mathbf 1}^\ast \arrow{d}{\psi_h} \\
		& \Psi_{\cZ,\mathsf C} \circ \delta_{\mathbf 1}^\ast \circ \gamma_{\mathbf 1}^\ast
	\end{tikzcd} \]
	is canonically commutative.
\end{cor}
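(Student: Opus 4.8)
The plan is to deduce \cref{2-functoriality_psi} formally from \cref{2-functoriality_phi}, exploiting the compatibility of the exchange (mate) construction with pasting. Recall from \cref{construction:comparison_morphisms} that $\psi_f$, $\psi_h$ and $\psi_{h\!f}$ are, by definition, the exchange transformations associated with $\phi_f$, $\phi_h$ and $\phi_{h\!f}$ relative to the adjunctions $\Phi^\cE_{\cX,\mathsf A}\dashv\Psi^\cE_{\cX,\mathsf A}$, $\Phi^\cE_{\cY,\mathsf B}\dashv\Psi^\cE_{\cY,\mathsf B}$ and $\Phi^\cE_{\cZ,\mathsf C}\dashv\Psi^\cE_{\cZ,\mathsf C}$ of \cref{cor:exodromy_adjunction}. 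Thus the whole situation lives in a single diagram of three composable adjunctions arranged in two rows: the bottom row $\cX\otimes\cE\to\cY\otimes\cE\to\cZ\otimes\cE$ joined by $f_\cE$ and $h_\cE$, the top row $\Fun(\mathsf A(\mathbf 1_\cX),\cE)\to\Fun(\mathsf B(\mathbf 1_\cY),\cE)\to\Fun(\mathsf C(\mathbf 1_\cZ),\cE)$ joined by $\gamma_{\mathbf 1}^\ast$ and $\delta_{\mathbf 1}^\ast$, and the vertical adjunctions $\Phi\dashv\Psi$. In this picture $\phi_f$ and $\phi_h$ are the Beck--Chevalley cells of the left and right squares, $\phi_{h\!f}$ is that of the outer rectangle, and $\psi_f$, $\psi_h$, $\psi_{h\!f}$ are their mates, read in the opposite vertical direction.

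With this dictionary, \cref{2-functoriality_phi} says precisely that $\phi_{h\!f}$ is the composite Beck--Chevalley cell of the rectangle obtained by juxtaposing the left and right squares, i.e.\ that it factors as $\phi_h$ whiskered by $f_\cE$ followed by $\phi_f$ whiskered by $\delta_{\mathbf 1}^\ast$, exactly as displayed in \eqref{eq:2-functoriality}. The second step is then to invoke the \emph{pasting law for mates}: the mate of a horizontal composite of two squares of adjunctions equals the horizontal composite of the individual mates, but taken in the reverse order and with the two whiskerings transposed; the order reversal is forced by the fact that passing to the mate of $L\dashv R$ interchanges $L$ and $R$. Applying this to the factorization of $\phi_{h\!f}$ recorded above produces a factorization of $\psi_{h\!f}$ as $\psi_f$ whiskered by $h_\cE$ followed by $\psi_h$ whiskered by $\gamma_{\mathbf 1}^\ast$, which is exactly the assertion that the triangle of \cref{2-functoriality_psi} commutes.

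If one prefers to keep the argument self-contained, one can bypass the pasting law and argue by an explicit diagram chase. Unwinding the definition from \cref{construction:comparison_morphisms}, $\psi_{h\!f}$ is the composite obtained by inserting $\phi_{h\!f}$ between a unit of $\Phi^\cE_{\cZ,\mathsf C}\dashv\Psi^\cE_{\cZ,\mathsf C}$ and a counit of $\Phi^\cE_{\cX,\mathsf A}\dashv\Psi^\cE_{\cX,\mathsf A}$; after rewriting $\phi_{h\!f}$ by means of \cref{2-functoriality_phi}, one introduces a unit--counit pair for the middle adjunction $\Phi^\cE_{\cY,\mathsf B}\dashv\Psi^\cE_{\cY,\mathsf B}$ and then uses naturality together with the triangle identities to reorganize the pasting into $\psi_h\circ\gamma_{\mathbf 1}^\ast$ after $h_\cE\circ\psi_f$; the commutativities needed along the way are all instances of \cref{rem:phi_equiv_iff_psi_equiv}(1) together with the pointwise formula of \cref{lem:RKE}. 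I expect the only genuine difficulty to be the $(\infty,2)$-categorical coherence underlying ``pasting of mates'', but this is precisely the kind of issue already present — and handled via the componibility of Beck--Chevalley transformations — in the proof of \cref{2-functoriality_phi}, so it can be dispatched by the same means.
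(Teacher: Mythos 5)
Your proposal is correct and follows essentially the same route as the paper: the paper's proof likewise assembles the two adjunction squares, uses \cref{2-functoriality_phi} to identify the pasting of $\phi_h$ and $\phi_f$ with $\phi_{h\!f}$, and then concludes by the compatibility of Beck--Chevalley (mate) transformations with horizontal composition. Your explicit unit--counit diagram chase is just an unwinding of that same componibility property, so no new ingredient is needed.
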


\begin{proof}
	We consider the case where $\cX$, $\cY$ and $\cZ$ are presentable, as every other case is dealt with similarly.
	We start with the following diagram:
	\[ \begin{tikzcd}
		\cX \otimes \cE \arrow{d}{\Phi_{\cX,\mathsf A}} \arrow{r}{f_\cE} & \cY \otimes \cE \arrow{r}{h_\cE} \arrow{d}{\Phi_{\cY,\mathsf B}} & \cZ \otimes \cE \arrow{d}{\Phi_{\cZ,\mathsf C}} \\
		\Fun(\mathsf A(\mathbf 1_\cX), \cE) \arrow{r}{\gamma_{\mathbf 1}^\ast} & \Fun(\mathsf B(\mathbf 1_\cY), \cE) \arrow{r}{\delta_{\mathbf 1}^\ast} & \Fun(\mathsf C(\mathbf 1_\cZ), \cE)
	\end{tikzcd} \]
	It is made $2$-commutative by the transformations $\phi_f$, $\phi_h$.
	\Cref{2-functoriality_phi} identifies their composite $\phi_f \circ (\phi_h \circ f_\cE)$ with $\phi_{h\!f}$.
	Since $\psi_f = \mathsf{BC}(\phi_f)$, $\psi_h = \mathsf{BC}(\phi_h)$ and $\psi_{h\!f} = \mathsf{BC}(\phi_{h\!f})$, the conclusion follows directly from the componibility property of the Beck-Chevalley transformations.
\end{proof}

\subsection{Monodromy, revisited}

Let us explain how to recover the monodromy equivalence as a particular instance of \cref{cor:exodromy_adjunction}.
Let $X$ be a  topological space and let $\cE$ be a presentable $\infty$-category.
From \cite[Construction 2.1]{HPT}, we have a functor
\[ \Pi_\infty^\cE \colon \HSh(X;\cE) \to \cE \ . \]
When $\cE = \cS$ is the $\infty$-category of spaces, this functor -- denoted simply by $\Pi_\infty$ -- is the left Kan extension of the homotopy type functor, sending an open $U \in \mathrm{Open}(X)$ to its homotopy type $\Pi_\infty(U)$.
For a general $\cE$, we have a canonical identification $\Pi_\infty^\cE \simeq \Pi_\infty \otimes \id_{\cE}$.
As observed in \emph{loc.\ cit.}, $\Pi_\infty^\cE$ has a right adjoint
\[ \Pi^\infty_\cE \colon \cE \to \HSh(X;\cE) \ . \]
Assume now that $X$ is locally weakly contractible. 
Then, \cite[Proposition 2.5]{HPT} implies that  $\Pi^\infty_\cE$ is canonically identified with the constant hypersheaf functor $\Gamma_X^{\ast,\hyp}$.

\medskip

If $\cE = \cS$, the functor $\Pi_\infty$ can be canonically lifted to a functor
\[ \widetilde{\Pi}_\infty \colon \HSh(X) \to \cS_{/\Pi_\infty(X)} \ , \]
which in turn admits a right adjoint $\widetilde{\Pi}^\infty$ sending $f \colon K \to \Pi_\infty(X)$ to the hypersheaf defined by
\begin{equation}\label{description_pi_tilde}
\widetilde{\Pi}^\infty(K)(U) \coloneqq \Map_{/\Pi_\infty(X)}(\Pi_\infty(U), K) \ .
\end{equation}
Put $\widetilde{\Pi}_\infty^\cE\coloneqq\widetilde{\Pi}_\infty \otimes \id_{\cE} $ and denote by $\widetilde{\Pi}^\infty_\cE$
the right adjoint of $\widetilde{\Pi}_\infty^\cE$.

\begin{lem}\label{factor_through_LC}
Let $X$ be a weakly contractible and locally weakly contractible topological space.
Let $\cE$ be a presentable $\infty$-category.
Then, $\widetilde{\Pi}^\infty_\cE$ factors through the full subcategory $\LChyp(X;\cE)$.
\end{lem}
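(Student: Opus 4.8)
The plan is to show that, under the stated hypotheses, the functor $\widetilde{\Pi}^\infty_\cE$ is nothing but the constant hypersheaf functor $\Gamma_X^{\ast,\mathrm{hyp}}$, so that the assertion becomes essentially a tautology.

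First I would use that $X$ is weakly contractible: then $\Pi_\infty(X)$ is a terminal object of $\cS$, so the forgetful functor $\cS_{/\Pi_\infty(X)} \to \cS$ is an equivalence. Since $\widetilde{\Pi}_\infty \colon \HSh(X) \to \cS_{/\Pi_\infty(X)}$ is by construction a lift of $\Pi_\infty$ along that forgetful functor, under this equivalence $\widetilde{\Pi}_\infty$ is identified with $\Pi_\infty \colon \HSh(X) \to \cS$. Tensoring with $\id_\cE$ and using the identification $\Pi_\infty^\cE \simeq \Pi_\infty \otimes \id_\cE$ recalled above, this yields an equivalence $\widetilde{\Pi}_\infty^\cE \simeq \Pi_\infty^\cE$ of functors in $\PrL$, and hence, passing to right adjoints, an equivalence $\widetilde{\Pi}^\infty_\cE \simeq \Pi^\infty_\cE$.

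Next I would invoke the fact recalled just before the statement — a consequence of \cite[Proposition 2.5]{HPT} together with the hypothesis that $X$ is locally weakly contractible — that $\Pi^\infty_\cE$ is canonically identified with the constant hypersheaf functor $\Gamma_X^{\ast,\mathrm{hyp}} \colon \cE \to \HSh(X;\cE)$. Its essential image consists of hyperconstant hypersheaves, and these are in particular locally hyperconstant, as witnessed by the trivial open cover $\{X\}$ of $X$; hence $\Gamma_X^{\ast,\mathrm{hyp}}$, and therefore $\widetilde{\Pi}^\infty_\cE$, factors through $\mathrm{LC}^{\mathrm{hyp}}(X;\cE)$.

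I do not expect a genuine obstacle here; the only point that deserves a word is the compatibility of forming right adjoints with $-\otimes\id_\cE$, but this is formal, since $\widetilde{\Pi}_\infty^\cE$ and $\Pi_\infty^\cE$ are genuinely equivalent as functors and right adjoints of equivalent functors are equivalent. Alternatively, one can first establish $\widetilde{\Pi}_\infty \simeq \Pi_\infty$ (hence $\widetilde{\Pi}^\infty \simeq \Pi^\infty \simeq \Gamma_X^{\ast,\mathrm{hyp}}$) over $\cE=\cS$ and then tensor the resulting adjunction with $\cE$, which circumvents the issue entirely. If one instead prefers a hands-on route, one can compute directly from \eqref{description_pi_tilde} that $\widetilde{\Pi}^\infty(K)(U) \simeq \Map(\Pi_\infty(U),K)$ for every open $U$, and recognize this as the value of $\Gamma_X^{\ast,\mathrm{hyp}}(K)$ on $U$ using \cite[Theorem 2.13]{HPT}.
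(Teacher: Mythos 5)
Your proof is correct and follows essentially the same route as the paper: identify $\widetilde{\Pi}_\infty$ with $\Pi_\infty$ using weak contractibility of $X$, tensor with $\cE$, pass to right adjoints, and conclude via \cite[Proposition 2.5]{HPT} that $\Pi^\infty_\cE \simeq \Gamma_X^{\ast,\mathrm{hyp}}$ lands in hyperconstant (hence locally hyperconstant) hypersheaves. Your extra remark on the compatibility of right adjoints with $-\otimes\id_\cE$ is a reasonable clarification of a step the paper treats as immediate.
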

\begin{proof}
	Since $X$ is weakly contractible, $\Pi_\infty(X) \simeq *$, and therefore $\widetilde{\Pi}_\infty$ identifies with $\Pi_\infty$.
	By tensoring with $\cE$, we deduce that $\widetilde{\Pi}_\infty^\cE$ identifies with $\Pi_\infty^\cE$.
	Hence, $\widetilde{\Pi}^\infty_\cE$ identifies with $\Pi^\infty_\cE $.
	Since $X$ is locally weakly contractible, \cite[Proposition 2.5]{HPT} supplies a canonical identification of $\Pi^\infty_\cE$ with $\Gamma_X^{\ast,\hyp} \colon \cE \to \HSh(X;\cE)$.
	The conclusion follows.
\end{proof}

\begin{lem}\label{cor:Phi_identification}
	Let $X$ be a locally weakly contractible topological space and let $\cE$ be a presentable $\infty$-category.
	Then $\Phi_X^{\hyp,\cE} $ and $\widetilde{\Pi}_\infty^\cE$ are canonically equivalent as functors from $\HSh(X;\cE)$ to $\Fun(\Pi_\infty(X),\cE)$.
	In particular, $\Psi_X^{\hyp,\cE} $ and $\widetilde{\Pi}^\infty_\cE$ are canonically equivalent.
\end{lem}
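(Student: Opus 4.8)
The plan is to reduce to the case $\cE = \cS$ and then compare the two functors by identifying their right adjoints. For the reduction, I would apply \cref{lem:Phi_with_coefficients} to $\cX = \HSh(X)$, $\mathsf A = \Pi_\infty$ and the decomposition $\cE \simeq \cS \otimes \cE$, obtaining a canonical equivalence $\Phi_X^{\mathrm{hyp},\cE} \simeq \Phi_X^{\mathrm{hyp}} \otimes \id_\cE$ under the identification $\Fun(\Pi_\infty(X), \cE) \simeq \Fun(\Pi_\infty(X), \cS) \otimes \cE$, where $\Phi_X^{\mathrm{hyp}}$ denotes the functor for $\cE = \cS$. Since $\widetilde{\Pi}_\infty^\cE = \widetilde{\Pi}_\infty \otimes \id_\cE$ by definition, an equivalence $\Phi_X^{\mathrm{hyp}} \simeq \widetilde{\Pi}_\infty$ for $\cE = \cS$ --- where we use the straightening equivalence $\cS_{/\Pi_\infty(X)} \simeq \Fun(\Pi_\infty(X), \cS)$ to view $\widetilde{\Pi}_\infty$ as valued in $\Fun(\Pi_\infty(X),\cS)$ --- tensors up to the first assertion for arbitrary $\cE$. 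The ``in particular'' assertion is then formal: $\Psi_X^{\mathrm{hyp},\cE}$ is the right adjoint of $\Phi_X^{\mathrm{hyp},\cE}$ by \cref{cor:exodromy_adjunction}, $\widetilde{\Pi}^\infty_\cE$ is the right adjoint of $\widetilde{\Pi}_\infty^\cE$ by definition, and right adjoints are unique.

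It remains to treat $\cE = \cS$. Both $\Phi_X^{\mathrm{hyp}}$ and $\widetilde{\Pi}_\infty$ are colimit-preserving functors $\HSh(X) \to \Fun(\Pi_\infty(X), \cS)$, so it suffices to identify their right adjoints $\Psi_X^{\mathrm{hyp}}$ and $\widetilde{\Pi}^\infty$. Fix $G \in \Fun(\Pi_\infty(X), \cS)$ and let $E_G \to \Pi_\infty(X)$ be the left fibration obtained by unstraightening $G$; for an open $U \subseteq X$, write $\Pi_\infty(U) \to \Pi_\infty(X)$ for the map induced by $U \hookrightarrow X$. By \cref{formula_for_psi}, applied to the trivial stratification (so that $\Pi_\infty^\Sigma(U, P|_U) = \Pi_\infty(U)$), we have $\Psi_X^{\mathrm{hyp}}(G)(U) \simeq \lim_{\Pi_\infty(U)} G|_{\Pi_\infty(U)}$, the restriction being along $\Pi_\infty(U) \to \Pi_\infty(X)$. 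Now I would invoke the standard dictionary over a Kan complex: the restriction $G|_{\Pi_\infty(U)}$ unstraightens to the base change $E_G \times_{\Pi_\infty(X)} \Pi_\infty(U) \to \Pi_\infty(U)$, the limit of a space-valued functor over an $\infty$-groupoid is the space of sections of the associated left fibration, and sections of a base change are maps over the base; together these give
\[ \Psi_X^{\mathrm{hyp}}(G)(U) \simeq \lim_{\Pi_\infty(U)} G|_{\Pi_\infty(U)} \simeq \Map_{\cS_{/\Pi_\infty(X)}}\big(\Pi_\infty(U), E_G\big) \ , \]
which by \cref{description_pi_tilde} is exactly $\widetilde{\Pi}^\infty(E_G)(U)$. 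I would then check that these identifications are natural in $U$ and in $G$, so that they assemble into an equivalence of presheaf-valued functors; since $\Psi_X^{\mathrm{hyp}}$ is valued in $\HSh(X)$ by \cref{lem:sheaf_condition} (cf.\ \cref{eg:II}), $\widetilde{\Pi}^\infty$ is valued in $\HSh(X)$ by construction, and $\HSh(X) \hookrightarrow \PSh(X)$ is fully faithful, this yields $\Psi_X^{\mathrm{hyp}} \simeq \widetilde{\Pi}^\infty$. Passing to left adjoints gives $\Phi_X^{\mathrm{hyp}} \simeq \widetilde{\Pi}_\infty$.

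The conceptual content is light; the real work is concentrated in the middle step, namely making precise the chain of identifications ``limit over $\Pi_\infty(U)$ $=$ space of sections of $E_G \times_{\Pi_\infty(X)} \Pi_\infty(U)$ $=$ mapping space over $\Pi_\infty(X)$'' together with its naturality in both $U$ and $G$, and its compatibility with the straightening equivalence and with base change along $\Pi_\infty(U) \to \Pi_\infty(X)$. (Note that the hypothesis of local weak contractibility is not needed for this particular statement; it is part of the standing assumptions of the subsection and enters the monodromy equivalence elsewhere, e.g.\ in \cref{factor_through_LC}.)
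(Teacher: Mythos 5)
Your proof is correct and follows essentially the same route as the paper's: reduce to $\cE=\cS$ via \cref{lem:Phi_with_coefficients} and the identity $\widetilde{\Pi}_\infty^\cE = \widetilde{\Pi}_\infty\otimes\id_\cE$, pass to right adjoints, and match the formula \eqref{eq:Psi_formula} for $\Psi_X^{\mathrm{hyp}}$ with the formula \eqref{description_pi_tilde} for $\widetilde{\Pi}^\infty$ under straightening. You merely spell out the ``limit over an $\infty$-groupoid $=$ sections of the left fibration $=$ mapping space over the base'' dictionary that the paper leaves implicit, and your side remark that local weak contractibility is not actually used here is accurate.
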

\begin{proof}
It is enough to show that $\Phi_X^{\hyp,\cE} $ and $\widetilde{\Pi}_\infty^\cE$ are canonically equivalent.
From \cref{lem:Phi_with_coefficients}, we have $\Phi_X^{\hyp,\cE}  \simeq \Phi_X^{\hyp} \otimes \id_\cE$.
Since $\widetilde{\Pi}_\infty^\cE=\widetilde{\Pi}_\infty \otimes \id_{\cE} $, we are left to show that $\Phi_X^{\hyp} $ and $\widetilde{\Pi}_\infty$ are canonically equivalent.
To do this, it is enough to show that $\Psi_X^{\hyp} $ and $\widetilde{\Pi}^\infty$ are canonically equivalent.
Under the straightening equivalence $\cS_{/\Pi_\infty(X)} \simeq \Fun(\Pi_\infty(X), \cS)$, the formula (\ref{description_pi_tilde}) for $\widetilde{\Pi}^\infty$ identifies canonically with the formula  (\ref{eq:Psi_formula}) for $\Psi^{\hyp}_X$.
Thus $\Psi_X^{\hyp} $ and $\widetilde{\Pi}^\infty$ are canonically equivalent.
\end{proof}

\begin{cor} \label{cor:Psi_produces_locally_constant}
	Let $X$ be a locally weakly contractible topological space.
	Then for every presentable $\infty$-category $\cE$ the functor $\Psi_X^{\hyp,\cE}  \colon \Fun(\Pi_\infty(X), \cE) \to \HSh(X;\cE)$ factors through the full subcategory $\LChyp(X;\cE)$.
\end{cor}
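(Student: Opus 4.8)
The plan is to exploit that being locally hyperconstant is, by definition, a condition that may be checked on any open cover of $X$. Since $X$ is locally weakly contractible, it admits an open cover $\{U_\alpha\}_{\alpha}$ by weakly contractible open subsets; moreover each $U_\alpha$, being open in $X$, is itself locally weakly contractible (a fundamental system of weakly contractible neighbourhoods of a point of $X$ may be taken inside any given open neighbourhood). Fixing $F \in \Fun(\Pi_\infty(X), \cE)$, it therefore suffices to show that $i_\alpha^{\ast,\mathrm{hyp}}\big(\Psi_X^{\mathrm{hyp},\cE}(F)\big)$ is locally hyperconstant on $U_\alpha$ for each inclusion $i_\alpha \colon U_\alpha \hookrightarrow X$. (Recall that $\Psi_X^{\mathrm{hyp},\cE}(F)$ is genuinely an object of $\HSh(X;\cE)$ by \cref{lem:sheaf_condition}, as recorded in \cref{eg:II}.)

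The first step is to pass to the weakly contractible opens using the functoriality of $\Psi$. Since $i_\alpha$ is an open immersion, \cref{eg:functoriality_topological_spaces}(2) tells us that the exchange transformation $\psi_{i_\alpha}^{\mathrm{hyp}}$ is an equivalence, so that there is a canonical identification
\[ i_\alpha^{\ast,\mathrm{hyp}} \circ \Psi_X^{\mathrm{hyp},\cE} \;\simeq\; \Psi_{U_\alpha}^{\mathrm{hyp},\cE} \circ \Pi_\infty(i_\alpha)^\ast . \]
Consequently $i_\alpha^{\ast,\mathrm{hyp}}\big(\Psi_X^{\mathrm{hyp},\cE}(F)\big) \simeq \Psi_{U_\alpha}^{\mathrm{hyp},\cE}\big(\Pi_\infty(i_\alpha)^\ast F\big)$, and it is enough to prove that $\Psi_{U_\alpha}^{\mathrm{hyp},\cE}$ has image contained in $\mathrm{LC}^{\mathrm{hyp}}(U_\alpha;\cE)$.

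For the second step I would invoke the monodromy picture of the previous subsection. By \cref{cor:Phi_identification} applied to the locally weakly contractible space $U_\alpha$, the functor $\Psi_{U_\alpha}^{\mathrm{hyp},\cE}$ is canonically equivalent to $\widetilde{\Pi}^\infty_\cE$ for $U_\alpha$; and since $U_\alpha$ is in addition weakly contractible, \cref{factor_through_LC} shows that this functor factors through $\mathrm{LC}^{\mathrm{hyp}}(U_\alpha;\cE)$ (in fact through the hyperconstant hypersheaves, being identified there with $\Gamma_{U_\alpha}^{\ast,\mathrm{hyp}}$). Combining the two steps: for every $F$, the restriction $i_\alpha^{\ast,\mathrm{hyp}}\big(\Psi_X^{\mathrm{hyp},\cE}(F)\big)$ is locally hyperconstant on $U_\alpha$; refining the cover $\{U_\alpha\}_\alpha$ by the opens that witness this and using the functoriality of $(-)^{\ast,\mathrm{hyp}}$ under composition of open immersions, we conclude that $\Psi_X^{\mathrm{hyp},\cE}(F)$ is locally hyperconstant on $X$, i.e.\ lies in $\mathrm{LC}^{\mathrm{hyp}}(X;\cE)$.

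I do not expect a genuine obstacle: the substantive work has already been done in \cref{eg:functoriality_topological_spaces}(2) (verification of the hypotheses of \cref{cor:Psi_open_restriction} for an open immersion, so that $\psi_f^{\mathrm{hyp}}$ is invertible) and in \cref{cor:Phi_identification}/\cref{factor_through_LC}. The only points that merit an explicit word are the stability of local weak contractibility under passing to open subsets and the elementary fact that local hyperconstancy is detected on an open cover; both are immediate from the definitions.
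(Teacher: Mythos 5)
Your proof is correct and follows essentially the same route as the paper's: reduce to a weakly contractible open $U$ via the invertibility of $\psi^{\mathrm{hyp}}$ for open immersions (\cref{cor:Psi_open_restriction} in the form of \cref{eg:functoriality_topological_spaces}-(2)), then conclude on $U$ by combining \cref{cor:Phi_identification} with \cref{factor_through_LC}. The only difference is that you spell out the two elementary verifications (stability of local weak contractibility under passage to opens, and detection of local hyperconstancy on a cover) that the paper leaves implicit.
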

\begin{proof}
	Let $F \colon \Pi_\infty(X) \to \cE$ be a functor.
	Since $X$ is locally weakly contractible, it is enough to prove that for every weakly contractible open subset $U$ of $X$ the restriction $\Psi_X^{\hyp,\cE} (F) |_{U}$ is locally hyperconstant.
	Using \cref{cor:Psi_open_restriction} in the form of \cref{eg:functoriality_topological_spaces_bis}, we find a canonical equivalence
	\[ \Psi_X^{\hyp,\cE} (F) |_U \simeq \Psi_U^{\hyp,\cE} ( F |_{\Pi_\infty(U)} ) . \]
	Hence, we are left to suppose that $X$ is weakly contractible and locally weakly contractible.
	In that case, \cref{cor:Psi_produces_locally_constant} follows from the combination of \cref{cor:Phi_identification} and \cref{factor_through_LC}. 
\end{proof}

In particular, \cref{cor:Psi_produces_locally_constant} implies that the adjunction $\Phi_X^{\hyp} \dashv \Psi_X^{\hyp}$ restricts to an adjunction
\begin{equation} \label{eq:monodromy_revisited}
	\Phi_X^{\hyp}\colon \LChyp(X;\cE) \leftrightarrows \Fun(\Pi_\infty(X),\cE) \colon \Psi_X^{\hyp} \ .
\end{equation}
At this point, we have:

\begin{cor}\label{cor:monodromy_revisited}
	Let $X$ be a locally weakly contractible topological space.
	Let $\cE$ be a presentable $\infty$-category.
	Then the adjunction \eqref{eq:monodromy_revisited} is an equivalence.
\end{cor}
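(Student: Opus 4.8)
The plan is to identify the adjunction \eqref{eq:monodromy_revisited} with the classical monodromy equivalence for the locally weakly contractible space $X$, using the comparisons already established in this section. By \cref{cor:Psi_produces_locally_constant} the functor $\Psi_X^{\mathrm{hyp},\cE}$ takes values in $\mathrm{LC}^{\mathrm{hyp}}(X;\cE)$, so that in view of \cref{cor:exodromy_adjunction} the adjunction $\Phi_X^{\mathrm{hyp},\cE} \dashv \Psi_X^{\mathrm{hyp},\cE}$ genuinely restricts to \eqref{eq:monodromy_revisited}; it therefore remains only to show that its unit and its counit are equivalences.

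To this end I would first use \cref{cor:Phi_identification}, which --- together with the straightening equivalence $\cS_{/\Pi_\infty(X)} \simeq \Fun(\Pi_\infty(X),\cS)$ and the explicit formula \eqref{description_pi_tilde} --- identifies \eqref{eq:monodromy_revisited}, for $\cE = \cS$, with the adjunction $\widetilde{\Pi}_\infty \colon \mathrm{LC}^{\mathrm{hyp}}(X;\cS) \leftrightarrows \cS_{/\Pi_\infty(X)} \colon \widetilde{\Pi}^\infty$. This is precisely the monodromy equivalence for the locally weakly contractible space $X$, established in \cite{HPT} and generalizing the classical case \cite[Appendix A]{Lurie_Higher_algebra}. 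The case of general coefficients then follows either by the same reference, or by reduction to $\cE = \cS$: by \cref{lem:Phi_with_coefficients} one has $\Phi_X^{\mathrm{hyp},\cE} \simeq \Phi_X^{\mathrm{hyp}} \otimes \id_\cE$, and under the identifications $\HSh(X;\cE) \simeq \HSh(X)\otimes\cE$, $\Fun(\Pi_\infty(X),\cE) \simeq \Fun(\Pi_\infty(X),\cS)\otimes\cE$ and $\mathrm{LC}^{\mathrm{hyp}}(X;\cE) \simeq \mathrm{LC}^{\mathrm{hyp}}(X;\cS)\otimes\cE$ (the last compatibly with the inclusion into $\HSh(X;\cE)$, see \cite{HPT}) the adjunction \eqref{eq:monodromy_revisited} is carried to its $\cS$-version by the equivalence-preserving functor $-\otimes\cE \colon \PrL \to \PrL$.

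If one prefers a self-contained argument for the $\cS$-case, the natural route is to reduce to $X$ weakly contractible, where \cref{factor_through_LC} identifies $\widetilde{\Pi}^\infty$ with the constant hypersheaf functor and the statement becomes the assertion that every locally hyperconstant hypersheaf on $X$ is hyperconstant; the passage from this local model to an arbitrary locally weakly contractible $X$ proceeds by a descent argument along a basis of weakly contractible opens, of the same flavour as the proof of \cref{prop:locally_contractible_strata}, exploiting that $\psi_j$ is an equivalence for open immersions $j$ (\cref{eg:functoriality_topological_spaces}). The delicate points --- which I would expect to be the main obstacle --- are, on the one hand, checking that all of these identifications are compatible with the passage to the full subcategories $\mathrm{LC}^{\mathrm{hyp}}(X;-)$, so that the whole adjunction (and not just the two underlying functors) is being compared, and, on the other hand, controlling $\Phi_X^{\mathrm{hyp}}$ under restriction along open immersions (i.e.\ the transformation $\phi_j$), which in the bootstrapping argument one must know to be an equivalence.
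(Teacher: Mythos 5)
Your main argument is exactly the paper's proof: identify the adjunction with $\widetilde{\Pi}_\infty^\cE \dashv \widetilde{\Pi}^\infty_\cE$ via \cref{cor:Phi_identification} and then invoke the monodromy equivalence of \cite[Corollary 3.9]{HPT}. The additional reductions and the alternative self-contained route you sketch are not needed, since the cited result of \cite{HPT} already covers arbitrary presentable coefficients and locally weakly contractible $X$.
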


\begin{proof}
	Combine \cref{cor:Phi_identification} with the monodromy equivalence \cite[Corollary 3.9]{HPT}.
\end{proof}

\section{Exodromy adjunction for constructible hypersheaves}

Fix a conically stratified space $(X,P)$ and a presentable $\infty$-category $\cE$.
We specialize the discussion of \cref{sec:categorical_framework} to the context of \cref{eg:I}.
In other words, we take $\cX \coloneqq \HSh(X)$ and consider the functor
\[ \Pi_\infty \colon \HSh(X) \to \Cat_\infty \]
left Kan extended from the functor sending an open $U$ of $X$ to the $\infty$-category of exit paths $\Pi_\infty(U, P|_U)$.
Van Kampen's theorem for exit paths \cite[Theorem A.7.1]{Lurie_Higher_algebra} shows that $\Pi_\infty$ is a colimit preserving functor.
Thus, \cref{cor:exodromy_adjunction} provides for every presentable $\infty$-category $\cE$ an adjunction
\[ \Phi_{X,P}^{\hyp} \colon \HSh(X;\cE) \leftrightarrows \Fun(\Pi_\infty(X,P), \cE) \colon \Psi_{X,P}^{\hyp} \ , \]
which we refer to as the exodromy adjunction.
When the stratification on $X$ is trivial, Corollaries \ref{cor:Psi_produces_locally_constant} and \ref{cor:monodromy_revisited} show that this adjunction restricts to an equivalence between $\Fun(\Pi_\infty(X,P),\cE)$ and the $\infty$-category of locally hyperconstant hypersheaves on $X$.
Our goal in this section is to show that  the functor $\Psi_{X,P}^{\hyp}$ factors through the full subcategory $\Cons_P^{\hyp}(X;\cE)$ of constructible hypersheaves on $X$, and therefore that the exodromy adjunction restricts to an adjunction
\[ \Phi_{X,P}^{\hyp} \colon \Cons_P^{\hyp}(X;\cE) \leftrightarrows \Fun(\Pi_\infty(X,P),\cE) \colon \Psi_{X,P}^{\hyp} \ . \]
Since the case of trivial stratification is already known, the main point is to show that the functor $\Psi_{X,P}^{\hyp}$ is compatible with the restriction to strata.

\subsection{Criteria for $\psi_f^{\hyp}$ to be an equivalence}

Let $f \colon (Y,Q) \to (X,P)$ be a morphism of conically stratified spaces.
\cref{construction:comparison_morphisms}  associated to $f$ a natural transformation
\[ \psi_f^{\hyp} \colon f^{\ast,\hyp} \circ \Psi_{X,P}^{\hyp} \to \Psi_{Y,Q}^{\hyp} \circ \Pi_\infty(f)^\ast \ . \]
Our  goal is to prove that $\psi_f^{\hyp}$ is an equivalence when $f$ is the inclusion of a single stratum.
For this, we start laying out general locality properties of $\psi_f^{\hyp}$.

\begin{notation}
	Let $f \colon (Y,Q) \to (X,P)$ be a morphism of conically stratified spaces.
	For every open $U \subseteq X$ we let $Y_U \coloneqq U \times_X Y$ and we denote by $f_U \colon (Y_U,Q) \to (U,P)$ the induced morphism.
\end{notation}

The formation of the hypersheaf associated to a functor from the Exit Paths is local.
This is justified by the following

\begin{lem}\label{commutation_open_immersion}
Let $(X,P)$ be a conically stratified space.
Let $j : U \to X$ be the inclusion of an open subset of $X$. 
Then, the natural transformation
\[ \psi_j^{\hyp} \colon j^{\ast,\hyp} \circ \Psi_{X,P}^{\hyp} \to \Psi_{U,P}^{\hyp} \circ \Pi_\infty(j)^\ast \  \]
is an equivalence.
\end{lem}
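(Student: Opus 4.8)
The plan is to obtain this as a direct application of the criterion \cref{cor:Psi_open_restriction}; indeed the statement is precisely the content of \cref{eg:functoriality_topological_spaces}-(2) specialized to the case $Y = U$ (equipped with the restricted stratification $P|_U$) and $f = j$, so the only real task is to check that this case falls under the hypotheses of that corollary. Concretely, I would set $\cX \coloneqq \HSh(X)$, $\cY \coloneqq \HSh(U)$, $f \coloneqq j^{\ast,\mathrm{hyp}} \colon \HSh(X) \to \HSh(U)$, and take $\mathsf A = \mathsf B = \Pi_\infty^\Sigma$ to be the colimit-preserving exit-path functors of \cref{eg:I}-(2). Because $j \colon U \to X$ is an open immersion, $j^{\ast,\mathrm{hyp}}$ admits a fully faithful left adjoint $h \coloneqq j_!^{\mathrm{hyp}}$, which takes care of the first hypothesis of \cref{cor:Psi_open_restriction}.

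It remains to verify that the composite $\mathsf B \to \mathsf B \circ j^{\ast,\mathrm{hyp}} \circ j_!^{\mathrm{hyp}} \to \mathsf A \circ j_!^{\mathrm{hyp}}$ (the unit of $j_!^{\mathrm{hyp}} \dashv j^{\ast,\mathrm{hyp}}$, followed by the exit-path functoriality transformation $\gamma$) is an equivalence of functors $\HSh(U) \to \Cat_\infty$. Both functors preserve colimits, and $\HSh(U)$ is generated under colimits by the hypersheafifications of the representables attached to open subsets $V \subseteq U$; hence it suffices to check the statement on such $V$. For $V \subseteq U$ open, the unit map $V \to j^{\ast,\mathrm{hyp}} j_!^{\mathrm{hyp}}(V)$ is an equivalence since $j_!^{\mathrm{hyp}}$ is fully faithful, $j_!^{\mathrm{hyp}}(V)$ represents $V$ viewed as an open subset of $X$, and under these identifications the relevant component of $\gamma$ is the tautological equality $\Pi_\infty^\Sigma(V, P|_V) = \Pi_\infty^\Sigma(V, P|_V)$ — the exit-path $\infty$-category of $V$ is the same whether $V$ is stratified via $U$ or via $X$, since the stratification of $U$ is by definition the restriction of $P$. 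So the composite is an equivalence on generators and hence an equivalence, and \cref{cor:Psi_open_restriction} gives that $\psi_j^{\mathrm{hyp}}$ is an equivalence.

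I do not anticipate a genuine difficulty: the argument is a verification of the hypotheses of an already-available criterion. The one place calling for a little care is the reduction to representable open subsets, which relies on the fact that $\Pi_\infty^\Sigma \colon \HSh(U) \to \Cat_\infty$ is, by construction, the colimit-preserving left Kan extension of its restriction to $\mathrm{Open}(U)$, together with the fact that hypersheafified representables generate $\HSh(U)$ under colimits; once this is granted, comparing the two colimit-preserving functors on that generating family is immediate.
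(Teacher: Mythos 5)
Your proposal is correct and follows essentially the same route as the paper: the paper's proof likewise observes that $j^{\ast,\mathrm{hyp}}$ is presheaf pullback with fully faithful left adjoint $j^{\mathrm{hyp}}_{\sharp}$, checks that the unit is an equivalence, and then invokes \cref{cor:Psi_open_restriction} (exactly as spelled out in \cref{eg:functoriality_topological_spaces}-(2)). Your reduction to representable opens is just a slightly more explicit verification of the same hypothesis.
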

\begin{proof}
Since $j : U \to X$ is the inclusion of an open subset of $X$, the functor $j^{\ast,\hyp} : \Sh^{\hyp}(X; \cE)\to \Sh^{\hyp}(U;\cE)$ is computed as the presheaf pullback $j^{-1}$.
On the other hand, $j^{-1}$ admits a fully-faithul left adjoint $j_!$.
Hence, $j_{\sharp}^{\hyp} \coloneqq \hyp \circ j_!$ is left adjoint to $j^{\ast,\hyp}$.
Since the hypersheafification commutes with $j^{-1}$, the unit transformation $\id \to j^{\ast,\hyp}\circ j_{\sharp}^{\hyp}$ is an equivalence.
Then, \cref{commutation_open_immersion} follows from \cref{cor:Psi_open_restriction}.
\end{proof}

\begin{lem}\label{two_psif_bis}
Let $f \colon (Y,Q) \to (X,P)$ be a morphism of conically stratified spaces.
Let $i : U \to X$ be the inclusion of an open subset of $X$. 
Let $j : f^{-1}(U) \to Y$ be the induced inclusion. 
Then, for every $F \in \Fun(\Pi_\infty(X,P),\cE)$, the transformation $j^{\ast,\hyp}(\psi_f^{\hyp}(F))$ is canonically equivalent to
$\psi_{f_U}^{\hyp}(\Pi_\infty(i)^\ast (F))$.
\end{lem}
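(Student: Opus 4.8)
The plan is to reduce the statement to the $2$-functoriality of the exchange transformations (\cref{2-functoriality_psi}), applied to two different factorizations of one and the same morphism of stratified spaces. Write $j \colon f^{-1}(U) \to Y$ for the induced open immersion and $f_U \colon f^{-1}(U) \to U$ for the restricted morphism, so that at the level of stratified spaces $f \circ j = i \circ f_U \colon (f^{-1}(U),Q) \to (X,P)$. Passing to hypersheaf $\infty$-topoi this becomes $j^{\ast,\mathrm{hyp}} \circ f^{\ast,\mathrm{hyp}} \simeq (f\circ j)^{\ast,\mathrm{hyp}} \simeq f_U^{\ast,\mathrm{hyp}} \circ i^{\ast,\mathrm{hyp}}$, and on exit paths $\Pi_\infty^\Sigma(j)^\ast \circ \Pi_\infty^\Sigma(f)^\ast \simeq \Pi_\infty^\Sigma(f\circ j)^\ast \simeq \Pi_\infty^\Sigma(f_U)^\ast \circ \Pi_\infty^\Sigma(i)^\ast$. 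All the $\infty$-topoi in play are presentable, the four pullback functors are left exact and colimit preserving (being composites of inverse image functors of geometric morphisms with hypersheafification), and $\Pi_\infty^\Sigma$ commutes with colimits by the van Kampen theorem for exit paths; hence the hypotheses of \cref{2-functoriality_psi} are satisfied for both factorizations.

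Applying \cref{2-functoriality_psi} to $\HSh(X) \xrightarrow{f^{\ast,\mathrm{hyp}}} \HSh(Y) \xrightarrow{j^{\ast,\mathrm{hyp}}} \HSh(f^{-1}(U))$ yields a canonical identification of natural transformations
\[ \psi_{f\circ j}^{\mathrm{hyp}} \simeq \psi_j^{\mathrm{hyp}} \circ \big( j^{\ast,\mathrm{hyp}} \circ \psi_f^{\mathrm{hyp}} \big) , \]
while applying it to $\HSh(X) \xrightarrow{i^{\ast,\mathrm{hyp}}} \HSh(U) \xrightarrow{f_U^{\ast,\mathrm{hyp}}} \HSh(f^{-1}(U))$ yields
\[ \psi_{f\circ j}^{\mathrm{hyp}} \simeq \psi_{f_U}^{\mathrm{hyp}} \circ \big( f_U^{\ast,\mathrm{hyp}} \circ \psi_i^{\mathrm{hyp}} \big) . \]
Evaluating both at $F \in \Fun(\Pi_\infty^\Sigma(X,P),\cE)$ and combining gives a canonical identification
\[ \psi_j^{\mathrm{hyp}}\big(\Pi_\infty^\Sigma(f)^\ast F\big) \circ j^{\ast,\mathrm{hyp}}\big(\psi_f^{\mathrm{hyp}}(F)\big) \simeq \psi_{f_U}^{\mathrm{hyp}}\big(\Pi_\infty^\Sigma(i)^\ast F\big) \circ f_U^{\ast,\mathrm{hyp}}\big(\psi_i^{\mathrm{hyp}}(F)\big) . \]
Now $i$ and $j$ are open immersions, so \cref{commutation_open_immersion} shows that $\psi_i^{\mathrm{hyp}}$ and $\psi_j^{\mathrm{hyp}}$ are equivalences; hence $\psi_j^{\mathrm{hyp}}(\Pi_\infty^\Sigma(f)^\ast F)$ and $f_U^{\ast,\mathrm{hyp}}(\psi_i^{\mathrm{hyp}}(F))$ are equivalences, and they realize precisely the canonical identifications of the targets $j^{\ast,\mathrm{hyp}}\Psi_{Y,Q}^{\mathrm{hyp}}(\Pi_\infty^\Sigma(f)^\ast F) \simeq \Psi_{f^{-1}(U),Q}^{\mathrm{hyp}}(\Pi_\infty^\Sigma(f_U)^\ast\Pi_\infty^\Sigma(i)^\ast F)$ and of the sources $j^{\ast,\mathrm{hyp}} f^{\ast,\mathrm{hyp}}\Psi_{X,P}^{\mathrm{hyp}}(F) \simeq f_U^{\ast,\mathrm{hyp}}\Psi_{U,P}^{\mathrm{hyp}}(\Pi_\infty^\Sigma(i)^\ast F)$. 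Through these identifications the displayed equation exhibits $j^{\ast,\mathrm{hyp}}(\psi_f^{\mathrm{hyp}}(F))$ and $\psi_{f_U}^{\mathrm{hyp}}(\Pi_\infty^\Sigma(i)^\ast F)$ as the same morphism — namely $\psi_{f\circ j}^{\mathrm{hyp}}(F)$, read in the appropriate mapping space — which is the desired canonical equivalence; naturality in $F$ is inherited from the transformations used.

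The one point that requires care — and the step I expect to cost the most bookkeeping — is verifying that the natural transformations $\gamma$ and $\delta$ feeding \cref{2-functoriality_psi} in the two factorizations are the ones induced by the evident functoriality of the exit-path construction, and in particular that the resulting transformation $\Pi_\infty^\Sigma \circ (f\circ j)^{\ast,\mathrm{hyp}} \to \Pi_\infty^\Sigma$ on $\HSh(X)$ does not depend on the factorization. This is a coherence statement about the assignment $g \mapsto \Pi_\infty^\Sigma(g)^\ast$ on morphisms of stratified spaces, which is part of the structure set up in \cref{sec:categorical_framework}; it is worth spelling out, but presents no substantive difficulty.
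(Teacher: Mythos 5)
Your proof is correct and is exactly the paper's argument: the paper's own proof reads ``Apply \cref{2-functoriality_psi} and \cref{commutation_open_immersion} twice,'' i.e.\ it compares the two factorizations $\psi_{f\circ j}^{\mathrm{hyp}} \simeq \psi_j^{\mathrm{hyp}} \circ (j^{\ast,\mathrm{hyp}}\circ\psi_f^{\mathrm{hyp}})$ and $\psi_{i\circ f_U}^{\mathrm{hyp}} \simeq \psi_{f_U}^{\mathrm{hyp}} \circ (f_U^{\ast,\mathrm{hyp}}\circ\psi_i^{\mathrm{hyp}})$ of the same transformation and cancels the open-immersion equivalences, just as you do. Your closing remark about the coherence of the exit-path functoriality is a fair observation, but it is part of the setup of \cref{sec:categorical_framework} and the paper treats it as implicit as well.
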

\begin{proof}
Apply \cref{2-functoriality_psi} and \cref{commutation_open_immersion} twice.
\end{proof}

\begin{cor}\label{lem:localization}
	Let $f \colon (Y,Q) \to (X,P)$ be a morphism of conically stratified spaces.
	Let $\cB$ be a basis for the topology of $X$.
	Assume that for every $U\in \cB$, the transformation 	
		\[ \psi_{f_{U}}^{\hyp} \colon f_{U}^{\ast, \hyp} \circ \Psi_{U,P}^{\hyp} \to \Psi_{Y_U,Q}^{\hyp} \circ \Pi_\infty(f_{U})^\ast \]
		is an equivalence.
		Then, $\psi_f^{\hyp}$ is an equivalence.
\end{cor}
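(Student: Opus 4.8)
The plan is to detect the asserted equivalence locally on $Y$ and then feed the local statements into the hypothesis via \cref{two_psif_bis}. First I would note that since $\cB$ is a basis for the topology of $X$, one has $\bigcup_{U \in \cB} U = X$, so that the opens $\{f\inv(U)\}_{U \in \cB}$ form an open cover of $Y$; moreover, for $U \in \cB$ the open substratified space $f\inv(U)$ of $(Y,Q)$ coincides with $Y_U = U \times_X Y$, so that the inclusion $j_U \colon f\inv(U) \hookrightarrow Y$ is exactly the map induced by $i_U \colon U \hookrightarrow X$ as in \cref{two_psif_bis}, and $f_U \colon (Y_U,Q) \to (U,P)$ is the corresponding base change of $f$.

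Next I would record the locality input: the family of functors $j_U^{\ast,\mathrm{hyp}} \colon \HSh(Y;\cE) \to \HSh(f\inv(U);\cE)$, for $U$ ranging over $\cB$, is jointly conservative. This is where hyperdescent enters. Given a morphism $\alpha$ in $\HSh(Y;\cE)$ and an arbitrary open $V \subseteq Y$, the family $\{V \cap f\inv(U)\}_{U \in \cB}$ is an open cover of $V$, and each hypersheaf satisfies descent along the associated \v{C}ech nerve, whose terms are opens contained in the various $f\inv(U)$; hence $\alpha(V)$ is a limit of the values of $\alpha$ on such opens. Thus, if $j_U^{\ast,\mathrm{hyp}}(\alpha)$ is an equivalence for every $U \in \cB$, then $\alpha(V)$ is an equivalence for every open $V$, and therefore $\alpha$ is an equivalence.

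Granting this, the conclusion is formal. Fix $F \in \Fun(\Pi_\infty^\Sigma(X,P),\cE)$, and for $U \in \cB$ let $i_U \colon U \hookrightarrow X$ be the inclusion. By \cref{two_psif_bis} there is a canonical equivalence
\[ j_U^{\ast,\mathrm{hyp}}\big(\psi_f^{\mathrm{hyp}}(F)\big) \simeq \psi_{f_U}^{\mathrm{hyp}}\big(\Pi_\infty^\Sigma(i_U)^\ast(F)\big) \ . \]
Since $\psi_{f_U}^{\mathrm{hyp}}$ is an equivalence of functors by hypothesis, the right-hand side is an equivalence, hence so is $j_U^{\ast,\mathrm{hyp}}(\psi_f^{\mathrm{hyp}}(F))$ for every $U \in \cB$. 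By the joint conservativity of the $j_U^{\ast,\mathrm{hyp}}$, the morphism $\psi_f^{\mathrm{hyp}}(F)$ is an equivalence in $\HSh(Y;\cE)$; as $F$ was arbitrary, $\psi_f^{\mathrm{hyp}}$ is an equivalence of functors.

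The only point requiring care is the second step, and it is mild: one must ensure that the passage from a basis of $X$ to an arbitrary open of $Y$ (with the opens $f\inv(U)$ sitting in between) is legitimate, which is precisely what hyperdescent guarantees, the descent along the cover $\{V \cap f\inv(U)\}$ being effective. Everything else is a bookkeeping combination of \cref{two_psif_bis} — itself built on \cref{2-functoriality_psi} and \cref{commutation_open_immersion} — with the pointwise description of equivalences of hypersheaves.
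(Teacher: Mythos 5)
Your proof is correct and follows essentially the same route as the paper's: reduce to the basis via \cref{two_psif_bis} and conclude by locality of equivalences of hypersheaves on $Y$. The only difference is that you spell out the descent argument behind the joint conservativity of the restrictions $j_U^{\ast,\mathrm{hyp}}$, a point the paper dispatches with the single phrase ``this statement is local on $Y$''.
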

\begin{proof}
Fix $F \in \Fun(\Pi_\infty(X,P),\cE)$.
We have to show that  $\psi_f^{\hyp}(F)$ is an equivalence of hypersheaves on $Y$.
This statement is local on $Y$.
In particular, for $U \in \cB$, if $i : U\to X$ and $j  : f^{-1}(U)\to Y$ are the inclusions, it is enough to show that $j^{\ast,\hyp}(\psi_f^{\hyp}(F))$ is an equivalence.
Then, \cref{lem:localization} follows from \cref{two_psif_bis}.
\end{proof}

\begin{lem}\label{two_psif}
Let $f \colon (Y,Q) \to (X,P)$ be a morphism of conically stratified spaces. 
Then, $\psi_f^{\hyp}$ is canonically equivalent to
$\hyp(\psi_f^{\mathrm{psh}})$.
\end{lem}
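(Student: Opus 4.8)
The plan is to realise both $\psi_f^{\mathrm{hyp}}$ and $\mathrm{hyp}(\psi_f^{\mathrm{psh}})$ as the two halves into which one and the same exchange transformation factors, and then to invoke the $2$-functoriality of $\psi$ recorded in \cref{2-functoriality_psi}. The starting observation is that the square of colimit- and terminal-object-preserving functors formed by the presheaf pullback $f^{-1}\colon \PSh(X;\cE)\to\PSh(Y;\cE)$, the hypersheafified pullback $f^{\ast,\mathrm{hyp}}\colon \HSh(X;\cE)\to\HSh(Y;\cE)$ and the two hypersheafification functors $(-)^{\mathrm{hyp}}$ commutes canonically: indeed $f^{\ast,\mathrm{hyp}}$ is by construction the hypersheafification of the restriction of $f^{-1}$ to hypersheaves, and $f^{-1}$ preserves $\infty$-connective morphisms of presheaves since its value at an open $V$ is a filtered colimit of the sections over the opens containing $f(V)$, so $(-)^{\mathrm{hyp}}_Y\circ f^{-1}$ inverts the hypersheafification units. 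Write $G\colon \PSh(X;\cE)\to\HSh(Y;\cE)$ for the resulting common composite.

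Equip the four $\infty$-categories in this square with the exit-path functor $\Pi_\infty^\Sigma$ of \cref{eg:I} --- on presheaves the left Kan extension along the Yoneda embedding of $U\mapsto\Pi_\infty^\Sigma(U,P|_U)$, and on hypersheaves its factorisation through $(-)^{\mathrm{hyp}}$ --- so that the comparison transformations attached to the two edges $(-)^{\mathrm{hyp}}$ are the identities realising these factorisations, while the ones attached to $f^{-1}$ and $f^{\ast,\mathrm{hyp}}$ are the exit-path maps of $f$ assembled over the opens of $X$. The two ways of composing these transformations along $G$ produce the same natural transformation $\Pi_\infty^\Sigma\circ G\to\Pi_\infty^\Sigma$: the exit-path functors in play are all left Kan extensions from $\mathrm{Open}(X)$, and each of the two composites restricts along the Yoneda embedding to the single transformation $U\mapsto\big(\Pi_\infty^\Sigma(f^{-1}(U),Q|_{f^{-1}(U)})\to\Pi_\infty^\Sigma(U,P|_U)\big)$, whence they agree by uniqueness of the Kan extension. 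Consequently the exchange transformation $\psi_G$ of \cref{construction:comparison_morphisms} is well defined independently of the factorisation, and \cref{2-functoriality_psi}, applied to the factorisations $G=(-)^{\mathrm{hyp}}_Y\circ f^{-1}$ and $G=f^{\ast,\mathrm{hyp}}\circ(-)^{\mathrm{hyp}}_X$, supplies canonical identifications
\[ \psi_{(-)^{\mathrm{hyp}}_Y}\circ\mathrm{hyp}(\psi_f^{\mathrm{psh}}) \;\simeq\; \psi_G \;\simeq\; \psi_f^{\mathrm{hyp}}\circ\big(f^{\ast,\mathrm{hyp}}\circ\psi_{(-)^{\mathrm{hyp}}_X}\big). \]

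To conclude one uses that the exchange transformations $\psi_{(-)^{\mathrm{hyp}}_X}$ and $\psi_{(-)^{\mathrm{hyp}}_Y}$ attached to the hypersheafification functors are equivalences --- this is, up to bookkeeping, the content of \cref{eg:functoriality}-(2), namely that hypersheafification identifies $\Psi^{\mathrm{psh}}$ with $\Psi^{\mathrm{hyp}}$ --- and that, the relevant natural transformation $\gamma$ being the identity in both cases, these equivalences match the canonical identifications of the sources and targets of $\psi_f^{\mathrm{hyp}}$ and of $\mathrm{hyp}(\psi_f^{\mathrm{psh}})$. Cancelling $\psi_{(-)^{\mathrm{hyp}}_X}$ and $\psi_{(-)^{\mathrm{hyp}}_Y}$ in the displayed identities then yields the desired canonical equivalence $\psi_f^{\mathrm{hyp}}\simeq\mathrm{hyp}(\psi_f^{\mathrm{psh}})$.

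The main obstacle is precisely this last coherence step: it is not enough to know that $\psi_{(-)^{\mathrm{hyp}}}$ is an equivalence, one must check that, under the identifications $(-)^{\mathrm{hyp}}_Y\circ f^{-1}\simeq f^{\ast,\mathrm{hyp}}\circ(-)^{\mathrm{hyp}}_X$ and $(-)^{\mathrm{hyp}}\circ\Psi^{\mathrm{psh}}\simeq\Psi^{\mathrm{hyp}}$, the two occurrences of $\psi_{(-)^{\mathrm{hyp}}}$ cancel coherently rather than up to some further unnamed auto-equivalence. I would settle this exactly as in the proof of \cref{2-functoriality_phi}: unwind \cref{construction:comparison_morphisms} so as to rewrite each $\psi$ above as the exchange transformation of the associated $\phi$, note via \cref{cor:functoriality} that each $\phi$ at issue is a composable Beck-Chevalley transformation, and appeal to the componibility of Beck-Chevalley squares.
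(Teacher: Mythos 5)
Your argument is correct and is essentially a faithful expansion of the paper's own (one-line) proof: the paper also factors the common composite $(-)^{\mathrm{hyp}}_Y\circ f^{-1}\simeq f^{\ast,\mathrm{hyp}}\circ(-)^{\mathrm{hyp}}_X$ both ways, applies \cref{2-functoriality_psi} to each factorization, and cancels the two equivalences $\psi_{(-)^{\mathrm{hyp}}}$ supplied by \cref{eg:functoriality}-(2). Your closing remark on how to verify the coherent cancellation --- rewriting each $\psi$ as the exchange transformation of the corresponding $\phi$ and invoking componibility of Beck--Chevalley squares --- is exactly the mechanism underlying the proofs of \cref{2-functoriality_phi} and \cref{2-functoriality_psi}, so nothing further is needed.
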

\begin{proof}
Apply \cref{2-functoriality_psi} and \cref{eg:functoriality_bis} twice.
\end{proof}

\begin{cor}\label{criterion_psif}
Let $f \colon (Y,Q) \to (X,P)$ be a morphism of conically stratified spaces.
Fix $F \in \Fun(\Pi_\infty(X,P),\cE)$. 
Assume that the collection $\cB = \cB(f,F)$ of open subsets $V \subseteq Y$ for which the transformation
		\[ \psi^{\mathrm{psh}}_{f}(F)(V) \colon f\inv\big(\Psi_{X,P}^{\mathrm{psh}}(F)\big)(V) \to \Psi_{Y,Q}^{\mathrm{psh}}\big( \Pi_\infty(f)^\ast(F) \big)(V) \]
		is an equivalence is a basis for the topology of $Y$.
		Then, $\psi_f^{\hyp}(F)$ is an equivalence.		
\end{cor}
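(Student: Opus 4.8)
The plan is to reduce the statement about hypersheaves to a purely presheaf-theoretic assertion and then invoke the locality of hypersheafification. First, I would apply \cref{two_psif}: for the fixed $F$, it identifies $\psi_f^{\mathrm{hyp}}(F)$ with the hypersheafification
\[ \bigl( \psi_f^{\mathrm{psh}}(F) \bigr)^{\mathrm{hyp}} \colon \bigl( f\inv \Psi_{X,P}^{\mathrm{psh}}(F) \bigr)^{\mathrm{hyp}} \longrightarrow \bigl( \Psi_{Y,Q}^{\mathrm{psh}}(\Pi_\infty^\Sigma(f)^\ast F) \bigr)^{\mathrm{hyp}} \]
of the morphism of $\cE$-valued presheaves $\alpha \coloneqq \psi_f^{\mathrm{psh}}(F) \colon G \to H$ on $Y$. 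Thus it suffices to prove the following general statement: if $\alpha \colon G \to H$ is a morphism in $\PSh(Y;\cE)$ and $\cB$ is a basis of the topology of $Y$ such that $\alpha(V) \colon G(V) \to H(V)$ is an equivalence for every $V \in \cB$, then $\alpha^{\mathrm{hyp}}$ is an equivalence in $\HSh(Y;\cE)$. By hypothesis, the basis $\cB(f,F)$ has precisely this property relative to $\alpha = \psi_f^{\mathrm{psh}}(F)$, so the corollary follows.

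For the general statement, I would use that restriction along the inclusion $\cB \hookrightarrow \mathrm{Open}(Y)$ — where $\cB$ carries the Grothendieck topology induced by the open-cover topology on $Y$ — induces an equivalence $\HSh(Y;\cE) \xrightarrow{\ \sim\ } \HSh(\cB;\cE)$ which is compatible with hypersheafification, i.e.\ fits into a square with the restriction $\PSh(Y;\cE) \to \PSh(\cB;\cE)$ and the two hypersheafification functors; this is the comparison lemma, valid for an arbitrary presentable coefficient $\cE$, and one only needs that $\cB$ is a basis in the topological sense (no stability under intersection is required). Under this square, $\alpha^{\mathrm{hyp}}$ is carried to the hypersheafification of $\alpha|_\cB$, which is the hypersheafification of an objectwise equivalence of presheaves on $\cB$, hence an equivalence; as the restriction equivalence is conservative, $\alpha^{\mathrm{hyp}}$ is an equivalence. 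Alternatively, for $\cE = \cS$ one can test on stalks: $G_y = \colim_{y \in V \in \cB} G(V)$ is a filtered colimit over basic opens, so $\alpha_y$ is an equivalence; since taking stalks commutes with hypersheafification and is jointly conservative on $\HSh(Y;\cS)$, the claim follows, and the general case follows by tensoring with $\cE$ using $\HSh(Y;\cE) \simeq \HSh(Y)\otimes\cE$.

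The preceding lemmas have already isolated the genuine content, so there is no serious obstacle here; the only points requiring a little care are bookkeeping ones. One must make sure that the identification of $\psi_f^{\mathrm{hyp}}(F)$ as the hypersheafification of $\psi_f^{\mathrm{psh}}(F)$ is exactly the one produced by \cref{two_psif} (which itself rests on \cref{2-functoriality_psi} and \cref{eg:functoriality}-(2)), and that the presheaf-level pullback $f\inv$ appearing in the formation of $\psi_f^{\mathrm{psh}}$ is the one whose hypersheafification computes $f^{\ast,\mathrm{hyp}}$, so that the reduction is literal rather than merely morally true. Once these identifications are in place, the corollary is immediate from the comparison lemma.
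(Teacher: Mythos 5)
Your proof is correct and takes essentially the same route as the paper's: reduce via \cref{two_psif} to the presheaf-level map $\psi_f^{\mathrm{psh}}(F)$, then conclude using the restriction adjunction $j\inv \dashv j_*$ along the basis $\cB$, i.e.\ the hypersheaf comparison lemma of \cite[Proposition 1.13 and Remark 1.14]{HPT}. The only difference is one of packaging: the paper exploits that the target $\Psi_{Y,Q}^{\mathrm{psh}}\big(\Pi_\infty^\Sigma(f)^\ast(F)\big)$ is already a hypersheaf (\cref{eg:functoriality}), so that $j_*j\inv$ of the source is then itself a hypersheaf and the cited remark identifies it with the hypersheafification, whereas you invoke the slightly more general statement that a basis-wise equivalence of presheaves becomes an equivalence after hypersheafification.
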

\begin{proof}
Let $j \colon \cB \hookrightarrow \mathrm{Open}(Y)$ be the canonical inclusion.
Consider the commutative square
\begin{equation}\label{eq:diagpsijj}
	\begin{tikzcd}
	f\inv\big(\Psi_{X,P}^{\mathrm{psh}}(F)\big)\arrow{rr}{\psi^{\mathrm{psh}}_{f}(F)} \arrow{d} && \Psi_{Y,Q}^{\mathrm{psh}}\big( \Pi_\infty(f)^\ast(F) \big) \arrow{d}\\
	j_*j\inv	f\inv\big(\Psi_{X,P}^{\mathrm{psh}}(F)\big) \arrow{rr}{ j_*j\inv\psi^{\mathrm{psh}}_{f}(F)} &&   j_*j\inv\Psi_{Y,Q}^{\mathrm{psh}}\big( \Pi_\infty(f)^\ast(F) \big) 
	\end{tikzcd} 
\end{equation}
From \cref{eg:functoriality}, the presheaf $\Psi_{Y,Q}^{\mathrm{psh}}\big( \Pi_\infty(f)^\ast(F) \big)$ is a hypersheaf on $Y$.
From \cite[Proposition 1.13]{HPT}, the adjunction  $j\inv \dashv  j_*$ induces an equivalence between $\Sh^{\hyp}(Y;\cE)$ and the full subcategory $\Sh^{\hyp}(\cB;\cE)$ of $\PSh(\cB;\cE)$ spanned by the functors $F :  \cB\op\to \cE$ such that $j_*(F)$ is a hypersheaf on $Y$.
Hence, the right vertical arrow of \cref{eq:diagpsijj} is an equivalence.
Since $j\inv\psi_{f}^{\mathrm{psh}}(F)$ is an equivalence by assumption, the bottom arrow is an equivalence as well.
Thus, $	j_*j\inv	f\inv\big(\Psi_{X,P}^{\mathrm{psh}}(F)\big) $ is a hypersheaf on $Y$.
Then, \cite[Remark 1.14]{HPT} implies that the left vertical arrow exhibits its target as the hypersheafification of its source.
Hence, 
$$
 j_*j\inv\psi_{f}^{\mathrm{psh}}(F) \simeq  \hyp(\psi_{f}^{\mathrm{psh}}(F)) \simeq\psi_{f}^{\hyp}(F)
$$
where the last equivalence follows from \cref{two_psif}.
\cref{criterion_psif} thus follows.
\end{proof}

\begin{cor} \label{cor:functoriality_monodromy}

	Let $f \colon Y \to X$ be a morphism between locally weakly contractible topological spaces.
	Then for every presentable $\infty$-category $\cE$, both transformations
	\[ \psi_f^{\hyp} \colon f^{\ast,\hyp} \circ \Psi_X^{\hyp} \to \Psi_Y^{\hyp} \circ \Pi_\infty(f)^\ast \quad \text{and} \quad \phi_f^{\hyp} \colon \Phi_Y^{\hyp} \circ f^{\ast, \hyp} \to \Pi_\infty(f)^\ast \circ \Phi_X^{\hyp} \]
	are equivalences once restricted to locally hyperconstant hypersheaves.
\end{cor}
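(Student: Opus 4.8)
The plan is to deduce the statement for $\phi_f^{\mathrm{hyp}}$ from the one for $\psi_f^{\mathrm{hyp}}$, and to prove the latter by localizing on $X$. First observe that, by functoriality of pullback and the identity $\Gamma_X \circ f = \Gamma_Y$, one has $f^{\ast,\mathrm{hyp}} \circ \Gamma_X^{\ast,\mathrm{hyp}} \simeq \Gamma_Y^{\ast,\mathrm{hyp}}$; hence $f^{\ast,\mathrm{hyp}}$ carries hyperconstant, and therefore locally hyperconstant, hypersheaves to locally hyperconstant ones. Combined with \cref{cor:Psi_produces_locally_constant}, this shows that all the composites occurring in $\phi_f^{\mathrm{hyp}}$ and $\psi_f^{\mathrm{hyp}}$ factor through the $\mathrm{LC}^{\mathrm{hyp}}$-subcategories. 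By \cref{cor:monodromy_revisited} the adjunctions $\Phi_X^{\mathrm{hyp}} \dashv \Psi_X^{\mathrm{hyp}}$ and $\Phi_Y^{\mathrm{hyp}} \dashv \Psi_Y^{\mathrm{hyp}}$ restrict to equivalences between $\mathrm{LC}^{\mathrm{hyp}}(-;\cE)$ and the corresponding functor $\infty$-categories, so the purely formal argument of \cref{rem:phi_equiv_iff_psi_equiv}(1), applied to these restricted equivalences, shows that $\phi_f^{\mathrm{hyp}}$ is an equivalence on locally hyperconstant hypersheaves if and only if $\psi_f^{\mathrm{hyp}}$ is an equivalence. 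It therefore suffices to treat $\psi_f^{\mathrm{hyp}}$.

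By \cref{lem:localization}, applied to the basis of $X$ consisting of weakly contractible open subsets (such a basis exists because $X$ is locally weakly contractible, and each $f^{-1}(U)$ remains locally weakly contractible), we may assume that $X$ is weakly contractible. Then $\Pi_\infty(X) \simeq \ast$, the functor $\Pi_\infty(f)^\ast$ is identified with the constant-diagram functor $\mathrm{const} \colon \cE \to \Fun(\Pi_\infty(Y),\cE)$, and, since $X$ is weakly and locally weakly contractible, \cref{cor:Phi_identification} together with (the proof of) \cref{factor_through_LC} identifies $\Psi_X^{\mathrm{hyp}}$ with $\Gamma_X^{\ast,\mathrm{hyp}}$. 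Using $f^{\ast,\mathrm{hyp}} \circ \Gamma_X^{\ast,\mathrm{hyp}} \simeq \Gamma_Y^{\ast,\mathrm{hyp}}$ once more, the transformation $\psi_f^{\mathrm{hyp}}$ becomes a natural transformation $\Gamma_Y^{\ast,\mathrm{hyp}} \to \Psi_Y^{\mathrm{hyp}} \circ \mathrm{const}$ of functors $\cE \to \HSh(Y;\cE)$, both of which preserve colimits: for $\Gamma_Y^{\ast,\mathrm{hyp}}$ this is because it is a left adjoint, and for $\Psi_Y^{\mathrm{hyp}} \circ \mathrm{const}$ it follows by evaluating on weakly contractible opens $V \subseteq Y$, on which $\bigl(\Psi_Y^{\mathrm{hyp}}(\mathrm{const}_E)\bigr)(V) \simeq \lim_{\Pi_\infty(V)} \mathrm{const}_E \simeq E$ by \eqref{formula_for_psi} since $\Pi_\infty(V) \simeq \ast$, and such $V$ form a basis of hypersheaf-detecting opens.

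Next, by the change-of-coefficient compatibilities of \cref{lem:Phi_with_coefficients} and \cref{construction:comparison_morphisms}, all three of $\Gamma_Y^{\ast,\mathrm{hyp}}$, $\Psi_Y^{\mathrm{hyp}} \circ \mathrm{const}$ and $\psi_f^{\mathrm{hyp}}$ are obtained from their values at $\cE = \cS$ by applying $- \otimes \id_\cE$, so we may assume $\cE = \cS$. A colimit-preserving functor out of $\cS$ is determined by its value on the point, and likewise a natural transformation between two such functors is determined by its component at the point; thus it is enough to check that the component of $\psi_f^{\mathrm{hyp}}$ at $\ast \in \cS$ is an equivalence. But $\Gamma_Y^{\ast,\mathrm{hyp}}(\ast)$ is the terminal object of $\HSh(Y)$, and $\bigl(\Psi_Y^{\mathrm{hyp}}(\mathrm{const}_\ast)\bigr)(V) \simeq \lim_{\Pi_\infty(V)} \ast \simeq \ast$ for every open $V \subseteq Y$, so $\Psi_Y^{\mathrm{hyp}}(\mathrm{const}_\ast)$ is terminal as well; hence that component is a map between terminal objects and is automatically an equivalence. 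This proves that $\psi_f^{\mathrm{hyp}}$ is an equivalence, and therefore so is $\phi_f^{\mathrm{hyp}}$ on locally hyperconstant hypersheaves.

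The main obstacle is bookkeeping rather than mathematical depth: one must verify that, under the successive identifications used in the reduction to a weakly contractible base and to $\cS$-coefficients, the natural transformation $\psi_f^{\mathrm{hyp}}$ is genuinely carried to the canonical comparison map between the functors described above. This is a consequence of the $2$-functoriality of \cref{construction:comparison_morphisms} (as recorded in \cref{2-functoriality_psi}), of the locality statements of this subsection, and of the naturality of the construction of $\psi_f$ in the coefficient category; none of these steps is difficult, but keeping track of the compatibilities requires care.
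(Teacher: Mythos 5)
Your proof follows the paper's argument step for step: reduce $\phi_f^{\mathrm{hyp}}$ to $\psi_f^{\mathrm{hyp}}$ via \cref{rem:phi_equiv_iff_psi_equiv}-(1) and the monodromy equivalence \cref{cor:monodromy_revisited}, localize to weakly contractible $X$ via \cref{lem:localization}, and then identify $\Psi_X^{\mathrm{hyp}}$ with $\Gamma_X^{\ast,\mathrm{hyp}}$ using \cref{cor:Phi_identification} — the paper concludes at that point by citing \cite[Proposition 2.5]{HPT}, whereas you spell out the endgame by reducing to $\cS$-coefficients and checking the component at the point, which is a valid (and arguably more careful) way to finish. The one loose step is your claim that $E \mapsto \Psi_Y^{\mathrm{hyp}}(\mathrm{const}_E)$ preserves colimits "by evaluating on weakly contractible opens": colimits of hypersheaves are not computed objectwise on a basis, so you should first use the computation $\Psi_Y^{\mathrm{hyp}}(\mathrm{const}_E)(V)\simeq E$ together with \cite[Theorem 2.13]{HPT} to identify this functor with $\Gamma_Y^{\ast,\mathrm{hyp}}$, and only then invoke that the latter is a left adjoint.
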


\begin{proof}
	Combining \cref{rem:phi_equiv_iff_psi_equiv} and \cref{cor:monodromy_revisited} we see that it is enough to show that $\psi_f$ is an equivalence.
	Using \cref{lem:localization}, we can reduce ourselves to the case where $X$ is weakly contractible, so that the conclusion follows from \cref{cor:Phi_identification} and \cite[Proposition 2.5]{HPT}.
\end{proof}

\subsection{Restriction to strata for $\Psi$}

\begin{prop}\label{prop:Psi_restriction_stratum}
	Let $(X,P)$ be a conically stratified space.
	Let $a \in P$ and let $i_a \colon X_a \to X$ be the inclusion of the associated stratum.
	Then the natural transformation
	\[ \psi_a^{\hyp} \colon i_a^{\ast,\hyp} \circ \Psi_{X,P}^{\hyp} \to \Psi_{X_a}^{\hyp} \circ \Pi_\infty(i_a)^\ast \]
	is an equivalence.
\end{prop}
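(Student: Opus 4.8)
The plan is to reduce to a local statement using the locality criteria established above, and then to exploit the conical structure. First I would invoke \cref{lem:localization}: since the property of $\psi_a^{\mathrm{hyp}}$ being an equivalence can be checked on a basis of $X$, and since $(X,P)$ is conically stratified, it suffices to treat the case where $X = Z \times C(Y)$ is a conical chart at a point $x \in X_a$ with $a$ the minimum of $P = Q^{\vartriangleleft}$; here $X_a = Z$ sits inside $X$ as $Z \times \{\ast\}$. Shrinking $Z$ if necessary (using \cref{lem:localization} again on a basis of $Z$), we may assume $Z$ is weakly contractible; by \cref{lem:categorical_homotopy_invariance} the inclusion $\Exit(C_\varepsilon(Y), Q^{\vartriangleleft}) \to \Exit(Z \times C_\varepsilon(Y), Q^{\vartriangleleft})$ is then a categorical equivalence, sending $\ast$ to $x$.

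Next I would use \cref{criterion_psif}: it is enough to produce a basis $\cB$ of $X_a = Z$ on which the \emph{presheaf-level} transformation $\psi^{\mathrm{psh}}_{i_a}(F)(V)$ is an equivalence for all $F$. Using the explicit formula \eqref{formula_for_psi}, for an open $W \subseteq Z$ the left-hand side is $\big(i_a^{-1}\Psi_{X,P}^{\mathrm{psh}}(F)\big)(W) = \colim_{W \subseteq W' \times C_\varepsilon(Y)} \Psi_{X,P}^{\mathrm{psh}}(F)(W' \times C_\varepsilon(Y))$, where the colimit runs over conical charts around $W$; by \eqref{formula_for_psi} and \cref{prop:conical_implies_local_excellency} (local excellence at $a$, so that $\Exit(Z,\{a\}) \to \Exit(Z \times C_\varepsilon(Y), Q^{\vartriangleleft})$ is final), each term is computed as $\lim_{\Sing(W')} F|_{\Sing(W')}$, i.e.\ a limit over $\Pi_\infty^\Sigma(W',\{a\})$. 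Meanwhile the right-hand side $\Psi_{X_a}^{\mathrm{psh}}\big(\Pi_\infty^\Sigma(i_a)^\ast F\big)(W) = \lim_{\Sing(W)} (\Pi_\infty^\Sigma(i_a)^\ast F)|_{\Sing(W)}$ is visibly the same limit. Taking $\cB$ to be the basis of opens $W$ that are themselves weakly contractible (or more robustly, any basis of $Z$) and checking that the colimit on the left is constant along the conical directions—which is exactly the content of finality in \cref{prop:conical_implies_local_excellency} combined with \cref{Exit_cone_initial_object}—identifies the two sides compatibly with $\psi^{\mathrm{psh}}_{i_a}(F)$.

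The main obstacle I anticipate is the bookkeeping in the second step: one must verify that the comparison map $\psi^{\mathrm{psh}}_{i_a}(F)$, which is defined abstractly through Beck--Chevalley transformations in \cref{construction:comparison_morphisms}, really does coincide with the tautological identification of the two limits above. This is where \cref{lem:functoriality} and the finality statement of \cref{prop:conical_implies_local_excellency} must be fed into \cref{construction:comparison_morphisms} carefully: the point is that $\psi^{\mathrm{psh}}_{i_a}$ is induced by the functor $\Pi_\infty^\Sigma(i_a) \colon \Pi_\infty^\Sigma(X_a,\{a\}) \to \Pi_\infty^\Sigma(X,P)$ on conical charts, and by the conical structure this functor factors through the final inclusion $\Exit(Z,\{a\}) \hookrightarrow \Exit(Z \times C_\varepsilon(Y),Q^{\vartriangleleft})$, so that restriction along it does not change the value of the limit in \eqref{formula_for_psi}. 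Once this identification is in place, \cref{criterion_psif} and \cref{lem:localization} close the argument; alternatively, the whole reduction can be streamlined by observing that $i_a$ restricted to a conical chart satisfies the hypotheses of \cref{cor:Psi_open_restriction} after passing to the chart, since $\Exit(Z,\{a\})\to\Exit(Z\times C_\varepsilon(Y),Q^{\vartriangleleft})$ being final is precisely the kind of equivalence-on-fibers condition that makes $\psi$ invertible.
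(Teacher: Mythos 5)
The engine of your argument---\cref{criterion_psif} applied to a basis of the topology of $X_a$, together with the finality of $\Pi_\infty^\Sigma(U,\{a\})\to\Pi_\infty^\Sigma(V,P)$ supplied by \cref{prop:conical_implies_local_excellency}, which forces the colimit of limits computing $i_a\inv(\Psi^{\mathrm{psh}}_{X,P}(F))(U)$ to be constant---is exactly the paper's proof. Two of the surrounding steps, however, do not work as written. First, the opening reduction via \cref{lem:localization} to the case where $X$ itself is a conical chart centred at a point of $X_a$ is not achievable: \cref{lem:localization} requires a basis of the topology of $X$ (the target), and when $\widebar{X_a}\neq X_a$ the points of $\widebar{X_a}\smallsetminus X_a$ admit no small neighbourhoods that are either disjoint from $X_a$ or conical charts centred on $X_a$; a chart centred at a point of a lower stratum meets $X_a$ in a piece of the form $Z\times Y_a\times\mathbb{R}_{>0}$, not as the cone-point stratum. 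Fortunately this reduction is also unnecessary, precisely because \cref{criterion_psif} only asks for a basis of the \emph{source} $X_a$; the paper takes $\cB$ to be the opens $U\subseteq X_a$ admitting a fundamental system of excellent neighbourhoods in $X$ and runs your second step directly on the general $(X,P)$. Second, the proposed shortcut via \cref{cor:Psi_open_restriction} fails: that corollary requires $f$ to admit a left adjoint $h$ with $\mathsf B\simeq\mathsf A\circ h$, which for topological spaces amounts to $i_a^{\ast,\mathrm{hyp}}$ having a left adjoint $i_{a,!}$ preserving representables---available for open immersions (\cref{eg:functoriality_topological_spaces}-(2)) but not for the locally closed immersion $i_a$.

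One bookkeeping point in your computation of the left-hand side: the colimit defining $i_a\inv(\Psi^{\mathrm{psh}}_{X,P}(F))(W)$ ranges over open neighbourhoods $V$ of the \emph{fixed} open $W\subseteq X_a$, and by excellence one may restrict to those $V$ for which $\Pi_\infty^\Sigma(W,\{a\})\to\Pi_\infty^\Sigma(V,P)$ is final; each term is then $\lim_{\Pi_\infty^\Sigma(W,\{a\})}F$, independent of $V$, so the colimit is constant. Enlarging $W$ to a varying $W'$, as in your formula $\lim_{\Sing(W')}F|_{\Sing(W')}$, makes the terms depend on the index and obscures why the colimit collapses; it is the fixed $W$ that must appear in the limit.
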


\begin{proof}
	Recall from \cref{Excellent_at_S_stratified space} the notion of excellency at strata.
	Given an open subset $U$ of $X_a$, write $\cB^{\exc}(U)$ for the poset of open neighborhoods $V$ of $U$ in $X$ such that $(V,P)$ is final at the $a$-stratum, that is for which the functor
	\[ \Pi_\infty(U) \simeq \Pi_\infty(U,\{a\}) \to \Pi_\infty(V,P) \]
	is final.
	Say that $U$ is \emph{nice} if $\cB^{\exc}(U)$ is a fundamental system of open neighbourhoods of $U$ in $X$ and write $\cB$ for the poset of nice open subsets of $X_a$.
	\Cref{prop:conical_implies_local_excellency} guarantees that $\cB$ is a basis of $X_a$.
	Fix $F \in \Fun(\Pi_\infty(X,P),\cE)$.
	From \cref{criterion_psif}, we are left to show that for every
	$U\in \cB$, the morphism
	\[ \psi^{\mathrm{psh}}_{i_a}(F)(U) \colon i_a\inv\big(\Psi_{X,P}^{\mathrm{psh}}(F)\big)(U) \to \Psi_{X_a,\{a\}}^{\mathrm{psh}}\big( \Pi_\infty(i_a)^\ast(F) \big)(U) \]
	is an equivalence.
	On the one hand
	\[ i_a\inv( \Psi_{X,P}^{\mathrm{psh}}(F))(U) \simeq \colim_{V\in \cB^{\exc}(U)} \Psi_{X,P}^{\mathrm{psh}}(F)(V) \simeq \colim_{V\in \cB^{\exc}(U)} \lim_{\Pi_\infty(V,P)} F |_{\Pi_\infty(V,P)} \ . \]
	Since for every $V\in \cB^{\exc}(U)$, the functor $\Pi_\infty(U)\to \Pi_\infty(V,P)$ is final, we deduce 
	\[ i_a\inv( \Psi_{X,P}^{\mathrm{psh}}(F))(U) \simeq \colim_{V\in \cB^{\exc}(U)} \lim_{\Pi_\infty(U)} F |_{\Pi_\infty(U)} \simeq  \lim_{\Pi_\infty(U)}  F |_{\Pi_\infty(U)}\ . \]
	On the other hand,
	\[ \Psi_{X_a}^{\hyp} \big(\Pi_\infty(i_a)^\ast(F)\big)(U) \simeq \lim_{\Pi_\infty(U)} F|_{\Pi_\infty(U)} \ . \]
	This concludes the proof of \cref{prop:Psi_restriction_stratum}.
\end{proof}

\begin{cor}\label{cor:Psi_produces_constructible}
	Let $(X,P)$ be a conically stratified space with locally weakly contractible strata.
	Then the functor $\Psi_{X,P}^{\hyp} \colon \Fun(\Pi_\infty(X,P), \cE) \to \HSh(X;\cE)$ factors through the full subcategory $\Cons_P^{\hyp}(X;\cE)$.
\end{cor}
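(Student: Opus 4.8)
The plan is to verify the defining condition of $\mathrm{Cons}_P^{\mathrm{hyp}}(X;\cE)$ directly: given any functor $F \in \Fun(\Pi_\infty^\Sigma(X,P),\cE)$, one must check that for every $p \in P$, the hyperrestriction $i_p^{\ast,\mathrm{hyp}}\big(\Psi_{X,P}^{\mathrm{hyp}}(F)\big)$ is locally hyperconstant on the stratum $X_p$. The key input is \cref{prop:Psi_restriction_stratum}, which identifies this hyperrestriction (up to canonical equivalence via $\psi_p^{\mathrm{hyp}}$) with $\Psi_{X_p}^{\mathrm{hyp}}\big(\Pi_\infty^\Sigma(i_p)^\ast(F)\big)$, where now $X_p$ carries the \emph{trivial} stratification.

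\begin{proof}[Proof of \cref{cor:Psi_produces_constructible}]
	Fix $F \in \Fun(\Pi_\infty^\Sigma(X,P),\cE)$ and let $p \in P$. By definition, we must show that $i_p^{\ast,\mathrm{hyp}}\big(\Psi_{X,P}^{\mathrm{hyp}}(F)\big) \in \HSh(X_p;\cE)$ is locally hyperconstant. By \cref{prop:Psi_restriction_stratum}, the natural transformation
	\[ \psi_p^{\mathrm{hyp}} \colon i_p^{\ast,\mathrm{hyp}} \circ \Psi_{X,P}^{\mathrm{hyp}} \longrightarrow \Psi_{X_p}^{\mathrm{hyp}} \circ \Pi_\infty^\Sigma(i_p)^\ast \]
	is an equivalence, so it suffices to prove that $\Psi_{X_p}^{\mathrm{hyp}}\big( \Pi_\infty^\Sigma(i_p)^\ast(F) \big)$ is locally hyperconstant on $X_p$. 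Here $X_p$ is equipped with the trivial stratification, so that $\Pi_\infty^\Sigma(X_p) = \Pi_\infty(X_p)$ and $\Psi_{X_p}^{\mathrm{hyp}}$ is the monodromy functor of \cref{subsec:functoriality}. Since $(X,P)$ has locally weakly contractible strata, \cref{prop:locally_contractible_strata} shows that $X_p$ is a locally weakly contractible topological space. Therefore \cref{cor:Psi_produces_locally_constant} applies and guarantees that $\Psi_{X_p}^{\mathrm{hyp}} \colon \Fun(\Pi_\infty(X_p),\cE) \to \HSh(X_p;\cE)$ factors through $\mathrm{LC}^{\mathrm{hyp}}(X_p;\cE)$. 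In particular $\Psi_{X_p}^{\mathrm{hyp}}\big( \Pi_\infty^\Sigma(i_p)^\ast(F) \big)$ is locally hyperconstant, which is what we needed. As $p \in P$ was arbitrary, $\Psi_{X,P}^{\mathrm{hyp}}(F)$ is $P$-hyperconstructible.
\end{proof}

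There is no serious obstacle here: the statement is a bookkeeping corollary that assembles \cref{prop:Psi_restriction_stratum} (the hard technical result, already proved), \cref{prop:locally_contractible_strata} (which transfers the ``locally weakly contractible strata'' hypothesis to the topological-space level), and \cref{cor:Psi_produces_locally_constant} (the trivially stratified case of the monodromy statement). The only point requiring a moment of care is to observe that $\Psi_{X_p}^{\mathrm{hyp}}$ appearing in \cref{prop:Psi_restriction_stratum}, which is built from the functor $\Pi_\infty^\Sigma$ on $\HSh(X_p)$ for the trivial stratification, genuinely coincides with the monodromy functor to which \cref{cor:Psi_produces_locally_constant} refers — but this is exactly the identification $\Pi_\infty^\Sigma = \Pi_\infty$ recorded in \cref{small_rem} and \cref{eg:II}.
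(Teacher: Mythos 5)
Your proof is correct and is exactly the argument the paper intends: it combines \cref{prop:Psi_restriction_stratum} with \cref{cor:Psi_produces_locally_constant} stratum by stratum, which is precisely what the paper's one-line proof asserts. The only cosmetic remark is that the appeal to \cref{prop:locally_contractible_strata} is superfluous, since the hypothesis already states directly that each stratum $X_p$ is locally weakly contractible.
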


\begin{proof}
	This is an immediate consequence of \cref{prop:Psi_restriction_stratum} and \cref{cor:Psi_produces_locally_constant}.
\end{proof}

\section{The exodromy equivalence}

\subsection{A criterion for $\phi_f$ to be an equivalence}

Let $f \colon (Y, Q) \to (X,P)$ be a morphism of conically stratified spaces and let $\cE$ be a presentable $\infty$-category.
As usual $\cE$ will be fixed throughout this section, and we therefore suppress it from the notations.
Our first goal is to provide a general method to establish when the natural transformation
\[ \phi_f^{\hyp} \colon \Phi_{Y,Q}^{\hyp} \circ f^{\ast,\hyp} \to \Pi_\infty(f)^\ast \circ \Phi_{X,P}^{\hyp} \]
is an equivalence.

\begin{lem}\label{relations_phif}
Let $f \colon (Y, Q) \to (X,P)$ be a morphism of conically stratified spaces.
Then the following statements hold; 
	\begin{enumerate}\itemsep=0.2cm
\item the transformations $\phi_f^{\mathrm{psh}}$ and 
$\phi_f^{\hyp}\circ \hyp$ are canonically equivalent.
\item for every diagram $F : I \to \PSh(X;\cE)$ with $I$ a small $\infty$-category, the canonical morphism 
$$
\phi_f^{\mathrm{psh}}(\colim_{i\in I} j(F_i))\to  \colim_{i\in I} \phi_f(F_i)
$$ 
is an equivalence, where $j : \PSh(X;\cE)\to \Fun(\PSh(X;\cE)\op;\cE)$ is the Yoneda embedding.
	\end{enumerate}
\end{lem}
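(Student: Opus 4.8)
The plan is to run, for part (1), exactly the argument used for \cref{two_psif}, but with $\phi$ in place of $\psi$ and \cref{2-functoriality_phi} in place of \cref{2-functoriality_psi}. Consider the commutative square of colimit-preserving functors whose top edge is the presheaf pullback $f^{-1}\colon \PSh(X)\to\PSh(Y)$, whose bottom edge is $f^{\ast,\mathrm{hyp}}\colon\HSh(X)\to\HSh(Y)$, and whose two vertical edges are the hypersheafification functors $(-)^{\mathrm{hyp}}$. Equip each corner with its exit-path functor to $\Cat_\infty$; by \cref{eg:I}-(2) these are compatible along the vertical edges up to canonical equivalence (since $\Pi_\infty^\Sigma$ inverts $\infty$-connective morphisms of presheaves), and along the horizontal edges they carry the natural transformations coming from functoriality of exit paths. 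Applying \cref{2-functoriality_phi} to the two factorizations of the diagonal composite $\PSh(X)\to\HSh(Y)$ supplied by the square identifies the composite of the comparison transformation of $(-)^{\mathrm{hyp}}$ on $Y$ with $\phi_f^{\mathrm{psh}}$ and the composite of $\phi_f^{\mathrm{hyp}}$ with the comparison transformation of $(-)^{\mathrm{hyp}}$ on $X$. By \cref{eg:functoriality}-(2) both comparison transformations attached to $(-)^{\mathrm{hyp}}$ (on $X$ and on $Y$) are equivalences; cancelling them yields the desired canonical equivalence $\phi_f^{\mathrm{psh}}\simeq\phi_f^{\mathrm{hyp}}\circ(-)^{\mathrm{hyp}}$.

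For part (2), the key observation is that $\phi_f^{\mathrm{psh}}$ is a natural transformation between \emph{colimit-preserving} functors. Its source $\Phi_{Y,Q}^{\mathrm{psh}}\circ f^{-1}$ and its target $\Pi_\infty^\Sigma(f)^\ast\circ\Phi_{X,P}^{\mathrm{psh}}$ are composites of colimit-preserving functors: the functors $\Phi_{X,P}^{\mathrm{psh}}$ and $\Phi_{Y,Q}^{\mathrm{psh}}$ preserve colimits, being of the form $\lambda_!\circ\pi^\ast$ (equivalently, by \cref{cor:exodromy_adjunction}, left adjoints); the presheaf pullback $f^{-1}$ preserves colimits, being a left Kan extension; and $\Pi_\infty^\Sigma(f)^\ast$ is a restriction functor, hence preserves all colimits. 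Consequently, regarded as a functor valued in the arrow $\infty$-category $\Fun(\Delta^1,\Fun(\Pi_\infty^\Sigma(Y,Q),\cE))$, the transformation $\phi_f^{\mathrm{psh}}$ preserves colimits, so it carries $\colim_{i\in I} j(F_i)$ to $\colim_{i\in I}\phi_f^{\mathrm{psh}}(j(F_i))$. It then remains to identify $\phi_f^{\mathrm{psh}}\circ j$ with $\phi_f$: this is the content of \cref{eg:functoriality}-(1), which identifies $\Phi_{X,P}^{\mathrm{psh}}$ and $\Phi_{Y,Q}^{\mathrm{psh}}$ with the left Kan extensions of $\Phi_{X,P}$ and $\Phi_{Y,Q}$ along the Yoneda embeddings, supplemented by one more application of \cref{2-functoriality_phi} to the equal composites $\mathrm{Open}(X)\to\PSh(X)\to\PSh(Y)$ and $\mathrm{Open}(X)\to\mathrm{Open}(Y)\to\PSh(Y)$, which transports this identification from the functors to the comparison transformations themselves and produces a natural equivalence $\phi_f^{\mathrm{psh}}(j(F))\simeq\phi_f(F)$. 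Composing the two equivalences gives the statement.

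The only delicate point I anticipate is bookkeeping: one must check that the canonical equivalences produced by \cref{2-functoriality_phi} and \cref{eg:functoriality} fit together coherently, so that in (1) cancelling the two $(-)^{\mathrm{hyp}}$-comparisons, and in (2) matching $\phi_f^{\mathrm{psh}}\circ j$ with $\phi_f$, yield genuine equivalences of natural transformations rather than merely objectwise ones. This is exactly what the functoriality of the Beck-Chevalley formalism underlying \cref{2-functoriality_phi} guarantees, so no new ideas beyond those already used for \cref{two_psif} are needed.
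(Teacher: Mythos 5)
Your proposal is correct and follows essentially the same route as the paper: part (1) is exactly the paper's ``apply \cref{eg:functoriality}-(2) and \cref{2-functoriality_phi} twice'' argument, with the cancellation of the two hypersheafification comparison transformations spelled out, and part (2) reproduces the paper's two-step chain (colimit-preservation of $\phi_f^{\mathrm{psh}}$ because it is built from left adjoints, then the identification of $\Phi^{\mathrm{psh}}$ with the left Kan extension of $\Phi$ along Yoneda via \cref{eg:functoriality}-(1) and \cref{2-functoriality_phi}). No gaps; your version is simply a more detailed write-up of the paper's terse proof.
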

\begin{proof}
For the first point, apply \cref{eg:functoriality_bis} and \cref{2-functoriality_phi} twice.
For the second point, we have 
$$
\phi_f^{\mathrm{psh}}(\colim_{i\in I} j(F_i))\simeq \colim_{i\in I}  \phi_f^{\mathrm{psh}}(j(F_i)) \simeq \colim_{i\in I} \phi_f(F_i)
$$ 
where the first equivalence follows from the fact that $\phi_f^{\mathrm{psh}}$ is built out of left adjoint functors and where the second equivalence follows from the observation that $\Phi_{X,P}^{\mathrm{psh}}$ identifies with the left Kan extension of $\Phi_{X,P}$ along the Yoneda embedding.
\end{proof}
\begin{cor} \label{lem:phi_equivalence_reduction}
	If $\phi_f$ is an equivalence, the same goes for $\phi_f^{\mathrm{psh}}$ and $\phi_f^{\hyp}$.
\end{cor}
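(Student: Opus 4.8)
The plan is a two-step bootstrap: first upgrade the hypothesis that $\phi_f$ is an equivalence to the statement that $\phi_f^{\mathrm{psh}}$ is an equivalence, and then upgrade that to $\phi_f^{\mathrm{hyp}}$. Both steps are instances of the slogan that an equivalence between colimit-preserving functors may be tested on a colimit-generating family of objects, and the two non-formal ingredients have already been isolated in \cref{relations_phif}.

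For the first step I would take an arbitrary object $Z$ in the source of $\phi_f^{\mathrm{psh}}$ and present it as a colimit $Z\simeq\colim_{i\in I}j(F_i)$ of the composite of a diagram $F\colon I\to\PSh(X;\cE)$ with the Yoneda embedding $j$ — this is just density of $j$. Then \cref{relations_phif}-(2) identifies the component $\phi_f^{\mathrm{psh}}(Z)$ with $\colim_{i\in I}\phi_f(F_i)$; since $\phi_f$ is an equivalence by assumption, each $\phi_f(F_i)$ is an equivalence, hence so is their colimit, and therefore $\phi_f^{\mathrm{psh}}(Z)$ is an equivalence. The underlying formal fact is that $\phi_f^{\mathrm{psh}}$ is a natural transformation between functors that both preserve colimits — they are composites of left Kan extensions (the $\lambda_{\mathsf A,!}$ of \cref{lem:LKE} occurring in the $\Phi$'s, and the presheaf pullback attached to $f$) and of precomposition functors (the $\pi_{\mathsf A}^\ast$ occurring in the $\Phi$'s, and $\Pi_\infty^\Sigma(f)^\ast$) — and it is invertible on the family of Yoneda images $j(F)$, which generates the source under colimits.

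For the second step I would invoke \cref{relations_phif}-(1), which identifies $\phi_f^{\mathrm{psh}}$ with $\phi_f^{\mathrm{hyp}}\circ\mathrm{hyp}$, where $\mathrm{hyp}\colon\PSh(X;\cE)\to\HSh(X;\cE)$ denotes hypersheafification. Since $\mathrm{hyp}$ is essentially surjective (it is a localization), the invertibility of $\phi_f^{\mathrm{hyp}}\circ\mathrm{hyp}$ established in the first step forces $\phi_f^{\mathrm{hyp}}$ to be an equivalence: given $G\in\HSh(X;\cE)$, choose $F\in\PSh(X;\cE)$ with $\mathrm{hyp}(F)\simeq G$, so that $\phi_f^{\mathrm{hyp}}(G)\simeq\phi_f^{\mathrm{psh}}(F)$ is an equivalence.

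I do not expect a genuine obstacle, precisely because \cref{relations_phif} has already done the substantive work of relating the three flavours of $\phi_f$ and of controlling their behaviour on colimits of representables; what remains is the formal descent of an equivalence along a colimit-dense, essentially surjective comparison. The only points requiring a moment's vigilance are the bookkeeping of which functors preserve colimits and the precise sense in which the Yoneda image is colimit-generating in the relevant functor $\infty$-category, both of which follow directly from the constructions of \cref{sec:categorical_framework}.
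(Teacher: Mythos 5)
Your proposal is correct and follows essentially the same route as the paper: the first step is exactly the paper's combination of \cref{relations_phif}-(2) with the fact that every presheaf is a small colimit of representables, and the second step is the paper's use of \cref{relations_phif}-(1) together with the essential surjectivity of hypersheafification. The extra remarks you add (colimit-preservation of the functors involved, the pointwise argument for descending an equivalence along an essentially surjective localization) are just the details the paper leaves implicit.
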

\begin{proof}
The first statement comes from \cref{relations_phif}-$(2)$ and the fact that every presheaf is a small colimit of representable objects.
The second statements then follows from the observation from \cref{relations_phif}-$(1)$.
\end{proof}

The advantage of $\phi_f$ is that it can be explicitly computed in terms of open subsets of $Y$, rather than having to deal with all hypersheaves on $Y$.
To fully exploit this, let us specialize the main construction of \cref{subsec:functoriality} to the current setting.

\begin{notation}\label{notation:key_correspondence}
	For a conically stratified space $(X,P)$, let
	\[ \pi_X \colon \mathrm E_X \to \mathrm{Open}(X)\op \]
	be the cartesian fibration classifying the exit paths $\infty$-functor $\Pi_\infty \colon \mathrm{Open}(X) \to \Cat_\infty$.
	We let
	\[ \lambda_X \colon \mathrm E_X \to \Pi_\infty(X,P) \]
	be the canonical localization functor.
	Given a morphism $f \colon (Y,Q) \to (X,P)$ of conically stratified spaces, we consider the following diagram:
	\begin{equation}\label{eq:functoriality_topological_setting}
		\begin{tikzcd}[column sep = small]
			{} & \Pi_\infty(Y,Q) \arrow{rr}{\Pi_\infty(f)} & & \Pi_\infty(X,P) \\
			\mathrm E_Y \arrow{ur}{\lambda_Y} \arrow{d}{\pi_Y} & & \mathrm E_f \arrow{ll}[swap]{q} \arrow{r}{p} \arrow{d}{\pi_f} \arrow{ul}[swap]{\lambda_f} & \mathrm E_X \arrow{dl}{\pi_X} \arrow{u}{\lambda_X} \\
			\mathrm{Open}(Y)\op & & \mathrm{Open}(X)\op \arrow{ll}[swap]{f\inv}
		\end{tikzcd}
	\end{equation}
\end{notation}

With these notations, \cref{eg:functoriality} shows that $\Psi_{X,P}^{\mathrm{psh}} \simeq \pi_{X,\ast} \circ \lambda_X^\ast$.
In turn, we obtain:

\begin{cor} \label{cor:phi_equivalence_reduction}
	\hfill
	\begin{enumerate}\itemsep=0.2cm
		\item There is a canonical identification $\Phi_{X,P}^{\mathrm{psh}} \simeq \lambda_{X,!} \circ \pi_X^\ast$.
		
		\item The Beck-Chevalley transformation
		\[ (f\inv)^\ast \circ \pi_{Y,\ast} \to \pi_{f,\ast} \circ q^\ast \]
		is an equivalence.
		
		\item If the Beck-Chevalley transformation
		\[ \lambda_{f,!} \circ p^\ast \to \Pi_\infty(f)^\ast \circ \lambda_{X,!} \]
		is an equivalence, then so is $\phi_f^{\hyp}$.
	\end{enumerate}
\end{cor}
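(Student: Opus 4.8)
The plan is to read all three claims off the comparison between the topological diagram \eqref{eq:functoriality_topological_setting} and the abstract machinery of \cref{subsec:functoriality}--\cref{construction:comparison_morphisms}, specialised to $\cX \coloneqq \mathrm{Open}(X)$, $\cY \coloneqq \mathrm{Open}(Y)$, the functor $f\inv \colon \mathrm{Open}(X) \to \mathrm{Open}(Y)$ (which preserves terminal objects, since $f\inv(X) = Y$), and $\mathsf A = \mathsf B \coloneqq \Pi_\infty^\Sigma$. With these choices the diagram \eqref{eq:functoriality_topological_setting} is literally an instance of \eqref{eq:functoriality}, so that $\mathrm E_X$, $\pi_X$, $\lambda_X$, $\mathrm E_f$, $\pi_f$, $p$, $q$ become $\cA$, $\pi_{\mathsf A}$, $\lambda_{\mathsf A}$, $\cB_f$, $\pi_f$, $p$, $q$ in the notation of that subsection.

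For part (1) I would argue by uniqueness of adjoints. The sentence immediately preceding the statement records, via \cref{eg:functoriality}-(1) and the identification $\FunR(\PSh(X)\op, \cE) \simeq \Fun(\mathrm{Open}(X)\op, \cE)$, that $\Psi_{X,P}^{\mathrm{psh}} \simeq \pi_{X,\ast} \circ \lambda_X^\ast$. Since $\pi_X^\ast \dashv \pi_{X,\ast}$ by \cref{lem:RKE} and $\lambda_{X,!} \dashv \lambda_X^\ast$ because $\lambda_X$ is a functor of small $\infty$-categories, the composite $\lambda_{X,!} \circ \pi_X^\ast$ is left adjoint to $\pi_{X,\ast} \circ \lambda_X^\ast$; as $\Phi_{X,P}^{\mathrm{psh}}$ is by construction the left adjoint of $\Psi_{X,P}^{\mathrm{psh}}$, the two coincide.

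For part (2) I would simply invoke \cref{lem:functoriality} for the data above: the bottom-left square of \eqref{eq:functoriality_topological_setting} is exactly the pullback square of \eqref{eq:functoriality}, so \cref{lem:functoriality} asserts that it is vertically right adjointable, which unwinds to the statement that $(f\inv)^\ast \circ \pi_{Y,\ast} \to \pi_{f,\ast} \circ q^\ast$ is an equivalence. For part (3) I would unwind \cref{construction:comparison_morphisms}: using \cref{cor:functoriality} to rewrite its source, the $\mathrm{Open}$-level comparison map $\phi_f$ becomes the composite
\[ \lambda_{f,!} \circ \pi_f^\ast \;\simeq\; \lambda_{f,!} \circ p^\ast \circ \pi_X^\ast \;\xrightarrow{\ \mathsf{BC}\, \pi_X^\ast\ }\; \Pi_\infty^\Sigma(f)^\ast \circ \lambda_{X,!} \circ \pi_X^\ast \;\simeq\; \Pi_\infty^\Sigma(f)^\ast \circ \Phi_{X,P} \ , \]
where the first identity uses $\pi_f = \pi_X \circ p$ and $\mathsf{BC} \colon \lambda_{f,!} \circ p^\ast \to \Pi_\infty^\Sigma(f)^\ast \circ \lambda_{X,!}$ is the Beck--Chevalley transformation of the statement. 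The only arrow not manifestly an equivalence is the whiskering $\mathsf{BC}\, \pi_X^\ast$; hence if $\mathsf{BC}$ is an equivalence then so is $\phi_f$, and \cref{lem:phi_equivalence_reduction} upgrades this to the invertibility of $\phi_f^{\mathrm{psh}}$ and $\phi_f^{\mathrm{hyp}}$.

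I do not expect any real obstacle here: the content of this corollary is entirely organisational --- its point is to trade the genuinely hard question of whether $\phi_f^{\mathrm{hyp}}$ is an equivalence for the combinatorially accessible Beck--Chevalley condition of part (3), whose verification is the task of the subsequent subsections. The only thing requiring care is bookkeeping: keeping straight which flavour ($\mathrm{Open}$-level, presheaf, or hypersheaf) of each of $\Phi$, $\Psi$, $\phi$ is in play at each stage, and checking that the pullback and adjointability properties of the squares in \eqref{eq:functoriality_topological_setting} are indeed the instances of the general statements of \cref{sec:categorical_framework} claimed above.
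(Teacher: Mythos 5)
Your proposal is correct and follows essentially the same route as the paper: part (1) by passing to left adjoints in the identification $\Psi_{X,P}^{\mathrm{psh}} \simeq \pi_{X,\ast} \circ \lambda_X^\ast$, part (2) as a direct instance of \cref{lem:functoriality}, and part (3) by exhibiting $\phi_f$ as the whiskering of the Beck--Chevalley transformation with $\pi_X^\ast$ and then invoking \cref{lem:phi_equivalence_reduction}. The bookkeeping of which level ($\mathrm{Open}$, presheaf, hypersheaf) is in play is also handled as in the paper.
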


\begin{proof}
	Point (1) follows from the identification $\Psi_{X,P}^{\mathrm{psh}} \simeq \pi_{X,\ast} \circ \lambda_X^\ast$ passing to left adjoints.
	Point (2) is just a reformulation of \cref{lem:functoriality} in this specific situation.
	As for point (3), observe that by construction $\phi_f$ is the composition
	\[ \lambda_{Y,!} \circ \pi_Y^\ast \circ (f\inv)_! \simeq \lambda_{f,!} \circ \pi_{f}^\ast \simeq \lambda_{f,!} \circ p^\ast \circ \pi_X^\ast \to \Pi_\infty(f)^\ast \circ \lambda_{X,!} \circ \pi_X^\ast \ . \]
	Therefore, if the Beck-Chevalley transformation of the statement is an equivalence, the same goes for $\phi_f$.
	Thus, the conclusion follows from \cref{lem:phi_equivalence_reduction}.
\end{proof}

\subsection{Restriction to a closed union of strata for $\Phi$}

The following lemma is a straightforward consequence of Quillen's theorem A and its proof is left to the reader:

\begin{lem} \label{lem:verifying_left_adjointability}
	Let
	\[ \begin{tikzcd}
		\cC_0 \arrow{d}{g} \arrow{r}{i} & \cC \arrow{d}{f} \\
		\cD_0 \arrow{r}{j} & \cD
	\end{tikzcd} \]
	be a pullback square in $\Cat_\infty$.
	Assume that:
	\begin{enumerate}\itemsep=0.2cm
		\item the functors $i$ and $j$ are fully faithful;
		\item for $d \in \cD$, $d_0 \in \cD_0$, if $\Map_{\cD}(d,j(d_0)) \ne \emptyset$, then $d$ belongs to the essential image of $j$.
	\end{enumerate}
	Then, for any $\cE$ presentable $\infty$-category the Beck-Chevalley transformation $ g_! \circ i^* \to j^* \circ f_!$ is an equivalence.
\end{lem}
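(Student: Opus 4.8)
The plan is to check the Beck--Chevalley transformation $g_! \circ i^\ast \to j^\ast \circ f_!$ pointwise, using the pointwise formula for left Kan extensions (available since $\cE$ is presentable, hence cocomplete).  Fix $F \in \Fun(\cC, \cE)$ and an object $d_0 \in \cD_0$.  The pointwise formula gives
	\[ (g_! \, i^\ast F)(d_0) \simeq \colim_{\cC_0 \times_{\cD_0} (\cD_0)_{/d_0}} F \circ i \circ \mathrm{pr} \qquad \text{and} \qquad (j^\ast f_! F)(d_0) \simeq \colim_{\cC \times_\cD \cD_{/j(d_0)}} F \circ \mathrm{pr} \ , \]
	and a routine unwinding of the definitions identifies the value at $d_0$ of the Beck--Chevalley transformation with the map of colimits induced by the canonical comparison functor
	\[ \Theta_{d_0} \colon \cC_0 \times_{\cD_0} (\cD_0)_{/d_0} \longrightarrow \cC \times_\cD \cD_{/j(d_0)} \]
	built from $i$, $j$ and the commutativity $f \circ i \simeq j \circ g$.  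Hence it suffices to show that $\Theta_{d_0}$ is colimit-final; I will in fact show that it is an equivalence of $\infty$-categories, which is the place where Quillen's theorem A could alternatively be invoked but will not be needed.

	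Since the given square is a pullback, $\cC_0 \simeq \cC \times_\cD \cD_0$, and the pasting lemma for pullbacks yields
	\[ \cC_0 \times_{\cD_0} (\cD_0)_{/d_0} \simeq \cC \times_\cD (\cD_0)_{/d_0} \ , \]
	the fibre product on the right being formed along $(\cD_0)_{/d_0} \to \cD_0 \xrightarrow{j} \cD$.  Under this identification, $\Theta_{d_0}$ becomes the base change along $\cC \to \cD$ of the slice functor
	\[ j_{/d_0} \colon (\cD_0)_{/d_0} \longrightarrow \cD_{/j(d_0)} \ . \]
	As equivalences of $\infty$-categories are stable under base change, it is enough to prove that $j_{/d_0}$ is an equivalence.

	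The functor $j_{/d_0}$ is fully faithful because $j$ is: given objects $\beta \colon d_0' \to d_0$ and $\beta' \colon d_0'' \to d_0$ of $(\cD_0)_{/d_0}$, the mapping space between their images in $\cD_{/j(d_0)}$ is the fibre of $\Map_\cD(j(d_0'), j(d_0'')) \to \Map_\cD(j(d_0'), j(d_0))$ over $j(\beta)$, which by full faithfulness of $j$ is identified with the fibre of $\Map_{\cD_0}(d_0', d_0'') \to \Map_{\cD_0}(d_0', d_0)$ over $\beta$, i.e.\ with the mapping space in $(\cD_0)_{/d_0}$.  It is essentially surjective by hypothesis (2): an object of $\cD_{/j(d_0)}$ is a morphism $\delta \colon d \to j(d_0)$, so $\Map_\cD(d, j(d_0)) \ne \emptyset$ and hence $d$ lies in the essential image of $j$, say $d \simeq j(d_0')$; then under full faithfulness of $j$ the morphism $\delta$ corresponds to some $\beta \colon d_0' \to d_0$ with $j_{/d_0}(\beta) \simeq \delta$.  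Therefore $j_{/d_0}$ is an equivalence, so $\Theta_{d_0}$ is an equivalence for every $d_0 \in \cD_0$, and the Beck--Chevalley transformation $g_! \circ i^\ast \to j^\ast \circ f_!$ is an equivalence.  The only step requiring genuine care is the identification, in the first paragraph, of the Beck--Chevalley transformation with the colimit comparison map induced by $\Theta_{d_0}$; everything else is formal.
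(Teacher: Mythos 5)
Your proof is correct and follows essentially the same route as the paper's: both reduce, via the pointwise formula for left Kan extensions, to showing that the comparison functor $\cC_0 \times_{\cD_0} (\cD_0)_{/d_0} \to \cC \times_\cD \cD_{/j(d_0)}$ is an equivalence (hence colimit-final), with full faithfulness coming from that of $i$ and $j$ and essential surjectivity from hypothesis (2). Your reorganization through the base change of the slice functor $j_{/d_0}$ is a clean way to package the same argument.
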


\personal{
\begin{proof}
	For every $d_0 \in \cD_0$, it is enough to show that  the morphism $\phi \colon \cC_0 \times_{\cD_0} (\cD_0)_{/d_0} \to \cC \times_{\cD} \cD_{/j(d_0)}$ is an equivalence, and hence cofinal. 
	Because $i$ and $j$ are fully faithful, the same goes for $\phi$.
	We only need to check that it is essentially surjective.
	So let $(c, \alpha) \in \cC \times_\cD \cD_{/j(d_0)}$.
	Here $c \in \cC$ and $\alpha \colon f(c) \to j(d_0)$.
	Condition (3) implies that $f(c) \in \cD_0$ and therefore condition (1) implies that $c \in \cC_0$.
	Therefore $(c, \alpha)$ actually defines an element in $\cC_0 \times_{\cD_0} (\cD_0)_{/d_0}$.
	The proof is complete.
\end{proof}
}
In the following corollaries, recall that the notations of \cref{notation_defin} are in use.
\begin{cor} \label{cor:Beck-Chevalley_beta_closed_strata}
	Let $(X,P)$ be a conically stratified space and let $S \subset P$ be a closed subset.
	The upper right square of the diagram \eqref{eq:functoriality_topological_setting}
	\[ \begin{tikzcd}
		\mathrm E_{i_S} \arrow{r}{p} \arrow{d}{\lambda_{i_S}} & \mathrm E_X \arrow{d}{\lambda_X} \\
		\Pi_\infty(X_S,S) \arrow{r}{\Pi_\infty(i_S)} & \Pi_\infty(X,P)
	\end{tikzcd} \]
	induces the Beck-Chevalley transformation
	\[ \beta \colon \lambda_{i_S !} \circ p^* \to \Pi_\infty(i_S)^* \circ \lambda_{X!} , \]
	which is an equivalence.
\end{cor}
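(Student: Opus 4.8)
The plan is to deduce this from \cref{lem:verifying_left_adjointability}. First observe that $X_S$, equipped with the stratification $S$, is again conically stratified by \cref{union_of_strata_and_conicality}, so the whole diagram \eqref{eq:functoriality_topological_setting} is available for $f \colon (X_S,S) \to (X,P)$ and the square in the statement is precisely its upper right square; in particular it commutes. I would then apply \cref{lem:verifying_left_adjointability} with $\cC_0 = \mathrm E_f$, $\cC = \mathrm E_X$, $\cD_0 = \Pi_\infty^\Sigma(X_S,S)$, $\cD = \Pi_\infty^\Sigma(X,P)$, horizontal functors $p$ and $f$, and vertical functors $\lambda_f$ and $\lambda_X$; its conclusion is exactly that $\beta \colon \lambda_{f!}\circ p^* \to f^*\circ\lambda_{X!}$ is an equivalence.

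The verification of the hypotheses goes as follows. That $f \colon \Pi_\infty^\Sigma(X_S,S) \to \Pi_\infty^\Sigma(X,P)$ is fully faithful is \cref{fully_faith_XS}. To see that the square is a pullback, note that, $f$ being fully faithful, $\mathrm E_X \times_{\Pi_\infty^\Sigma(X,P)} \Pi_\infty^\Sigma(X_S,S)$ is the full subcategory of $\mathrm E_X$ spanned by the objects $(U,x)$ whose image $\lambda_X(U,x) = x$ lies in the essential image of $f$. By the very definition of the exit-path simplicial set, every morphism $a \to b$ in an exit-path $\infty$-category satisfies $\mathrm{stratum}(a) \leqslant \mathrm{stratum}(b)$, so an isomorphism forces $a$ and $b$ into the same stratum; since $S$ is downward closed, the essential image of $f$ is then exactly the set of points of $X_S$, and the above fibre product is the full subcategory of $\mathrm E_X$ on the objects $(U,x)$ with $x \in U_S \coloneqq U \cap X_S$. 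Finally, for opens $V \subseteq U$ of $X$ and points $x \in U_S$, $y \in V_S$, the functor $p$ induces on mapping spaces the map $\Map_{\Pi_\infty^\Sigma(U_S,S)}(x,y) \to \Map_{\Pi_\infty^\Sigma(U,P)}(x,y)$, which is again an equivalence by \cref{fully_faith_XS}. Hence $p$ is fully faithful and identifies $\mathrm E_f$ with that full subcategory of $\mathrm E_X$, compatibly with the projections down to $\Pi_\infty^\Sigma(X_S,S)$, so the square is a pullback and hypothesis (1) holds. Hypothesis (2) is immediate: if $d$ is a point of $X$, $d_0$ a point of $X_S$ and $\Map_{\Pi_\infty^\Sigma(X,P)}(d,f(d_0)) \ne \emptyset$, an exit path $d \to d_0$ gives $\mathrm{stratum}(d) \leqslant \mathrm{stratum}(d_0) \in S$, whence $d \in X_S$ by downward-closedness, i.e.\ $d$ lies in the essential image of $f$.

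The only step that is not completely formal is showing that the square is a pullback — that is, pinning down the essential image of $f$ (which is where the hypothesis that $S$ is downward closed, hence that $X_S$ is a union of strata, is used) and checking that $p$ is essentially surjective onto the corresponding full subcategory of $\mathrm E_X$, with fibrewise full faithfulness supplied by \cref{fully_faith_XS}. Once this identification is in place, everything else — the full faithfulness of $f$ and the one-point verification of hypothesis (2) — follows directly from the definitions of exit paths and of $\mathrm E_f$.
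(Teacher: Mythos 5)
Your proposal is correct and follows essentially the same route as the paper: both apply \cref{lem:verifying_left_adjointability} with the same identifications, deduce full faithfulness of $f$ and $p$ from \cref{fully_faith_XS}, and use the downward closedness of $S$ for the non-emptiness condition on mapping spaces. The only difference is that you spell out the pullback identification and the essential image of $f$ in detail, whereas the paper declares the pullback property obvious.
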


\begin{proof}
	We check that the conditions of  \cref{lem:verifying_left_adjointability} are satisfied.
	First of all we observe that for every open subset $U$ of $X$, one has $U \cap X_S \simeq U_S$, where we consider $U$ equipped with the induced stratification $P|_U$.
	From \cref{fully_faith_XS}, the functor
	\[ \Pi_\infty(i_S|_{U_S}) \colon \Pi_\infty(U_S, S|_U) \to \Pi_\infty(U, P|_U) \]
	is fully faithful.
	It follows that both $\Pi_\infty(i_S)$ and $p$ are fully faithful functors.
	That the above square is a pullback is then obvious.
	Finally, the condition (3) from \cref{lem:verifying_left_adjointability} holds since $S\subset P$ is closed.
\end{proof}

Combining Corollaries \ref{cor:phi_equivalence_reduction}-(3) and \ref{cor:Beck-Chevalley_beta_closed_strata} we immediately obtain the following:

\begin{cor}\label{cor:Phi_restriction_closed_strata}
	Let $(X,P)$ be a conically stratified space and let $S \subset P$ be a closed subset.
	Then the natural transformation
	\[ \phi_S^{\hyp} \colon \Phi_{X_S,S}^{\hyp} \circ i_S^{\ast,\hyp} \to \Pi_\infty(i_S)^\ast \circ \Phi_{X,P}^{\hyp} \]
	is an equivalence.
\end{cor}

\subsection{Restriction to an open union of strata for $\Phi$}

Dealing with the inclusion of an open union of strata is more complicated.
We start collecting some general $\infty$-categorical facts:

\begin{defin}
	We say that a functor $f \colon \cC \to \cD$ is weakly cofiltered if for every object $d \in \cD$ the $\infty$-category $\cC_{d/} \coloneqq \cC \times_{\cD} \cD_{d/}$ is cofiltered.
\end{defin}

\begin{rem}
	If $f \colon \cC \to \cD$ is cofiltered in the sense of \cref{def:cofiltered} it is also weakly cofiltered: indeed, \cref{lem:filtered_functor} shows that for every $d \in \cD$ the induced map $\cC_{d/} \to \cD_{d/}$ is cofiltered and, on the other hand, $\cD_{d/}$ is obviously cofiltered since it has an initial object.
	On the other hand, Quillen's theorem A implies that if $f \colon \cC \to \cD$ is weakly cofiltered, then it is also colimit-final.
\end{rem}

\begin{lem}\label{lem:fiberwise_cofinality_criterion}
	Let $\cX$ be an $\infty$-category. 
	Let $\mathsf A, \mathsf B \colon \cX \to \Cat_\infty$ be functors and let
	\[ \pi_{\mathsf A} \colon \cA \to \cX\op \ , \qquad \pi_{\mathsf B} \colon \cB \to \cX\op \]
	be the associated cartesian fibrations.
	Let $f \colon \mathsf A \to \mathsf B$ be a natural transformation.
	If for every $x \in \cX$ the functor $f_x \colon \mathsf A(x) \to \mathsf B(x)$ is weakly cofiltered, then the induced functor $\cA \to \cB$ is colimit-final.
\end{lem}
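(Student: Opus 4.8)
The plan is to invoke Quillen's theorem A \cite[Theorem 4.1.3.1]{HTT}: the induced functor $F \colon \cA \to \cB$ is colimit-final if and only if for every object $b \in \cB$ the comma $\infty$-category $\cA \times_{\cB} \cB_{b/}$ is weakly contractible. So fix such a $b$, lying over $x \coloneqq \pi_{\mathsf B}(b)$, and regard $b$ as an object of $\mathsf B(x)$. The strategy is to show that this comma category is already cofiltered, from which weak contractibility follows by \cref{lem:filtered_functor}(2) together with the fact that cofiltered $\infty$-categories are weakly contractible. To get cofilteredness, I would build a pullback square relating $\cA \times_{\cB} \cB_{b/}$ to the fiber $\mathsf A(x) \times_{\mathsf B(x)} \mathsf B(x)_{b/}$ — which is cofiltered by the hypothesis that $f_x$ is weakly cofiltered — and then apply \cref{lem:filtered_functor}(1).

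Concretely: since $\pi_{\mathsf A}$ and $\pi_{\mathsf B}$ are cartesian fibrations and $F$ is a map of cartesian fibrations over $\cX\op$, there is a commutative square
\[ \begin{tikzcd}
\cA \times_{\cB} \cB_{b/} \arrow{r} \arrow{d} & \cA \times_{\cX\op} (\cX\op)_{x/} \arrow{d} \\
\mathsf B(x)_{b/} \arrow{r} & (\cX\op)_{x/}
\end{tikzcd} \]
which one checks is a pullback by unwinding the definition of the comma construction and using that $\pi_{\mathsf B}(b) = x$. Now $(\cX\op)_{x/}$ has an initial object, hence is cofiltered, and the right-hand vertical map $\cA \times_{\cX\op} (\cX\op)_{x/} \to (\cX\op)_{x/}$ is a pullback of $\pi_{\mathsf A}$, hence a cartesian fibration; one then has to observe that its fibers are the $\infty$-categories $\mathsf A(y)$ for $y$ receiving a map from $x$, and that over the initial object the fiber is exactly $\mathsf A(x)$, so that the pullback $\cA \times_{\cX\op} (\cX\op)_{x/}$ retracts onto $\mathsf A(x)$; more precisely, the comma category sits over $\mathsf B(x)_{b/}$, and taking the further fiber over the initial object $\mathrm{id}_b \in \mathsf B(x)_{b/}$ gives $\mathsf A(x) \times_{\mathsf B(x)} \mathsf B(x)_{b/}$ mapping to $\mathsf B(x)_{b/}$. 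The key point is that this last map $\mathsf A(x) \times_{\mathsf B(x)} \mathsf B(x)_{b/} \to \mathsf B(x)_{b/}$ is, by the very definition of weak cofilteredness of $f_x$ (its fibers over objects $b \to b'$ are $\mathsf A(x)_{b'/} \times \cdots$, cofiltered), a cofiltered functor in the sense of \cref{def:cofiltered} — here I would mimic the argument in the proof of \cref{lem:LKE} where an analogous pullback-of-comma-categories reduction is carried out. Combining: $\cA \times_{\cB} \cB_{b/}$ maps to $\mathsf B(x)_{b/}$, the latter is cofiltered (initial object), and the map is cofiltered by \cref{lem:filtered_functor}(1) applied to the composite/pullback; hence by \cref{lem:filtered_functor}(2) the total space is cofiltered, so weakly contractible.

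The main obstacle I anticipate is the bookkeeping needed to identify the comma category $\cA \times_{\cB} \cB_{b/}$ correctly as an iterated pullback and to verify that the relevant map is \emph{cofiltered} (not merely \emph{weakly cofiltered}) — the subtlety being that $\mathsf B(x)_{b/}$ is itself only built from a single object $b$, whereas weak cofilteredness of $f_x$ gives cofilteredness of the fibers $\mathsf A(x)_{b'/}$ for \emph{all} objects $b'$ of $\mathsf B(x)$ under $b$, so one must feed in the chain of objects along morphisms in $\mathsf B(x)_{b/}$, exactly as in the proof of \cref{lem:LKE}. A cleaner alternative, if the pullback-square approach gets unwieldy, is to argue directly with Quillen A on $F$ and show $\cA \times_{\cB} \cB_{b/}$ is cofiltered by verifying the lifting property against $I \hookrightarrow I^{\lhd}$ for finite $I$: a finite diagram in the comma category projects to a finite diagram in $\mathsf A(x)$ living over a finite diagram of objects in $\mathsf B(x)$ under $b$; pick a cone point for the image diagram of objects in $\mathsf B(x)$ (using that $\mathsf B(x)_{b/}$ has an initial object, namely $b$ itself, one can even take $b$), pull everything back to the fiber $\mathsf A(x)$ over that cone, and use weak cofilteredness of $f_x$ there to find the desired cone in $\mathsf A(x)$. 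Either way, the homotopy-coherence of these choices is the only genuinely technical ingredient; everything else is formal.
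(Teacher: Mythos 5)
Your overall strategy --- Quillen's theorem A, followed by showing that each comma category $\cA \times_{\cB} \cB_{b/}$ is cofiltered by reducing to the fiber over $x = \pi_{\mathsf B}(b)$ --- is the right one and is what the paper does. However, your primary route, the displayed square, does not work. There is no map $\cA \times_{\cB} \cB_{b/} \to \mathsf B(x)_{b/}$ making that square commute over $(\cX\op)_{x/}$: the lower-left corner lives entirely in the fiber over $x$, so its composite to $(\cX\op)_{x/}$ is constant at $\id_x$, whereas an object $(a, \beta \colon b \to F(a))$ of the comma category projects to an arbitrary morphism $x \to \pi_{\mathsf A}(a)$. Even if you take the left vertical map to be ``factor $\beta$ through a cartesian edge'', the square is not cartesian: the fiber product of the other three corners only retains those $a$ lying over $x$ (it is essentially $\mathsf A(x) \times \mathsf B(x)_{b/}$), which is much smaller than $\cA \times_{\cB} \cB_{b/}$. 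So you cannot deduce cofilteredness of the comma category from \cref{lem:filtered_functor} applied to this square.

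The ``cleaner alternative'' in your last paragraph is essentially the paper's proof, and is the argument you should run; two precisions are needed. First, the diagram you produce in the fiber must land in $\mathsf A(x)_{b/} \coloneqq \mathsf A(x) \times_{\mathsf B(x)} \mathsf B(x)_{b/}$, not merely in $\mathsf A(x)$: the weak-cofilteredness hypothesis on $f_x$ is precisely the statement that this comma category is cofiltered, so the diagram must retain the structure maps from $b$. Second, a finite diagram $F \colon I \to \cA \times_{\cB} \cB_{b/}$ does not ``project'' to the fiber; rather, using that $\pi_{\mathsf A}$ and $\pi_{\mathsf B}$ are cartesian fibrations one constructs $F' \colon I \to \mathsf A(x)_{b/}$ \emph{together with a natural transformation} $j \circ F' \to F$, where $j \colon \mathsf A(x)_{b/} \to \cA \times_{\cB} \cB_{b/}$ is the inclusion of the fiber. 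Cofilteredness of $\mathsf A(x)_{b/}$ then extends $F'$ over $I^{\lhd}$, and the natural transformation transports that cone back to a cone on $F$. This natural transformation is the piece your sketch leaves implicit, and it is exactly where the cartesianness of the fibrations is used.
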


\begin{proof}
	Let $b \in \cB$ be an object.
	In virtue of Quillen's theorem A, we have to prove that the $\infty$-category
	\[ \cA_{b/} \coloneqq \cA \times_{\cB} \cB_{b/} \]
	is weakly contractible.
	We claim that it is cofiltered.
	Set $x \coloneqq \pi_{\mathsf B}(b)$, so that we can review $b$ as an element in $\cB_x \simeq \mathsf B(x)$.
	Write $\mathsf A(x)_{b/} \coloneqq \mathsf A(x) \times_{\mathsf B(x)} \mathsf B(x)_{b/}$ and let
	\[ j \colon \mathsf A(x)_{b/} \to \cA_{b/} \]
	be the natural functor.
	Let now $F \colon I \to \cA_{b/}$ be a finite diagram.
	Composing $F$ with the canonical projection $\cA_{b/} \to \cX_{x/}$ and using the fact that both $\pi_{\mathsf A}$ and $\pi_{\mathsf B}$ are cartesian fibrations, we deduce the existence of a functor
	\[ F' \colon I \to \mathsf A(x)_{b/} \coloneqq \mathsf A(x) \times_{\mathsf B(x)} \mathsf B(x)_{b/} \]
	together with a natural transformation $\gamma \colon j \circ F' \to F$.
	By assumption the functor $f_x \colon \mathsf A(x) \to \mathsf B(x)$ is weakly cofiltered, and therefore $\mathsf A(x)_{b/}$ is cofiltered.
	It follows that $F'$ admits an extension $\widetilde{F}' \colon I^\lhd \to \mathsf A(x)_{b/}$.
	At this point, the natural transformation $\gamma$ allows us to prolong $\widetilde{F}'$ into an extension $\widetilde{F} \colon I^\lhd \to \cA_{b/}$ of $F$.
	The conclusion follows.
\end{proof}

Let $(X,P)$ be a conically stratified space.
Fix an object $x \in \Pi_\infty(X,P)$.
Define $\Pi_\infty(X,P)_{/x}^\simeq$ as the full subcategory of $\Pi_\infty(X,P)_{/x}$ spanned by its final objects.
For every functor $\cC \to \Pi_\infty(X,P)$ we set
\[ \cC_{/x} \coloneqq \cC \times_{\Pi_\infty(X,P)} \Pi_\infty(X,P)_{/x} \quad \text{and} \quad \cC_{/x}^\simeq \coloneqq \cC \times_{\Pi_\infty(X,P)} \Pi_\infty(X,P)_{/x}^\simeq \ . \]
The following lemma is the key geometrical argument of this section (and of the whole paper):

\begin{lem}\label{technical_lemma_for_exit}
	Let $U\subset X$ be an open subset.
	Then, the functor
		\[ \Pi_\infty(U,P)_{/x}^\simeq \to \Pi_\infty(U,P)_{/x} \]
	is weakly cofiltered.
	In particular, it is cofinal.
\end{lem}

Before giving the proof, let us describe the topological idea.
Unraveling the definitions, we see that we have to show the following statement: given a morphism $\gamma \colon y \to x$ where $y \in U$, the space of factorizations of $\gamma$ as
\[ \begin{tikzcd}[column sep=small]
	y \arrow{r}{\gamma'} & y' \arrow{r}{\gamma''} & x \ ,
\end{tikzcd} \]
where $\gamma''$ lies entirely in the stratum of $x$ is cofiltered (and in particular weakly contractible).
It is easy to see that it is non-empty: representing $\gamma$ as a continuous morphism $\gamma \colon [0,1] \to X$, the exit path condition ensures that $\gamma((0,1])$ entirely belongs to the stratum of $x = \gamma(1)$.
Since $\gamma$ is continuous, $\gamma\inv(U)$ is an open subset of $[0,1]$ and $0 \in \gamma\inv(U)$, so that $\gamma\inv(U) \cap (0,1] \ne \emptyset$.
Any point $t$ in this intersection allows to write $\gamma$ as the concatenation of two paths, $\gamma |_{[0,t]}$ and $\gamma |_{[t,1]}$, which gives the factorization we were seeking for.
The proof below is an implementation of this idea, made more sophisticated by the fact that we not only check non-emptyness but the property of being cofiltered.

\begin{proof}[Proof of \cref{technical_lemma_for_exit}]
	Let $\mathbf \gamma  \in \Pi_\infty(U,P)_{/x}$ and write $\Pi_\infty(U,P)_{\mathbf \gamma /\!\!/x}\coloneqq (\Pi_\infty(U,P)_{/x})_{\mathbf \gamma/}$.
	We show that the $\infty$-category
	\[ \Pi_\infty(U,P)_{\mathbf \gamma/\!\!/x}^\simeq \coloneqq \Pi_\infty(U,P)_{/x}^\simeq \times_{\Pi_\infty(U,P)_{/x}} \Pi_\infty(U,P)_{\mathbf \gamma/\!\!/x} \]
    is cofiltered.
	Let therefore $I$ be a finite category and consider the lifting problem
	\[ \begin{tikzcd}
		I \arrow{r}{g} \arrow[hook]{d}{j} & \Pi_\infty(U,P)_{\mathbf \gamma /\!\!/ x}^\simeq \\
		I^\lhd \arrow[dashed]{ur}[swap]{\overline{g}}
	\end{tikzcd} \]
	Since $\Pi_\infty(U,P)_{\mathbf \gamma/\!\!/x}^\simeq$ is fully faithful inside $\Pi_\infty(U,P)_{\mathbf \gamma /\!\!/x}$, we can rewrite the above lifting problem as
	\[ \begin{tikzcd}
		I \arrow{r}{g} \arrow[hook]{d}{j} & \Pi_\infty(U,P)_{\mathbf \gamma/\!\!/x} \\
		I^\lhd \arrow[dashed]{ur}[swap]{\overline{g}}
	\end{tikzcd} \]
	where $g$ sends an object $i\in I$ to a morphism $\mathbf \gamma \to \mathbf \delta_i$ in $\Pi_\infty(U,P)_{/x}$ with $\mathbf \delta_i \in \Pi_\infty(U,P)_{/x}^\simeq $ and where $\overline{g}$ is required to send the vertex $v_1$ of $I^\lhd$ to a morphism $\mathbf \gamma \to \mathbf \delta$ in $\Pi_\infty(U,P)_{/x}$ with $\mathbf \delta \in \Pi_\infty(U,P)_{/x}^\simeq$.
	The following picture summarizes the situation in the case where $I$ consists of two objects:
	\begin{center}
		\begin{tikzpicture}
			\node (A) at (-2,3) {} ;
			\node (B) at (2,0) {} ;
			\node (C1) at (0.2,2.1) {} ;
			\node (C2) at (4,3.8) {} ;
			\node (C3) at (6,2.6) {} ;
			\node (C4) at (6,0.2) {} ;
			\node (C5) at (3.5,0.8) {} ;
			\draw (-2,3) .. controls (C1) .. node[pos=0.65,label=240:{\tiny $y$}] (y) {} (2,0) ;
			\draw[rounded corners] (-2,3) .. controls (C2) and (5.8,3.1) .. (6,2.6) .. controls (C4) and (C5) .. (2,0) ;
			
			\node (D1) at (-3.7,2.8) {} ;
			\node (D2) at (-4,2) {} ;
			\node (D3) at (-2,1.5) {} ;
			\node (D4) at (0,-1) {} ;
			\draw[rounded corners] (-2,3) .. controls (D1) .. (-3.8,2.3) .. controls (D3) .. (0,-1) .. controls (0.6,-0.6) .. (2,0) ;
			
			\filldraw[black] (y) circle (1pt) ;
			\draw[dashed] (y) circle (43pt) ;
			
			\node[label={[label distance=0.2pt]60:{\tiny{$x$} }}] (x) at (4,2.5) {} ;
			\filldraw[black] (x) circle (1pt) ;
			
			\draw[->] (y) .. controls (1,2) and (3,2) .. node[pos=0.2,label={[label distance=0.1pt]60:{\tiny{$\overline{y}$}}}] (ybar) {} node[pos=0.6,sloped,above] {\tiny$\gamma$} (x) ;
			
			\node[label={[label distance=0.2pt]180:{\tiny{$y_1$}}}] (y1) at (0.7,2.6) {} ;
			\filldraw[black] (y1) circle (1pt) ;
			
			\node[label={[label distance=0.2pt]275:{\tiny{$y_2$}}}] (y2) at (1.5,1.2) {} ;
			\filldraw[black] (y2) circle (1pt) ;
			
			\filldraw[black] (ybar) circle (1pt) ;
			
			\draw[->] (y) -- (y1) ;
			
			\draw[->] (y) -- (y2) ;
			
			\draw[->] (y1) .. controls (2.1,3) .. node[midway,sloped,above] {\tiny{$\delta_1$}} (x) ;
			
			\draw[->] (y2) .. controls (3,1.6) .. node[midway,sloped,below] {\tiny{$\delta_2$}} (x) ;
			
			\draw[->,dotted] (ybar) -- (y1) ;
			
			\draw[->,dotted] (ybar) -- (y2) ;
			
			\node at (-1.2,1) {\small{$U$}} ;
		\end{tikzpicture}
	\end{center}
	The stratum through $y$ is represented by the line through $y$, and everything on its right represents the stratum containing $x$.
	The goal is to establish the existence of the dotted arrows as well as of the homotopies making the above diagram commutative.
	If $t \in (0,1)$ is a point for which $\overline{y} = \gamma(t)$, then taking $\delta \coloneqq \gamma |_{[t,1]}$ solves the above lifting problem (notice that in this case, the morphism $\gamma \to \delta$ would correspond to $\gamma |_{[0,t]}$).
	
	\medskip
	
	Formally, this is equivalent to a lifting problem
	\[ \begin{tikzcd}
		I^\lhd \arrow{r}{g'} \arrow[hook]{d}{j^\lhd} & \Pi_\infty(U,P)_{/x} \\
		(I^\lhd)^\lhd \arrow[dashed]{ur}[swap]{\overline{g}'} & \phantom{(\mathrm E_X)_{/x}^\simeq} 
	\end{tikzcd} \]
	where $g'$ sends the vertex $v_0$ of the exterior cone $(I^\lhd)^\lhd$ to $\mathbf \gamma$ and the other objects  in $\Pi_\infty(U,P)_{/x}^\simeq$, and where $\overline{g}'$ is required to send  $v_1$  in $\Pi_\infty(U,P)_{/x}^\simeq$. 
	Unraveling the definitions and denoting by $v_2$ the final object of $((I^\lhd)^\lhd)^\rhd$, we can further reduce this lifting problem to the following one
	\[ \begin{tikzcd}
		(I^\lhd)^\rhd \arrow[hook]{d}{j^\lhd} \arrow{r}{h} & \Pi_\infty(X,P) \\
		((I^\lhd)^\lhd)^\rhd \arrow[dashed]{ur}[swap]{\overline{h}} & \phantom{\Pi_\infty(X,P)} \ ,
	\end{tikzcd} \]
	where we ask for $\overline{h}$ to represent a continuous morphism
	\[ \overline{\mathrm h} \colon | ((I^\lhd)^\lhd)^\rhd | \to X , \]
	with the following properties:
	\begin{enumerate}\itemsep=0.2cm
		\item the morphism $\mathrm h$ takes the segment $[v_0,v_2]$ to the exit path $\gamma$ underlying $\mathbf \gamma$. 
		
		\item the morphism $\mathrm h$ takes the complement of $v_0$ to the same stratum of $x$;
		
		\item the morphism $\mathrm h$ takes the double cone $(I^\lhd)^\lhd$ inside the open $U$.
		
	\end{enumerate}
	Observe that there is a canonical equivalence
	\[ ((I^\lhd)^\lhd)^\rhd \simeq (\Delta^1 \star I) \star \Delta^0 \simeq \Delta^1 \star (I \star \Delta^0) \simeq ((I^\rhd)^\lhd)^\lhd \ , \]
	where $\star$ denotes the join operation.
	Set $J \coloneqq I^\rhd$.
	By definition of $\Pi_\infty(X,P)$, we can represent $h$ by an explicit continuous morphism
	\[ \mathrm h \colon | J^\lhd | \to X , \]
	with the property that it takes the segment $[v_0,v_2]$   to $\gamma$, that the complement of $v_0$ is sent in the same stratum as $x$, and that the cone over $I$ with vertex $v_0$ is sent in $U$.
	Since $\mathrm h$ is continuous, the preimage of $U$ is an open of  $| J^\lhd |$ containing the cone over $I$ with vertex $v_0$.
	In particular, $\mathrm h\inv(U) \cap (v_0,v_2] \ne \emptyset$.
	Let $w \in \mathrm h\inv(U) \cap (v_0,v_2]$ be any point.
	Since $\gamma$ is an exit path, $\mathrm h([w,v_2])$ is contained in the same stratum of $x$.
	Then, the inclusion of the exterior cone
	\[ i \coloneqq |j^\lhd| \colon | J^\lhd | \hookrightarrow | (J^\lhd)^\lhd | \]
	admits a retraction $r_w$ satisfying the following conditions:
	\begin{enumerate}[(i)]\itemsep=0.2cm
		\item $r_w(v_1) = w$;
		
		\item $r_w$ sends $|(I^\lhd)^\lhd|$ inside $\mathrm h^{-1}(U)$;
		
		\item the vertex $v_0$ is the unique point $s \in | (J^\lhd)^\lhd |$ such that $r_w(s) = v_0$.
	\end{enumerate}
	Define
	\[ \overline{\mathrm h} \coloneqq \mathrm h \circ r . \]
	It is then straightforward to verify that $\overline{\mathrm h}$ satisfies the conditions (1),(2) and (3) listed above.
\end{proof}

\begin{cor}\label{cofinality_trivial_fibration}
Let $(X,P)$ be a  conically stratified space.
Let $S\subset P$ be any subset.
Put $U\coloneqq X_S$.
Let $V\subset X$ be any open subset and consider the commutative square 
$$
\begin{tikzcd}
\Pi_\infty(U\cap V,P)  \arrow{r} \arrow{d}  &  \Pi_\infty(U,P)  \arrow{d}     \\
\Pi_\infty(V,P)  \arrow{r}   &  \Pi_\infty(X,P)
	\end{tikzcd} 
$$
Let $x\in U$. 
Then, the functor 
$$
\Pi_\infty(U\cap V,P)  \times_{ \Pi_\infty(U,P) }  \Pi_\infty(U,P)_{/x}\to  \Pi_\infty(V,P)  \times_{ \Pi_\infty(X,P) }  \Pi_\infty(X,P)_{/x}
$$
is cofinal.
\end{cor}
\begin{proof}
If $x$ lies in the stratum $0\in S$, we have 
$$
\Pi_\infty(X,P)_{/x}^{\simeq}=\Exit(X_0,\ast)_{/x}=\Pi_\infty(U,P)_{/x}^{\simeq}
$$
Thus, 
$$
\Pi_\infty(V,P)  \times_{ \Pi_\infty(X,P) }  \Pi_\infty(X,P)_{/x}^{\simeq}  =\Pi_\infty(U\cap V,P)  \times_{ \Pi_\infty(U,P) }  \Pi_\infty(U,P)_{/x}^{\simeq} 
$$
Hence, there is a commutative diagram 
$$
\begin{tikzcd}
\Pi_\infty(U\cap V,P)  \times_{ \Pi_\infty(U,P) }  \Pi_\infty(U,P)_{/x}^{\simeq} \arrow{r} \arrow[equal]{d}  &  \Pi_\infty(U\cap V,P)  \times_{ \Pi_\infty(U,P) }  \Pi_\infty(U,P)_{/x} \arrow{d}     \\
\Pi_\infty(V,P)  \times_{ \Pi_\infty(X,P) }  \Pi_\infty(X,P)_{/x}^{\simeq}   \arrow{r}   &  \Pi_\infty(V,P)  \times_{ \Pi_\infty(X,P) } \Pi_\infty(X,P)_{/x}
	\end{tikzcd} 
$$
From \cref{technical_lemma_for_exit}, the top and bottom arrows of the above diagram are cofinal.
Hence, the right vertical arrow is cofinal in virtue of \cite[4.1.1.3]{HTT}.
\end{proof}

\begin{cor}\label{technical_lemma}
	The functor
	$(\mathrm E_X)_{/x}^\simeq \to (\mathrm E_X)_{/x}$
	is cofinal.
\end{cor}
\begin{proof}
	We apply \cref{lem:fiberwise_cofinality_criterion} taking $\cX = \mathrm{Open}(X)$, and $\mathsf A$ and $\mathsf B$ to be the functors given by
	\[ \mathsf A(U) \coloneqq \Pi_\infty(U,P)_{/x}^\simeq \quad \text{and} \quad \mathsf B(U) \coloneqq \Pi_\infty(U,P)_{/x} \ . \]
	By construction, the cartesian fibration classified by $\mathsf B$ is given by the natural projection $(\mathrm E_X)_{/x} \to \mathrm{Open}(X)\op$, and similarly the cartesian fibration classified by $\mathsf A$ is given by $(\mathrm E_X)_{/x}^\simeq \to \mathrm{Open}(X)\op$.
	Thus, \cref{lem:fiberwise_cofinality_criterion} shows that it is enough to show that for every open $U$ of $X$, the functor
	\[ \Pi_\infty(U,P)_{/x}^\simeq \to \Pi_\infty(U,P)_{/x} \]
	is weakly cofiltered.
	This is proved in \cref{technical_lemma_for_exit}.
	\end{proof}

\begin{cor} \label{cor:Beck-Chevalley_beta_open}
	Let $(X,P)$ be a conically stratified space.
	Let $S \subset P$ be an open subset.
	The upper right square of the diagram \eqref{eq:functoriality_topological_setting}
	\begin{equation} \label{eq:Beck-Chevalley_beta_open_strata}
		\begin{tikzcd}
			\mathrm E_{i_S} \arrow{r}{p} \arrow{d}{\lambda_{i_S}} & \mathrm E_X \arrow{d}{\lambda_X} \\
			\Pi_\infty(X_S,S) \arrow{r}{\Pi_\infty(i_S)} & \Pi_\infty(X,P)
		\end{tikzcd}
	\end{equation}
	induces the Beck-Chevalley transformation
	\[ \beta \colon \lambda_{i_S !} \circ p^* \to \Pi_\infty(i_S)^* \circ \lambda_{X!} \]
	which is an equivalence.
\end{cor}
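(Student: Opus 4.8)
The plan is to check the Beck--Chevalley transformation $\beta$ pointwise on $\Pi_\infty^\Sigma(X_S,S)$. Fix $x_S\in\Pi_\infty^\Sigma(X_S,S)$ and put $x\coloneqq f(x_S)$. Since $\mathrm{Open}(X)$ is a small category there are no size issues, and since $\lambda_X\circ p=f\circ\lambda_f$ there is an induced functor $g_{x_S}\colon(\mathrm E_f)_{/x_S}\to(\mathrm E_X)_{/x}$ over $\mathrm{Open}(X)\op$; as usual for Beck--Chevalley transformations between left Kan extensions along a commutative square, the value of $\beta$ at $x_S$ is, for every $G\colon\mathrm E_X\to\cE$, the canonical comparison $\colim_{(\mathrm E_f)_{/x_S}}\!\big(G|_{(\mathrm E_X)_{/x}}\circ g_{x_S}\big)\to\colim_{(\mathrm E_X)_{/x}}G|_{(\mathrm E_X)_{/x}}$. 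So I would reduce the statement to showing that $g_{x_S}$ is cofinal for every $x_S$.

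To do this I would interpolate through the full subcategories of ``final objects'', exactly as is set up for \cref{technical_lemma}. Let $(\mathrm E_X)_{/x}^\simeq$ be as in the discussion preceding \cref{technical_lemma}, and set $(\mathrm E_f)_{/x_S}^\simeq\coloneqq\mathrm E_f\times_{\Pi_\infty^\Sigma(X_S,S)}\Pi_\infty^\Sigma(X_S,S)_{/x_S}^\simeq$. Since functors preserve equivalences, $g_{x_S}$ restricts to a functor $g^\simeq\colon(\mathrm E_f)_{/x_S}^\simeq\to(\mathrm E_X)_{/x}^\simeq$ forming a commutative square with the inclusions $a\colon(\mathrm E_f)_{/x_S}^\simeq\hookrightarrow(\mathrm E_f)_{/x_S}$ and $b\colon(\mathrm E_X)_{/x}^\simeq\hookrightarrow(\mathrm E_X)_{/x}$, so that $g_{x_S}\circ a=b\circ g^\simeq$. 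I would then establish: (i) $b$ is cofinal — this is precisely \cref{technical_lemma}; (ii) $a$ is cofinal — the fibre of $(\mathrm E_f)_{/x_S}\to\mathrm{Open}(X)\op$ over an open $U$ is $\Pi_\infty^\Sigma(U\cap X_S,S)_{/x_S}$ (and similarly with $\simeq$), and since $X_S$ is open in $X$ (here I use that $S$ is closed upwards) the sets $U\cap X_S$ run over all of $\mathrm{Open}(X_S)$; as $(X_S,S)$ is conically stratified by \cref{union_of_strata_and_conicality}, the proof of \cref{technical_lemma} applies verbatim to $(X_S,S)$ and $x_S$, i.e. \cref{lem:fiberwise_cofinality_criterion} together with the weak cofilteredness of $\Pi_\infty^\Sigma(V,S)_{/x_S}^\simeq\to\Pi_\infty^\Sigma(V,S)_{/x_S}$ for $V$ open in $X_S$ (the geometric heart of that proof) yields cofinality of $a$; (iii) $g^\simeq$ is an equivalence of $\infty$-categories.

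Granting (i)--(iii), I would conclude by right-cancellation of cofinal functors: $g_{x_S}\circ a=b\circ g^\simeq$ is a composite of cofinal functors hence cofinal, and $a$ is cofinal, so $g_{x_S}$ is cofinal. Concretely, for any $G$,
\[ \colim_{(\mathrm E_f)_{/x_S}}\!\big(G|_{(\mathrm E_X)_{/x}}\circ g_{x_S}\big)\;\simeq\;\colim_{(\mathrm E_f)_{/x_S}^\simeq}\!\big(G|_{(\mathrm E_X)_{/x}}\circ g_{x_S}\circ a\big)\;\simeq\;\colim_{(\mathrm E_X)_{/x}}G|_{(\mathrm E_X)_{/x}}\ , \]
and this composite is $\beta$ evaluated at $x_S$ and $G$; hence $\beta$ is an equivalence.

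The main obstacle is step (iii), and it is where the geometry of an \emph{open} union of strata enters. The key point: $x=f(x_S)$ lies in a stratum $X_a$ with $a\in S$, and a morphism $y\to x$ in $\Pi_\infty^\Sigma(X,P)$ forces the stratum of $y$ to lie below $a$, so the equivalence underlying any object of $(\mathrm E_X)_{/x}^\simeq$ forces its source to lie in $X_a\subseteq X_S$. Thus each object $(U,y,\phi)$ of $(\mathrm E_X)_{/x}^\simeq$ has $y\in U\cap X_S$, hence determines an object $z$ of $\Pi_\infty^\Sigma(U\cap X_S,S)$ with $\gamma_U(z)=y$, where $\gamma_U\colon\Pi_\infty^\Sigma(U\cap X_S,S)\to\Pi_\infty^\Sigma(U,P)$ is the canonical functor (the identity on objects). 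Since $\gamma_U$ and $f\colon\Pi_\infty^\Sigma(X_S,S)\to\Pi_\infty^\Sigma(X,P)$ are fully faithful (both by \cref{fully_faith_XS}), one checks that $g^\simeq$ is essentially surjective (lift $\phi$ uniquely along $f$ to an equivalence $z\to x_S$) and fully faithful (on mapping spaces it is, over each pair of opens, the equivalence induced by $\gamma_U$, fibred over the equivalence induced by $f$), hence an equivalence. The only other thing I would be careful about is confirming, for step (ii), that the proof of \cref{technical_lemma} genuinely goes through with $\mathrm E_f$ in place of $\mathrm E_X$; this is fine because that proof is fibrewise over $\mathrm{Open}(X)$ and all its fibres are slices of exit-path categories of opens of $X_S$.
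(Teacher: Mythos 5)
Your proposal is correct and follows the same skeleton as the paper's proof: reduce to cofinality of $(\mathrm E_f)_{/x}\to(\mathrm E_X)_{/x}$, interpolate through the full subcategories of final objects, observe that the top arrow $g^\simeq$ is an equivalence (with exactly the paper's essential-surjectivity argument: an equivalence to $x$ forces its source into the stratum of $x$, hence into $X_S$), and feed in \cref{technical_lemma} for the right-hand inclusion. The one place where you diverge is the left-hand inclusion $a\colon(\mathrm E_f)_{/x}^\simeq\hookrightarrow(\mathrm E_f)_{/x}$: the paper does not prove its cofinality independently, but instead shows that for every $\mathbf y\in(\mathrm E_f)_{/x}$ the under-category $((\mathrm E_f)_{/x})_{\mathbf y/}\to((\mathrm E_X)_{/x})_{\mathbf y/}$ is an equivalence --- this is where the hypothesis that $S$ is closed upwards is used, since an exit path starting in $X_S$ must stay in $X_S$ --- so that by Quillen's theorem A cofinality of the right vertical arrow transfers to the left one. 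You instead prove cofinality of $a$ directly by rerunning the fibrewise argument of \cref{technical_lemma} for the conically stratified space $(X_S,S)$ (legitimate by \cref{union_of_strata_and_conicality} and \cref{lem:fiberwise_cofinality_criterion}) and then conclude by right-cancellation of cofinal functors, which is valid. Your route costs a second (verbatim) application of the geometric lemma but makes the role of the closed-upwards hypothesis essentially disappear from the cofinality argument; the paper's reduction is shorter once \cref{technical_lemma} is in hand and makes the use of that hypothesis explicit.
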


\begin{proof}
	As in the proof of \cref{cor:Beck-Chevalley_beta_closed_strata}, $\Pi_\infty(i_S)$ and $p$ are fully faithful and the above square is a pullback.
	Let now $x \in \Pi_\infty(X_S, S)$ be an object.
	Committing a slight abuse of notation, we still denote by $x$ its image in $\Pi_\infty(X,P)$ via the functor $\Pi_\infty(i_S)$.
	Put
	\[ (\mathrm E_{i_S})_{/x} \coloneqq \mathrm E_{i_S} \times_{\Pi_\infty(X_S, S)} \Pi_\infty(X_S, S)_{/x} \]
	and consider the commutative diagram
	\[ \begin{tikzcd}
		(\mathrm E_{i_S})_{/x} \arrow{r} \arrow{d} & \mathrm (\mathrm E_X)_{/x}  \arrow{d} \\
		\Pi_\infty(X_S, S)_{/x} \arrow{r} & \Pi_\infty(X,P)_{/x}.
	\end{tikzcd} \]
	Unraveling the definitions, we reduce ourselves to check that  $(\mathrm E_{i_S})_{/x} \to (\mathrm E_X)_{/x}$ is cofinal.
	Observe that since $p$ is fully faithful, the same goes for this map.
	By definition, $(\mathrm E_{i_S})_{/x}$ is the full subcategory of triples $(U,y,\gamma)$ in $(\mathrm E_X)_{/x}$ such that  $y$ lies in $X_S$.
	Using the same notations of \cref{technical_lemma}, we find the following commutative diagram of $\infty$-categories over $\Pi_\infty(X, P)_{/x}$ :
	\[ \begin{tikzcd}
		(\mathrm E_{i_S})_{/x}^\simeq \arrow{r} \arrow{d} & \mathrm (\mathrm E_X)_{/x}^\simeq \arrow{d} \\
		(\mathrm E_{i_S})_{/x} \arrow{r} & (\mathrm E_X)_{/x} .
	\end{tikzcd} \]
	Since the vertical functors and the bottom horizontal one are fully faithful, the same goes for the top horizontal one.
	Furthermore, if $(U,y,\gamma) \in (\mathrm E_X)_{/x}^\simeq$ then $\gamma \colon y \to x$ is an equivalence in $\Pi_\infty(X,P)$.
	Since $x$ belongs to $X_S$, the same goes for $y$.
	In other words, $(U,y,\gamma)$ belongs to $(\mathrm E_{i_S})_{/x}^\simeq$.
	This shows that the top horizontal arrow is also essentially surjective, hence an equivalence.
	As a consequence, to prove that the bottom horizontal functor is cofinal it is enough to prove that both vertical functors are cofinal.
	
	We now show that it is enough to prove that the right vertical functor is cofinal.
	By Quillen's Theorem A and the above discussion, it is enough to show that for $\mathbf y = (U,y,\gamma) \in (\mathrm E_{i_S})_{/x}$, the fully faithful functor $((\mathrm E_{i_S})_{/x})_{\mathbf y/} \to ((\mathrm E_X)_{/x})_{\mathbf y/}$ is an equivalence.
	Let $\mathbf z \coloneqq (V,z,\delta) \in (\mathrm E_X)_{/x}$ and let  $\mathbf y \to \mathbf z$ be a morphism in $(\mathrm E_X)_{/x}$. 
	Observe that this morphism produces an exit path $y\to z$. 
	Since  $S$ is open, this implies that $z$ lies in $X_S$. Thus, we have $\mathbf z \in (\mathrm E_{i_S})_{/x}$.
	Hence,  the  fully faithful functor $((\mathrm E_{i_S})_{/x})_{\mathbf y/} \to ((\mathrm E_X)_{/x})_{\mathbf y/}$  is an equivalence and the sought-after reduction is complete.
	\personal{Indeed, if
		\[ \begin{tikzcd}[ampersand replacement = \&]
			\cA \arrow{r} \arrow{d} \& \cB \arrow{d} \\
			\cC \arrow{r} \& \cD
		\end{tikzcd} \]
		is a pullback of $\infty$-categories and the bottom horizontal functor is fully faithful and for every $c \in \cC$ the canonical map $\cC_{c/} \to \cD_{c/}$ is an equivalence, then cofinality of $\cB \to \cD$ immediately implies cofinality for $\cA \to \cC$, thanks to Quillen's theorem A.}
	We then conclude the proof of \cref{cor:Beck-Chevalley_beta_open} using \cref{technical_lemma}.
\end{proof}

Combining Corollaries~\ref{cor:phi_equivalence_reduction}-(3) and \ref{cor:Beck-Chevalley_beta_open} we immediately obtain the following:

\begin{cor}\label{cor:Phi_restriction_open_strata}
	Let $(X,P)$ be a conically stratified space.
	Let $S \subset P$ be an open  subset.
	Then the natural transformation
	\[ \phi_S^{\hyp} \colon \Phi_{X_S,S}^{\hyp} \circ i_S^{\ast,\hyp} \to \Pi_\infty(i_S)^\ast \circ \Phi_{X,P}^{\hyp} \]
	is an equivalence.
\end{cor}	

\begin{cor}\label{cor:Phi_restriction_single_stratum}
	Let $(X,P)$ be a conically stratified space.
	For every  $a \in P$ , the natural transformation
	\[ \phi_a^{\hyp} \colon \Phi_{X_a}^{\hyp} \circ i_a^{\ast,\hyp} \to \Pi_\infty(i_a)^\ast \circ \Phi_{X,P}^{\hyp} \]
	is an equivalence.
\end{cor}

\begin{proof}
	It follows by first  applying first \cref{cor:Phi_restriction_open_strata}  to $(X,P)$ and the subset $P_{\geqslant a} \subseteq P$ and subsequently \cref{cor:Phi_restriction_closed_strata} to $(X_{\geqslant a}, P_{\geqslant a})$ and the subset $\{a\} \subseteq P_{\geqslant a}$.
\end{proof}

\subsection{The equivalence}

We are now ready for the main theorem:

\begin{thm} \label{thm:exodromy}
	Let $(X,P)$ be a conically stratified space and let $\cE$ be a presentable $\infty$-category.
	Assume that:
	\begin{enumerate}\itemsep=0.2cm
		\item for every $a \in P$, the associated stratum $X_a$ is locally weakly contractible;
		
		\item the hyper-restrictions
		\[ \big\{ i_a^{\ast,\hyp} \colon \HSh(X;\cE) \to \HSh(X_a;\cE) \big\}_{a \in P} \]
		are jointly conservative.
	\end{enumerate}
	Then the exodromy adjunction
	\[ \Phi_{X,P}^{\hyp} \colon \Cons_P^{\hyp}(X;\cE) \leftrightarrows \Fun(\Pi_\infty(X,P),\cE) \colon \Psi_{X,P}^{\hyp} \]
	is an equivalence.
\end{thm}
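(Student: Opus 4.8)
The plan is to verify that the unit $\eta\colon \id\to \Psi_{X,P}^{\mathrm{hyp}}\circ\Phi_{X,P}^{\mathrm{hyp}}$ on $\ConsPhyp(X;\cE)$ and the counit $\varepsilon\colon \Phi_{X,P}^{\mathrm{hyp}}\circ\Psi_{X,P}^{\mathrm{hyp}}\to\id$ on $\Fun(\Pi_\infty^\Sigma(X,P),\cE)$ are both equivalences, by devissage along the strata and reduction to the monodromy equivalence of \cref{cor:monodromy_revisited}. First one observes that hypothesis (1) and \cref{cor:Psi_produces_constructible} guarantee that $\Psi_{X,P}^{\mathrm{hyp}}$ really does land in $\ConsPhyp(X;\cE)$, so the displayed adjunction makes sense and its counit agrees with the counit of the ambient adjunction on $\HSh(X;\cE)$; thus one may work with the adjunction $\Phi_{X,P}^{\mathrm{hyp}}\dashv\Psi_{X,P}^{\mathrm{hyp}}$ on all of $\HSh(X;\cE)$ throughout. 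The crucial point for both halves is that, for each $a\in P$, \cref{prop:Psi_restriction_stratum} and \cref{cor:Phi_restriction_single_stratum} show that $\psi_a^{\mathrm{hyp}}$ and $\phi_a^{\mathrm{hyp}}$ are equivalences, so \cref{rem:phi_equiv_iff_psi_equiv}-(2) (applied to $f=i_a$) tells us that $i_a^{\ast,\mathrm{hyp}}$ carries the unit of $\Phi_{X,P}^{\mathrm{hyp}}\dashv\Psi_{X,P}^{\mathrm{hyp}}$ to the unit of $\Phi_{X_a}^{\mathrm{hyp}}\dashv\Psi_{X_a}^{\mathrm{hyp}}$, and $\Pi_\infty^\Sigma(i_a)^\ast$ carries the counit of the former to the counit of the latter.

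For the counit: fix $F\in\Fun(\Pi_\infty^\Sigma(X,P),\cE)$. A morphism of functors $\Pi_\infty^\Sigma(X,P)\to\cE$ is an equivalence if and only if it is an equivalence at every object, and every object of $\Pi_\infty^\Sigma(X,P)$ is a point of $X$, lying over a unique $a\in P$ and in the image of the fully faithful functor $\Pi_\infty^\Sigma(i_a)$ (\cref{fully_faith_XS}); hence the family $\{\Pi_\infty^\Sigma(i_a)^\ast\}_{a\in P}$ is jointly conservative, and it suffices to show $\Pi_\infty^\Sigma(i_a)^\ast(\varepsilon_F)$ is an equivalence. By the compatibility just recalled this is the counit of $\Phi_{X_a}^{\mathrm{hyp}}\dashv\Psi_{X_a}^{\mathrm{hyp}}$ evaluated at $\Pi_\infty^\Sigma(i_a)^\ast(F)\in\Fun(\Pi_\infty(X_a),\cE)$, and since $X_a$ is locally weakly contractible by hypothesis (1), the monodromy adjunction \eqref{eq:monodromy_revisited} is an equivalence by \cref{cor:monodromy_revisited}; in particular its counit is an equivalence on all of $\Fun(\Pi_\infty(X_a),\cE)$. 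Hence $\varepsilon_F$ is an equivalence.

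For the unit: fix $F\in\ConsPhyp(X;\cE)$. Hypothesis (2) says the family $\{i_a^{\ast,\mathrm{hyp}}\}_{a\in P}$ is jointly conservative on $\HSh(X;\cE)$, so it suffices to show $i_a^{\ast,\mathrm{hyp}}(\eta_F)$ is an equivalence. By the compatibility above this is the unit of $\Phi_{X_a}^{\mathrm{hyp}}\dashv\Psi_{X_a}^{\mathrm{hyp}}$ at $i_a^{\ast,\mathrm{hyp}}(F)$; since $F$ is $P$-hyperconstructible, $i_a^{\ast,\mathrm{hyp}}(F)$ lies in $\mathrm{LC}^{\mathrm{hyp}}(X_a;\cE)$, where the monodromy unit is an equivalence by \cref{cor:monodromy_revisited}. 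Thus $\eta_F$ is an equivalence, and the adjunction is an equivalence of $\infty$-categories. In this assembly no genuinely new difficulty arises: the substantive content — the two stratum-restriction compatibilities — has already been established, the $\Phi$-side relying ultimately on the geometric deformation argument of \cref{technical_lemma} and the $\Psi$-side on the local-excellence input of \cref{prop:conical_implies_local_excellency}; what remains here is only the formal bookkeeping of units and counits via \cref{rem:phi_equiv_iff_psi_equiv}-(2) together with the two joint-conservativity statements.
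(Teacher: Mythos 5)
Your proposal is correct and follows essentially the same route as the paper's proof: reduce both unit and counit to each stratum via the right-adjointability of the $\Phi$-square (obtained from \cref{cor:Phi_restriction_single_stratum}, \cref{prop:Psi_restriction_stratum} and \cref{rem:phi_equiv_iff_psi_equiv}-(2)), then invoke the monodromy equivalence of \cref{cor:monodromy_revisited} on each locally weakly contractible stratum. Your explicit justification that the family $\{\Pi_\infty^\Sigma(i_a)^\ast\}_{a\in P}$ is jointly conservative (via essential surjectivity of the strata inclusions on objects) is a welcome detail that the paper's proof leaves implicit.
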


\begin{proof}
	We have to prove that for every functor $F \colon \Pi_\infty(X,P) \to \cE$ and every hyperconstructible hypersheaf $G \in \Cons_P^{\hyp}(X;\cE)$, the unit and counit
	\[ \eta_G \colon G \to \Psi_{X,P}^{\hyp}(\Phi_{X,P}^{\hyp}(G)) \quad \text{and} \quad \varepsilon_F \colon \Phi_{X,P}^{\hyp}(\Psi_{X,P}^{\hyp}(F)) \to F \]
	are equivalences.
	By assumption, it is enough to check that for every $a \in P$, the (hyper-)restrictions $i_a^{\ast,\hyp}(\eta_G)$ and $\Pi_\infty(i_a)^{\ast}(\varepsilon_F)$ are equivalences.
	Consider the following square:
	\[ \begin{tikzcd}
		\HSh(X;\cE) \arrow{d}{i_a^{\ast,\hyp}} \arrow{r}{\Phi_{X,P}^{\hyp}} & \Fun\big(\Pi_\infty(X,P), \cE\big) \arrow{d}{\Pi_\infty(i_a)^\ast} \\
		\HSh(X_a;\cE) \arrow{r}{\Phi_{X_a}^{\hyp}} & \Fun\big(\Pi_\infty(X_a),\cE\big) \ .
	\end{tikzcd} \]
	\Cref{cor:Phi_restriction_single_stratum} shows that the natural transformation $\phi_a^{\hyp}$ makes this into a commutative diagram.
	Moreover, combining \cref{rem:phi_equiv_iff_psi_equiv_bis} and \cref{prop:Psi_restriction_stratum} we deduce that it is horizontally right adjointable.
	Therefore we obtain canonical identifications
	\[ i_a^{\ast,\hyp}(\eta_G) \simeq \eta_{i_a^{\ast,\hyp}(G)}  \qquad \text{and} \qquad \Pi_\infty(i_a)^{\ast}(\varepsilon_F) \simeq \varepsilon_{\Pi_\infty(i_a)^\ast(F)} \ , \]
	where the latter are the unit and the counit of the adjunction $\Phi_{X_a}^{\hyp} \dashv \Psi_{X_a}^{\hyp}$.
	Observe now that since $G$ is hyperconstructible, $i_a^{\ast,\hyp}(G)$ is locally constant.
	Thus the conclusion follows directly from \cref{cor:monodromy_revisited}.
\end{proof}

\begin{rem}\label{joint_conservativeness}
The joint conservativity of the hyper-restrictions to strata is satisfied in the following two cases of interest:
\begin{enumerate}\itemsep=0.2cm
\item the category $\cE$ is compactly generated. 
See \cite[Corollary 5.16]{HPT}.

\item the poset $P$ is noetherian and the category $\cE$ is stable and presentable. 
See \cite[Corollary 5.21]{HPT}.
\end{enumerate}
\end{rem}

Let us emphasize some immediate consequences of the above theorem.

\begin{cor}\label{cor:constructible_presentable}
	Let $(X,P)$ be a conically stratified space and let $\cE$ be a presentable $\infty$-category.
	If the assumptions of \cref{thm:exodromy} are satisfied, then:
	\begin{enumerate}\itemsep=0.2cm
		\item the $\infty$-category $\Cons_P^{\hyp}(X;\cE)$ is presentable;
		\item if $\cE$ is stable, then so is $\Cons_P^{\hyp}(X;\cE)$.
		
		\item if $\cE$ is an $\infty$-topos, then so is $\Cons_P^{\hyp}(X;\cE)$.
	\end{enumerate}
\end{cor}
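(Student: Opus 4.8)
The plan is to bootstrap both statements from the exodromy equivalence
\[ \ConsPhyp(X;\cE) \simeq \Fun\big(\Pi_\infty^\Sigma(X,P),\cE\big) \]
supplied by \cref{thm:exodromy}; once this is in hand, everything reduces to a statement about functor $\infty$-categories whose source is the small $\infty$-category $\Pi_\infty^\Sigma(X,P)$.

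For point (1) I would simply invoke that $\Fun(\cC,\cE)$ is presentable whenever $\cC$ is small and $\cE$ is presentable \cite[Proposition 5.5.3.6]{HTT}, and transport presentability along the displayed equivalence.

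For point (2) the idea is to recall the general fact that $\Fun(\cC,\cE)$ is an $\infty$-topos for every small $\infty$-category $\cC$ and every $\infty$-topos $\cE$, and then specialize to $\cC \coloneqq \Pi_\infty^\Sigma(X,P)$. To prove the general fact I would use \cite[Theorem 6.1.0.6]{HTT} to present $\cE$ as an accessible left exact localization $L \colon \PSh(\cD) \leftrightarrows \cE \colon \iota$ of the $\infty$-topos of presheaves on a small $\infty$-category $\cD$, with $\iota$ fully faithful. Postcomposition then yields an adjunction
\[ L \circ - \colon \Fun(\cC,\PSh(\cD)) \leftrightarrows \Fun(\cC,\cE) \colon \iota \circ - \]
whose right adjoint $\iota \circ -$ is again fully faithful and whose left adjoint $L \circ -$ is left exact, finite limits in functor categories being computed objectwise; being a left adjoint between presentable $\infty$-categories it is moreover accessible, so $L \circ -$ is an accessible left exact localization. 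Since $\Fun(\cC,\PSh(\cD)) \simeq \PSh(\cC\op \times \cD)$ is a presheaf $\infty$-topos, \cite[Theorem 6.1.0.6]{HTT} again exhibits $\Fun(\cC,\cE)$ as an $\infty$-topos. Transporting along the exodromy equivalence then gives (2).

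I do not expect a genuine obstacle, as the corollary is a formal consequence of \cref{thm:exodromy}; the only step requiring a little care is checking that $L \circ -$ remains an accessible left exact localization, which amounts to the objectwise computation of (co)limits in $\Fun(\cC,-)$.
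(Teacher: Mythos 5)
Your proof is correct and is exactly the route the paper intends: the corollary is stated as an immediate consequence of \cref{thm:exodromy}, obtained by transporting the properties of $\Fun(\Pi_\infty^\Sigma(X,P),\cE)$ across the exodromy equivalence. Your verification that postcomposition with an accessible left exact localization yields an accessible left exact localization of the presheaf $\infty$-topos $\PSh(\Pi_\infty^\Sigma(X,P)\op \times \cD)$ correctly fills in the standard fact needed for point (2).
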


The following corollary generalizes at the same time \cite[Lemma A.9.14]{Lurie_Higher_algebra} and \cite[Corollary 3.2]{HPT}

\begin{cor} \label{cor:constructible_limits_colimits}
	Let $(X,P)$ be a conically stratified space and let $\cE$ be a presentable $\infty$-category.
	If the assumptions of \cref{thm:exodromy} are satisfied, then the full subcategory $\Cons_P^{\hyp}(X;\cE) \hookrightarrow \HSh(X;\cE)$ is closed under small limits and small colimits.
\end{cor}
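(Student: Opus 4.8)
The plan is to derive the statement directly from the exodromy equivalence of \cref{thm:exodromy}, combined with the compatibility of $\Psi^{\mathrm{hyp}}$ with restriction to strata (\cref{prop:Psi_restriction_stratum}), the joint conservativity hypothesis (2) of \cref{thm:exodromy}, and the already known case of locally hyperconstant hypersheaves. Throughout write $\mathcal C \coloneqq \ConsPhyp(X;\cE)$, $\mathcal H \coloneqq \HSh(X;\cE)$, $\mathcal F \coloneqq \Fun(\Pi_\infty^\Sigma(X,P),\cE)$ and $\iota \colon \mathcal C \hookrightarrow \mathcal H$ for the inclusion.

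\emph{Closure under limits.} By \cref{cor:Psi_produces_constructible} the functor $\Psi^{\mathrm{hyp}}_{X,P} \colon \mathcal F \to \mathcal H$ factors through $\mathcal C$, and by \cref{thm:exodromy} the corestricted functor $\mathcal F \to \mathcal C$ is an equivalence. Hence $\mathcal C$ admits all small limits, computed by transporting the pointwise limits of $\mathcal F$ along this equivalence. Since $\Psi^{\mathrm{hyp}}_{X,P}$ is a right adjoint it preserves limits, so the composite $\mathcal F \xrightarrow{\ \sim\ } \mathcal C \xrightarrow{\ \iota\ } \mathcal H$ preserves small limits; as the first arrow is an equivalence, $\iota$ preserves small limits.

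\emph{Closure under colimits.} This is the substantive point, the obstacle being precisely that the right adjoint $\Psi^{\mathrm{hyp}}_{X,P}$ need not preserve colimits in general. Fix a small diagram $G \colon I \to \mathcal C$ and set $F \coloneqq \Phi^{\mathrm{hyp}}_{X,P} \circ \iota \circ G \colon I \to \mathcal F$. Since the unit of the exodromy adjunction is an equivalence on $\mathcal C$ (\cref{thm:exodromy}), we have $\iota G_i \simeq \Psi^{\mathrm{hyp}}_{X,P}(F_i)$, and since $\Phi^{\mathrm{hyp}}_{X,P} \circ \iota$ is an equivalence onto $\mathcal F$ with inverse the corestriction of $\Psi^{\mathrm{hyp}}_{X,P}$, the colimit of $G$ in $\mathcal C$ is $\Psi^{\mathrm{hyp}}_{X,P}(\colim_I F)$, the inner colimit being pointwise in $\mathcal F$. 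It therefore suffices to show that the canonical comparison
\[ \colim_{I} \Psi^{\mathrm{hyp}}_{X,P}(F_i) \longrightarrow \Psi^{\mathrm{hyp}}_{X,P}\big(\colim_I F\big), \]
with the left-hand colimit formed in $\mathcal H$, is an equivalence. By hypothesis (2) of \cref{thm:exodromy} the functors $\{ i_a^{\ast,\mathrm{hyp}} \}_{a \in P}$ are jointly conservative, and each of them is a left adjoint hence preserves colimits; so it is enough to check the map becomes an equivalence after applying $i_a^{\ast,\mathrm{hyp}}$ for every $a \in P$. Using \cref{prop:Psi_restriction_stratum} to identify $i_a^{\ast,\mathrm{hyp}} \circ \Psi^{\mathrm{hyp}}_{X,P} \simeq \Psi^{\mathrm{hyp}}_{X_a} \circ \Pi_\infty^\Sigma(i_a)^\ast$, and the fact that restriction along $\Pi_\infty^\Sigma(i_a)$ preserves colimits (being computed pointwise), this reduces to showing that for $H \coloneqq \Pi_\infty^\Sigma(i_a)^\ast F \colon I \to \Fun(\Pi_\infty(X_a),\cE)$ the canonical map $\colim_I \Psi^{\mathrm{hyp}}_{X_a}(H_i) \to \Psi^{\mathrm{hyp}}_{X_a}(\colim_I H_i)$, with left-hand colimit formed in $\HSh(X_a;\cE)$, is an equivalence.

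Now $X_a$ is locally weakly contractible by hypothesis (1), so \cref{cor:monodromy_revisited} identifies $\Psi^{\mathrm{hyp}}_{X_a}$ with the monodromy equivalence onto $\mathrm{LC}^{\mathrm{hyp}}(X_a;\cE)$. Under this identification the displayed equivalence is exactly the statement that $\mathrm{LC}^{\mathrm{hyp}}(X_a;\cE)$ is closed under small colimits inside $\HSh(X_a;\cE)$, which is \cite[Corollary 3.2]{HPT}. This completes the colimit case, and hence the proof. The only point requiring care beyond this outline is the naturality of the successive comparison maps in $i \in I$, which makes each identification above compatible with the colimit cocones; this is routine once the reduction to strata and to the locally hyperconstant case has been arranged.
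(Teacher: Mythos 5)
Your proof is correct. The limit half coincides with the paper's argument: $\Psi_{X,P}^{\mathrm{hyp}}$ is a fully faithful right adjoint whose essential image is $\ConsPhyp(X;\cE)$ by \cref{thm:exodromy}, so it preserves limits and the claim follows. For colimits you take a longer route than the paper does. The paper argues directly from the \emph{definition} of hyperconstructibility: a hypersheaf is $P$-hyperconstructible iff each $i_a^{\ast,\mathrm{hyp}}(F)$ is locally hyperconstant, and since $i_a^{\ast,\mathrm{hyp}}$ preserves colimits, one only needs that $\mathrm{LC}^{\mathrm{hyp}}(X_a;\cE)$ is closed under colimits in $\HSh(X_a;\cE)$, i.e.\ \cite[Corollary 3.2]{HPT}. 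In particular the paper's colimit argument uses neither the joint conservativity hypothesis nor the exodromy equivalence itself. You instead form the comparison map $\colim_I \Psi(F_i) \to \Psi(\colim_I F)$, test it on strata via joint conservativity and \cref{prop:Psi_restriction_stratum}, and reduce to the same input \cite[Corollary 3.2]{HPT} through \cref{cor:monodromy_revisited}. This is valid (the naturality of $\psi_{i_a}^{\mathrm{hyp}}$ you flag at the end is indeed the only thing to verify), and it has the mild advantage of exhibiting the colimit explicitly as $\Psi(\colim_I F)$, hence showing that $\Phi_{X,P}^{\mathrm{hyp}}|_{\ConsPhyp}$ preserves colimits; but it invokes more hypotheses than are needed for this half of the statement.
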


\begin{proof}
	For colimits it is enough to observe that if $a \in P$, then $i_a^{\ast,\hyp} \colon \HSh(X;\cE) \to \HSh(X_a;\cE)$ commutes with colimits.
	Thus, we are reduced to show that $\LChyp(X;\cE)$ is closed under colimits in $\HSh(X;\cE)$. 
	This was shown in \cite[Corollary 3.2]{HPT}.
	Concerning limits, it is enough to observe that $\Psi_{X,P}^{\hyp} \colon \Fun(\Pi_\infty(X,P), \cE) \to \HSh(X;\cE)$ is fully faithful and right adjoint to $\Phi_{X,P}^{\hyp}$.
	Thus $\Psi_{X,P}^{\hyp}$ commutes with small limits.
	On the other hand \cref{thm:exodromy} shows that the essential image of $\Psi_{X,P}^{\hyp}$ is $\Cons_P^{\hyp}(X;\cE)$, whence the conclusion.
\end{proof}

\begin{rem} \label{rem:constructibilization}
Combining Corollaries~\ref{cor:constructible_presentable} and \ref{cor:constructible_limits_colimits} it follows that if the assumptions of \cref{thm:exodromy} are satisfied then the inclusion $\iota : \Cons_P(X;\cE) \hookrightarrow \HSh(X;\cE)$ admits both a left adjoint $\mathrm L_P^{\hyp}$ and a right adjoint $\mathrm R_P^{\hyp}$.
\end{rem}		

\begin{rem} \label{rem:phi_does_not_see_the_constructibilization}
		The functor $\Psi_{X,P}^{\hyp} :\Fun(\Pi_\infty(X,P),\cE)\to \HSh(X;\cE)$ 
		factors as $\iota \circ\Psi_{X,P}^{\hyp}$.
		Hence, if the assumptions of \cref{thm:exodromy} are satisfied, it admits both $\Phi_{X,P}^{\hyp}$ 	 and $\Phi_{X,P}^{\hyp}|_{\Cons_P(X;\cE)}\circ \mathrm L_P^{\hyp}$ as left adjoints.
		Thus, for every $F\in  \HSh(X;\cE)$, the unit adjunction $F\to \iota\circ L_P^{\hyp}(F)$ induces an equivalence $\Phi_{X,P}^{\hyp}(F)\simeq \Phi_{X,P}^{\hyp}L_P^{\hyp}(F)$.
\end{rem}

\begin{cor} \label{cor:criterion_constructibility}
	Let $(X,P)$ be a conically stratified space and let $\cE$ be a presentable $\infty$-category.
	Under the assumptions of \cref{thm:exodromy}, for a hypersheaf $F \in \HSh(X;\cE)$ the following statements are equivalent:
	\begin{enumerate}\itemsep=0.2cm
		\item $F$ is hyperconstructible on $(X,P)$;
		
		\item for every open inclusion $U \subset V$ for which the induced map $\Pi_\infty(U,P) \to \Pi_\infty(V,P)$ is a categorical equivalence, the restriction map $F(V) \to F(U)$ is an equivalence;
		
		\item for every conical chart $Z \times C(Y)$ of $(X,P)$, $F$ satisfies the following two conditions:
		\begin{enumerate}[(i)]\itemsep=0.2cm
			\item for every open subset $W \subset Z$ and every $0 < \varepsilon < \varepsilon' \leqslant 1$, the restriction map
			\[ F( W \times C_{\varepsilon'}(Y)) \to F( W \times C_\varepsilon(Y) ) \]
			is an equivalence;
			
			\item for every inclusion $U \subseteq V$ of weakly contractible open subsets of $Z$, the restriction map
			\[ F(V \times C(Y)) \to F(U \times C(Y)) \]
			is an equivalence.
		\end{enumerate}
	\end{enumerate}
\end{cor}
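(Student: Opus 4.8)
The plan is to establish the cyclic chain of implications $(1) \Rightarrow (2) \Rightarrow (3) \Rightarrow (1)$. For $(1) \Rightarrow (2)$ I would invoke the exodromy equivalence directly: by \cref{cor:Psi_produces_constructible} and \cref{thm:exodromy} a hyperconstructible $F$ can be written as $F \simeq \Psi_{X,P}^{\mathrm{hyp}}(G)$ for some $G \colon \Pi_\infty^\Sigma(X,P) \to \cE$, and then formula \eqref{formula_for_psi} identifies the restriction $F(V) \to F(U)$ with the comparison map $\lim_{\Pi_\infty^\Sigma(V,P)} G|_{\Pi_\infty^\Sigma(V,P)} \to \lim_{\Pi_\infty^\Sigma(U,P)} G|_{\Pi_\infty^\Sigma(U,P)}$ induced by the functor $\Pi_\infty^\Sigma(U,P) \to \Pi_\infty^\Sigma(V,P)$; when this functor is a categorical equivalence the induced map of limits is an equivalence.

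For $(2) \Rightarrow (3)$ the point is to check that the two families of open inclusions appearing in $(3)$ induce categorical equivalences on exit-path $\infty$-categories, so that $(2)$ applies. On a conical chart $Z \times C(Y)$ one has the isomorphism $\Exit(W \times C_\varepsilon(Y), P) \simeq \Sing(W) \times \Exit(C_\varepsilon(Y), Q^{\vartriangleleft})$, with the convention $C_\infty(Y) \coloneqq C(Y)$. For $(3)(ii)$, an inclusion $U \subseteq V$ of weakly contractible opens of $Z$ gives a map of contractible Kan complexes $\Sing(U) \to \Sing(V)$, hence a categorical equivalence, so $\Pi_\infty^\Sigma(U \times C(Y),P) \to \Pi_\infty^\Sigma(V \times C(Y),P)$ is one as well. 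For $(3)(i)$ I would first identify $\Exit(C_\varepsilon(Y),Q^{\vartriangleleft})$ with the left cone $\Exit(Y \times (0,\varepsilon),Q)^{\lhd}$: the point $\ast$ is initial by \cref{Exit_cone_initial_object}, the full subcategory on the remaining objects is $\Exit(Y\times(0,\varepsilon),Q)$ by \cref{fully_faith_XS}, and no exit path lowers a stratum so there are no nontrivial morphisms into $\ast$. Since $\Sing(0,\varepsilon)$ is contractible one gets $\Exit(Y\times(0,\varepsilon),Q) \simeq \Exit(Y,Q)$ (product formula, as in \cref{lem:categorical_homotopy_invariance}), and under these identifications the inclusion $C_\varepsilon(Y) \hookrightarrow C_{\varepsilon'}(Y)$ becomes a left cone on a categorical equivalence; as $(-)^{\lhd} = \Delta^0 \star (-)$ preserves categorical equivalences, $\Exit(C_\varepsilon(Y),Q^{\vartriangleleft}) \to \Exit(C_{\varepsilon'}(Y),Q^{\vartriangleleft})$, hence $\Pi_\infty^\Sigma(W \times C_\varepsilon(Y),P) \to \Pi_\infty^\Sigma(W \times C_{\varepsilon'}(Y),P)$, is a categorical equivalence, and $(2)$ yields $(3)(i)$.

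The hard part will be $(3) \Rightarrow (1)$. Both conditions are local on $X$, so I would fix $a \in P$ and $x \in X_a$, choose a conical chart $U = Z \times C(Y)$ with $a$ the minimum of $P|_U = Q^{\vartriangleleft}$, note that $F|_U$ again satisfies $(3)$, and — using that $i_a^{\ast,\mathrm{hyp}}$ commutes with restriction to opens — reduce to proving that $i_{a,U}^{\ast,\mathrm{hyp}}(F|_U)$ is locally hyperconstant on $Z \simeq Z \times \{\ast\}$. Here I would introduce the presheaf $F^c \in \PSh(Z;\cE)$ defined by $F^c(W) \coloneqq F(W \times C(Y))$: since $W \mapsto W \times C(Y)$ carries (hyper)covers of $Z$ to (hyper)covers of $Z \times C(Y)$ and is compatible with intersections of opens, $F^c$ is a hypersheaf on $Z$, and by $(3)(ii)$ together with the monodromy equivalence \cref{cor:monodromy_revisited} on the locally weakly contractible space $Z$ it is locally hyperconstant. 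The restriction maps $F(W \times C(Y)) \to (i_{a,U}^{-1}(F|_U))(W) \to (i_{a,U}^{\ast,\mathrm{hyp}}(F|_U))(W)$ assemble into a morphism of hypersheaves $F^c \to i_{a,U}^{\ast,\mathrm{hyp}}(F|_U)$ whose stalk at $z \in Z$ is the composite $\colim_{W \ni z} F(W\times C(Y)) \to \colim_{W\ni z,\ \varepsilon>0} F(W\times C_\varepsilon(Y)) \simeq F_{(z,\ast)}$; this is an equivalence because the conical charts $W\times C_\varepsilon(Y)$ form a neighbourhood basis of $(z,\ast)$ and each transition map is an equivalence by $(3)(i)$. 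Since a morphism of hypersheaves on a topological space that is an equivalence on all stalks is an equivalence, $i_{a,U}^{\ast,\mathrm{hyp}}(F|_U) \simeq F^c$ is locally hyperconstant, and as such charts cover $X_a$ this gives $(1)$. The delicate points in this last step are precisely the verification that $F^c$ is a hypersheaf and that the comparison map is a stalkwise equivalence — in other words, showing that $(3)(i)$ and the cone topology together force the restriction of $F$ to a stratum to "see" only $F^c$, after which the unstratified, locally weakly contractible recognition principle takes over.
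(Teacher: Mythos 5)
Your proof runs through the same cycle of implications as the paper's and is essentially correct: $(1)\Rightarrow(2)$ from the limit formula for $\Psi_{X,P}^{\mathrm{hyp}}$ supplied by \cref{thm:exodromy}, $(2)\Rightarrow(3)$ by checking that both families of inclusions induce categorical equivalences on exit-path $\infty$-categories (your cone decomposition $\Exit(C_\varepsilon(Y),Q^{\vartriangleleft})\simeq \Exit(Y\times(0,\varepsilon),Q)^{\lhd}$ is a more explicit substitute for the paper's appeal to a stratified homotopy equivalence, and it works), and $(3)\Rightarrow(1)$ by localizing to a conical chart and identifying $i^{\ast,\mathrm{hyp}}(F)$ with $W\mapsto F(W\times C(Y))$. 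Two soft spots in the last step. First, you conclude that $F^c\to i^{\ast,\mathrm{hyp}}(F)$ is an equivalence by passing to stalks and invoking that a stalkwise equivalence of $\cE$-valued hypersheaves is an equivalence; for a general presentable $\cE$ satisfying only hypothesis (2) of \cref{thm:exodromy} (joint conservativity of restrictions to \emph{strata}, not to points) this is not available, and it is also unnecessary: condition (3)(i) already makes every transition map in $i^{-1}(F)(W)\simeq \colim_{0<\varepsilon\leqslant 1}F(W\times C_\varepsilon(Y))$ an equivalence, so the comparison $F(W\times C(Y))\to i^{-1}(F)(W)$ is an equivalence sectionwise for every open $W$, which is how the paper argues. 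Second, the recognition of local hyperconstancy from (3)(ii) uses the criterion that it suffices to test restriction maps on inclusions of weakly contractible opens, i.e.\ \cite[Proposition 3.1]{HPT}, rather than \cref{cor:monodromy_revisited} itself; the latter alone does not give the recognition statement. Both points are immediately repairable and do not affect the architecture of the argument.
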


\begin{proof}
	The implication (1) $\Rightarrow$ (2) is a direct consequence of \cref{thm:exodromy}.
	For the implication (2) $\Rightarrow$ (3), it is enough to observe that, on the one hand, the inclusion $W \times C_{\varepsilon}(Y) \hookrightarrow W \times C_\varepsilon(Y)$ is a stratified homotopy equivalence (and hence it satisfies the assumption of point (2)); and on the other hand, for an inclusion $U \subseteq V$ of weakly contractible open subsets of $Z$, the induced map $\Pi_\infty(U \times C(Y)) \to \Pi_\infty(V \times C(Y))$ is also a categorical equivalence, thanks to \cref{lem:categorical_homotopy_invariance}.
	We are therefore left to prove that (3) implies (1).
	The question is local, so we can replace $X$ by a conical chart of the form $Z \times C(Y)$.
	Letting $i \colon Z \to Z \times C(Y)$ be the natural inclusion, we only have to check that $i^{\ast,\hyp}(F)$ is locally hyperconstant.
	Consider first the presheaf-theoretic pullback $i\inv(F)$.
	For every open subset $W$ of $Z$, one has
	\[ i\inv(F)(W) \simeq \colim_{0 < \varepsilon \leqslant 1} F( W \times C_\varepsilon(Y) ) \ . \]
	Our assumption (i) guarantees that the transition maps in the above colimit diagram are equivalences, and hence that the canonical restriction map
	\[ F(W \times C(Y)) \to i\inv(F)(W) \]
	is an equivalence.
	In particular, it follows that $i\inv(F)$ is a hypersheaf, and therefore that it coincides with $i^{\ast,\hyp}(F)$.
	
	At this point, in order to prove that $i\inv(F) \simeq i^{\ast,\hyp}(F)$ is locally hyperconstant it is enough, in virtue of \cite[Proposition 3.1]{HPT}, to prove that for every inclusion $U \subset V$ of weakly contractible open subsets of $Z$, the restriction map
	\[ i\inv(F)(V) \to i\inv(F)(U) \]
	is an equivalence.
	As we showed above, this amounts to check that the restriction map
	\[ F(V \times C(Y)) \to F(U \times C(Y)) \]
	is an equivalence.
	Since this holds by assumption (ii), the conclusion follows.
\end{proof}

For later use, let us remark that in the previous proof we established the following fact:

\begin{cor}\label{cor:hyperpullback_computation}
	Let $Z$ be a locally weakly contractible topological space and let $(Y,Q)$ be a stratified space so that $(X,P) \coloneqq (Z \times C(Y), Q^{\vartriangleleft})$ is conically stratified.
	Let $\cE$ be a presentable $\infty$-category and assume that the assumptions of \cref{thm:exodromy} are satisfied by $(X,P)$ and $\cE$.
	Let $F \in \ConsPhyp(X;\cE)$  and let $U$ be an open subset of $Z$.
	Then the canonical map
	\[ F(U \times C(Y)) \to i^{\ast,\hyp}(F)(U) \]
	is an equivalence, where $i$ denotes the canonical inclusion $i \colon Z \hookrightarrow X$.
\end{cor}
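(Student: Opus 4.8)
The plan is to observe that this corollary has essentially already been proven inside the proof of \cref{cor:criterion_constructibility}, so the task is to extract the relevant sub-argument and make it self-contained. The key point is that the hypothesis that $F \in \ConsPhyp(X;\cE)$ together with the exodromy equivalence of \cref{thm:exodromy} guarantees that condition (3)(i) of \cref{cor:criterion_constructibility} holds for $F$, namely that for every open $W \subseteq Z$ and every $0 < \varepsilon < \varepsilon' \leqslant 1$ the restriction map $F(W \times C_{\varepsilon'}(Y)) \to F(W \times C_\varepsilon(Y))$ is an equivalence. Indeed, the inclusion $W \times C_\varepsilon(Y) \hookrightarrow W \times C_{\varepsilon'}(Y)$ induces a categorical equivalence on exit-path $\infty$-categories (by \cref{lem:categorical_homotopy_invariance}, since it is a stratified homotopy equivalence), so the criterion (2) $\Rightarrow$ restriction is an equivalence applies.

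First I would recall that for an open inclusion $i \colon Z \hookrightarrow X$ of a topological space, the presheaf-theoretic pullback $i\inv(F)$ is computed on an open $U \subseteq Z$ by the filtered colimit
\[ i\inv(F)(U) \simeq \colim_{0 < \varepsilon \leqslant 1} F(U \times C_\varepsilon(Y)) \ , \]
the colimit being taken over the opposite of the poset $(0,1]$, and the transition maps being the restriction maps $F(U \times C_{\varepsilon'}(Y)) \to F(U \times C_\varepsilon(Y))$ for $\varepsilon < \varepsilon'$. This identification holds because the sets $U \times C_\varepsilon(Y)$, for $\varepsilon$ ranging in $(0,1]$, form a cofinal family of open neighbourhoods of $U$ inside $X = Z \times C(Y)$, by the very definition of the topology on $C(Y)$. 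By the previous paragraph all these transition maps are equivalences, so the colimit is computed by any of its terms; in particular the canonical map
\[ F(U \times C(Y)) \longrightarrow \colim_{0 < \varepsilon \leqslant 1} F(U \times C_\varepsilon(Y)) \simeq i\inv(F)(U) \]
is an equivalence.

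It remains only to identify $i\inv(F)$ with $i^{\ast,\mathrm{hyp}}(F)$. Since $F$ is a hypersheaf on $X$ and $F(U \times C(Y)) \simeq i\inv(F)(U)$ for every open $U \subseteq Z$, and since the assignment $U \mapsto U \times C(Y)$ sends covers of an open $U$ in $Z$ to covers of $U \times C(Y)$ in $X$ (intersections being computed componentwise), the presheaf $i\inv(F)$ inherits hyperdescent from $F$; hence $i\inv(F)$ is already a hypersheaf and therefore coincides with its hypersheafification $i^{\ast,\mathrm{hyp}}(F)$. Combining the two displayed equivalences gives the claim. I do not expect any real obstacle here: the only subtlety is making sure the cofinality of the family $\{U \times C_\varepsilon(Y)\}_\varepsilon$ and the sheaf-theoretic transfer of hyperdescent are spelled out, but both are immediate from the construction of the cone topology and were already used implicitly in the proof of \cref{cor:criterion_constructibility}.

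\begin{proof}
	This was established within the proof of \cref{cor:criterion_constructibility}; we isolate the argument for convenience.
	By the definition of the topology on $C(Y)$, the open subsets $U \times C_\varepsilon(Y)$ for $0 < \varepsilon \leqslant 1$ form a cofinal system of open neighbourhoods of $U$ inside $X = Z \times C(Y)$.
	Hence for every open $U \subseteq Z$ we have
	\[ i\inv(F)(U) \simeq \colim_{0 < \varepsilon \leqslant 1} F(U \times C_\varepsilon(Y)) \ , \]
	the colimit being taken over $(0,1]\op$ with transition maps the restriction maps $F(U \times C_{\varepsilon'}(Y)) \to F(U \times C_\varepsilon(Y))$ for $\varepsilon < \varepsilon'$.
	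Since $F$ is hyperconstructible, \cref{thm:exodromy} applies and in particular \cref{cor:criterion_constructibility} shows that $F$ satisfies condition (3)(i); that is, each such transition map is an equivalence.
	It follows that the canonical map
	\[ F(U \times C(Y)) \longrightarrow \colim_{0 < \varepsilon \leqslant 1} F(U \times C_\varepsilon(Y)) \simeq i\inv(F)(U) \]
	is an equivalence.
	In particular, since $U \mapsto U \times C(Y)$ carries open covers in $Z$ to open covers in $X$ and is compatible with intersections, the presheaf $i\inv(F)$ inherits hyperdescent from the hypersheaf $F$, so $i\inv(F)$ is itself a hypersheaf and therefore coincides with $i^{\ast,\mathrm{hyp}}(F)$.
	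Combining the last equivalence with this identification yields the claim.
\end{proof}
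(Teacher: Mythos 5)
Your proof is correct and is essentially identical to the paper's: the corollary is stated there precisely as a fact "established in the previous proof," namely the passage in the proof of \cref{cor:criterion_constructibility} where $i\inv(F)(U)\simeq\colim_{\varepsilon}F(U\times C_\varepsilon(Y))$ is shown to have constant transition maps via condition (3)(i), so that $i\inv(F)$ is already a hypersheaf and agrees with $i^{\ast,\mathrm{hyp}}(F)$. You correctly supply the only extra step needed to make this self-contained, namely deducing (3)(i) from hyperconstructibility via the implication (1) $\Rightarrow$ (2) of \cref{cor:criterion_constructibility} and the stratified homotopy equivalence $U\times C_\varepsilon(Y)\hookrightarrow U\times C_{\varepsilon'}(Y)$.
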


\section{Consequences}

Having proven \cref{thm:exodromy}, we now explore some of its consequences.
In this section, we mainly focus on improved functoriality results for the exodromy equivalence and some structural results for the $\infty$-category of hyperconstructible hypersheaves.

\subsection{Structural results for hyperconstructible hypersheaves}

Let $(X,P)$ be a conically stratified with locally weakly contractible strata.
Applying \cref{cor:constructible_presentable}, we see that $\Cons_P^{\hyp}(X)$ is a presentable $\infty$-category.
In particular, for every $\cE \in \PrL$, the tensor product $\Cons_P^{\hyp}(X) \otimes \cE$ is well defined.
Our first task is to compare it with $\Cons_P^{\hyp}(X;\cE)$.

\medskip

Since $\Cons_P^{\hyp}(X)$ is presentable, it follows from \cref{cor:constructible_limits_colimits} that the inclusion $\Cons_P^{\hyp}(X) \hookrightarrow \HSh(X)$ is a morphism in $\PrL$ having a further left adjoint.
Therefore, tensoring with $\cE$ yields a fully faithful functor
\begin{equation}\label{eq:tensor_decomposition}
	\Cons_P^{\hyp}(X) \otimes \cE \longhookrightarrow \HSh(X) \otimes \cE \ .
\end{equation}
Let $a \in P$ and let $i_a \colon X_a \hookrightarrow X$ be the inclusion of the corresponding stratum.
Functoriality of the tensor product of presentable $\infty$-categories immediately implies the commutativity of the following diagram:
\[ \begin{tikzcd}
	\Cons_P^{\hyp}(X) \otimes \cE \arrow[hook]{r} \arrow{d}{i_a^{\ast,\hyp} \otimes \id_\cE} & \HSh(X) \otimes \cE \arrow{d}{i_a^{\ast,\hyp} \otimes \id_\cE} \\
	\LChyp(X_a) \otimes \cE \arrow[hook]{r} & \HSh(X_a) \otimes \cE \ .
\end{tikzcd} \]
Recall from \cite[Observation 3.11]{HPT} that the equivalence $\HSh(X_a) \otimes \cE \simeq \HSh(X_a;\cE)$ restricts to an equivalence $\LChyp(X_a) \otimes \cE \simeq \LChyp(X_a;\cE)$.
Thus it follows that under the equivalence $\HSh(X) \otimes \cE \simeq \HSh(X;\cE)$, the functor \eqref{eq:tensor_decomposition} factors through $\Cons_P^{\hyp}(X;\cE)$, yielding the following commutative diagram:
\begin{equation}\label{eq:tensor_decomposition_II}
	\begin{tikzcd}
		\Cons_P^{\hyp}(X) \otimes \cE \arrow{r} \arrow{d} & \HSh(X) \otimes \cE \arrow{d} \\
		\Cons_P^{\hyp}(X;\cE) \arrow{r} & \HSh(X;\cE) \ .
	\end{tikzcd}
\end{equation}

\begin{cor} \label{cor:tensor_decomposition}
	Let $(X,P)$ be a conically stratified space and let $\cE$ be a presentable $\infty$-category.
	If the assumptions of \cref{thm:exodromy} are satisfied, then the diagram
	\[ \begin{tikzcd}[column sep = large]
			\Cons_P^{\hyp}(X) \otimes \cE \arrow{r}{\Phi_{X,P}^{\hyp} \otimes \id_\cE} \arrow{d}{\boxtimes} & \Fun\big(\Pi_\infty(X,P), \cS\big) \otimes \cE \arrow{d}{\boxtimes} \\
			\Cons_P^{\hyp}(X;\cE) \arrow{r}{\Phi_{X,P}^{\hyp,\cE}} & \Fun\big(\Pi_\infty(X,P),\cE\big)
	\end{tikzcd} \]
	is canonically commutative.
	In particular, the left vertical functor is an equivalence.
\end{cor}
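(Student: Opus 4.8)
The plan is to obtain the corollary by restricting to hyperconstructible objects a square that is already commutative on \emph{all} hypersheaves, and then to apply \cref{thm:exodromy} twice. Note first that since $(X,P)$ satisfies the assumptions of \cref{thm:exodromy}, the stratum hyper-restrictions out of $\HSh(X;\cS)$ are jointly conservative (the category $\cS$ being compactly generated, see \cref{joint_conservativeness}), so \cref{thm:exodromy} applies with coefficients in $\cS$ as well; together with \cref{cor:constructible_presentable} this also ensures that $\mathrm{Cons}_P^{\mathrm{hyp}}(X)$ is presentable, so that the tensor products appearing in the statement make sense.

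The first step is to record the commutative square
\[
\begin{tikzcd}[column sep = large]
	\HSh(X) \otimes \cE \arrow{r}{\Phi_{X,P}^{\mathrm{hyp}} \otimes \id_\cE} \arrow{d}{\wr} & \Fun\big(\Pi_\infty^\Sigma(X,P),\cS\big) \otimes \cE \arrow{d}{\wr} \\
	\HSh(X;\cE) \arrow{r}{\Phi_{X,P}^{\mathrm{hyp},\cE}} & \Fun\big(\Pi_\infty^\Sigma(X,P),\cE\big) \ ,
\end{tikzcd}
\]
whose commutativity is precisely \cref{lem:Phi_with_coefficients} (equivalently \cref{cor:exodromy_change_of_coefficients}) applied to $\cX = \HSh(X)$, $\mathsf A = \Pi_\infty^\Sigma$, with the two coefficient categories taken to be $\cS$ and $\cE$, read through the canonical identifications $\HSh(X;\cE) \simeq \HSh(X) \otimes \cE$ and $\Fun(\Pi_\infty^\Sigma(X,P),\cE) \simeq \Fun(\Pi_\infty^\Sigma(X,P),\cS) \otimes \cE$; the two vertical arrows are exactly these canonical equivalences.

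Next I would argue that this square restricts to the one in the statement. On the two left-hand corners this is the content of the diagram \eqref{eq:tensor_decomposition_II}: the canonical equivalence $\boxtimes$ carries $\mathrm{Cons}_P^{\mathrm{hyp}}(X) \otimes \cE$ into $\mathrm{Cons}_P^{\mathrm{hyp}}(X;\cE)$. For the top arrow, \cref{thm:exodromy} with coefficients in $\cS$ shows that $\Phi_{X,P}^{\mathrm{hyp}}$ restricts to an equivalence $\mathrm{Cons}_P^{\mathrm{hyp}}(X) \xrightarrow{\sim} \Fun(\Pi_\infty^\Sigma(X,P),\cS)$; by \cref{cor:constructible_limits_colimits} the inclusion $\mathrm{Cons}_P^{\mathrm{hyp}}(X) \hookrightarrow \HSh(X)$ preserves colimits, so this restriction is a morphism in $\PrL$, and since $- \otimes \cE \colon \PrL \to \PrL$ is a functor, $\Phi_{X,P}^{\mathrm{hyp}} \otimes \id_\cE$ restricts to an equivalence $\mathrm{Cons}_P^{\mathrm{hyp}}(X) \otimes \cE \xrightarrow{\sim} \Fun(\Pi_\infty^\Sigma(X,P),\cS) \otimes \cE$. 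Hence the square of the statement is obtained from the one above by passing to the indicated full subcategories in the two left-hand corners, and in particular it commutes.

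Finally I would conclude by two-out-of-three. In the square of the statement, the top horizontal arrow is an equivalence by the previous step, the right-hand $\boxtimes$ is the canonical equivalence $\Fun(\Pi_\infty^\Sigma(X,P),\cS) \otimes \cE \simeq \Fun(\Pi_\infty^\Sigma(X,P),\cE)$, and the bottom horizontal arrow $\Phi_{X,P}^{\mathrm{hyp},\cE}$ is an equivalence onto $\mathrm{Cons}_P^{\mathrm{hyp}}(X;\cE)$ by \cref{thm:exodromy} with coefficients in $\cE$; therefore the left-hand $\boxtimes$ is an equivalence as well. There is no genuine difficulty here once \cref{thm:exodromy} is in hand; the only point requiring care is the bookkeeping ensuring that ``$\Phi_{X,P}^{\mathrm{hyp}}$ restricted to constructibles'' and ``$- \otimes \id_\cE$'' interact compatibly with the full-subcategory inclusions — i.e.\ that \eqref{eq:tensor_decomposition_II} and the analogous compatibility for $\Phi$ fit together so that restricting the $\HSh$-level square is legitimate — and this relies on \cref{cor:constructible_limits_colimits} to see that the restriction of $\Phi_{X,P}^{\mathrm{hyp}}$ lands in $\PrL$.
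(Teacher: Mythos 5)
Your proposal is correct and follows essentially the same route as the paper: the commutative square at the level of $\HSh(X)\otimes\cE$ from \cref{lem:Phi_with_coefficients}, the restriction to hyperconstructible objects via the square \eqref{eq:tensor_decomposition_II}, and the two-out-of-three property of equivalences using \cref{thm:exodromy} for both $\cS$ and $\cE$. The paper's proof is just a terser version of the same argument, and your observation that the joint conservativity hypothesis holds automatically for $\cS$-coefficients (since $\cS$ is compactly generated) is a correct and worthwhile point of bookkeeping.
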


\begin{proof}
	The first half is a direct consequence of \cref{lem:Phi_with_coefficients} and the commutativity of the diagram \eqref{eq:tensor_decomposition_II}.
	The second half follows from the first one and the 2-out-of-3 property of the equivalences.
\end{proof}

\begin{cor}\label{cor:k_compact_generation}
	Let $(X,P)$ be a conically stratified space and let $\cE$ be a $\kappa$-presentable $\infty$-category.
	If the assumptions of \cref{thm:exodromy} are satisfied, then the $\infty$-category $\ConsPhyp(X;\cE)$ is $\kappa$-presentable.
\end{cor}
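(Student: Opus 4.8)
The plan is to reduce the statement, via \cref{thm:exodromy} and \cref{cor:tensor_decomposition}, to a formal fact about presheaf $\infty$-categories together with the behaviour of presentability rank under the tensor product of presentable $\infty$-categories. Throughout the argument, $\kappa$ denotes a regular cardinal for which $\cE$ is $\kappa$-presentable (such a $\kappa$ exists since $\cE$ is presentable).

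First I would use \cref{thm:exodromy} to identify $\ConsPhyp(X;\cS)$ with $\Fun(\Pi_\infty^\Sigma(X,P),\cS)$. Since $\Pi_\infty^\Sigma(X,P)$ is a small $\infty$-category, the latter is the presheaf $\infty$-category $\PSh(\Pi_\infty^\Sigma(X,P)\op)$; its representable objects are compact, because evaluation at a fixed object preserves all colimits, and they generate $\PSh(\Pi_\infty^\Sigma(X,P)\op)$ under colimits. Hence $\ConsPhyp(X;\cS)$ is compactly generated, in particular $\omega$-presentable, and therefore $\kappa$-presentable, as a compactly generated $\infty$-category is $\tau$-presentable for every regular cardinal $\tau$ (see \cite[\S5.4]{HTT}).

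Next I would invoke \cref{cor:tensor_decomposition}, which provides a canonical equivalence $\ConsPhyp(X;\cS)\otimes\cE \simeq \ConsPhyp(X;\cE)$. Writing $\ConsPhyp(X;\cS) \simeq \Ind_\kappa\big(\ConsPhyp(X;\cS)^\kappa\big)$ and $\cE \simeq \Ind_\kappa(\cE^\kappa)$, with both subcategories of $\kappa$-compact objects essentially small, the tensor product in $\PrL$ is computed as $\Ind_\kappa\big(\ConsPhyp(X;\cS)^\kappa \otimes_\kappa \cE^\kappa\big)$, where $\otimes_\kappa$ denotes the tensor product of small $\kappa$-cocomplete $\infty$-categories (\cite[\S4.8]{Lurie_Higher_algebra}). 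Consequently $\ConsPhyp(X;\cE)$ is $\kappa$-presentable, which is the assertion. Alternatively, one can bypass \cref{cor:tensor_decomposition} entirely and run the same computation starting from the equivalence $\ConsPhyp(X;\cE) \simeq \Fun(\Pi_\infty^\Sigma(X,P),\cE) \simeq \Fun(\Pi_\infty^\Sigma(X,P),\cS)\otimes\cE$ furnished directly by \cref{thm:exodromy}.

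There is no genuine obstacle in this argument; the only points requiring a little care are the standard bookkeeping facts that a compactly generated $\infty$-category remains $\kappa$-presentable for every regular cardinal $\kappa$, and that the tensor product of two $\kappa$-presentable $\infty$-categories is again $\kappa$-presentable. Both are classical and I would simply cite \cite{HTT} and \cite[\S4.8]{Lurie_Higher_algebra} rather than reprove them.
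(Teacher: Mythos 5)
Your argument is correct and follows essentially the same route as the paper: identify $\ConsPhyp(X;\cE)$ with $\Fun(\Pi_\infty^\Sigma(X,P),\cS)\otimes\cE$ via \cref{thm:exodromy} and the tensor decomposition, observe that the first factor is a compactly generated presheaf $\infty$-category, and conclude from the behaviour of the presentability rank under $\otimes$. The only difference is cosmetic: the paper cites \cite[Lemma 5.3.2.11]{Lurie_Higher_algebra} for the last step, whereas you unwind it by hand through $\Ind_\kappa$ of the $\kappa$-compact objects.
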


\begin{proof}
	Indeed, $\Fun(\Pi_\infty(X,P), \cS)$ is always compactly generated, and therefore \cite[Lemma 5.3.2.11]{Lurie_Higher_algebra} implies $\Fun(\Pi_\infty(X,P),\cS) \otimes \cE$ is $\kappa$-presentable as well.
\end{proof}

\begin{cor}[Categorical K\"unneth formula] \label{cor:categorical_Kunneth}
	Let $(X,P)$ and $(Y,Q)$ be conically stratified spaces with locally weakly contractible strata.
	Then there is a canonical equivalence
	\[ \Cons_{P \times Q}^{\hyp}(X\times Y) \simeq \Cons_P^{\hyp}(X) \otimes \Cons_Q^{\hyp}(Y) \ . \]
\end{cor}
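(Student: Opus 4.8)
The plan is to combine the exodromy equivalence (\cref{thm:exodromy}) with the tensor decomposition (\cref{cor:tensor_decomposition}) and the categorical Künneth formula for exit-path $\infty$-categories. First, I would record the exit-path Künneth formula: since $\Exit(X \times Y, P \times Q) \simeq \Exit(X,P) \times \Exit(Y,Q)$ as simplicial sets (this is immediate from the definition of exit paths, just as in the proof of \cref{lem:categorical_homotopy_invariance}), one has a canonical equivalence of $\infty$-categories $\Pi_\infty^\Sigma(X \times Y, P \times Q) \simeq \Pi_\infty^\Sigma(X,P) \times \Pi_\infty^\Sigma(Y,Q)$. Consequently, for the $\infty$-category of spaces $\cS$, there is a canonical equivalence
\[ \Fun\big(\Pi_\infty^\Sigma(X\times Y, P\times Q), \cS\big) \simeq \Fun\big(\Pi_\infty^\Sigma(X,P),\cS\big) \otimes \Fun\big(\Pi_\infty^\Sigma(Y,Q),\cS\big) \ , \]
since for small $\infty$-categories $\cC, \cD$ one has $\Fun(\cC \times \cD, \cS) \simeq \PSh(\cC\op) \otimes \PSh(\cD\op)$ (presheaf categories are the free cocompletions, and the tensor product of presentable $\infty$-categories computes the tensor product of free cocompletions; see \cite[\S4.8.1]{Lurie_Higher_algebra}).

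Next I would check that $(X \times Y, P \times Q)$ is again conically stratified with locally weakly contractible strata, so that \cref{thm:exodromy} applies to it. Conicality of a product of conically stratified spaces is standard: a conical chart $Z_1 \times C(Y_1)$ at $x$ and a conical chart $Z_2 \times C(Y_2)$ at $y$ combine, using the homeomorphism $C(Y_1) \times C(Y_2) \simeq C(Y_1 \star Y_2)$ (join of stratified spaces), to a conical chart at $(x,y)$. The strata of $(X\times Y, P\times Q)$ are products $X_p \times Y_q$ of strata, which are locally weakly contractible by \cref{prop:locally_contractible_strata} applied in both factors (a product of locally weakly contractible spaces is locally weakly contractible). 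Since we are working with $\cE = \cS$, the joint conservativity assumption (2) of \cref{thm:exodromy} holds automatically: $\cS$ is compactly generated, so \cref{joint_conservativeness}-(1) applies.

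Now I would assemble the equivalences. By \cref{thm:exodromy} applied to $(X,P)$, $(Y,Q)$ and $(X\times Y, P\times Q)$ with coefficients in $\cS$, together with \cref{cor:tensor_decomposition} (whose second half states that the natural map $\mathrm{Cons}_P^{\mathrm{hyp}}(X) \otimes \cE \to \mathrm{Cons}_P^{\mathrm{hyp}}(X;\cE)$ is an equivalence), we obtain
\[ \mathrm{Cons}_{P\times Q}^{\mathrm{hyp}}(X\times Y) \simeq \Fun\big(\Pi_\infty^\Sigma(X\times Y, P\times Q),\cS\big) \simeq \Fun\big(\Pi_\infty^\Sigma(X,P),\cS\big) \otimes \Fun\big(\Pi_\infty^\Sigma(Y,Q),\cS\big) \ , \]
and again by \cref{thm:exodromy} the right-hand side is canonically equivalent to $\mathrm{Cons}_P^{\mathrm{hyp}}(X) \otimes \mathrm{Cons}_Q^{\mathrm{hyp}}(Y)$, which is the claim. (One can alternatively phrase the middle step via \cref{cor:tensor_decomposition} with $\cE = \mathrm{Cons}_Q^{\mathrm{hyp}}(Y)$, identifying $\mathrm{Cons}_P^{\mathrm{hyp}}(X) \otimes \mathrm{Cons}_Q^{\mathrm{hyp}}(Y)$ with $\mathrm{Cons}_P^{\mathrm{hyp}}(X; \mathrm{Cons}_Q^{\mathrm{hyp}}(Y))$ and comparing via exodromy with coefficients.)

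The main obstacle I anticipate is verifying that the equivalences constructed are genuinely \emph{canonical} and compatible — in particular, that the exit-path Künneth equivalence $\Pi_\infty^\Sigma(X\times Y) \simeq \Pi_\infty^\Sigma(X) \times \Pi_\infty^\Sigma(Y)$ is compatible with the restriction-to-strata functors used in the proof of \cref{thm:exodromy}, so that the composite of equivalences above is well-defined independently of choices. A clean way to sidestep most bookkeeping is to invoke the functoriality machinery of \cref{sec:categorical_framework}: the two projections $X \times Y \to X$ and $X \times Y \to Y$ are morphisms of conically stratified spaces, and one checks directly (as for \cref{cor:induced_stratification}, noting the product stratification is the one induced by either projection only after intersecting) that the induced functor on hyperconstructible hypersheaves $\mathrm{Cons}_P^{\mathrm{hyp}}(X) \otimes \mathrm{Cons}_Q^{\mathrm{hyp}}(Y) \to \mathrm{Cons}_{P\times Q}^{\mathrm{hyp}}(X\times Y)$, given by external tensor product $F \boxtimes G \coloneqq p_X^{\ast,\mathrm{hyp}}(F) \otimes p_Y^{\ast,\mathrm{hyp}}(G)$, is the desired equivalence; checking it is an equivalence then reduces, via \cref{thm:exodromy}, to the statement about $\Fun(-,\cS)$ already handled above. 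The verification of conicality of the product via the join $C(Y_1) \times C(Y_2) \simeq C(Y_1 \star Y_2)$ is the other point requiring a little care but is essentially bookkeeping.
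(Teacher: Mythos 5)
Your proof is correct and follows essentially the same route as the paper's, which simply combines the facts that $\Pi_\infty^\Sigma$ commutes with finite products of stratified spaces and that $\Fun(-,\cS)$ takes products of $\infty$-categories to tensor products of presentable $\infty$-categories, then invokes \cref{thm:exodromy}. Your additional verification that $(X\times Y, P\times Q)$ is conically stratified with locally weakly contractible strata (via $C(Y_1)\times C(Y_2)\simeq C(Y_1\star Y_2)$) is a detail the paper leaves implicit, and is a worthwhile check.
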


\begin{proof}
	This follows from the fact that $\Fun(-,\cS)$ takes products of $\infty$-categories to tensor products of presentable $\infty$-categories, and the fact that $\Pi_\infty$ commutes with finite products of stratified spaces.
\end{proof}

\subsection{Exodromic morphisms}

Let $f \colon (Y,Q) \to (X,P)$ be a morphism of conically stratified spaces.
Let $\cE$ be a presentable $\infty$-category.
Although the natural transformations
\[ \psi_f^{\hyp} \colon f^{\ast,\hyp} \circ \Psi_{X,P}^{\hyp} \to \Psi_{Y,Q}^{\hyp} \circ \Pi_\infty(f)^\ast \quad \text{and} \quad \phi_f^{\hyp} \colon \Phi_{Y,Q}^{\hyp} \circ f^{\ast,\hyp} \to \Pi_\infty(f)^\ast \circ \Phi_{X,P}^{\hyp} \]
are always defined, they are not always equivalences.

\begin{defin}
	We say that the morphism $f \colon (Y,Q) \to (X,P)$ is:
	\begin{itemize}\itemsep=0.2cm
		\item \emph{right exodromic} if the natural transformation $\psi_f^{\hyp}$ is an equivalence;
		
		\item \emph{left exodromic} if the natural transformation $\phi_f^{\hyp}$ is an equivalence when evaluated on hyperconstructible hypersheaves on $(X,P)$;
		
		\item \emph{strongly left exodromic} if the natural transformation $\phi_f^{\hyp}$ is an equivalence;
		
		\item (\emph{strongly}) \emph{exodromic} if it is right and (strongly) left exodromic.
	\end{itemize}
\end{defin}

With this terminology and with the help of \cref{thm:exodromy}, \cref{rem:phi_equiv_iff_psi_equiv_bis} implies:

\begin{lem}\label{lem:left_right_exodromic}
	Let $f \colon (Y,Q) \to (X,P)$ be a morphism of conically stratified spaces with  locally weakly contractible strata.
	Let $\cE$ be a presentable $\infty$-category satisfying the assumptions of \cref{thm:exodromy}.
	Then $f$ is left exodromic if and only if it is right exodromic.
\end{lem}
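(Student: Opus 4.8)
The plan is to deduce this from the abstract categorical formalism of Section~\ref{sec:categorical_framework}, applied to the situation $\cX = \HSh(X)$, $\cY = \HSh(Y)$, $f = f^{\ast,\mathrm{hyp}}$, $\mathsf A = \mathsf B = \Pi_\infty^\Sigma$, and $\gamma$ the natural transformation coming from functoriality of exit paths, exactly as in \cref{eg:functoriality_topological_spaces}-(1). In this setup $\psi_f^{\mathrm{hyp}}$ is the exchange transformation $\psi_{\mathsf A,\mathsf B,f,\gamma}$ and $\phi_f^{\mathrm{hyp}}$ is the companion transformation $\phi_{\mathsf A,\mathsf B,f,\gamma}$ of \cref{construction:comparison_morphisms}. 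The key input is \cref{rem:phi_equiv_iff_psi_equiv}-(1): if both adjunctions $\Phi_{\cX,\mathsf A}^\cE \dashv \Psi_{\cX,\mathsf A}^\cE$ and $\Phi_{\cY,\mathsf B}^\cE \dashv \Psi_{\cY,\mathsf B}^\cE$ are equivalences, then $\phi_f^{\mathrm{hyp}}$ is an equivalence if and only if $\psi_f^{\mathrm{hyp}}$ is.

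First I would record that, since $(X,P)$ and $(Y,Q)$ have locally weakly contractible strata and $\cE$ satisfies the hypotheses of \cref{thm:exodromy}, the exodromy adjunctions
\[ \Phi_{X,P}^{\mathrm{hyp}} \colon \ConsPhyp(X;\cE) \leftrightarrows \Fun(\Pi_\infty^\Sigma(X,P),\cE) \colon \Psi_{X,P}^{\mathrm{hyp}} \]
and the analogous one for $(Y,Q)$ are equivalences. Here one needs to check that the hypotheses of \cref{thm:exodromy}, in particular the joint conservativity of the hyper-restrictions to strata (assumption (2)), transfer from $(X,P)$ to $(Y,Q)$; by \cref{joint_conservativeness} this is automatic in the two cases of interest ($\cE$ compactly generated, or $P$ noetherian and $\cE$ stable presentable), since $Q$ inherits the relevant finiteness property, and in any case the hypothesis on $\cE$ is really a hypothesis on $\cE$ alone once the poset is noetherian. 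So both vertical adjunctions in the defining square of $\psi_f^{\mathrm{hyp}}$, restricted to constructible hypersheaves, are equivalences.

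Now one applies \cref{rem:phi_equiv_iff_psi_equiv}-(1) directly: with $\Phi_{X,P}^{\mathrm{hyp}} \dashv \Psi_{X,P}^{\mathrm{hyp}}$ and $\Phi_{Y,Q}^{\mathrm{hyp}} \dashv \Psi_{Y,Q}^{\mathrm{hyp}}$ both equivalences, $\phi_f^{\mathrm{hyp}}$ (evaluated on constructible hypersheaves, which is where it lives as a transformation between the restricted functors) is an equivalence if and only if $\psi_f^{\mathrm{hyp}}$ is. Unwinding the definitions, ``$\phi_f^{\mathrm{hyp}}$ is an equivalence when evaluated on hyperconstructible hypersheaves on $(X,P)$'' is precisely the definition of $f$ being left exodromic, while ``$\psi_f^{\mathrm{hyp}}$ is an equivalence'' is precisely the definition of $f$ being right exodromic. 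Hence the two conditions coincide.

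The only subtle point — and the main thing to be careful about rather than a genuine obstacle — is bookkeeping: $\psi_f^{\mathrm{hyp}}$ is a priori defined as a transformation between functors out of $\Fun(\Pi_\infty^\Sigma(X,P),\cE)$ into $\HSh(Y;\cE)$, and $\phi_f^{\mathrm{hyp}}$ as a transformation between functors out of $\HSh(X;\cE)$, so one must verify that the equivalences $\Psi_{X,P}^{\mathrm{hyp}}$ and $\Psi_{Y,Q}^{\mathrm{hyp}}$ identify the source/target categories correctly so that ``$\psi_f^{\mathrm{hyp}}$ is an equivalence'' and ``$\phi_f^{\mathrm{hyp}}|_{\ConsPhyp(X;\cE)}$ is an equivalence'' are exactly the two sides of \cref{rem:phi_equiv_iff_psi_equiv}-(1). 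Since $\Pi_\infty^\Sigma(f)^\ast$ restricts $\Fun(\Pi_\infty^\Sigma(X,P),\cE)$ into $\Fun(\Pi_\infty^\Sigma(Y,Q),\cE)$ and both $\Psi$'s land in the respective constructible subcategories by \cref{cor:Psi_produces_constructible}, this matching is formal. I would therefore spell out this identification in one sentence and then invoke \cref{rem:phi_equiv_iff_psi_equiv}-(1) to conclude.
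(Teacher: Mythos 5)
Your proof is correct and follows essentially the same route as the paper: the authors also deduce the lemma directly from \cref{thm:exodromy} together with the formal observation of \cref{rem:phi_equiv_iff_psi_equiv} that, once both exodromy adjunctions are equivalences, $\phi_f^{\mathrm{hyp}}$ is an equivalence if and only if $\psi_f^{\mathrm{hyp}}$ is. Your extra remarks about transferring the hypotheses to $(Y,Q)$ and matching the source/target categories of the two transformations are exactly the (implicit) bookkeeping the paper leaves to the reader.
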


\begin{prop}\label{cor:induced_stratification}
Let $f \colon (Y,Q) \to (X,P)$ be a morphism of conically stratified spaces with locally weakly contractible strata.
Let $\cE$ be a presentable $\infty$-category satisfying the assumptions of \cref{thm:exodromy}.
Then $f$ is left exodromic.
\end{prop}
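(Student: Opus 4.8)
The plan is to reduce the claim — that an arbitrary morphism $f\colon(Y,Q)\to(X,P)$ of conically stratified spaces with locally weakly contractible strata is left exodromic — to the case already handled in the previous section, namely the inclusion of a single stratum. By \cref{lem:left_right_exodromic} it suffices to show that $\psi_f^{\mathrm{hyp}}$ is an equivalence on all hypersheaves (equivalently, that $\phi_f^{\mathrm{hyp}}$ is an equivalence on hyperconstructible hypersheaves), so I will work with $\psi_f^{\mathrm{hyp}}$ throughout. The strategy is to check the assertion stratum by stratum on the target side, exploiting that the hyper-restrictions $\{i_a^{\ast,\mathrm{hyp}}\}_{a\in P}$ to the strata of $X$ are jointly conservative (this is part of the hypotheses of \cref{thm:exodromy}, which $\cE$ satisfies).

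First I would set up the following diagram of stratified spaces: for each $a\in P$, let $X_a\hookrightarrow X$ be the stratum inclusion and let $Y_a\coloneqq f^{-1}(X_a)$ with the stratification $Q|_{Y_a}$ induced by $f$, so that there is a pullback square
\[ \begin{tikzcd}
	(Y_a, Q|_{Y_a}) \arrow{r}{g_a} \arrow{d}{f_a} & (Y,Q) \arrow{d}{f} \\
	(X_a,\{a\}) \arrow{r}{i_a} & (X,P) \ .
\end{tikzcd} \]
Here $f_a$ is a morphism whose underlying map factors through a single stratum of the target, hence the induced stratification $f_a^{-1}(\{a\})$ is trivial; by \cref{cor:induced_stratification}... — but wait, that is the statement we are proving, so instead I will invoke the fact that a morphism to a trivially stratified space with locally weakly contractible strata is exodromic, which follows from \cref{cor:functoriality_monodromy} (the monodromy case). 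Then I would chase the composite $\psi^{\mathrm{hyp}}$'s around this square. Using \cref{2-functoriality_psi} applied to the composite $i_a\circ f_a = f\circ g_a$, one gets a compatibility between $\psi^{\mathrm{hyp}}_{i_a}\circ(\text{stuff})$, $\psi^{\mathrm{hyp}}_{f_a}$, and $g_a^{\ast,\mathrm{hyp}}(\psi_f^{\mathrm{hyp}})$. Concretely: $g_a^{\ast,\mathrm{hyp}}$ applied to $\psi_f^{\mathrm{hyp}}$ can be identified, via the two-out-of-three package in \cref{2-functoriality_psi} together with \cref{prop:Psi_restriction_stratum} (which makes $\psi_{i_a}^{\mathrm{hyp}}$ an equivalence) and the monodromy result for $f_a$ (making $\psi_{f_a}^{\mathrm{hyp}}$ an equivalence), with $\psi_{f_a}^{\mathrm{hyp}}$ composed with equivalences. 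Hence $g_a^{\ast,\mathrm{hyp}}(\psi_f^{\mathrm{hyp}})$ is an equivalence for every $a$.

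Now I would conclude by joint conservativity. We want $\psi_f^{\mathrm{hyp}}(F)$ to be an equivalence of hypersheaves on $Y$ for every $F\in\Fun(\Pi_\infty^\Sigma(X,P),\cE)$. The hyper-restrictions $\{g_a^{\ast,\mathrm{hyp}}\colon\HSh(Y;\cE)\to\HSh(Y_a;\cE)\}_{a\in P}$ are jointly conservative — this is because the $Y_a$ cover $Y$ set-theoretically and each restriction is to a locally closed subset; more precisely, since $\{i_a^{\ast,\mathrm{hyp}}\}$ are jointly conservative on $\HSh(X;\cE)$ by hypothesis, and $g_a$ sits over $i_a$, a standard argument shows joint conservativity of the $g_a^{\ast,\mathrm{hyp}}$ on $\HSh(Y;\cE)$ as well (one can also see this directly: the strata of $Y$ refine the $Y_a$, and joint conservativity over strata of $Y$ follows from $\cE$ satisfying the hypothesis relative to $(Y,Q)$, then factors through the $Y_a$). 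Applying this to the morphism $\psi_f^{\mathrm{hyp}}(F)$, whose restriction to each $Y_a$ we have just shown to be an equivalence, gives that $\psi_f^{\mathrm{hyp}}(F)$ itself is an equivalence.

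The main obstacle I anticipate is the bookkeeping around \cref{2-functoriality_psi}: one must be careful that the natural transformation $\gamma$ appearing for the composite $f\circ g_a$ really does factor through the one for $i_a\circ f_a$ in the expected way, so that the two-out-of-three argument is legitimate and does not secretly reintroduce the very hypothesis we are trying to prove. A second, more technical point is justifying the joint conservativity of $\{g_a^{\ast,\mathrm{hyp}}\}$ on $\HSh(Y;\cE)$ from the hypothesis on $X$; the cleanest route is probably to observe that $\cE$ satisfies assumption (2) of \cref{thm:exodromy} for \emph{any} conically stratified space once it does for one (it is a condition purely on $\cE$ in the cases of \cref{joint_conservativeness}), so joint conservativity over the strata of $(Y,Q)$ holds, and these strata refine the partition $\{Y_a\}$. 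Everything else is a formal consequence of the two-out-of-three property combined with \cref{prop:Psi_restriction_stratum} and \cref{cor:functoriality_monodromy}.
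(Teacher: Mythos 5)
Your overall strategy (reduce to the monodromy case by restricting to strata and concluding by joint conservativity) is the right one, and matches the paper's; but the decomposition you chose does not close the loop, and the step where you invoke \cref{cor:functoriality_monodromy} is a genuine gap.

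You pull back along the strata of the \emph{target}: for $a\in P$ you form $Y_a\coloneqq f^{-1}(X_a)$ with the restricted stratification $Q|_{Y_a}$ and the induced map $f_a\colon(Y_a,Q|_{Y_a})\to(X_a,\{a\})$. The target of $f_a$ is trivially stratified, but its source is not: $Y_a$ is a union of strata of $(Y,Q)$ and in general carries a nontrivial stratification. \cref{cor:functoriality_monodromy} only applies to morphisms between locally weakly contractible topological spaces, i.e.\ with trivial stratifications on \emph{both} sides, so it does not give that $\psi_{f_a}^{\mathrm{hyp}}$ is an equivalence. The assertion you substitute for it --- ``a morphism to a trivially stratified space is exodromic'' --- is, for nontrivially stratified source, exactly the special case $X=\ast$ of the proposition you are proving (it already contains the nontrivial statement that constant functors on $\Pi_\infty^\Sigma(Y,Q)$ correspond to constant hypersheaves), so the argument is circular at precisely the point you flagged as a worry, just located elsewhere than you suspected. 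A secondary issue is that \cref{prop:Psi_restriction_stratum} is stated for the inclusion of a \emph{single} stratum, whereas your $g_a\colon Y_a\to Y$ is the inclusion of a locally closed union of strata; that part is repairable (combine \cref{cor:Phi_restriction_closed_strata}, \cref{cor:Phi_restriction_open_strata}, \cref{thm:exodromy} and \cref{rem:phi_equiv_iff_psi_equiv}), but the circularity is not.

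The fix is to decompose over the strata of the \emph{source} instead, which is what the paper does. For $a\in Q$ let $i_a\colon Y_a\to Y$ be the inclusion of a single stratum of $(Y,Q)$; the composite $f\circ i_a$ lands in a single stratum $X_b$ of $(X,P)$ and factors as $i_b\circ g$ with $g\colon Y_a\to X_b$ a map of trivially stratified, locally weakly contractible spaces. Two applications of \cref{2-functoriality_psi} together with \cref{prop:Psi_restriction_stratum} (now applied to genuine single-stratum inclusions on both sides) identify $i_a^{\ast,\mathrm{hyp}}(\psi_f^{\mathrm{hyp}}(F))$ with $\psi_g^{\mathrm{hyp}}(\Pi_\infty^\Sigma(i_b)^\ast(F))$, and \cref{cor:functoriality_monodromy} now legitimately applies to $g$. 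Joint conservativity of the hyper-restrictions to the strata of $(Y,Q)$ then concludes.
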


\begin{proof}
	From \cref{lem:left_right_exodromic}, it is enough to prove that $\psi_f^{\hyp}$ is an equivalence.
	For this, it is enough to show that for every $a \in Q$, the hyper-restriction $i_a^{\ast,\hyp}(\psi_f^{\hyp})$ is an equivalence.
	From \cref{prop:Psi_restriction_stratum} and \cref{2-functoriality_psi}, we find for every $F : \Pi_{\infty}(X,P)\to \cE$ a canonical identification
		\[ i_a^{\ast,\hyp}(\psi_f^{\hyp}(F)) \simeq \psi_{f\circ i_a}^{\hyp}(F) \ . \]
	Let $b\in P$ be the image of $a$  and let $g : Y_a \to X_b$ be the morphism morphism induced by $f$.
	Then, $f\circ i_a = i_b\circ g$ so that a second application of \cref{prop:Psi_restriction_stratum} and \cref{2-functoriality_psi} gives a canonical identification 
\[ i_a^{\ast,\hyp}(\psi_f^{\hyp}(F)) \simeq \psi_{g}^{\hyp}(\Pi_{\infty}(i_b)^{\ast}(F)) \ . \]
Thus, the conclusion follows from \cref{cor:functoriality_monodromy}.
\end{proof}




\begin{rem}\label{rem:open_inclusion_strata_strongly_exodromic}
	In particular, if $f \colon (Y,Q) \to (X,P)$ is an open immersion, \cref{cor:induced_stratification} implies that $\phi_f^{\hyp}$ is an equivalence on hyperconstructible hypersheaves.
	When $f$ is the inclusion of an open union of strata of $(X,P)$, Corollaries~\ref{cor:Psi_open_restriction} and \ref{cor:Phi_restriction_open_strata} show that more is true: indeed, in this case $f$ is strongly exodromic.
\end{rem}

\begin{cor}\label{cor:Phi_formula}
	Let $(X,P)$ be a conically stratified space with locally weakly contractible strata.
	Let $\cE$ be a presentable $\infty$-category satisfying the assumptions of \cref{thm:exodromy}.
	Let $x \in X$ be a point and let $F \in \Cons_P^{\hyp}(X;\cE)$.
	We have:
	\[ \Phi_{X,P}^{\hyp}(F)(x) \simeq \colim_{x \in U} F(U) \ , \]
	where the colimit ranges over the open subsets of $X$ containing $x$.
\end{cor}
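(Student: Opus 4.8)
The plan is to reduce the computation of $\Phi_{X,P}^{\mathrm{hyp}}(F)(x)$ to a statement about a single stratum, where it becomes the corresponding statement for \emph{locally} hyperconstant hypersheaves, and then invoke the monodromy identification $\Phi_X^{\mathrm{hyp}} \simeq \widetilde\Pi_\infty^\cE$ of \cref{cor:Phi_identification} together with the known local formula for $\Pi_\infty$. Concretely: let $a \in P$ be the stratum of $x$ and let $i_a \colon X_a \hookrightarrow X$ be the inclusion. By \cref{cor:Phi_restriction_single_stratum} the natural transformation $\phi_a^{\mathrm{hyp}}$ is an equivalence, so evaluating at the point $x \in \Pi_\infty(X_a) \subseteq \Pi_\infty^\Sigma(X,P)$ we get
\[ \Phi_{X,P}^{\mathrm{hyp}}(F)(x) \simeq \Phi_{X_a}^{\mathrm{hyp}}\big(i_a^{\ast,\mathrm{hyp}}(F)\big)(x) \ . \]
Since $F$ is hyperconstructible, $i_a^{\ast,\mathrm{hyp}}(F)$ is locally hyperconstant on $X_a$, and $X_a$ is locally weakly contractible by hypothesis; so we are reduced to the monodromy situation.

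Next I would use \cref{cor:Phi_identification}, which identifies $\Phi_{X_a}^{\mathrm{hyp}}$ with $\widetilde\Pi_\infty^\cE$, the coefficient-tensored lift of the functor sending $f \colon K \to \Pi_\infty(X_a)$ ... dually, on the right-adjoint side, recall from \eqref{description_pi_tilde} that $\Psi_{X_a}^{\mathrm{hyp}} \simeq \widetilde\Pi^\infty$ is given on an open $U \subseteq X_a$ by $\Map_{/\Pi_\infty(X_a)}(\Pi_\infty(U), -)$. Since $\Phi_{X_a}^{\mathrm{hyp}}$ is left adjoint to $\Psi_{X_a}^{\mathrm{hyp}}$, evaluation of $\Phi_{X_a}^{\mathrm{hyp}}(G)$ at the point $x$ — which corepresents evaluation-at-$x$ on $\Fun(\Pi_\infty(X_a),\cE)$ — is computed by the colimit over a suitable comma category of the values of $G$. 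Unwinding the adjunction $\Phi^{\mathrm{hyp}}_{X_a} \simeq \lambda_{X_a,!}\circ\pi_{X_a}^\ast$ from \cref{cor:phi_equivalence_reduction}-(1) and \cref{eg:functoriality}, one gets
\[ \Phi_{X_a}^{\mathrm{hyp}}(G)(x) \simeq \colim_{(U,y,\gamma) \in (\mathrm E_{X_a})_{/x}} G(U) \ . \]
Now \cref{technical_lemma} says the inclusion $(\mathrm E_{X_a})^\simeq_{/x} \hookrightarrow (\mathrm E_{X_a})_{/x}$ is cofinal, so the colimit may be taken over triples $(U,y,\gamma)$ with $\gamma$ an equivalence; and since $X_a$ is a single stratum, $\gamma$ being an equivalence in $\Pi_\infty(X_a)$ just means $y$ lies in the same (unique) stratum, i.e.\ no constraint beyond $y \in U$. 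Because $i_a^{\ast,\mathrm{hyp}}(F)$ is locally hyperconstant and $X_a$ is locally weakly contractible, on a cofinal system of weakly contractible $U$ the value $(i_a^{\ast,\mathrm{hyp}}F)(U)$ is the stalk, independent of the chosen $y$; so the diagram factors through the poset of opens $U \ni x$, giving
\[ \Phi_{X_a}^{\mathrm{hyp}}\big(i_a^{\ast,\mathrm{hyp}}F\big)(x) \simeq \colim_{x \in U \subseteq X_a} \big(i_a^{\ast,\mathrm{hyp}}F\big)(U) \ . \]
Finally, by \cref{cor:hyperpullback_computation} applied in conical charts of the form $Z \times C(Y)$ at $x$, the values $(i_a^{\ast,\mathrm{hyp}}F)(U)$ for $U \subseteq X_a = Z$ agree with $F(U \times C(Y))$, and conical charts $U \times C(Y)$ form a cofinal system of open neighbourhoods of $x$ in $X$; rewriting the colimit over this cofinal system identifies it with $\colim_{x\in U} F(U)$ over all opens $U$ of $X$ containing $x$, as desired.

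I expect the main obstacle to be the careful bookkeeping in the second step: extracting the honest colimit formula $\colim_{(U,y,\gamma)} G(U)$ for $\Phi_{X_a}^{\mathrm{hyp}}(G)(x)$ from the abstract composite $\lambda_{X_a,!}\circ\pi_{X_a}^\ast$ and the localization functor $\lambda_{X_a}$, and then threading \cref{technical_lemma} together with local hyperconstancy to collapse the three-variable diagram to a one-variable one. The cleaner alternative, which I would actually pursue in the write-up, is to avoid \cref{technical_lemma} entirely: once we are reduced to $X_a$ locally weakly contractible and $G := i_a^{\ast,\mathrm{hyp}}(F)$ locally hyperconstant, \cref{cor:functoriality_monodromy} lets us restrict further along an open $U \ni x$ on which $G$ is hyperconstant, reducing to $X$ weakly contractible; there $\Pi_\infty(X) \simeq \ast$, $\Phi_X^{\mathrm{hyp}}(G)(x) \simeq \Pi_\infty^\cE(G) \simeq G_x$ by \cref{lem:factor_through_LC} and \cite[Proposition 2.5]{HPT}, and a final cofinality argument with conical charts via \cref{cor:hyperpullback_computation} upgrades this back to the stated colimit over all opens of $X$ containing $x$.
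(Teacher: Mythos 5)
Your ``cleaner alternative'' is correct, but it takes a genuinely different and longer route than the paper. The paper's proof is essentially a one-liner: view $x$ as a morphism of stratified spaces $(\{x\},\ast) \to (X,P)$; the point with its trivial stratification is conically stratified with weakly contractible stratum, so \cref{cor:induced_stratification} applies verbatim and says that $\phi_x^{\mathrm{hyp}}$ is an equivalence on hyperconstructible hypersheaves; since $\Phi_{\{x\}}^{\mathrm{hyp}}$ is the identity of $\cE$ and $x^{\ast,\mathrm{hyp}}(F)$ is computed by $\colim_{x \in U} F(U)$, the formula drops out. You instead factor the point through its stratum and re-derive the needed special case from more primitive ingredients: \cref{cor:Phi_restriction_single_stratum} to restrict to $X_a$, \cref{cor:functoriality_monodromy} plus the monodromy identification to compute $\Phi_{X_a}^{\mathrm{hyp}}(G)(x) \simeq G_x$ on a weakly contractible open where $G$ is hyperconstant, and \cref{cor:hyperpullback_computation} together with the constancy of $F$ along $C_\varepsilon(Y)$-directions to identify $G_x$ with $F_x$. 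This is all sound and non-circular (everything you invoke precedes the corollary), and it has the merit of making visible exactly where the geometry of conical charts enters; the cost is that you are partly re-proving, by hand and for the special map $\{x\} \to X$, what \cref{cor:induced_stratification} already gives in general. One caveat on your first route (which you rightly abandon): the claim that the colimit over $(\mathrm E_{X_a})_{/x}$ ``factors through the poset of opens $U \ni x$'' is not immediate, because the opens appearing in the triples $(U,y,\gamma)$ are only required to contain $y$, not $x$; establishing cofinality of the sub-diagram of triples $(U,x,\mathrm{id}_x)$ is a genuine argument, not bookkeeping, and \cref{technical_lemma} alone does not supply it. Your second route sidesteps this entirely, so the proposal as a whole stands.
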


\begin{proof}
	Review $x$ as a morphism \smash{$x \colon * \to X$}.
	The induced stratification on $\ast$ is the trivial one, and in particular it is conical.
	Thus, \cref{cor:induced_stratification} implies that $x$ is exodromic, and therefore that the natural transformation $\phi_x^{\hyp}$ is an equivalence.
	The conclusion follows since the colimit in the statement is canonically identified with $x^{\ast,\hyp}(F)$.
\end{proof}
The next lemma asserts that the failure for the formula from \cref{cor:Phi_formula}  to hold exactly measures the defect for a hypersheaf to be hyperconstructible.
\begin{cor}\label{formula_for_phi_characterize_cons}
Let $(X,P)$ be a conically stratified space with locally weakly contractible strata. 
Let $\cE$ be a presentable $\infty$-category satisfying the assumptions of \cref{thm:exodromy}.
Let $F\in \HSh(X;\cE)$.
Then $F$ is hyperconstructible on $(X,P)$ if and only if for every $x\in X$, the canonical morphism
\[ \phi_x^{\hyp}(F) : \Phi_{X,P}^{\hyp}(F)(x) \to \colim_{x \in U} F(U) \  \]
is an equivalence.
\end{cor}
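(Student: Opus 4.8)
\textbf{Proof plan for \cref{formula_for_phi_characterize_cons}.}
The ``only if'' direction is exactly \cref{cor:Phi_formula}, so the content is the converse. Assume that for every $x \in X$ the natural transformation $\phi_x^{\mathrm{hyp}}(F)$ is an equivalence, where $x$ is regarded as the map $x \colon * \to X$ with $*$ carrying the (trivial, hence conical) induced stratification. The plan is to feed this into the recognition criterion of \cref{cor:criterion_constructibility}, reducing the global statement to a local one on conical charts and then to a statement about restriction maps between sections over products $W \times C(Y)$.

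First I would reduce to the case where $X$ is a conical chart $Z \times C(Y)$ with $Z$ locally weakly contractible: hyperconstructibility is local on $X$ by definition, and the hypothesis on $\phi_x^{\mathrm{hyp}}(F)$ only involves the stalk-type colimit $\colim_{x \in U} F(U)$, which is unaffected by restricting to an open neighbourhood of $x$. (One should note that the hypothesis transports correctly: for a conical chart $j \colon V \hookrightarrow X$, by \cref{rem:open_inclusion_strata_strongly_exodromic} the open immersion $j$ is strongly exodromic, so $\phi_j^{\mathrm{hyp}}$ is an equivalence, and composing with $\phi_x^{\mathrm{hyp}}$ over $V$ via \cref{2-functoriality_phi} shows $\phi_x^{\mathrm{hyp}}$ computed inside $V$ agrees with the one computed inside $X$.) On such a chart I then want to verify condition (3) of \cref{cor:criterion_constructibility}, namely the two restriction-map conditions (i) and (ii). For (i), fix $W \subseteq Z$ open and $0 < \varepsilon < \varepsilon' \leqslant 1$; I need $F(W \times C_{\varepsilon'}(Y)) \to F(W \times C_\varepsilon(Y))$ to be an equivalence. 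For (ii), I need $F(V \times C(Y)) \to F(U \times C(Y))$ to be an equivalence for $U \subseteq V$ weakly contractible opens in $Z$.

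The bridge from the hypothesis to these conditions goes through the stalk formula. Because $\phi_x^{\mathrm{hyp}}(F)$ is an equivalence for every $x$, the functor $F \mapsto (x \mapsto \colim_{x\in U} F(U))$ on $\HSh(X;\cE)$ agrees on $F$ with $x \mapsto \Phi_{X,P}^{\mathrm{hyp}}(F)(x)$, i.e. with the pullback $x^{\ast,\mathrm{hyp}}(F)$; in other words $\phi_x^{\mathrm{hyp}}$ being an equivalence says precisely that the ``naive presheaf stalk'' $i_a\inv(F)$ computes the honest hyperpullback on each stratum. Concretely, for the stratum $Z = X_{-\infty}$ with inclusion $i \colon Z \hookrightarrow X$, the hypothesis at points of $Z$ gives that $i\inv(F)(W) \simeq \colim_{0<\varepsilon\leqslant 1} F(W \times C_\varepsilon(Y))$ is already a hypersheaf on $Z$ computing $i^{\ast,\mathrm{hyp}}(F)$; running the remaining argument of \cref{cor:criterion_constructibility} in reverse, the hypothesis at points in the deeper strata of $C(Y)$ handles the vertical direction. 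I expect the cleanest route is: from $\phi_x^{\mathrm{hyp}}(F)$ being an equivalence for all $x$, deduce first that $F$ restricted to each open union of strata $X_{\geqslant a}$ has $i\inv(F) \simeq i^{\ast,\mathrm{hyp}}(F)$ on the open cone stratum, via \cref{cor:hyperpullback_computation} read as an iff; this forces condition (i), and then weak contractibility of $U,V$ together with \cref{lem:categorical_homotopy_invariance} forces the categorical-equivalence hypothesis of \cref{cor:criterion_constructibility}(2), which by that corollary forces (ii); hence $F$ is hyperconstructible.

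The main obstacle is the bookkeeping in the reduction: making precise that the pointwise hypothesis ``$\phi_x^{\mathrm{hyp}}(F)$ is an equivalence for all $x$'' is equivalent to the conjunction of the restriction-map conditions (3)(i)--(ii) on every conical chart, rather than just implied by one of them. Concretely one must show that the colimit $\colim_{x\in U} F(U)$ at a point $x$ of a deeper stratum of $C(Y)$, which a priori averages over a complicated diagram of neighbourhoods, collapses — under the hypothesis — to the value dictated by conditions (i) and (ii); this is essentially the computation inside the proof of \cref{cor:criterion_constructibility} run backwards, combined with \cref{Exit_cone_initial_object} to identify the relevant cofinal subdiagram of neighbourhoods. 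Once that identification is in place, both implications of the iff are immediate applications of \cref{cor:criterion_constructibility} and \cref{cor:Phi_formula}, so I would budget essentially all the effort for this compatibility step.
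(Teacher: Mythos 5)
The ``only if'' direction is fine (it is exactly \cref{cor:Phi_formula}), but your plan for the converse has a genuine gap. You propose to verify the recognition criterion \cref{cor:criterion_constructibility}(3) directly from the hypothesis that $\phi_x^{\mathrm{hyp}}(F)$ is an equivalence for every $x$. The hypothesis is a statement about \emph{stalks}, whereas conditions (i) and (ii) assert that certain \emph{restriction maps between sections over distinct opens} are equivalences; these restriction maps are not morphisms of hypersheaves, so there is no general mechanism for checking them stalkwise, and you never supply one. The specific bridge you suggest is circular: you want to read \cref{cor:hyperpullback_computation} ``as an iff'', but that corollary is proved \emph{for hyperconstructible} $F$ and its proof uses condition (i) as an input; moreover, even granting $i\inv(F) \simeq i^{\ast,\mathrm{hyp}}(F)$, this identifies the colimit $\colim_\varepsilon F(W \times C_\varepsilon(Y))$ with the hyperpullback but says nothing about whether the individual transition maps $F(W\times C_{\varepsilon'}(Y)) \to F(W\times C_\varepsilon(Y))$ are equivalences. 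You correctly sense that ``essentially all the effort'' lies in this step, but it is not bookkeeping: it is the substance of the converse, and the route you sketch does not close it.

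The missing idea is to replace the section-level comparison by a genuine morphism of hypersheaves that \emph{can} be checked on stalks. Concretely (this is the paper's argument): let $\mathrm L_P^{\mathrm{hyp}}$ be the left adjoint to the inclusion $\ConsPhyp(X;\cE) \hookrightarrow \HSh(X;\cE)$ provided by \cref{rem:constructibilization}, and consider the unit $\eta \colon F \to \mathrm L_P^{\mathrm{hyp}}(F)$. Both sides are hypersheaves, so $\eta$ is an equivalence if and only if it is so on all stalks. The hypothesis identifies $F_x$ with $\Phi_{X,P}^{\mathrm{hyp}}(F)(x)$; on the other side, since $\mathrm L_P^{\mathrm{hyp}}(F)$ \emph{is} hyperconstructible, \cref{cor:Phi_formula} gives $\mathrm L_P^{\mathrm{hyp}}(F)_x \simeq \Phi_{X,P}^{\mathrm{hyp}}(\mathrm L_P^{\mathrm{hyp}}(F))(x)$, and \cref{rem:constructibilization} identifies $\Phi_{X,P}^{\mathrm{hyp}}(\mathrm L_P^{\mathrm{hyp}}(F)) \simeq \Phi_{X,P}^{\mathrm{hyp}}(F)$ because $\Phi_{X,P}^{\mathrm{hyp}}$ inverts the unit of the constructibilization. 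Hence $\eta$ is a stalkwise equivalence and $F \simeq \mathrm L_P^{\mathrm{hyp}}(F)$ is hyperconstructible. I would recommend abandoning the chart-by-chart verification entirely in favour of this adjunction argument.
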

\begin{proof}
	The direct implication follows from \cref{cor:Phi_formula}.
	Assume now that $\phi_x^{\hyp}(F)$  is an equivalence for every $x\in X$.
	Let $L_P^{\hyp}$ be the left adjoint to the inclusion $\Cons_P(X;\cE) \hookrightarrow \HSh(X;\cE)$ as obtained in \cref{rem:constructibilization}.
	To prove \cref{formula_for_phi_characterize_cons}, it is enough to prove that the unit transformation $F\to L_P^{\hyp}(F)$ is an equivalence.
	Since both source and target are hypersheaves it is enough to show that for every $x\in X$, the induced morphism $F_x\to L_P^{\hyp}(F)_x$ is an equivalence.
	By assumption, $F_x$ identifies with $\Phi_{X,P}^{\hyp}(F)(x)$.
	On the other hand, \cref{cor:Phi_formula} combined with \cref{rem:phi_does_not_see_the_constructibilization} gives the following chain of equivalences
	$$
	L_P^{\hyp}(F)_x\simeq \Phi_{X,P}^{\hyp}(L_P^{\hyp}(F))(x)\simeq \Phi_{X,P}^{\hyp}(F)(x)
	$$
 \cref{formula_for_phi_characterize_cons} is thus proved.
\end{proof}

%
%
%

\begin{cor}\label{cor:localization}
	Let $(X,P)$ be a conically stratified space and let $(X,Q)$ be a conical refinement of $P$.
	Assume that the strata of $(X,P)$ and $(X,Q)$ are locally weakly contractible.
	Then the natural map
	\[ f \colon \Pi_\infty(X,Q) \to \Pi_\infty(X,P) \]
	is a localization.
\end{cor}

\begin{proof}
	We first reduce to the case where the stratification on $X$ is trivial.
	Let $W$ be the collection of $f$-local equivalences in $\Pi_\infty(X,Q)$.
	Since $f$ is essentially surjective, \cite[7.1.7, 7.1.11]{Cisinski_Higher_Category} shows that it is enough to prove that the functor
	\[ f^* \colon \Fun(\Pi_\infty(X,P), \cS) \to \Fun(\Pi_\infty(X,Q), \cS) \]
	is fully faithful and its essential image consists of $W$-local objects.
	Since by definition $f = \Pi_\infty(\id_X)$, \cref{cor:induced_stratification} shows that $\psi_f^{\hyp}$ makes the diagram
	\[ \begin{tikzcd}
		\Fun(\Pi_\infty(X,P),\cS) \arrow{r}{f^*} \arrow{d}{\Psi_{X,P}^{\hyp}} & \Fun(\Pi_\infty(X,Q),\cS) \arrow{d}{\Psi_{X,Q}^{\hyp}} \\
		\Cons_P^{\hyp}(X) \arrow{r}{\id_X^{\ast,\hyp}} & \Cons_Q^{\hyp}(X)
	\end{tikzcd} \]
	commutative, while \cref{thm:exodromy} shows that the vertical functors are equivalences.
	Considering $\Cons_Q^{\hyp}(X)$ and $\Cons_P^{\hyp}(X)$ as full subcategories of $\HSh(X)$, we see that $f^{\ast,\hyp}$ acts as the identity and it is therefore fully faithful.
	Thus, the commutativity of the above diagram implies that the functor $f^*$ is fully faithful as well.
	Since $f$ takes (by definition) arrows in $W$ to equivalences, we see that $f^*$ factors through the full subcategory of $W$-local objects.
	Thus, we are left to check the essential surjectivity.
	Let $F \colon \Pi_\infty(X,Q) \to \cS$ be a functor and assume that it is $W$-local.
	In virtue of \cref{thm:exodromy}, it is enough to prove that $\Psi_{X,Q}^{\hyp}(F)$ belongs to $\Cons_P^{\hyp}(X)$.
	Denote by $\varphi \colon Q \to P$ be the given morphism at the level of posets.
	Let $a \in P$ be a point and let $Q_a \coloneqq \varphi\inv(a)$.
	Denote by $j_a \colon X_{Q_a} \to X$ the inclusion of the corresponding union of strata.
	Applying \cref{prop:Psi_restriction_stratum} we see that the transformation
	\[ \psi_{j_a}^{\hyp} \colon j_a^{\ast,\hyp}\big( \Psi_{X,Q}^{\hyp}(F) \big) \to \Psi_{X_{Q_a}, Q_a}^{\hyp}\big( \Pi_\infty(j_a)^\ast(F) \big) \]
	is an equivalence.
	Thus, we can replace $X$ by $X_a$.
	Equivalently, we can assume from the very beginning that $P$ is the trivial stratification (and therefore that $X$ is locally weakly contractible).
	
	\medskip
	
	Equipping $*$ with the trivial stratification $S$, we can see the canonical map $\Gamma_X \colon X \to *$ as a map of stratified spaces $(X,Q) \to (*,S)$.
	Then \cref{cor:induced_stratification} implies that  $\psi_{\Gamma_X}^{\hyp}$ yields a canonical identification
	\[ \Gamma_X^{\ast,\hyp} \simeq \Psi_{X,Q}^{\hyp} \circ \Gamma_{\Pi_\infty(X,Q)}^\ast \ . \]
	Since $X$ is locally weakly contractible, \cite[\S~2.2]{HPT} shows that $\Gamma_X^{\ast,\hyp}$ has a left adjoint $\Gamma_{X,\sharp}^{\hyp}$.
	The above identification implies
	\[ \Gamma_{X,\sharp}^{\hyp} \simeq \Gamma_{\Pi_\infty(X,Q),!} \circ \Phi_{X,Q}^{\hyp} \ . \]
	Using \cite[Corollary 3.5]{HPT} we find a canonical identification $\Pi_\infty(X) \simeq \Gamma_{X,\sharp}^{\hyp}( \Gamma_X^{\ast,\hyp}(*) )$.
	Thus, the above identification yields:
	\[ \Pi_\infty(X) \simeq \Gamma_{X,\sharp}^{\hyp} \Gamma_X^{\ast,\hyp}(*) \simeq \colim_{\Pi_\infty(X,Q)} \Gamma_{\Pi_\infty(X,Q)}^\ast(*) \simeq \mathrm{Env}(\Pi_\infty(X,Q)) \ , \]
	whence the conclusion.
\end{proof}

We conclude this subsection with \cref{observation:extra_functorialities} showing that exodromy implies enhanced functorialities at the level of hyperconstructible hypersheaves:

\begin{observation}\label{observation:extra_functorialities}
	Let $f \colon (X,P) \to (Y,Q)$ be a  morphism between conically stratified spaces with locally weakly contractible strata, and let $\cE$ be a presentable $\infty$-category satisfying the assumption of \cref{thm:exodromy}.
	The functor $\Pi_\infty(f)^\ast$ has both a left adjoint given by the left Kan extension $\Pi_\infty(f)_!$ and a right adjoint given by the right Kan extension $\Pi_\infty(f)_\ast$.
	Thus, it follows that the functor
	\[ f^{\ast,\hyp} \colon \Cons_P^{\hyp}(X;\cE) \to \Cons_Q^{\hyp}(Y;\cE) \]
	also has a left and a right adjoint.
	We set
	\[ f_{\sharp}^{\Cons} \coloneqq \Psi_{X,P}^{\hyp} \circ \Pi_\infty(f)_! \circ \Phi_{X,P}^{\hyp} \quad \text{and} \quad f_\ast^{\Cons} \coloneqq \Psi_{X,P}^{\hyp} \circ \Pi_\infty(f)_\ast \circ \Phi_{X,P}^{\hyp} \ . \]
	Since the functor $f^{\ast, \hyp} \colon \HSh(X;\cE) \to \HSh(Y;\cE)$ has a right adjoint given by $f_\ast$, and the inclusions of $\Cons_P^{\hyp}(X;\cE)$ and $\Cons_Q^{\hyp}(Y;\cE)$ have right adjoints $\mathrm R_P^{\hyp}$ and $\mathrm R_Q^{\hyp}$ (see \cref{rem:constructibilization}), it follows from formal reasons that
	\begin{equation}\label{eq:pushforward}
		f_\ast^{\Cons} \simeq \mathrm R_P^{\hyp} \circ f_\ast \ .
	\end{equation}
	When $f^{\ast,\hyp}$ has itself a left adjoint\footnote{This is e.g.\ the case if $f$ is the projection $S \times X \to S$ for locally weakly contractible topological spaces, see \cite[\S~2.2]{HPT}.} $f_\sharp^{\hyp}$, it follows similarly that
	\[ f_\sharp^{\Cons} \simeq \mathrm L_P^{\hyp} \circ f_\sharp^{\hyp} \ , \]
	where $\mathrm L_P^{\hyp}$ denotes the left adjoint to the inclusion of $\ConsPhyp(X;\cE)$ in $\HSh(X;\cE)$ (see \cref{rem:constructibilization} again).
\end{observation}

\subsection{Monoidal structures}

Assume now that our $\infty$-category of coefficients $\cE$ carries a presentably symmetric monoidal structure.
In other words, we fix $\cE \in \CAlg(\PrLotimes)$.

\begin{recollection}
	\hfill
	\begin{enumerate}\itemsep=0.2cm
		\item For every topological space $X$, the $\infty$-category $\HSh(X;\cE)$ inherits a canonical symmetric monoidal structure, and every morphism $f \colon X \to Y$ induces a symmetric monoidal functor
		\[ f^{\ast,\hyp} \colon \HSh(Y;\cE) \to \HSh(X;\cE) \ . \]
		In particular, it follows that $\LChyp(X;\cE)$ is stable under tensor product, and consequently that for every stratified space $(X,P)$, $\ConsPhyp(X,P)$ acquires a symmetric monoidal structure.
		
		\item Similarly, $\Fun\big(\Pi_\infty(X,P), \cE \big)$ inherits a symmetric monoidal structure, where the tensor product is computed objectwise.
	\end{enumerate}
\end{recollection}

\begin{prop}\label{prop:exodromy_strong_monoidal}
	Let $(X,P)$ be a conically stratified space and let $\cE^\otimes$ be a symmetric monoidal $\infty$-category.
	Then the functor
	\[ \Phi_{X,P}^{\hyp,\cE} \colon \HSh(X;\cE) \to \Fun\big(\Pi_\infty(X,P), \cE\big) \ , \]
	has a natural lax symmetric monoidal structure.
	Under the assumptions of \cref{thm:exodromy}, $\Phi_{X,P}^{\hyp,\cE}$ restricts to a symmetric monoidal functor on the full subcategory $\ConsPhyp(X;\cE)$.
\end{prop}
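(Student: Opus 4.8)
The lax symmetric monoidal structure on $\Phi_{X,P}^{\mathrm{hyp},\cE}$ is not something one proves by hand, but something one extracts from the fact that $\Phi_{X,P}^{\mathrm{hyp},\cE}$ is right adjoint to nothing — rather, it is the \emph{left} adjoint $\Psi_{X,P}^{\mathrm{hyp},\cE}$ that is naturally \emph{lax} monoidal, and $\Phi$ then inherits an \emph{oplax} structure by adjunction; the point is that here $\Psi_{X,P}^{\mathrm{hyp},\cE}$ is in fact \emph{strong} monoidal, so its left adjoint $\Phi_{X,P}^{\mathrm{hyp},\cE}$ is lax monoidal. So the first step is to produce a symmetric monoidal structure on $\Psi_{X,P}^{\mathrm{hyp},\cE}$. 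For this I would go back to the description $\Psi_{X,P}^{\mathrm{psh},\cE} \simeq \pi_{X,\ast} \circ \lambda_X^\ast$ from \cref{eg:functoriality}-(1), together with the identifications $\Psi_{X,P}^{\mathrm{hyp},\cE} \simeq \Psi_{X,P}^{\mathrm{psh},\cE}$ of \cref{eg:functoriality}. The functor $\lambda_X^\ast \colon \Fun(\Pi_\infty^\Sigma(X,P),\cE) \to \Fun(\mathrm E_X, \cE)$ is restriction along $\lambda_X$, hence strong symmetric monoidal for the pointwise tensor products on both sides. The functor $\pi_{X,\ast}$ is a right Kan extension along $\pi_X \colon \mathrm E_X \to \mathrm{Open}(X)\op$; by the standard fact that right Kan extension (equivalently, relative limit) of functors valued in a presentably symmetric monoidal $\cE$ is \emph{lax} symmetric monoidal — and is \emph{strong} monoidal precisely when the relevant comma categories over which one takes limits are sifted, or more simply because the fibers $\mathrm E_X \times_{\mathrm{Open}(X)\op} (\mathrm{Open}(X)\op)_{U/}$ are limit-final over $\Pi_\infty^\Sigma(U,P|_U)$ by \cref{lem:RKE}, and limits in $\cE$ distribute over $\otimes$ suitably — one gets a lax monoidal structure on $\Psi_{X,P}^{\mathrm{hyp},\cE}$ in general. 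In fact all that is genuinely needed for the first assertion is: $\Psi$ (equivalently $\pi_{X,\ast}$) is lax symmetric monoidal, hence its left adjoint $\Phi_{X,P}^{\mathrm{hyp},\cE}$ is automatically \emph{oplax} symmetric monoidal, and one checks the oplax structure maps are equivalences — or one simply observes that $\Psi$ being lax monoidal makes $\Phi$ oplax, which after transposing is the lax structure on $\Phi$ stated in the proposition once one knows $\Psi$ is \emph{strong}.

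Let me restructure. The cleanest route: first show $\Psi_{X,P}^{\mathrm{hyp},\cE}$ is \emph{strong} symmetric monoidal. Granting this, its left adjoint $\Phi_{X,P}^{\mathrm{hyp},\cE}$ acquires a canonical \emph{lax} symmetric monoidal structure by the general adjunction principle (a left adjoint of a strong monoidal functor is lax monoidal), which is the first claim. For the strong monoidality of $\Psi$, I would argue at the presheaf level. Using the formula \eqref{formula_for_psi}, $\Psi(F)(U) \simeq \lim_{\Pi_\infty^\Sigma(U,P|_U)} F|_{\Pi_\infty^\Sigma(U,P|_U)}$. The tensor product in $\HSh(X;\cE)$ is the hypersheafification of the pointwise tensor product of presheaves; the tensor product in $\Fun(\Pi_\infty^\Sigma(X,P),\cE)$ is pointwise. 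The comparison map $\Psi(F) \otimes \Psi(G) \to \Psi(F \otimes G)$ evaluated on $U$ is the canonical map $\big(\lim_{\Pi_\infty^\Sigma(U,P)} F\big) \otimes \big(\lim_{\Pi_\infty^\Sigma(U,P)} G\big) \to \lim_{\Pi_\infty^\Sigma(U,P)} (F \otimes G)$. This is \emph{not} an equivalence for arbitrary presentably symmetric monoidal $\cE$ — and this is the crux, and the reason the second sentence of the proposition restricts to $\ConsPhyp$: one should \emph{not} expect $\Psi$ to be strong monoidal on all of $\HSh(X;\cE)$. So the correct statement to prove is only that $\Phi$ is \emph{lax} monoidal on all of $\HSh(X;\cE)$ (automatic from $\Psi$ being lax, or directly: $\Phi = \lambda_{X,!}\pi_X^\ast$ on presheaves, $\pi_X^\ast$ is strong monoidal being a pullback, $\lambda_{X,!}$ is a left Kan extension hence \emph{oplax} — so one actually gets $\Phi$ oplax a priori; the lax structure is the adjoint-mate of the oplax structure on $\Psi$). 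I would therefore phrase the first half as: $\Psi_{X,P}^{\mathrm{hyp},\cE}$ is lax symmetric monoidal (since $\pi_{X,\ast}$ is a limit-type construction, always lax monoidal into a presentably monoidal target), and dualize.

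For the second half — that $\Phi_{X,P}^{\mathrm{hyp},\cE}$ restricts to a \emph{strong} symmetric monoidal functor on $\ConsPhyp(X;\cE)$ — the plan is: (i) recall that $\ConsPhyp(X;\cE)$ is closed under tensor product, by the Recollection, since $\mathrm{LC}^{\mathrm{hyp}}(X_a;\cE)$ is closed under $\otimes$ and hyperconstructibility is checked stratum-by-stratum via the jointly conservative $i_a^{\ast,\mathrm{hyp}}$ (which are symmetric monoidal); (ii) to check the lax structure maps $\Phi(F) \otimes \Phi(G) \to \Phi(F\otimes G)$ are equivalences for $F,G \in \ConsPhyp(X;\cE)$, use \cref{thm:exodromy}: it suffices to check after applying $\Psi$, i.e. that $F \otimes G \to \Psi(\Phi(F) \otimes \Phi(G))$ is an equivalence, and since both sides are hyperconstructible it suffices, by joint conservativity of $\{i_a^{\ast,\mathrm{hyp}}\}$, to check after restriction to each stratum $X_a$; (iii) on $X_a$ the stratification is trivial, $i_a^{\ast,\mathrm{hyp}}$ is symmetric monoidal, $i_a^{\ast,\mathrm{hyp}}F, i_a^{\ast,\mathrm{hyp}}G$ are locally hyperconstant, and using \cref{cor:Phi_restriction_single_stratum} (compatibility of $\Phi$ with restriction to a stratum) together with \cref{cor:monodromy_revisited} (the monodromy equivalence, under which $\Phi_{X_a}^{\mathrm{hyp}}$ is an equivalence of symmetric monoidal $\infty$-categories because on locally hyperconstant hypersheaves the tensor product corresponds to the pointwise tensor product on $\Fun(\Pi_\infty(X_a),\cE)$) one concludes. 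The key input at the last step is that the monodromy equivalence $\Phi_{X_a}^{\mathrm{hyp},\cE}\colon \mathrm{LC}^{\mathrm{hyp}}(X_a;\cE) \simeq \Fun(\Pi_\infty(X_a),\cE)$ is symmetric monoidal — which in turn follows because $\Psi_{X_a}^{\mathrm{hyp},\cE}$, given by \eqref{formula_for_psi}, is strong monoidal \emph{on the essential image}, i.e. when evaluated on locally (hyper)constant inputs the relevant map $(\lim F)\otimes(\lim G)\to \lim(F\otimes G)$ over $\Pi_\infty(U)$ for $U$ weakly contractible is an equivalence since both limits are then just the value at a point.

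\textbf{Main obstacle.} The delicate point is establishing that on each stratum the monodromy equivalence is symmetric monoidal, i.e. that $\Phi_{X_a}^{\mathrm{hyp}}$ intertwines the $\otimes$-structure on $\mathrm{LC}^{\mathrm{hyp}}(X_a;\cE)$ (hypersheafified pointwise tensor) with the pointwise $\otimes$ on $\Fun(\Pi_\infty(X_a),\cE)$. This requires knowing that on locally hyperconstant hypersheaves the comparison map for $\Psi_{X_a}^{\mathrm{hyp}}$ is an equivalence, which is \emph{not} formal — it hinges on the description $\widetilde{\Pi}_\infty \dashv \widetilde{\Pi}^\infty$ from \cref{cor:Phi_identification} and the fact that over a weakly contractible open $U$ the hypersheaf tensor product is computed pointwise (stalks tensor). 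I would isolate this as a lemma: for $X$ locally weakly contractible, $\Psi_X^{\mathrm{hyp},\cE}$ is strong symmetric monoidal, proven by checking on weakly contractible opens where $\Psi_X^{\mathrm{hyp},\cE}(F)(U)\simeq F(x)$ for $x\in U$ and the tensor product on $\mathrm{LC}^{\mathrm{hyp}}$ is stalkwise by \cite[\S2.2]{HPT}. Everything else is bookkeeping with the $(\infty,2)$-functoriality of Beck–Chevalley already developed in \cref{2-functoriality_phi} and \cref{2-functoriality_psi}.
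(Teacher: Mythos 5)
Your route for the first assertion has a genuine gap in the direction of the mate correspondence. For an adjunction $\Phi \dashv \Psi$, a lax (or even strong) symmetric monoidal structure on the \emph{right} adjoint $\Psi$ produces an \emph{oplax} structure on the left adjoint $\Phi$, with structure maps $\Phi(F \otimes G) \to \Phi(F) \otimes \Phi(G)$; there is no "transposing" that converts this into the lax structure $\Phi(F) \otimes \Phi(G) \to \Phi(F \otimes G)$ asserted in the proposition. Your text oscillates between "lax" and "oplax" precisely at this point and never resolves it: the direct construction $\Phi = \lambda_{X,!} \circ \pi_X^\ast$ (pullback is strong, left Kan extension along $\lambda_X$ is oplax as the left adjoint of the strong monoidal $\lambda_X^\ast$) also only yields an oplax structure. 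Since you yourself observe that $\Psi$ is not strong monoidal on all of $\Fun(\Pi_\infty^\Sigma(X,P),\cE)$, you cannot invert the oplax maps globally, so the first sentence of the proposition — a lax structure on all of $\HSh(X;\cE)$ — is not actually established by your argument. (Your second half then verifies that "the lax structure maps" are equivalences on $\ConsPhyp(X;\cE)$, i.e.\ maps you have not constructed; recast in terms of the oplax maps it would make sense, but the first claim would still be missing.)

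The paper's proof is much shorter and sidesteps all of this. By \cref{lem:Phi_with_coefficients} one has $\Phi_{X,P}^{\mathrm{hyp},\cE} \simeq \Phi_{X,P}^{\mathrm{hyp},\cS} \otimes \id_\cE$, which reduces everything to $\cE = \cS$. There the monoidal structures on both $\ConsPhyp(X)$ and $\Fun(\Pi_\infty^\Sigma(X,P),\cS)$ are \emph{cartesian}, so the lax structure on $\Phi$ is supplied formally by \cite[Proposition 2.4.1.7]{Lurie_Higher_algebra}, and strong monoidality on $\ConsPhyp(X;\cS)$ follows because $\Phi$ is an equivalence there by \cref{thm:exodromy}, hence preserves products (\cite[Corollary 2.4.1.8]{Lurie_Higher_algebra}). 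In particular your "main obstacle" — proving that the monodromy equivalence on each stratum is symmetric monoidal via a stalkwise computation over weakly contractible opens — simply does not arise: for cartesian structures an equivalence of underlying $\infty$-categories is automatically symmetric monoidal. The stratum-by-stratum reduction via joint conservativity in your step (ii)--(iii) is sound in itself, but it is doing work that the reduction to $\cS$ makes unnecessary.
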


\begin{proof}
	Thanks to \cref{lem:Phi_with_coefficients} and \cref{cor:tensor_decomposition}, we immediately reduce to the case where $\cE = \cS$.
	In this case, the monoidal structure induced on both $\ConsPhyp(X)$ and $\Fun\big(\Pi_\infty(X,P),\cS\big)$ is the cartesian one.
	Then $\Phi_{X,P}^{\hyp}$ has a canonical lax monoidal structure given by \cite[Proposition 2.4.1.7]{Lurie_Higher_algebra}.
	Since $\Phi_{X,P}^{\hyp}$ is an equivalence, it commutes with products and hence Corollary 2.4.1.8 in \emph{loc.\ cit.} guarantees that $\Phi_{X,P}^{\hyp}$ is strong monoidal.
	The conclusion follows.
\end{proof}

As a consequence of our criterion for constructibility (see \cref{cor:criterion_constructibility}), we obtain:

\begin{cor}\label{cor:internal_hom_constructible}
	Let $(X,P)$ be a conically stratified space and let $\cF, \cG \in \ConsPhyp(X;\cE)$.
	If the assumptions of \cref{thm:exodromy} are satisfied, then the internal hom $\cHom_X(\cF, \cG)$, computed inside $\HSh(X;\cE)$, belongs to $\ConsPhyp(X;\cE)$.
\end{cor}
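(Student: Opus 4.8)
The plan is to verify condition (2) of \cref{cor:criterion_constructibility}: namely, for every open inclusion $U \subseteq V$ of $X$ for which the induced map $\Pi_\infty^\Sigma(U,P) \to \Pi_\infty^\Sigma(V,P)$ is a categorical equivalence, the restriction map
\[ \cHom_X(\cF,\cG)(V) \longrightarrow \cHom_X(\cF,\cG)(U) \]
is an equivalence in $\cE$. First I would recall that the internal hom in $\HSh(X;\cE)$ is a hypersheaf whose sections over an open $W$ are computed by $\cHom_X(\cF,\cG)(W) \simeq \Map_{\HSh(W;\cE)}\big( \cF|_W, \cG|_W \big)$, where $(-)|_W$ denotes the hyper-restriction $j_W^{\ast,\mathrm{hyp}}$ along the open immersion $j_W \colon W \hookrightarrow X$; this is the standard adjunction characterization of the internal hom in a presentably symmetric monoidal $\infty$-topos, applied section-wise. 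Under this identification, the restriction map $\cHom_X(\cF,\cG)(V) \to \cHom_X(\cF,\cG)(U)$ is induced by the hyper-restriction functor $\HSh(V;\cE) \to \HSh(U;\cE)$ along the open inclusion $U \hookrightarrow V$, acting on $\cF|_V$ and $\cG|_V$.

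Next I would transport the problem across the exodromy equivalence. Since $\cF, \cG$ are $P$-hyperconstructible, so are their hyper-restrictions $\cF|_V, \cF|_U, \cG|_V, \cG|_U$ — this uses that the hyper-restriction along an open immersion preserves hyperconstructibility, which is \cref{cor:induced_stratification} (or already \cref{commutation_open_immersion} combined with the stratum-wise definition). By \cref{thm:exodromy}, applied to $(V,P)$ and to $(U,P)$ — whose strata are locally weakly contractible as open subspaces of those of $(X,P)$, and for which the joint-conservativity hypothesis on $\cE$ is inherited — we have equivalences $\Psi_{V,P}^{\mathrm{hyp}}$ and $\Psi_{U,P}^{\mathrm{hyp}}$, and since these are (symmetric) monoidal by \cref{prop:exodromy_strong_monoidal} they identify mapping spaces: $\Map_{\ConsPhyp(V;\cE)}(\cF|_V,\cG|_V) \simeq \Map_{\Fun(\Pi_\infty^\Sigma(V,P),\cE)}\big( \Phi_{V,P}^{\mathrm{hyp}}(\cF|_V), \Phi_{V,P}^{\mathrm{hyp}}(\cG|_V) \big)$, and similarly over $U$. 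Moreover \cref{cor:induced_stratification}/\cref{commutation_open_immersion} guarantee that $\Phi^{\mathrm{hyp}}$ and $\Psi^{\mathrm{hyp}}$ are compatible with the open restriction $U \hookrightarrow V$, so that $\Phi_{U,P}^{\mathrm{hyp}}(\cF|_U) \simeq \Pi_\infty^\Sigma(U \hookrightarrow V)^\ast\big( \Phi_{V,P}^{\mathrm{hyp}}(\cF|_V) \big)$ and likewise for $\cG$. Therefore the restriction map we must analyze is identified with the map on mapping spaces
\[ \Map_{\Fun(\Pi_\infty^\Sigma(V,P),\cE)}(\oF, \oG) \longrightarrow \Map_{\Fun(\Pi_\infty^\Sigma(U,P),\cE)}\big( \iota^\ast\oF, \iota^\ast\oG \big) \]
induced by restriction along $\iota \coloneqq \Pi_\infty^\Sigma(U \hookrightarrow V)$, where $\oF \coloneqq \Phi_{V,P}^{\mathrm{hyp}}(\cF|_V)$ and $\oG \coloneqq \Phi_{V,P}^{\mathrm{hyp}}(\cG|_V)$.

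Finally I would invoke the hypothesis: by assumption $\iota \colon \Pi_\infty^\Sigma(U,P) \to \Pi_\infty^\Sigma(V,P)$ is a categorical equivalence, so restriction along $\iota$ is an equivalence $\Fun(\Pi_\infty^\Sigma(V,P),\cE) \xrightarrow{\sim} \Fun(\Pi_\infty^\Sigma(U,P),\cE)$, hence induces an equivalence on all mapping spaces, and the displayed map is an equivalence. This shows $\cHom_X(\cF,\cG)$ satisfies \cref{cor:criterion_constructibility}(2), hence is hyperconstructible on $(X,P)$. The main obstacle I anticipate is bookkeeping the compatibility of the internal hom with hyper-restriction and the naturality of the monoidal exodromy equivalence under open inclusions — i.e. making precise that the restriction map on internal-hom sections really does correspond to the restriction functor on presheaf categories after passing through $\Psi^{\mathrm{hyp}}$; this is where \cref{prop:exodromy_strong_monoidal}, \cref{commutation_open_immersion}, and the 2-functoriality statements \cref{2-functoriality_phi}–\cref{2-functoriality_psi} do the work, and one must be slightly careful that $\Psi^{\mathrm{hyp}}$ being fully faithful lets one compute the mapping spaces of $\ConsPhyp$ inside the functor category rather than inside $\HSh$.
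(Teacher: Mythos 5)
Your proposal is correct and follows essentially the same route as the paper: verify condition (2) of the recognition criterion, identify the sections of the internal hom over $V$ and $U$ with homs of the restrictions, transport through the exodromy equivalence using the compatibility of $\Phi^{\mathrm{hyp}}$ with open restrictions (\cref{cor:induced_stratification}), and conclude from the assumed categorical equivalence $\Pi_\infty^\Sigma(U,P) \to \Pi_\infty^\Sigma(V,P)$. The only caveat is that the sections of $\cHom_X(\cF,\cG)$ live in $\cE$, so you should work with the $\cE$-enriched hom objects rather than mapping spaces throughout, exactly as the paper does.
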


\begin{proof}
	We apply \cref{cor:criterion_constructibility}.
	It is enough to prove that for every inclusion $i \colon U \subset V$ for which the induced map $\Pi_\infty(i) \colon \Pi_\infty(U,P) \to \Pi_\infty(V,P)$ is an equivalence, the restriction map
	\[ \cHom_X(\cF, \cG)(V) \to \cHom_X(\cF,\cG)(U) \]
	is an equivalence as well.
	By definition, this map can be rewritten as
	\begin{equation}\label{eq:constructible_hom}
		\Hom_{\ConsPhyp(V;\cE)}( \cF|_V, \cG|_V ) \to \Hom_{\ConsPhyp(U;\cE)}(\cF|_U, \cG|_U) \ .
	\end{equation}
	\Cref{thm:exodromy} allows to write the target of this morphism
	\[ \Hom_{\Fun(\Pi_\infty(U,P),\cE)}(\Phi_{U,P}^{\hyp}(\cF|_U), \Phi_{U,P}^{\hyp}(\cG|_U)) \ , \]
	and similarly for the source.
	Moreover, since the natural transformation
	\[ \phi^{\hyp}_i \colon \Phi_{U,P}^{\hyp} \circ i^{\ast,\hyp} \to \Pi_\infty(i)^\ast \circ \Phi_{V,P}^{\hyp} \]
	is an equivalence thanks to \cref{cor:induced_stratification}, the morphism \eqref{eq:constructible_hom} becomes canonically identified with the morphism induced by $\Pi_\infty(i)^\ast$
	\[  \Hom_{V}(\Phi_{V,P}^{\hyp}(\cF|_V), \Phi_{V,P}^{\hyp}(\cG|_V)) \to \Hom_{U}(\Pi_\infty(i)^\ast\Phi_{V,P}^{\hyp}(\cF|_V), \Pi_\infty(i)^\ast\Phi_{V,P}^{\hyp}(\cG|_V)) \ , \]
	where we wrote $\Hom_V$ to denote the $\cE$-enriched hom object computed in $\Fun(\Pi_\infty(V,P),\cE)$, and similarly for $\Hom_U$.
	Since $\Pi_\infty(i)^\ast$ is an equivalence, so is the above morphism, whence the conclusion.
\end{proof}

\subsection{Compact generation}
 
     Using the  functoriality of the exodromy equivalence, we are now able to prove the following result, strengthening \cref{cor:k_compact_generation}.

\begin{thm} \label{thm:compact_generation}
	Let $(X,P)$ be a conically stratified space with locally weakly contractible strata.
	Let $\cE\in \PrLomega$ and let $(e_{\alpha})_{\alpha \in A}$ be a set of compact generators for $\cE$.
	Then $(j_{x,\sharp}(e_{\alpha}))_{x\in X,\alpha \in A}$ is a set of compact generators for $\ConsPhyp(X;\cE)$.
\end{thm}

\begin{proof}
      For $x\in X$, \cref{cor:induced_stratification} shows that the triangle
		\begin{equation}\label{exodromic_morphism_square}
		\begin{tikzcd}
			\ConsPhyp(X;\cE) \arrow[rr,"\sim"] \arrow[rd, "j^{\ast, \hyp}_x"']&  & \Fun(\Pi_\infty(X,P), \cE) \arrow[ld, "j^{\ast}_x "] \\
			& \cE & 
		\end{tikzcd} 
	\end{equation} 
		commutes, where $j_x : \{x\}\to \Pi_\infty(X,P)$ is the inclusion.
		Passing to left adjoints, \cref{observation:extra_functorialities} guarantees that the functor
	$j_x^{\ast,\hyp} \colon \ConsPhyp(X;\cE) \to \cE$
	admits a left adjoint 
	\[ 
	j_{x,\sharp}^{\Cons} \colon \ \cE \to \ConsPhyp(X;\cE) 
	\] 
corresponding under the exodromy equivalence to the left Kan extension
\[
    j_{x,!} : \cE \to  \Fun(\Pi_\infty(X,P), \cE)  \ .
\]
	Hence, we are left to show that $(j_{x,!}(e_{\alpha}))_{x\in X,\alpha \in A}$ is a set of compact generators for $\Fun(\Pi_\infty(X,P), \cE)$.
	Via the equivalence 
	\[
	\Fun(\Pi_\infty(X,P), \cE)\simeq \Fun(\Pi_\infty(X,P), \cS) \otimes \cE
	\]
the functor 
\[
j_{x,!}(e_{\alpha}) : \Pi_\infty(X,P)\to \cE
\] 
corresponds to $j(x)\otimes e_{\alpha}$, where $j : \Pi_\infty(X,P)\op \to \Fun(\Pi_\infty(X,P), \cS)$
is the Yoneda embedding.
      Since $(j(x))_{x\in X}$  is a set of compact generators for $\Fun(\Pi_\infty(X,P), \cS)$, the conclusion follows from \cite[5.3.2.11]{Lurie_Higher_algebra}.
\end{proof}


\begin{cor}
    Let $X$ be a connected locally weakly contractible topological space and let $x \in X$ be a point.    
    For every $\mathbb E_\infty$-algebra $A$, the $\infty$-category $\LChyp(X;\Mod_A)$ is canonically equivalent to the $\infty$-category of right modules over the endomorphism ring of $j_{x,\sharp}(A)$.
\end{cor}

\begin{proof}
   By \cite{Schwede_Shipley} (see also \cite[Theorem 7.1.2.1]{Lurie_Higher_algebra}), it is enough to show that  $j_{x,\sharp}(A)[n]$, $n\in \bZ$ is a set of compact generators for  $\LChyp(X;\Mod_A)$.
   Since $A[n]$, $n\in \bZ$ is a set of compact generators for $\Mod_A$, the conclusion follows from \cref{thm:compact_generation}.
\end{proof}

     It turns out that it is possible to compute quite explicitly such endomorphism ring (we warmly thank Julian Holstein and Alexandru Oancea for discussions on this point) :

\begin{thm}\label{thm:LC_modules_over_chains}
	Let $X$ be a connected and locally weakly contractible topological space.
	Fix a point $x \in X$ and consider the $x$-based loop space $\Omega_x(X)$.
	Then for every $\mathbb E_\infty$-algebra $A$, the endomorphism ring of $j_{x,\sharp}(A)$ canonically coincides with the chain algebra $\mathrm C_\ast(\Omega_x(X);A)$.
	In particular, there is an equivalence
	\[ \LChyp(X;\Mod_A) \simeq \Mod_{\mathrm C_\ast(\Omega_x(X);A)} \ . \]
\end{thm}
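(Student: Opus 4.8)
The strategy is to combine the identification of $\mathrm{LC}^{\mathrm{hyp}}(X;\Mod_A)$ with a module category over an endomorphism ring, with the classical fact that the based loop space computes the endomorphism ring of the basepoint inclusion. First I would invoke \cref{cor:compact_generators}, applied to the trivial stratification and to $\cE = \Mod_A$ with single compact generator $A$: this shows that $j_{x,\sharp}(A)$ is a compact generator of $\mathrm{LC}^{\mathrm{hyp}}(X;\Mod_A)$. By the Schwede--Shipley / Morita-theoretic recognition of compactly generated stable $\infty$-categories with a single compact generator (\cite[Theorem 7.1.2.1]{Lurie_Higher_algebra}), together with the fact that $\mathrm{LC}^{\mathrm{hyp}}(X;\Mod_A)$ is $\Mod_A$-linear, we obtain an equivalence $\mathrm{LC}^{\mathrm{hyp}}(X;\Mod_A) \simeq \Mod_{\mathcal R}$ where $\mathcal R \coloneqq \End(j_{x,\sharp}(A))$ is the ($\mathbb E_1$-$A$-algebra of) endomorphisms of the generator. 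So the whole content of the theorem is reduced to the identification $\mathcal R \simeq \mathrm C_\ast(\Omega_x(X);A)$.

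To compute $\mathcal R$, I would pass through the monodromy equivalence \cref{cor:monodromy_revisited}, which gives $\mathrm{LC}^{\mathrm{hyp}}(X;\Mod_A) \simeq \Fun(\Pi_\infty(X),\Mod_A)$, with $\Phi_X^{\mathrm{hyp},\Mod_A}$ sending $j_{x,\sharp}(A)$ to the left Kan extension along $\Pi_\infty(\{x\}) \to \Pi_\infty(X)$ of the constant functor at $A$; this is exactly \cref{observation:creating_compact_generators}. Thus $\mathcal R$ is the endomorphism ring, computed in $\Fun(\Pi_\infty(X),\Mod_A)$, of $(\Pi_\infty(j_x))_!(A)$. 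Now $\Pi_\infty(X)$ is a connected space, so $\Pi_\infty(X) \simeq B\Omega_x(X)$ as an $\infty$-groupoid, and under this identification $\Fun(\Pi_\infty(X),\Mod_A)$ is the $\infty$-category of $A$-linear representations of the grouplike $\mathbb E_1$-space $\Omega_x(X)$; the point inclusion corresponds to the augmentation $B\Omega_x(X) \to \ast$ picking out the basepoint, and its left Kan extension of $A$ is the "regular representation". A standard computation (e.g.\ the bar resolution, or \cite[Theorem 7.1.2.1]{Lurie_Higher_algebra} applied in this form, cf.\ the discussion of $\mathrm C_\ast(\Omega X;A)$ in \cite{HPT}) identifies the endomorphism ring of the regular representation with the group algebra $A[\Omega_x(X)] = \Pi_\infty(\Omega_x(X)) \otimes A = \mathrm C_\ast(\Omega_x(X);A)$, with its convolution $\mathbb E_1$-structure. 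This gives $\mathcal R \simeq \mathrm C_\ast(\Omega_x(X);A)$ and hence the stated equivalence $\mathrm{LC}^{\mathrm{hyp}}(X;\Mod_A) \simeq \Mod_{\mathrm C_\ast(\Omega_x(X);A)}$.

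The main obstacle, and the step requiring the most care, is the identification of the endomorphism ring: one must check that the $\mathbb E_1$-structure on $\mathcal R$ coming from composition of endomorphisms of $j_{x,\sharp}(A)$ matches the convolution $\mathbb E_1$-structure on $\mathrm C_\ast(\Omega_x(X);A)$, rather than merely producing an equivalence of the underlying $A$-modules (which is the easy part: $\mathcal R \simeq j_x^{\ast,\mathrm{hyp}} j_{x,\sharp}(A) \simeq \colim_{\Omega_x(X)} A \simeq \Pi_\infty(\Omega_x(X))\otimes A$ by adjunction and \cref{observation:creating_compact_generators}). The cleanest way to handle this is to work entirely inside $\Fun(B\Omega_x(X),\Mod_A)$ and invoke the equivalence $\Fun(BG,\Mod_A)\simeq \mathrm{LMod}_{A[G]}$ for a grouplike $\mathbb E_1$-monoid $G$ — this is the $A$-linear form of \cite[Remark 4.8.5.18]{Lurie_Higher_algebra} (or can be derived from the bar/cobar formalism) — under which the object $(\Pi_\infty(j_x))_!(A)$ goes precisely to $A[G]$ as a module over itself, so that its endomorphism algebra is $A[G] = \mathrm C_\ast(\Omega_x(X);A)$ as an $\mathbb E_1$-algebra tautologically. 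One should also record that, by construction, these equivalences are compatible with the residual action of $A$, so that the resulting equivalence is $\Mod_A$-linear, and note the two mild finiteness caveats are irrelevant here since no constructibility was used beyond the trivially-stratified case of \cref{cor:monodromy_revisited}.
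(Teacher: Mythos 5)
Your proposal is correct, and its overall architecture matches the paper's: identify $\mathrm{LC}^{\mathrm{hyp}}(X;\Mod_A)$ with modules over $\End(j_{x,\sharp}(A))$, then compute that endomorphism ring as $\mathrm C_\ast(\Omega_x(X);A)$. The differences are in the mechanisms. For the Morita step, you use compact generation (via \cref{cor:compact_generators}) together with the Schwede--Shipley recognition theorem, whereas the paper goes through Barr--Beck--Lurie monadicity of the conservative, colimit-preserving functor $j_x^{\ast,\mathrm{hyp}}$; these are interchangeable here. For the endomorphism computation, the paper stays sheaf-theoretic: it identifies $\End(j_{x,\sharp}(A)) \simeq j_x^{\ast,\mathrm{hyp}} j_{x,\sharp}^{\mathrm{hyp}}(A)$ by adjunction, proves the Beck--Chevalley transformation $\Gamma_\sharp^{\mathrm{hyp}}\Gamma^{\ast,\mathrm{hyp}} \to j_x^{\ast,\mathrm{hyp}} j_{x,\sharp}^{\mathrm{hyp}}$ for the loop-space pullback square is an equivalence (using \cref{cor:monodromy_revisited} and the pointwise formula for left Kan extensions), and then invokes the shape computation of \cite[Corollary 3.5]{HPT} to get $\Pi_\infty(\Omega_x(X))\otimes A$. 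You instead pass through $\Pi_\infty(X)\simeq B\Omega_x(X)$ and the equivalence $\Fun(B G,\Mod_A)\simeq \mathrm{LMod}_{A[G]}$, under which $j_{x,\sharp}(A)$ becomes the regular representation. Your route has the genuine advantage of making the $\mathbb E_1$-structure on the identification explicit (the paper's chain of equivalences only manifestly identifies underlying $A$-modules, leaving the multiplicative compatibility implicit in the monadic formalism); the cost is that you must know that $\Fun(BG,\Mod_A)\simeq\mathrm{LMod}_{A[G]}$ carries the induced object to the regular module, and one should still record the usual $R$ versus $R^{\mathrm{op}}$ convention when identifying endomorphisms of the regular representation. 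Both computations of the underlying module reduce to the same fact, namely $j_x^{\ast,\mathrm{hyp}} j_{x,\sharp}(A) \simeq \colim_{\Omega_x(X)} A$.
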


\begin{rem}
	This theorem recovers \cite[Theorem 26]{Holstein_Morita_cohomology_I} and \cite[Theorem 12]{Holstein_Morita_cohomology_II}.
	Besides it generalizes it in several directions, as $A$ is now allowed to be an $\mathbb E_\infty$-algebra, and $X$ is now only required to be locally weakly contractible rather than admitting a bounded locally finite good hypercover.
	In particular, the above theorem holds even without any paracompactness assumption on $X$.
\end{rem}

\begin{proof}[Proof of \cref{thm:LC_modules_over_chains}]
	The second half is a direct consequence of the first half and Lurie-Barr-Beck's theorem \cite[Theorem 4.7.3.5]{Lurie_Higher_algebra}.
	To compute the endomorphism ring of $j_{x,\sharp}(A)$, we first observe that there is the following natural equivalence:
	\[ \Hom_{\LChyp(X;\Mod_A)}\big(j_{x,\sharp}^{\hyp}(A), j_{x,\sharp}^{\hyp}(A)\big) \simeq j_x^{\ast,\hyp} j_{x,\sharp}^{\hyp}(A) \ . \]
	Observe now that the square
	\[ \begin{tikzcd}
		\Pi_\infty( \Omega_x( X ) ) \arrow{r}{\Gamma} \arrow{d}{\Gamma} & * \arrow{d}{j_x} \\
		* \arrow{r}{j_x} & \Pi_\infty(X) \ .
	\end{tikzcd} \]
	is a pullback in $\cS$, where we wrote $\Gamma$ instead of $\Gamma_{\Omega_x(X)}$ for brevity.
	Unraveling the definitions, this implies that $\Pi_\infty(\Omega_x(X))$ coincides with the comma category
	\[ \{x\} \times_{\Pi_\infty(X)} \Pi_\infty(X)_{j_x/} \ , \]
	so that combining \cref{cor:monodromy_revisited} with the formula for the left Kan extension, we deduce that the Beck-Chevalley transformation
	\[ \Gamma_\sharp^{\hyp} \circ \Gamma^{\ast,\hyp} \to j_x^{\ast,\hyp} \circ j_{x,\sharp}^{\hyp} \]
	is an equivalence.
	Thus, the endomorphism ring of $j_{x,\sharp}(A)$ is canonically identified with
	\[ \Gamma_\sharp^{\hyp} \Gamma^{\ast,\hyp}(A) \simeq \Gamma_\sharp^{\hyp} \Gamma^{\ast,\hyp}(*) \otimes A \ . \]
	By definition, $\Gamma_\sharp^{\hyp} \Gamma^{\ast,\hyp}(*) \in \cS$ is the shape of the $\infty$-topos $\HSh(\Omega_x(X))$.
	Applying \cite[Corollary 3.5]{HPT}, we see that this shape is simply identified with the homotopy type $\Pi_\infty(\Omega_x(X))$.
	Thus, the endomorphism ring of $j_{x,\sharp}(A)$ is identified with
	\[ \Gamma_\sharp^{\hyp} \Gamma^{\ast,\hyp}(A) \simeq \Pi_\infty(\Omega_x(X)) \otimes A \simeq \mathrm C_\ast(\Omega_x(X);A) \ , \]
	where the last equivalence holds by definition of singular chains.
\end{proof}

\begin{rem}
	It is possible to obtain a similar description in the stratified case, in line with \cite[\S6]{Holstein_Categorical_Koszul}.
	It should be possible to obtain an explicit description of the endomorphism ring of the single compact generator provided by \cref{thm:compact_generation} akin to the one of \cref{thm:LC_modules_over_chains}, at least in the setting of conically smooth stratified spaces.
	Indeed, any such description should see the chain algebras on single strata, but at the same time, it should also have a contribution from the links of the stratification.
	We will come back to this subject in a later work.
\end{rem}

\subsection{Exodromy and stalkwise compactness}

Let $(X,P)$ be a conically stratified space with locally weakly contractible strata and let $\cE$ be a presentable $\infty$-category.
The construction of  $\Phi_{X,P}^{\hyp}$ and $\Psi_{X,P}^{\hyp}$ relies a priori on the existence of limits and colimits of diagrams that are typically not finite.
However, the ``regularity'' of conical charts paired with the homotopy-invariance property of hyperconstructible hypersheaves \cite[Theorem 0.4]{HPT} actually implies that all infinite colimits that are involved in the construction of the exodromy adjunction can be ignored.
To give a proper formulation of this idea, let us first introduce the following notation:

\begin{notation}
	Let $(X,P)$ be a stratified space and let $\cE$ be a presentable $\infty$-category.
	Let $\kappa$ be a regular cardinal.
	We denote by $\Cons_{P,\kappa}^{\hyp}(X;\cE)$ the full subcategory of $\Cons_P^{\hyp}(X;\cE)$ spanned by $\cE$-valued hyperconstructible hypersheaves $F$ whose stalks are $\kappa$-compact objects of $\cE$.
	When the stratification $P$ is trivial, we denote this $\infty$-category by $\LChyp_\kappa^{\hyp}(X;\cE)$.
\end{notation}

\begin{warning}
	The $\infty$-category $\Cons_{P,\kappa}^{\hyp}(X;\cE)$ does not coincide neither with $\Cons_P^{\hyp}(X;\cE^\kappa)$ nor with $\Cons_P^{\hyp}(X;\cE)^\kappa$.
	We offer two counterexamples:
	\begin{enumerate}\itemsep=0.2cm
		\item Take $X \coloneqq \coprod_{\mathbb N} *$ to be an infinite disjoint union of points equipped with the trivial stratification.
		Take also $\cE = \cS$ and $\kappa = \omega$.
		Fix $K \in \cS^\kappa$ and let $F \coloneqq \Gamma_X^\ast(K)$ be the hyperconstant hypersheaf associated to $K$.
		Then $F \in \Cons_{P,\kappa}^{\hyp}(X;\cE)$, but $F(X) \simeq \prod_{\mathbb N} K$, which does not belong to $\cS^\omega$ unless $K$ is contractible.
		In particular, $F$ does not belong to $\Cons_P^{\hyp}(X;\cS^\omega)$.
		
		\item Take $X = S^1$, $\cE = \mathrm{Mod}_\C$ and $\kappa = \omega$.
		Once again, equip $X$ with the trivial stratification.
		Then, we have canonical identifications
		\[ \LChyp(S^1;\mathrm{Mod}_\C) \simeq \Fun(\Pi_{\infty}(S^1),\mathrm{Mod}_\C)\simeq \mathrm{QCoh}(\mathbb G_{m,\C}) \ . \]
		Therefore,
		\[ \LChyp(S^1;\mathrm{Mod}_\C)^\omega \simeq \mathrm{Perf}(\mathbb G_{m,\C}) \ . \]
		In particular, the structure sheaf of $\mathbb G_{m,\C}$ corresponds to the functor $F \colon S^1 \to \mathrm{Mod}_\C$ selecting $\C[T,T^{-1}]$ with endomorphism given by multiplication by $T$.
		Then \cref{cor:Phi_formula} implies that the stalk of $\Psi_{S^1}^{\hyp}(F)$ at any point of $S^1$ coincides with $\C[T,T^{-1}]$, which is not compact as an object in $\mathrm{Mod}_\C$.
	\end{enumerate}
\end{warning}

\begin{rem}
	In the second example above, it follows from \cite{Preygel_Integral_transform} that $\LChyp_\kappa(S^1;\mathrm{Mod}_\C)$ corresponds to the full subcategory of $\mathrm{QCoh}(\mathbb G_{m,\C})$ spanned by perfect complexes \emph{with proper support}.
\end{rem}

\begin{prop} \label{prop:exodromy_compact_objects}
	Let $(X,P)$ be a conically stratified space and let $\cE$ be a presentable $\infty$-category.
	Let $\kappa$ be a regular cardinal.
	If the assumptions of \cref{thm:exodromy} are satisfied, then the exodromy equivalence $\Phi_{X,P}^{\hyp} \dashv \Psi_{X,P}^{\hyp}$ restricts to an equivalence
	\[ \Phi_{X,P}^{\hyp} \colon \Fun(\Pi_\infty(X,P), \cE^\kappa) \leftrightarrows \Cons_{P,\kappa}^{\hyp}(X;\cE) \colon \Psi_{X,P}^{\hyp} \ . \]
\end{prop}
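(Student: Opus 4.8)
The plan is to deduce the statement formally from \cref{thm:exodromy} together with the stalk formula of \cref{cor:Phi_formula}; no further geometric input is needed. By \cref{thm:exodromy}, the exodromy adjunction is an equivalence
\[ \Phi_{X,P}^{\mathrm{hyp}} \colon \mathrm{Cons}_P^{\mathrm{hyp}}(X;\cE) \xrightarrow{\ \sim\ } \Fun(\Pi_\infty^\Sigma(X,P), \cE) \colon \Psi_{X,P}^{\mathrm{hyp}} \ , \]
so it suffices to check that this equivalence carries the full subcategory $\mathrm{Cons}_{P,\kappa}^{\mathrm{hyp}}(X;\cE)$ onto the full subcategory $\Fun(\Pi_\infty^\Sigma(X,P), \cE^\kappa)$. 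Since $\cE^\kappa \subseteq \cE$ is a full subcategory, $\Fun(\Pi_\infty^\Sigma(X,P), \cE^\kappa)$ is precisely the full subcategory of $\Fun(\Pi_\infty^\Sigma(X,P), \cE)$ spanned by those functors $G$ for which $G(x)$ is a $\kappa$-compact object of $\cE$ for every object $x$ of $\Pi_\infty^\Sigma(X,P)$.

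The key observation is then that the objects of $\Pi_\infty^\Sigma(X,P) = \Exit(X,P)$ are exactly the points of $X$. Fixing $F \in \mathrm{Cons}_P^{\mathrm{hyp}}(X;\cE)$ and a point $x \in X$, viewed as an object of $\Pi_\infty^\Sigma(X,P)$, \cref{cor:Phi_formula} gives
\[ \Phi_{X,P}^{\mathrm{hyp}}(F)(x) \simeq \colim_{x \in U} F(U) \simeq F_x \ , \]
the stalk of $F$ at $x$. Consequently $\Phi_{X,P}^{\mathrm{hyp}}(F)$ takes values in $\cE^\kappa$ if and only if $F_x$ is $\kappa$-compact for every $x \in X$, which is exactly the defining condition of $\mathrm{Cons}_{P,\kappa}^{\mathrm{hyp}}(X;\cE)$. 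Hence $\Phi_{X,P}^{\mathrm{hyp}}$ and $\Psi_{X,P}^{\mathrm{hyp}}$ restrict to mutually inverse equivalences between $\mathrm{Cons}_{P,\kappa}^{\mathrm{hyp}}(X;\cE)$ and $\Fun(\Pi_\infty^\Sigma(X,P),\cE^\kappa)$, as claimed.

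There is no serious obstacle once \cref{thm:exodromy} and \cref{cor:Phi_formula} are available: the argument is purely formal. The only point deserving a word of care is the identification of $\Fun(\Pi_\infty^\Sigma(X,P),\cE^\kappa)$ with the subcategory of pointwise-$\kappa$-compact functors, which uses solely that $\cE^\kappa$ is a full subcategory of $\cE$; in particular one does not need $\cE^\kappa$ to be closed under the colimits occurring in $\Pi_\infty^\Sigma(X,P)$, since we are merely matching up full subcategories rather than asserting any closure property. (One may also record, as in the statement's intended use, that this makes $\mathrm{Cons}_{P,\kappa}^{\mathrm{hyp}}(X;\cE)$ depend only on the equivalence class of $\Pi_\infty^\Sigma(X,P)$ and on $\cE^\kappa$.)
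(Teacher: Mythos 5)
Your proof is correct and follows essentially the same route as the paper: both arguments reduce to the stalk formula of \cref{cor:Phi_formula}, which identifies $\Phi_{X,P}^{\mathrm{hyp}}(F)(x)$ with $F_x$, so that under the equivalence of \cref{thm:exodromy} the condition of having $\kappa$-compact stalks matches the condition of taking values in $\cE^\kappa$. The paper merely writes the two inclusions out separately (using the unit of the adjunction to compute $\Psi_{X,P}^{\mathrm{hyp}}(G)_x \simeq G(x)$), whereas you package them into a single biconditional; the content is identical.
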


\begin{proof}
	It is enough to prove that both $\Phi_{X,P}^{\hyp}$ and $\Psi_{X,P}^{\hyp}$ respect these two full subcategories.
	Let first $F \in \Cons_{P,\kappa}^{\hyp}(X;\cE)$.
	Then we have to check that the functor $\Phi_{X,P}^{\hyp}(F)$ takes values in $\cE^\kappa$.
	To see this, let $x \in \Pi_\infty(X,P)$.
	Then \cref{cor:Phi_formula} provides a canonical identification
	\[ \Phi_{X,P}^{\hyp}(F)(x) \simeq F_x  \]
	so that $\Phi_{X,P}^{\hyp}(F)$ belongs to $\Fun(\Pi_\infty(X,P), \cE^\kappa)$.
	
	\medskip
	
	Let now $G \colon \Pi_\infty(X,P) \to \cE^\kappa$ be a functor.
	Let $x \in X$.
	Then \cref{cor:Phi_formula} and \cref{thm:exodromy} provide the following canonical identifications:
	\[ \Psi_{X,P}^{\hyp}(G)_x \simeq \big( \Phi_{X,P}^{\hyp} \Psi_{X,P}^{\hyp}(G) \big)(x) \simeq G(x) \ . \]
	Thus, the stalks of $\Psi_{X,P}^{\hyp}(G)$ belong to $\cE^\kappa$.
	In other words, $\Psi_{X,P}^{\hyp}(G)$ belongs to $\Cons_{P,\kappa}^{\hyp}(X;\cE)$.
\end{proof}

\subsection{Constructibility and pushforward. A general criterion}

Let $f \colon (Y,Q) \to (X,P)$ be an exodromic morphism of conically stratified spaces.
Then the natural transformation $\psi_f^{\hyp}$ makes the square
\[ \begin{tikzcd}
	\Fun(\Pi_\infty(X,P), \cE) \arrow{r}{\Pi_\infty(f)^\ast} \arrow{d}{\Psi_{X,P}^{\hyp}} & \Fun(\Pi_\infty(Y,Q),\cE) \arrow{d}{\Psi_{Y,Q}^{\hyp}} \\
	\HSh(X;\cE) \arrow{r}{f^{\ast,\hyp}} & \HSh(Y;\cE)
\end{tikzcd} \]
commutative.
In particular, there is an associated Beck-Chevalley transformation
\[ \chi_f^{\hyp} \colon \Psi_{X,P}^{\hyp} \circ \Pi_\infty(f)_\ast \to f_\ast \circ f^{\ast, \hyp} \circ \Psi_{X,P}^{\hyp} \circ \Pi_\infty(f)_\ast \xrightarrow{\psi_f^{\hyp}} f_\ast \circ \Psi_{Y,Q}^{\hyp} \circ \Pi_\infty(f)^\ast \circ \Pi_\infty(f)_\ast \to f_\ast \circ \Psi_{Y,Q}^{\hyp} \ . \]
The identification \eqref{eq:pushforward} has the following immediate consequence:

\begin{lem}\label{constructibility_push_criterion}
	Let $f \colon (Y,Q) \to (X,P)$ be a  morphism of conically stratified spaces with locally weakly contractible strata.
	Let $\cE$ be a presentable $\infty$-category and assume that the conditions of \cref{thm:exodromy} are satisfied.
	Then for every $F \colon \Pi_\infty(Y,Q) \to \cE$, the following statements are equivalent:
	\begin{enumerate}\itemsep=0.2cm
		\item the hypersheaf $f_\ast\big( \Psi_{Y,Q}^{\hyp}(F) \big) \in \HSh(X;\cE)$ is hyperconstructible on $(X,P)$;
		
		\item the  transformation $\chi_f^{\hyp}(F) \colon \Psi_{X,P}^{\hyp}\big( \Pi_\infty(f)_\ast(F) \big) \to f_\ast\big( \Psi_{Y,Q}^{\hyp}(F) \big)$ is an equivalence.
	\end{enumerate} 
\end{lem}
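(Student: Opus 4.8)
The plan is to exploit the identification \eqref{eq:pushforward}, namely $f_\ast^{\mathrm{cons}} \simeq \mathrm R_P^{\mathrm{hyp}} \circ f_\ast$, together with the fact that under the exodromy equivalence the constructible pushforward $f_\ast^{\mathrm{cons}}$ corresponds to the right Kan extension $\Pi_\infty^\Sigma(f)_\ast$. First I would set up the two adjunctions cleanly. By \cref{thm:exodromy} the functors $\Psi_{X,P}^{\mathrm{hyp}}$ and $\Psi_{Y,Q}^{\mathrm{hyp}}$ are equivalences onto $\mathrm{Cons}_P^{\mathrm{hyp}}(X;\cE)$ and $\mathrm{Cons}_Q^{\mathrm{hyp}}(Y;\cE)$ respectively, with inverses $\Phi_{X,P}^{\mathrm{hyp}}$ and $\Phi_{Y,Q}^{\mathrm{hyp}}$. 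Since $f$ is exodromic, the square displayed just before the statement is commutative via $\psi_f^{\mathrm{hyp}}$ (an equivalence), and passing to right adjoints of the horizontal functors we obtain that $f_\ast^{\mathrm{cons}} = \Psi_{X,P}^{\mathrm{hyp}} \circ \Pi_\infty^\Sigma(f)_\ast \circ \Phi_{X,P}^{\mathrm{hyp}}$ is the right adjoint of $f^{\ast,\mathrm{hyp}} \colon \mathrm{Cons}_P^{\mathrm{hyp}}(X;\cE) \to \mathrm{Cons}_Q^{\mathrm{hyp}}(Y;\cE)$, exactly as in \cref{observation:extra_functorialities}.

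Next I would unwind the Beck-Chevalley transformation $\chi_f^{\mathrm{hyp}}$. Applying $\Psi_{X,P}^{\mathrm{hyp}} \circ \Pi_\infty^\Sigma(f)_\ast$ to a functor $F \colon \Pi_\infty^\Sigma(Y,Q) \to \cE$, and using that $\Psi_{Y,Q}^{\mathrm{hyp}}(F)$ is automatically hyperconstructible on $(Y,Q)$ by \cref{cor:Psi_produces_constructible}, one finds
\[
\Psi_{X,P}^{\mathrm{hyp}}\big(\Pi_\infty^\Sigma(f)_\ast(F)\big) \simeq f_\ast^{\mathrm{cons}}\big(\Psi_{Y,Q}^{\mathrm{hyp}}(F)\big) \simeq \mathrm R_P^{\mathrm{hyp}}\big( f_\ast(\Psi_{Y,Q}^{\mathrm{hyp}}(F)) \big),
\]
where the last step is \eqref{eq:pushforward}. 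I would then check that under this chain of identifications the transformation $\chi_f^{\mathrm{hyp}}(F)$ is precisely the counit $\mathrm R_P^{\mathrm{hyp}}\big(f_\ast(\Psi_{Y,Q}^{\mathrm{hyp}}(F))\big) \to f_\ast(\Psi_{Y,Q}^{\mathrm{hyp}}(F))$ of the adjunction $\iota \dashv \mathrm R_P^{\mathrm{hyp}}$, where $\iota \colon \mathrm{Cons}_P^{\mathrm{hyp}}(X;\cE) \hookrightarrow \HSh(X;\cE)$ is the inclusion (which admits $\mathrm R_P^{\mathrm{hyp}}$ as right adjoint by \cref{rem:constructibilization}-(1)). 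Granting this identification, the equivalence $(1) \Leftrightarrow (2)$ is immediate: the counit of a coreflective subcategory is an equivalence on an object $G \in \HSh(X;\cE)$ if and only if $G$ already lies in the subcategory, i.e.\ is hyperconstructible on $(X,P)$; here $G = f_\ast(\Psi_{Y,Q}^{\mathrm{hyp}}(F))$.

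The main obstacle I anticipate is precisely the bookkeeping in the previous paragraph: verifying that $\chi_f^{\mathrm{hyp}}(F)$, defined as a composite of three Beck-Chevalley/unit-counit maps in the displayed formula for $\chi_f^{\mathrm{hyp}}$, genuinely agrees with the counit of $\iota \dashv \mathrm R_P^{\mathrm{hyp}}$ under the equivalences. This is a diagram chase combining the exodromy-naturality of $\psi_f^{\mathrm{hyp}}$ (and hence of $\phi_f^{\mathrm{hyp}}$ via \cref{rem:phi_equiv_iff_psi_equiv}), the triangular identities for $\Phi_{X,P}^{\mathrm{hyp}} \dashv \Psi_{X,P}^{\mathrm{hyp}}$, and the compatibility \eqref{eq:pushforward}. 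The cleanest route may be to avoid computing $\chi_f^{\mathrm{hyp}}$ by hand altogether: instead observe directly that $\Psi_{X,P}^{\mathrm{hyp}}\big(\Pi_\infty^\Sigma(f)_\ast(F)\big)$ is hyperconstructible, so it suffices to show that $\chi_f^{\mathrm{hyp}}(F)$ exhibits $f_\ast(\Psi_{Y,Q}^{\mathrm{hyp}}(F))$ as hyperconstructible $\Leftrightarrow$ $\chi_f^{\mathrm{hyp}}(F)$ is an equivalence; and then identify $\Psi_{X,P}^{\mathrm{hyp}}(\Pi_\infty^\Sigma(f)_\ast(F)) = \mathrm R_P^{\mathrm{hyp}}(f_\ast(\Psi_{Y,Q}^{\mathrm{hyp}}(F)))$ together with the fact that $\chi_f^{\mathrm{hyp}}(F)$ becomes, after applying $\Phi_{X,P}^{\mathrm{hyp}}$, the identity on $\Pi_\infty^\Sigma(f)_\ast(F)$ — the latter because $\Phi_{X,P}^{\mathrm{hyp}}$ is the inverse equivalence and kills the coreflection. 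That last sentence is really where the content sits, and I would spell it out carefully: applying the equivalence $\Phi_{X,P}^{\mathrm{hyp}}$ to $\chi_f^{\mathrm{hyp}}(F)$ and using $\Phi_{X,P}^{\mathrm{hyp}} \circ \iota \simeq \mathrm{id}$ on the constructible subcategory reduces $(2)$ to the statement that $F(V)\to F(U)$-type restriction maps (equivalently, the criterion of \cref{cor:criterion_constructibility}) hold for $f_\ast(\Psi_{Y,Q}^{\mathrm{hyp}}(F))$, which is exactly $(1)$.
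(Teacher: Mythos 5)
Your proposal is correct and follows exactly the route the paper intends: the paper gives no written proof, presenting the lemma as an ``immediate consequence'' of the identification \eqref{eq:pushforward}, which is precisely the identification $\Psi_{X,P}^{\mathrm{hyp}}(\Pi_\infty^\Sigma(f)_\ast(F)) \simeq f_\ast^{\mathrm{cons}}(\Psi_{Y,Q}^{\mathrm{hyp}}(F)) \simeq \mathrm R_P^{\mathrm{hyp}}(f_\ast(\Psi_{Y,Q}^{\mathrm{hyp}}(F)))$ you exploit, together with recognizing $\chi_f^{\mathrm{hyp}}(F)$ as the counit of the coreflection $\iota \dashv \mathrm R_P^{\mathrm{hyp}}$. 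Your extra care in checking the mate-calculus bookkeeping (that the composite defining $\chi_f^{\mathrm{hyp}}$ really agrees with that counit) is exactly the content the paper leaves implicit.
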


The following observation gives a convenient sufficient condition to check whether pushforward along a morphism $f \colon (Y,Q) \to (X,P)$ preserves hyperconstructible hypersheaves:

\begin{lem}\label{lem:strongly_exodromic_preserve_constructible}
	Let $f \colon (Y,Q) \to (X,P)$ be a strongly exodromic morphism of conically stratified spaces with locally weakly contractible strata.
	Let $\cE$ be a presentable $\infty$-category and assume that the conditions of \cref{thm:exodromy} are satisfied.
	Then $f_\ast$ preserves hyperconstructible hypersheaves.
\end{lem}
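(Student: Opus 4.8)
The plan is to trade $f_\ast$ on the ``big'' $\infty$-categories of hypersheaves for the right Kan extension $\Pi_\infty^\Sigma(f)_\ast$ on the categories of functors out of the exit-path $\infty$-categories, and then read off the conclusion from \cref{thm:exodromy}. Concretely: since $f$ is strongly exodromic it is in particular strongly left exodromic, so the natural transformation
\[ \phi_f^{\mathrm{hyp}} \colon \Phi_{Y,Q}^{\mathrm{hyp}} \circ f^{\ast,\mathrm{hyp}} \longrightarrow \Pi_\infty^\Sigma(f)^\ast \circ \Phi_{X,P}^{\mathrm{hyp}} \]
is an equivalence on \emph{all} of $\HSh(X;\cE)$, not merely on hyperconstructible hypersheaves. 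Now observe that both its source and its target are composites of left adjoints: $f^{\ast,\mathrm{hyp}}$ has right adjoint $f_\ast$, the restriction $\Pi_\infty^\Sigma(f)^\ast$ has right adjoint the right Kan extension $\Pi_\infty^\Sigma(f)_\ast$, and $\Phi_{X,P}^{\mathrm{hyp}}$, $\Phi_{Y,Q}^{\mathrm{hyp}}$ have right adjoints $\Psi_{X,P}^{\mathrm{hyp}}$, $\Psi_{Y,Q}^{\mathrm{hyp}}$ by the exodromy adjunction. Passing to right adjoints in the equivalence $\phi_f^{\mathrm{hyp}}$ (right adjoint of a composite being the composite of the right adjoints in reverse order) therefore yields a canonical equivalence
\[ f_\ast \circ \Psi_{Y,Q}^{\mathrm{hyp}} \simeq \Psi_{X,P}^{\mathrm{hyp}} \circ \Pi_\infty^\Sigma(f)_\ast \ , \]
which, up to the identifications of \cref{construction:comparison_morphisms}, is exactly (the inverse of) the Beck--Chevalley transformation $\chi_f^{\mathrm{hyp}}$ from \cref{constructibility_push_criterion}.

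With this in hand the argument concludes in one line. Let $G \in \ConsPhyp(Y;\cE)$. By \cref{thm:exodromy} applied to $(Y,Q)$ we may write $G \simeq \Psi_{Y,Q}^{\mathrm{hyp}}(F)$ with $F \coloneqq \Phi_{Y,Q}^{\mathrm{hyp}}(G)$, and then the displayed equivalence gives
\[ f_\ast(G) \simeq f_\ast\big( \Psi_{Y,Q}^{\mathrm{hyp}}(F) \big) \simeq \Psi_{X,P}^{\mathrm{hyp}}\big( \Pi_\infty^\Sigma(f)_\ast(F) \big) \ , \]
so that $f_\ast(G)$ lies in the essential image of $\Psi_{X,P}^{\mathrm{hyp}}$. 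By \cref{cor:Psi_produces_constructible} combined with \cref{thm:exodromy} applied to $(X,P)$, this essential image is precisely $\ConsPhyp(X;\cE)$; hence $f_\ast(G) \in \ConsPhyp(X;\cE)$, which is what we wanted. (One may equivalently route the argument through \cref{constructibility_push_criterion}: the above shows $\chi_f^{\mathrm{hyp}}(F)$ is an equivalence for every $F$, which is condition $(2)$ there.)

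The only genuinely delicate point is the passage to right adjoints: one must be careful that $\phi_f^{\mathrm{hyp}}$ is indeed an equivalence between functors that are honest left adjoints — this is exactly where \emph{strong} left exodromy is needed rather than left exodromy on hyperconstructibles alone — and that the mate so obtained coincides with the transformation $\chi_f^{\mathrm{hyp}}$ of the previous subsection. Both are formal: the first is bookkeeping of adjunctions, and the second is the compatibility of Beck--Chevalley transformations with taking adjoints, already used in \cref{rem:phi_equiv_iff_psi_equiv}. Everything else in the proof is a direct appeal to \cref{thm:exodromy}.
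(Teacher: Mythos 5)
Your proof is correct and follows essentially the same route as the paper's: strong left exodromy makes the square relating $f^{\ast,\mathrm{hyp}}$ and $\Pi_\infty^\Sigma(f)^\ast$ via $\Phi$ commute, passing to right adjoints shows $f_\ast\circ\Psi_{Y,Q}^{\mathrm{hyp}}\simeq\Psi_{X,P}^{\mathrm{hyp}}\circ\Pi_\infty^\Sigma(f)_\ast$ (i.e.\ $\chi_f^{\mathrm{hyp}}$ is an equivalence), and the conclusion follows from \cref{cor:Psi_produces_constructible} together with the surjectivity of $\Psi_{Y,Q}^{\mathrm{hyp}}$ onto hyperconstructible hypersheaves provided by \cref{thm:exodromy}. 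Your write-up merely spells out the final step that the paper leaves implicit.
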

\begin{proof}
By assumption, the natural transformation $\phi_f^{\hyp}$ makes the diagram
	\[ \begin{tikzcd}[column sep = large]
		\HSh(X;\cE) \arrow{r}{f^{\ast,\hyp}} \arrow{d}{\Phi_{X,P}^{\hyp}} & \HSh(Y;\cE) \arrow{d}{\Phi_{Y,Q}^{\hyp}} \\
		\Fun(\Pi_\infty(X,P),\cE) \arrow{r}{\Pi_\infty(f)^\ast} & \Fun(\Pi_\infty(Y,Q),\cE)
	\end{tikzcd} \]
	commutative.
	Passing to right adjoints, we deduce that the natural transformation $\chi_f^{\hyp}$ is an equivalence.
	Then \cref{lem:strongly_exodromic_preserve_constructible} follows from \cref{cor:Psi_produces_constructible}.
\end{proof}

\subsection{Constructibility and pushforward along immersions}

\begin{prop} \label{cor:pushforward_from_strata}
	Let $(X,P)$ be a conically stratified space with locally weakly contractible strata. 
	Let $\cE$ be a presentable $\infty$-category.
	Let $S \subseteq P$ be a locally closed subset.
	If the assumption of \cref{thm:exodromy} are satisfied, then
	\[ i_{S,\ast} \colon \HSh(X_S;\cE) \to \HSh(X;\cE) \]
	takes hyperconstructible hypersheaves on $(X_S,S)$ to hyperconstructible hypersheaves on $(X,P)$.
\end{prop}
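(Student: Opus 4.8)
The plan is to reduce the locally closed case to the two extreme cases of an open union of strata and a closed union of strata, which are both already under control. Since $S \subseteq P$ is locally closed, we can write $S = T \cap U$ where $T$ is a closed downwards subset and $U$ is a closed upwards subset of $P$; equivalently, there is a factorization $X_S \xrightarrow{j} X_U \xrightarrow{c} X$ where $j$ is the inclusion of a closed union of strata inside the conically stratified space $(X_U, U)$ (note $(X_U, U)$ is conically stratified by \cref{union_of_strata_and_conicality}, and its strata are locally weakly contractible by \cref{prop:locally_contractible_strata} applied to $(X,P)$), and $c$ is the inclusion of an open union of strata of $(X,P)$. Since $i_{S,\ast} \simeq c_\ast \circ j_\ast$, it suffices to treat the two cases separately, provided we also check that $\HSh(X_U;\cE)$ together with $(X_U,U)$ still satisfies the hypotheses of \cref{thm:exodromy}; the first hypothesis is the local weak contractibility just noted, and for the second hypothesis — joint conservativity of hyper-restrictions to strata — one observes that the strata of $(X_U, U)$ are exactly the strata $X_a$ for $a \in U$, and the hyper-restriction functors $i_a^{\ast,\mathrm{hyp}} \colon \HSh(X;\cE) \to \HSh(X_a;\cE)$ factor through $c^{\ast,\mathrm{hyp}}$, so joint conservativity is inherited.

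For the closed case, I would invoke \cref{lem:strongly_exodromic_preserve_constructible}: it suffices to show that the inclusion $j \colon X_T \to X$ of a closed downwards union of strata is strongly exodromic, i.e.\ that $\phi_j^{\mathrm{hyp}}$ is an equivalence on all hypersheaves. This is exactly \cref{cor:Phi_restriction_closed_strata}, which asserts that $\phi_S^{\mathrm{hyp}}$ is an equivalence for the inclusion of a closed union of strata. Hence $j_\ast$ preserves hyperconstructible hypersheaves. Similarly, for the open case, \cref{rem:open_inclusion_strata_strongly_exodromic} (together with Corollaries~\ref{cor:Psi_open_restriction} and \ref{cor:Phi_restriction_open_strata}) shows that the inclusion of an open union of strata is strongly exodromic, so \cref{lem:strongly_exodromic_preserve_constructible} again applies and $c_\ast$ preserves hyperconstructible hypersheaves. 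Composing, $i_{S,\ast} = c_\ast \circ j_\ast$ preserves hyperconstructibility, which is the claim.

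The main subtlety I anticipate is bookkeeping the hypotheses of \cref{thm:exodromy} for the intermediate stratified space $(X_U, U)$, since \cref{lem:strongly_exodromic_preserve_constructible} requires those hypotheses on \emph{both} source and target. The local weak contractibility of the strata is immediate from \cref{prop:locally_contractible_strata}; the joint conservativity needs the small argument sketched above, using that $X_U$ is open in $X$ so that $c^{\ast,\mathrm{hyp}} = c^{-1}$ is exact and that every stratum of $(X_U,U)$ is a stratum of $(X,P)$. Once this is in place, the rest is a formal two-step composition, and no further geometric input beyond Corollaries~\ref{cor:Phi_restriction_closed_strata} and \ref{cor:Phi_restriction_open_strata} is needed. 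Alternatively, if one prefers to avoid discussing hypotheses on $(X_U,U)$, one can argue directly: hyperconstructibility is checked stratum by stratum on $(X,P)$, and for $a \in P$ one computes $i_a^{\ast,\mathrm{hyp}} i_{S,\ast}(F)$ by base change along the locally closed inclusion, reducing to the local models $Z \times C(Y)$ and invoking \cref{cor:hyperpullback_computation} — but the composition argument is cleaner.
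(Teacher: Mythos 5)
Your proof is correct and is essentially the paper's own argument: the paper's proof reads, verbatim, ``Combine Corollaries~\ref{cor:Phi_restriction_closed_strata} and \ref{cor:Phi_restriction_open_strata} with \cref{lem:strongly_exodromic_preserve_constructible}'', and your factorization $i_S = c \circ j$ through the open union of strata $X_U$, with $X_S$ closed in $X_U$, is exactly the intended decomposition. The only quibble is your justification that joint conservativity is inherited by $(X_U,U)$: the fact that $i_a^{\ast,\mathrm{hyp}}$ factors through $c^{\ast,\mathrm{hyp}}$ gives conservativity of a family of functors out of $\HSh(X;\cE)$, not out of $\HSh(X_U;\cE)$; to repair this, either observe that the concrete conditions of \cref{joint_conservativeness} pass to subposets, or take a stratumwise equivalence $\alpha$ on $X_U$, note that $c_\sharp^{\mathrm{hyp}}(\alpha)$ is a stratumwise equivalence on $X$ (it restricts to $\alpha$ on strata in $U$ and to the initial object elsewhere), and conclude using the full faithfulness of $c_\sharp^{\mathrm{hyp}}$.
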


\begin{proof}
	Combine Corollaries~\ref{cor:Phi_restriction_closed_strata} and \ref{cor:Phi_restriction_open_strata} with \cref{lem:strongly_exodromic_preserve_constructible}.
\end{proof}

For the notion of recollement of $\infty$-categories, let us refer to \cite[A.8.1]{Lurie_Higher_algebra}.
\begin{cor}\label{recollement}
Let $(X,P)$ be a conically stratified space with locally weakly contractible strata.  
Let $\cE$ be a presentable $\infty$-category.
	Let $S \subseteq P$ be a closed subset  and put $U\coloneqq P\smallsetminus S$.
	If the assumptions of \cref{thm:exodromy} are satisfied, then the fully-faithful functors
\[ 
i_{S,\ast} \colon \Cons_S^{\hyp}(X_S;\cE)\to \Cons_P^{\hyp}(X;\cE) \leftarrow  \Cons_U^{\hyp}(X_U;\cE)  \colon  i_{U,\ast} 
\] 
exhibit $\Cons_P^{\hyp}(X;\cE) $ as a recollement of  $\Cons_S^{\hyp}(X_S;\cE)$ and  $\Cons_U^{\hyp}(X_U;\cE)$.
\end{cor}
\begin{proof}
\cref{recollement} follows immediately from the fact that the fully-faithful functors 
\[ 
i_{S,\ast} \colon \HSh(X_S;\cE) \to \HSh(X;\cE)   \leftarrow  \HSh(X_U;\cE)\colon   i_{U,\ast}  
\]
exhibit $\HSh(X;\cE)$ as a recollement of $\HSh(X_S;\cE)$  and $\HSh(X_U;\cE)$.
\end{proof}

     As for the hyperrestriction of a hyperconstructible hypersheaf to a stratum computed in \cref{cor:hyperpullback_computation}, one can compute the fibre of $ i_{S,\ast}$ at a point $x$ explicitly in terms of a conical chart containing $x$.
      This is the following
\begin{lem}\label{fiber_push_from_open}
	Let $Z$ be a weakly contractible locally weakly contractible topological space and let $(Y,Q)$ be a stratified space such that $\Exit(Y,Q)$ is an $\infty$-category and $(X,P) \coloneqq (Z \times C(Y), Q^{\vartriangleleft})$ is conically stratified.
	Let $\cE$ be a presentable $\infty$-category.
	Let $F \in \ConsPhyp(X;\cE)$ and let $x\in Z$.
	If the assumptions of \cref{thm:exodromy} are satisfied, there is a canonical identification 
	\[ (i_{Q,\ast} F)_x \simeq \lim_{\Exit(Y,Q)} \Phi_{X,P}^{\hyp}(F)|_{\Exit(Y,Q)}\]
	where in the right-hand side, the restriction is performed along the equivalence $\Exit(Z\times \mathbb{R}_{>0}\times Y,Q) \to \Exit(Y,Q)$ induced by the canonical projection.
\end{lem}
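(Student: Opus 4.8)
The plan is to compute the stalk $(i_{Q,\ast}F)_x$ by first reducing to a convenient cofinal family of open neighbourhoods of $x$ in $X = Z\times C(Y)$ and then applying \cref{fiber_push_from_open}'s ambient hypotheses together with the exodromy formulae established above. Since $Z$ is locally weakly contractible, the sets $\{W\times C_\varepsilon(Y)\}$, for $W$ ranging over a fundamental system of weakly contractible open neighbourhoods of $x$ in $Z$ and $\varepsilon>0$, form a fundamental system of open neighbourhoods of $x$ in $X$. Hence
\[ (i_{Q,\ast}F)_x \simeq \colim_{W,\varepsilon}\, (i_{Q,\ast}F)(W\times C_\varepsilon(Y)) \simeq \colim_{W,\varepsilon}\, F\big( (W\times C_\varepsilon(Y))\cap X_Q \big) \ , \]
where $X_Q = Z\times \mathbb{R}_{>0}\times Y$ is the open union of strata indexed by $Q\subseteq Q^{\vartriangleleft}$ (i.e.\ the complement of the cone point stratum $Z\times\{\ast\}$). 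Thus $(W\times C_\varepsilon(Y))\cap X_Q = W\times (0,\varepsilon)\times Y$, and $F$ restricted there is a hyperconstructible hypersheaf on the product stratified space $(W\times(0,\varepsilon)\times Y, Q)$.

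\textbf{Applying exodromy on $X_Q$.} First I would invoke \cref{prop:Psi_restriction_stratum} (or more precisely \cref{cor:Phi_restriction_open_strata}, since $Q$ is an open union of strata) to reduce $F(W\times(0,\varepsilon)\times Y)$ to a limit over exit paths: by \cref{thm:exodromy} applied to $(X_Q,Q)$ — whose hypotheses are inherited from those on $(X,P)$ — we have $F|_{X_Q}\simeq \Psi^{\mathrm{hyp}}_{X_Q,Q}(\Pi_\infty^\Sigma(i_Q)^\ast\Phi^{\mathrm{hyp}}_{X,P}(F))$, and \cref{commutation_open_immersion} gives
\[ F\big(W\times(0,\varepsilon)\times Y\big) \simeq \lim_{\Pi_\infty^\Sigma(W\times(0,\varepsilon)\times Y, Q)} \Phi^{\mathrm{hyp}}_{X,P}(F)\big|_{\Pi_\infty^\Sigma(W\times(0,\varepsilon)\times Y,Q)} \ . \]
Now \cref{lem:categorical_homotopy_invariance} (homotopy invariance of $\Exit$) identifies, via the projection $W\times(0,\varepsilon)\times Y\to Y$, the $\infty$-category $\Pi_\infty^\Sigma(W\times(0,\varepsilon)\times Y,Q)$ with $\Exit(Y,Q)$ up to categorical equivalence, because $W\times(0,\varepsilon)$ is weakly contractible (here we use $W$ weakly contractible and $(0,\varepsilon)$ contractible). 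Since limits are invariant under categorical equivalences of the indexing category, the right-hand side becomes $\lim_{\Exit(Y,Q)}\Phi^{\mathrm{hyp}}_{X,P}(F)|_{\Exit(Y,Q)}$, the restriction being performed along the stated equivalence $\Exit(Z\times\mathbb{R}_{>0}\times Y,Q)\to\Exit(Y,Q)$; in particular the expression no longer depends on $W$ or $\varepsilon$.

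\textbf{Collapsing the colimit and compatibility of restrictions.} Because the displayed limit is independent of $(W,\varepsilon)$ — one must check the transition maps in the colimit are the identity up to coherent equivalence, which follows from the naturality of \cref{commutation_open_immersion} and \cref{lem:categorical_homotopy_invariance} in the open set, exactly as in the proof of \cref{cor:criterion_constructibility} — the colimit over $(W,\varepsilon)$ is constant, and we conclude
\[ (i_{Q,\ast}F)_x \simeq \lim_{\Exit(Y,Q)} \Phi^{\mathrm{hyp}}_{X,P}(F)\big|_{\Exit(Y,Q)} \ . \]
The main obstacle I anticipate is bookkeeping the identification of the restriction functors: one needs that $\Phi^{\mathrm{hyp}}_{X_Q,Q}(F|_{X_Q})$, transported to $\Exit(Y,Q)$ via the projection equivalence, agrees with $\Phi^{\mathrm{hyp}}_{X,P}(F)$ restricted along $\Exit(Z\times\mathbb{R}_{>0}\times Y,Q)\hookrightarrow\Exit(X,P)\to$ (localization) and then pushed to $\Exit(Y,Q)$. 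This is precisely the content of \cref{cor:Phi_restriction_open_strata} combined with functoriality of $\Phi$ (\cref{2-functoriality_phi}) and the fact, from \cref{lem:categorical_homotopy_invariance}, that $\Exit(Z\times\mathbb{R}_{>0}\times Y,Q)\to\Exit(Y,Q)$ is a categorical equivalence; assembling these coherences is routine but is where the care is needed.
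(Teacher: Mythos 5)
Your proof is correct and follows essentially the same route as the paper: compute the stalk as a colimit over neighbourhoods $W\times C_\varepsilon(Y)$ with $W$ weakly contractible, use exodromy to rewrite each section $F(W\times(0,\varepsilon)\times Y)$ as a limit over $\Exit(W\times(0,\varepsilon)\times Y,Q)$, identify this exit-path category with $\Exit(Y,Q)$ by weak contractibility of $W\times(0,\varepsilon)$, and observe the colimit is constant. The extra coherence bookkeeping you flag at the end is real but routine, and the paper elides it in the same way.
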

\begin{proof}
Write $\cB_x$ for the collection of weakly contractible open neighborhoods of $x$ inside $Z$.
Then
	\[ i_{Q,\ast}(F)_x \simeq \colim_{U \in \cB_x} \colim_{\varepsilon \in (0,1)} F( U \times (0,\varepsilon) \times Y ) \ . \]
	From \cref{thm:exodromy} and \cref{formula_for_psi}, we deduce
		\[ i_{Q,\ast}(F)_x \simeq \colim_{U \in \cB_x} \colim_{\varepsilon \in (0,1)} \lim_{\Exit(U\times (0,\varepsilon)\times Y,Q)}\Phi_{X,P}^{\hyp}(F) \ . \]
		Since the $U\in \cB_x$ are weakly contractible, we have further
			\[  \Exit(U\times (0,\varepsilon)\times Y,Q) \simeq \Exit(U,\ast) \times\Exit((0,\varepsilon),\ast) \times \Exit(Y,Q) \simeq \Exit(Y,Q)  \ . \]
		Hence, the above colimits are constant and \cref{fiber_push_from_open} thus follows.
\end{proof}


\begin{lem}\label{lem:compact_homotopy_type}
	Let $\cC \in \Cat_\infty^\omega$ be a compact object in $\Cat_\infty$ and let $\cE$ be a presentable $\infty$-category.
	Assume that filtered colimits are left exact in $\cE$.
	Then the functor
	\[ \Gamma_{\cC,\ast} \colon \Fun(\cC, \cE) \to \cE \]
	commutes with filtered colimits.
	When $\cE$ is stable, both the left adjoint $\Gamma_{\cC,!}$ and the right adjoint $\Gamma_{\cC,\ast}$ to $\Gamma_\cC^\ast$ restrict to functors
	\[ \Gamma_{\cC,!} , \ \Gamma_{\cC,\ast} \colon \Fun(\cC,\cE^\omega) \to \cE^\omega \ . \]
%
%
%
\end{lem}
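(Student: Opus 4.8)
The plan is to reduce the statement to the well-known compactness behaviour of $(\infty,1)$-functor categories out of a compact $\infty$-category. For the first assertion, recall that $\cC \in \Cat_\infty^\omega$ being compact means precisely that the functor $\Fun(\cC,-) \colon \Cat_\infty \to \Cat_\infty$ commutes with filtered colimits; equivalently, evaluation at $\cC$ commutes with filtered colimits of diagrams of $\infty$-categories. First I would unwind the definition of $\Gamma_{\cC,\ast}$: by \cref{lem:RKE} applied to $\cX = \cC\op$ (a small $\infty$-category) and $\mathsf A$ the constant functor at $\ast$, the functor $\Gamma_{\cC,\ast}$ is computed as the limit $\lim_{\cC} F$ for $F \colon \cC \to \cE$. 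So the claim is that $\lim_{\cC} \colon \Fun(\cC,\cE) \to \cE$ commutes with filtered colimits. Since $\cC$ is a compact object of $\Cat_\infty$, it is a retract of a finite colimit of representables $\Delta^n$, hence a retract of an object built by finitely many pushouts from simplices. Retracts and finite colimits of index categories both turn limits into iterated pullbacks and finite totalizations of the original limits; and filtered colimits in $\cE$ commute with finite limits by hypothesis (left exactness of filtered colimits) and with retracts trivially. Carrying out this dévissage — reducing $\lim_\cC$ to a finite limit diagram in $\cE$ — gives the first claim.

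Alternatively, and perhaps more cleanly, I would argue directly: if $\{F_i\}_{i \in I}$ is a filtered diagram in $\Fun(\cC,\cE)$, then $\colim_i \Gamma_{\cC,\ast}(F_i) \to \Gamma_{\cC,\ast}(\colim_i F_i)$ is the comparison map $\colim_i \lim_\cC F_i \to \lim_\cC \colim_i F_i$, and one checks it is an equivalence by testing against the (jointly conservative) evaluation functors $\ev_c \colon \cE \to \cE$ is not available, so instead one uses that $\cC$ compact implies $\Fun(\cC,\cE)^\omega$ is generated under finite colimits and retracts by the corepresentables $\ev_c^L(E)$ with $E \in \cE^\omega$, together with the adjunction $(\ev_c)_! \dashv \ev_c \dashv (\ev_c)_\ast$; this localizes the question to a single point, where $\Gamma_{\cC,\ast}$ becomes the identity. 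I expect the retract-and-finite-colimit dévissage to be the main technical point, but it is standard and I would not grind through it.

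For the second assertion, assume $\cE$ is stable (and presentable, with filtered colimits left exact — automatic in the stable presentable setting). The functor $\Gamma_\cC^\ast \colon \cE \to \Fun(\cC,\cE)$ is cocontinuous and preserves compact objects iff its right adjoint $\Gamma_{\cC,\ast}$ commutes with filtered colimits, which we just established; hence $\Gamma_{\cC,!}$ restricts to $\cE^\omega \to \Fun(\cC,\cE)^\omega$. That is not quite what is wanted — I want $\Gamma_{\cC,!}$ and $\Gamma_{\cC,\ast}$ to take $\Fun(\cC,\cE^\omega) \to \cE^\omega$. For $\Gamma_{\cC,!}$, note it is a left Kan extension along $\Gamma_\cC$, so $\Gamma_{\cC,!}(F) = \colim_\cC F$; since $\cC$ is a compact object of $\Cat_\infty$ it is in particular a \emph{finite} $\infty$-category in the relevant sense (a retract of a finite colimit of simplices), so $\colim_\cC$ is a finite colimit, and finite colimits of compact objects are compact — this handles $\Gamma_{\cC,!}$. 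For $\Gamma_{\cC,\ast}(F) = \lim_\cC F$ with $F$ valued in $\cE^\omega$: in a stable $\infty$-category a finite limit is a finite colimit up to shift, and by the same dévissage $\lim_\cC$ is a finite limit, so it preserves $\cE^\omega$. The main obstacle here is making precise the passage from ``$\cC$ compact in $\Cat_\infty$'' to ``$\lim_\cC$ and $\colim_\cC$ are finite (co)limits''; this follows from the characterization of $\Cat_\infty^\omega$ but should be invoked carefully, and the stability hypothesis is what converts the finite-limit statement into a finite-colimit statement so that compactness of objects is preserved.
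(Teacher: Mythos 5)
Your proposal is correct and follows essentially the same route as the paper: the paper packages the dévissage by observing that the class of $\infty$-categories $\cC$ for which the conclusions hold is closed under finite colimits and retracts and contains all finite $1$-categories (in particular the $\Delta^n$, which compactly generate $\Cat_\infty$), with left exactness of filtered colimits handling the first claim and closure of $\cE^\omega$ under finite limits and colimits in the stable case handling the second. Your phrasing that $\lim_\cC$ and $\colim_\cC$ \emph{are} finite (co)limits is slightly loose — they are rather built from finitely many finite (co)limits and retracts via the decomposition $\Fun(\colim_i \cC_i,\cE)\simeq \lim_i \Fun(\cC_i,\cE)$ — but you flag exactly this point as the step needing care, and it is the same step the paper handles.
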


\begin{proof}
	The functor $\Gamma_\cC^\ast$ takes $\cE^\omega$ in $\Fun(\cC,\cE^\omega)$ by definition.
	It is therefore enough to prove that $\Gamma_{\cC,\ast}$ commutes with filtered colimits and that in the stable case it restricts it takes $\Fun(\cC,\cE^\omega)$ to $\cE^\omega$.	
	Let $\cT_0(\cE)$ be the full subcategory of $\Cat_\infty$ spanned by those $\infty$-categories $\cC$ for which the functor $\Gamma_{\cC,\ast}$ commutes with filtered colimits.
	Let $\cT(\cE)$ be the full subcategory of $\Cat_\infty$  spanned by those $\infty$-categories $\cC$ for which the functors $\Gamma_{\cC,!}$ and $\Gamma_{\cC,\ast}$ take $\Fun(\cC,\cE^\omega)$ to $\cE^\omega$.
	Inspection reveals that $\cT(\cE)$  is closed under finite colimits, retractions and contain all finite $1$-categories (that is, $1$-categories having a finite number of objects and of morphisms).
	Since $\Cat_\infty$ is compactly generated by the $1$-categories $\Delta^n$, the conclusion follows.
	\personal{
		\begin{enumerate}
			\item \emph{Closure under finite colimits}. Indeed, since both the compactly generated and the stable presentable case filtered colimits are left exact, the results of \cite[\S 8.2]{Porta_Yu_Higher_analytic_stacks_2014} imply that $\cT_0(\cE)$ is closed under finite colimits.
			Moreover, when $\cE$ is stable we also know that $\cE^\omega$ is a stable subcategory of $\cE$ and it is therefore closed under finite limits.
			This implies the desired statement for $\Gamma_{\cC,\ast}$.
			The closure of $\cE^\omega$ under finite colimits implies the desired statement for $\Gamma_{\cC,!}$.
			Thus, it follows in the same way that $\cT_1(\cE)$ is closed under finite colimits as well.
			\item \emph{Closure under retractions}. Consider a retraction diagram:
			\[ \begin{tikzcd}[column sep = small,ampersand replacement = \&]
				\cC \arrow{r}{f} \& \cC' \arrow{r}{g} \& \cC \ .
			\end{tikzcd} \]
			The transitivity of Beck-Chevalley transformations yields a canonical identification of the composite
			\[ \Gamma_{\cC,\ast} \xrightarrow{\mathrm{BC}_g} \Gamma_{\cC,\ast} \circ g_\ast \circ g^\ast \simeq \Gamma_{\cC',\ast} \circ g^\ast \xrightarrow{\mathrm{BC}_f} \big( \Gamma_{\cC',\ast} \circ f_\ast \big) \circ \big( f^\ast \circ g^\ast \big) \simeq \Gamma_{\cC,\ast}  \]
			with the Beck-Chevalley transformation associated to the composite $\mathrm{id}_\cC \simeq g \circ f$.
			Therefore, the functor $\Gamma_{\cC,\ast}$ is a retract of $\Gamma_{\cC'} \circ g^\ast$.
			Since $\cE^\omega$ is closed under retracts, it follows that both $\cT_0(\cE)$ and $\cT_1(\cE)$ are closed under retracts.
			A similar argument applies to $\Gamma_{\cC,!}$.
			\item \emph{Finite $1$-categories}. Both $\cT_0(\cE)$ and $\cT_1(\cE)$ contain all finite $1$-categories.
			For $\cT_0(\cE)$, this follows from the fact that filtered colimits are left exact in $\cE$, and for $\cT_1(\cE)$ it follows from the fact that in the stable case $\cE^\omega$ is closed under finite limits and finite colimits.
	\end{enumerate}}
\end{proof}

\begin{prop} \label{prop:open_pushforward}
	Let $(X,P)$ be a conically stratified space with locally weakly contractible strata and let $S \subseteq P$ be a locally closed subset.
	Assume that $(X,P)$ is locally categorically compact and  that $P$ is finite.
	 Then for every stable $\cE\in \PrL$, the functor
	\[ i_{S,\ast} \colon \Cons_S^{\hyp}(X_S;\cE) \to \Cons_P^{\hyp}(X;\cE) \]
	commutes with colimits and restricts to a functor
	\[ i_{S,\ast} \colon \Cons_{S,\omega}^{\hyp}(X_S;\cE) \to \Cons_{P,\omega}^{\hyp}(X;\cE) \ . \]
\end{prop}
\begin{proof}
	When $S$ is closed inside $P$, the functor $i_{S,\ast}$ coincides with the extension by zero and \cref{prop:open_pushforward} is  trivial in that case.
	It is then enough to consider the case where $S$ is open inside $P$.
	We proceed by induction on $\mathrm{depth}(P)$.
	When $\mathrm{depth}(P) = 0$, the stratification is trivial and therefore we have either $S = \emptyset$ or $S = P$.
	In both cases, the statement is obvious.\\ \indent
	Assume now that $\mathrm{depth}(X) > 0$.
	Since $P$ is finite, the subset $M \subseteq P$ of its minimal elements is finite as well.
	Writing $M = \{p_1, \ldots, p_n\}$, we see that $\{ X_{\geqslant p_i} \}_{i = 1, \ldots, n}$ is an open cover of $X$.
	Since both the compacity of stalks and the formation of filtered colimits are local statements on $X$, we can assume that $P$ has a minimum $p$.
	Using the inductive hypothesis, we further reduce to the case where $S = P \smallsetminus M$.\\ \indent	
	Let now $F \in \Cons_{S,\omega}^{\hyp}(X_S;\cE)$.
	We have to prove that the stalks of $i_{S,\ast}(F)$ belong to $\cE^\omega$.
	Since $i_{S,\ast}$ is fully faithful, we only have to prove this statement for the stalks at a point $x\in X_p$.
	From \cref{local_cat_compact_Prop_def}, there exists  a conical chart of the form $Z \times C(Y)$, where $(Y,S)$ is categorically compact and where $Z$ is weakly contractible and locally weakly contractible.
	From \cref{fiber_push_from_open}, we have a canonical identification 
	\[ (i_{S,\ast} F)_x \simeq \lim_{\Exit(Y,S)} \Phi_{X,P}^{\hyp}(F)|_{\Exit(Y,S)}\]
	Since $\Exit(Y,S)$ is a compact object of $\Cat_{\infty}$ and since the functor $\Phi_{X,P}^{\hyp}(F)$ takes values in $\cE^{\omega}$, \cref{prop:open_pushforward} thus follows from \cref{lem:compact_homotopy_type}.\\ \indent
	The same method guarantees that $i_{S,\ast}$ commutes with filtered colimits.
	To deduce that $i_{S,\ast}$ commutes with colimits, it is thus enough to show that $i_{S,\ast}$ commutes with finite colimits.
	 Since $\cE$ is stable, so are $\Cons_S^{\hyp}(X_S;\cE)$  and $\Cons_P^{\hyp}(X;\cE)$ by \cref{cor:tensor_decomposition}.
	Hence, it is enough to show that $i_{S,\ast}$  commutes with limits.
	This follows immediately from \cref{cor:constructible_limits_colimits}.
	
\end{proof}

\begin{cor}\label{recollement_compact}
    Let $(X,P)$ be a conically stratified space with locally weakly contractible strata.  
	Assume that $(X,P)$ is locally categorically compact and  that $P$ is finite.
	Let $\cE$ be a stable presentable $\infty$-category.
	Let $S \subseteq P$ be a closed subset  and put $U\coloneqq P\smallsetminus S$.
	Then the fully-faithful functors
\[ i_{S,\ast} \colon \Cons_{S,\omega}^{\hyp}(X_S;\cE)\to \Cons_P^{\hyp}(X;\cE) \leftarrow  \Cons_{U,\omega}^{\hyp}(X_U;\cE)  \colon  i_{U,\ast} \] 
exhibits $\Cons_{P,\omega}^{\hyp}(X;\cE) $ as a recollement of  $\Cons_{S,\omega}^{\hyp}(X_S;\cE)$ and  $\Cons_{U,\omega}^{\hyp}(X_U;\cE)$.
\end{cor}
\begin{proof}
Follows immediately from \cref{recollement} and  \cref{prop:open_pushforward}.
\end{proof}

\subsection{Constructibility and exceptional inverse image}

\begin{notation}\label{i_S_shrieck}
	Let $(X,P)$ be a stratified space. 
	Let $S \subset P$ be a locally closed subset.
	Put
	\[ \geq S \coloneqq\{p\in P | \exists s\in S \text{ with } p\geq s\} \]
	The set $\geq S$ is open in $P$ and $S$ is closed in $\geq S$.
	Thus, the inclusion $i_S \colon X_S \to X$ factors as
	\[ \begin{tikzcd}[column sep = small]
		X_S \arrow{r}{\iota_S} & X_{\geqslant S} \arrow{r}{i_{\geqslant S}} & X \ ,
	\end{tikzcd} \]
	where $\iota_S$ is a closed immersion and $i_{\geqslant S}$ is an open immersion.
	For a presentable  $\infty$-category $\cE$,  the functor
	\[ 
	i_S^{!, \hyp} \coloneqq \iota_S^{!,\hyp} \circ i_{\geqslant S}^{\ast,\hyp} \ \colon \HSh(X;\cE) \to \HSh(X_S;\cE)  
	\]
	is right adjoint to  $i_{\geq S,!} \circ \iota_{S,\ast} : \HSh(X_S ;\cE) \to \HSh(X;\cE)$.
\end{notation}

\begin{warning}
The functor $i_S^{!, \hyp}$ may not preserve $P$-hyperconstructibility without any further assumption on $(X,P)$.
\end{warning}

\begin{prop}\label{lem:shriek_constructible}
	Let $(X,P)$ be  conically stratified space with locally weakly contractible strata.
	Let $S\subset P$ be a locally closed subset.
	Let $\cE$ be a presentable stable $\infty$-category.
	Assume that the conditions of \cref{thm:exodromy} are satisfied.
	Then, the functor $i_S^{!,\hyp} \colon \HSh(X;\cE) \to \HSh(X_S;\cE)  $ restricts to a functor
	\[ i_S^{!,\hyp} \colon \ConsPhyp(X;\cE) \to \ConsShyp(X_S;\cE)  \ . \]
\end{prop}

\begin{proof}
	We can assume without loss of generality that $X = X_{\geqslant S}$.
	In this case, we simply write $i \colon X_S \hookrightarrow X$ for the natural inclusion, and $j \colon X_{P\smallsetminus S} \hookrightarrow X$ for the inclusion of the open complementary.
	For every $F \in \HSh(X;\cE)$, we have the following fiber sequence
\[
		i_\ast i^{!,\hyp}(F) \to F \to j_\ast j^{\ast,\hyp}(F) 
\]
	computed in $\HSh(X;\cE)$.
	Assume now that $F$ is $P$-hyperconstructible.
	Then the same goes for $j^{\ast,\hyp}(F)$ and \cref{cor:pushforward_from_strata} implies that $j_\ast j^{\ast,\hyp}(F)$ is $P$-hyperconstructible as well.
	Thus, \cref{cor:constructible_limits_colimits} implies that $i_\ast i^{!,\hyp}(F)$ belongs to $\ConsPhyp(X;\cE)$.
	In particular, $i^{!,\hyp}(F) \simeq i^{\ast,\hyp} i_\ast i^{!,\hyp}(F)$ belongs to $\ConsShyp(X_S;\cE)$.
\end{proof}

\begin{prop}
   Let $(X,P)$ be  conically stratified space with locally weakly contractible strata.
    Assume that $(X,P)$ is locally categorically compact and  that $P$ is finite.
	Let $S\subset P$ be a locally closed subset and let $\cE$ be a presentable stable $\infty$-category.
	Then the functor 
	\[ i_S^{!,\hyp} \colon \ConsPhyp(X;\cE) \to \ConsShyp(X_S;\cE)  \]
	 commutes with colimits and restricts to a functor
    \[ i_S^{!,\hyp} \colon \Cons_{P,\omega}^{\hyp}(X;\cE) \to \Cons_{S,\omega}^{\hyp}(X_S;\cE)\ .  \]
\end{prop}
\begin{proof}
We can assume without loss of generality that $X = X_{\geqslant S}$.
	In this case, we simply write $i \colon X_S \hookrightarrow X$ for the natural inclusion, and $j \colon X_{P\smallsetminus S} \hookrightarrow X$ for the inclusion of the open complementary.
	By  \cref{cor:constructible_limits_colimits} and the fact that  $i^{!,\hyp} \colon \HSh(X;\cE) \to \HSh(X_S;\cE)  $ commutes with limits,  the functor $ i^{!,\hyp} \colon \ConsPhyp(X;\cE) \to \ConsShyp(X_S;\cE) $ commutes with limits as well.
	Since $\ConsPhyp(X;\cE)$  and $\ConsShyp(X_S;\cE)$ are stable in virtue of \cref{cor:constructible_presentable}, we deduce that $i^{!,\hyp} \colon \ConsPhyp(X;\cE) \to \ConsShyp(X_S;\cE) $ commutes with finite colimits.
	Hence, we are left to show the commutation with filtered colimits.
This follows by considering the fibre sequence
\[
		i_{\ast} i^{!,\hyp}(F) \to F \to j_{\ast} j^{\ast,\hyp}(F) 
\]	
and using that $j_{\ast}  \colon \Conshyp_{P\smallsetminus S}(X_{P\smallsetminus S};\cE) \to \ConsPhyp(X;\cE)$ commutes with filtered colimits in virtue of \cref{prop:open_pushforward}.
The last claim follows immediately from \cref{prop:open_pushforward} combined with the above fibre sequence.
\end{proof}

\subsection{Change of coefficients revisited}

\begin{lem}\label{change_coef_preserves_cons}
	Let $(X,P)$ be a conically stratified space with locally weakly contractible strata.
	Let $L : \cE \to \cD$ be a morphism in $\PrL$ with right adjoint $R : \cD \to \cE$.
	Assume that the conditions of \cref{thm:exodromy}  are satisfied.
	For every $p\in P$,  the following commutative diagram
	$$
	\begin{tikzcd}
		\HSh(X_p;\cD)  \arrow{r}{R^{\hyp} } \arrow{d}{i_{p,\ast}} & \HSh(X_p;\cE)\arrow{d}{i_{p,\ast}} \\
		\HSh(X;\cD)\arrow{r}{R^{\hyp} } & \HSh(X;\cE) \ .
	\end{tikzcd}
	$$
	is verticaly left adjointable on $\ConsPhyp(X;\cD)$.
	That is, for every $F\in \ConsPhyp(X;\cD)$, the Beck-Chevalley transformation
	\[ i_p^{\ast  ,\hyp}\circ R^{\hyp} (F)\to R^{\hyp} \circ  i_p^{\ast  ,\hyp}(F) \ \]
	is an equivalence.
\end{lem}
\begin{proof}
	The question is local on $X$.
	Hence, we can suppose that $(X,P)$ is of the form $Z\times C(Y)$ where $Z$ is a locally weakly contractible topological open subset of $X_p$ and where $(Y,P_{>p})$ is a stratified space.
	Let $U$ be an open subset of $Z$. 
	Then, \cref{cor:hyperpullback_computation} gives 
	\[ (R^{\hyp} \circ  i_p^{\ast  ,\hyp}(F))(U)  = R^{\hyp} (i_p^{\ast  ,\hyp}(F)(U))\simeq R^{\hyp} (F(U\times C(Y))) \]
	On the other hand, we have 
	\[   (i_p^{\ast  ,\hyp}\circ R^{\hyp}(F))(U) \simeq \colim_{\varepsilon \in (0,1)} R^{\hyp}(F( U \times C_{\varepsilon}(Y) ) )  \]
	We know from \cref{cor:criterion_constructibility} that for every $\varepsilon \in (0,1)$, the restriction morphism
	\[ F( U \times C(Y))  \to F( U \times C_{\varepsilon}(Y) ) \]
	is an equivalence. 
	Hence, the above colimit is constant and we get
	\[ (i_p^{\ast  ,\hyp}\circ R^{\hyp}(F))(U)  \simeq R^{\hyp}(F(U\times C(Y))) \]
	\cref{change_coef_preserves_cons} is thus proved.
\end{proof}

The following \cref{change_coef_Exodromy} contrasts with \cref{no_right_adjoint}.

\begin{prop}\label{change_coef_Exodromy}
	Let $(X,P)$ be a conically stratified space with locally weakly contractible strata.
	Let $L : \cE \to \cD$ be a morphism in $\PrL$ with right adjoint $R : \cD \to \cE$.
	Assume that the conditions of \cref{thm:exodromy} are satisfied.
	Then, the following statements hold :
	\begin{enumerate}\itemsep=0.2cm
    	\item  For every $F\in  \ConsPhyp(X;\cD)$, the functor $R\circ F : \mathrm{Open}(X)\op \to \cE$ lies in $\ConsPhyp(X;\cE)$.
	    In particular, the adjunction 
\[ 
L^{\hyp} \colon \HSh(X;\cE) \leftrightarrows \HSh(X;\cD)  \colon R^{\hyp} 
\] 	   
restricts to  an adjunction
		\[L^{\hyp}  \colon \ConsPhyp(X;\cE)\leftrightarrows \ConsPhyp(X;\cD)\colon R^{\hyp}     \]
    
	    \item The Exodromy equivalence induces an equivalence of adjunctions
	    \[ \begin{tikzcd}
			L^{\hyp} \colon \ConsPhyp(X;\cE) \arrow[r, shift left=2pt]  \arrow{d}{\Phi_{X,P}^{\hyp,\cE}} & \ConsPhyp(X;\cD)\arrow{d}{\Phi_{X,P}^{\hyp,\cD}} \arrow[l, shift 	left=2pt] \colon R^{\hyp}    \\
			L \circ - \colon \Fun(\Pi_\infty(X,P), \cE) \arrow[r, shift left=2pt]& 	\Fun(\Pi_\infty(X,P), \cD) \arrow[l, shift left=2pt] \colon R \circ - \ .
		\end{tikzcd} \]
	\end{enumerate}
\end{prop}	
\begin{proof}
	
	Item $(1)$ follows from \cref{change_coef_preserves_cons} and \cref{g_preserves_LC}.
	Item $(2)$ follows from \cref{cor:exodromy_change_of_coefficients}.
\end{proof}

\begin{lem}\label{change_coef_Cons}
	Let $f : (X,P)\to (Y,Q)$ be a morphism of conically stratified spaces with locally weakly contractible strata.
	Let $L : \cE \to \cD$ be a morphism in $\PrL$ with right adjoint $R : \cD \to \cE$.
	Assume that the conditions of \cref{thm:exodromy}  are satisfied.
	Then, the commutative square
	$$
	\begin{tikzcd}
		\HSh(X;\cD)     \arrow{r}{R^{\hyp} } \arrow{d}{f_*} & \HSh(X;\cE)  \arrow{d}{f_*} \\
		\HSh(Y;\cD)  \arrow{r}{R^{\hyp} } &  \HSh(Y;\cE)  
	\end{tikzcd}
	$$
	is vertically left adjointable on $Q$-hyperconstructible hypersheaves.
	That is, for every $F\in \ConsQhyp(Y;\cD)$, the Beck-Chevalley transformation
	\[ f^{\ast  ,\hyp}\circ R^{\hyp} (F)\to R^{\hyp} \circ  f^{\ast  ,\hyp}(F) \ \]
	is an equivalence.	
\end{lem}
\begin{proof}
	Apply  \cref{cor:induced_stratification} and \cref{change_coef_Exodromy}-(2).
\end{proof}

\begin{rem}
	Put in a loose way, in the setting of \cref{thm:exodromy}, the functor $R^{\hyp} $ acquires on hyperconstructible hypersheaves the symmetry that holds easily for $L^{\hyp}$ on hypersheaves (\cref{hyp_coeff_change_inverse_image}).
	In \cref{change_coefficient_push} and \cref{lem:commuting_shriek_change_of_coefficients} below, we show that in the setting of \cref{thm:exodromy} and for the specific case of the locally closed immersion $i_S : X_S \to X$ where $S\subset P$ is locally closed, $L^{\hyp}$ acquires on hyperconstructible hypersheaves the symmetries that hold easily for $R^{\hyp} $ on hypersheaves (\cref{right_adjoint_and_!_closed_immersion} and commutation with push-forward).
\end{rem}

\begin{lem}\label{change_coefficient_push}
	Let $(X,P)$ be a conically stratified space with locally weakly contractible strata and let $S \subseteq P$ be a locally closed subset.
	Assume that $(X,P)$ is locally categorically compact and  that $P$ is finite.
	 Then for every morphism $L \colon \cE \to \cD$  in $\PrL$ where $\cE,\cD$ are stable, the commutative square
	\[ \begin{tikzcd}
		\Cons_P^{\hyp}(X;\cE) \arrow{r}{i_S^{\ast,\hyp}} \arrow{d}{L^{\hyp}} & \Cons_S^{\hyp}(X_S;\cE) \arrow{d}{L^{\hyp}} \\
		\Cons_P^{\hyp}(X;\cD) \arrow{r}{i_S^{\ast,\hyp}} & \Cons_S^{\hyp}(X_S;\cD)
	\end{tikzcd} \]
	is horizontally right adjointable.
	That is, the Beck-Chevalley transformation
	\[ L^{\hyp} \circ i_{S,\ast} \to i_{S,\ast}  \circ L^{\hyp} \]
	is an equivalence.	
\end{lem}
\begin{proof}
	Note that the above square commutes in virtue of \cref{hyp_coeff_change_inverse_image}.
      	By \cref{prop:open_pushforward}, the functor $ i_{S,\ast} \colon \Cons_S^{\hyp}(X_S;\cE) \to \Cons_P^{\hyp}(X;\cE)$
commutes with colimits and thus decomposes via the equivalences
	\[
\Cons_P^{\hyp}(X;\cE)  \simeq \Cons_P^{\hyp}(X;\Sp)\otimes \cE 	
	\]
and 
\[
 \Cons_S^{\hyp}(X_S;\cE)  \simeq \Cons_S^{\hyp}(X_S;\Sp)\otimes \cE 
\]
supplied by \cref{cor:tensor_decomposition} as the tensor product 
\[
i_{S,\ast}  \otimes \id_{\cE} : \Cons_S^{\hyp}(X_S;\Sp)\otimes \cE 	 \to \Cons_P^{\hyp}(X;\Sp)\otimes \cE \ .
\]
    Then the desired base change is obvious.
\end{proof}

%

In the next lemma, the  \cref{i_S_shrieck}  are in used.

\begin{prop}\label{lem:commuting_shriek_change_of_coefficients}
     Let $(X,P)$ be a conically stratified space with locally weakly contractible strata and let $S \subseteq P$ be a locally closed subset.
	Assume that $(X,P)$ is locally categorically compact and  that $P$ is finite.
	 Then for every morphism $L \colon \cE \to \cD$  in $\PrL$ where $\cE,\cD$ are stable, the commutative square
	\[ \begin{tikzcd}
		\HSh(X_S;\cE)    \arrow{r}{L^{\hyp} } \arrow{d}{i_{\geq S,!} \circ i_{S,\ast} } & \HSh(X_S;\cD)   \arrow{d}{i_{\geq S,!} \circ i_{S,\ast} } \\
		\HSh(X;\cE)  \arrow{r}{L^{\hyp}} & \HSh(X;\cE)  
	\end{tikzcd} \]
	is vertically right adjointable on $P$-hyperconstructible hypersheaves.
	That is,  for every $F\in \ConsPhyp(X;\cE)$,  the Beck-Chevalley transformation
	\[  L^{\hyp} \circ i_S^{!,\hyp}(F) \to i_S^{!,\hyp} \circ L^{\hyp}(F)  \]
	is an equivalence.
\end{prop}

\begin{proof}
	Recall that the above diagram indeed commutes in virtue of \cref{right_adjoint_and_!_closed_immersion}.
	From \cref{hyp_coeff_change_inverse_image},  we can  assume that $S\subset P$ is closed.
	In this case, we simply write $i \colon X_S \hookrightarrow X$ for the canonical closed immersion and $j \colon X_{P\smallsetminus S} \hookrightarrow X$ for the inclusion of the open complementary.
	In view of the morphism of cofiber sequences 
	\[ \begin{tikzcd}
		L^{\hyp} i^{!,\hyp}(F) \arrow{r} \arrow{d}  & L^{\hyp}i^{\ast,\hyp} (F) \arrow{r} \arrow{d} & L^{\hyp} i^{\ast,\hyp}  j_\ast j^{\ast,\hyp}(F) \arrow{d} \\
		i^{!,\hyp} L^{\hyp}(F) \arrow{r} &i^{\ast,\hyp}  L^{\hyp}(F) \arrow{r} &i^{\ast,\hyp}  j_\ast j^{\ast,\hyp} L^{\hyp}(F) 
	\end{tikzcd} \]	
	the conclusion follows from \cref{hyp_coeff_change_inverse_image} and \cref{change_coefficient_push}.
\end{proof}

\subsection{Constructibility, pushforward and weakly stratified bundles}

We now present a second result concerning proper pushforward playing an essential role in \cite{Porta_Teyssier_Stokes}.
Lurie proved proper non abelian base change for sheaves on locally compact Hausdorff spaces.
See  \cite[Corollary 7.3.1.18]{HTT}.
Note however that it is not clear that proper base change holds for hypersheaves.
We are going to see that under some additional assumptions on the stratifications involved, proper base change holds for \emph{hyperconstructible} hypersheaves.

\begin{observation} \label{obs:proper_implies_amenable}
	Let $f \colon Y \to X$ be a proper morphism between topological spaces. 
Following \cite[Definition 7.3.1.14]{HTT}, this means for us that $f$ is universally closed.
	Let $C \subseteq X$ be a locally closed subset of $X$.
	Write $\cB_C$ for the collection of open neighborhoods of $C$ inside $X$.
	Then $\{ f\inv(U) \}_{U \in \cB_C}$ is a fundamental system of open neighborhoods for the inverse image $f\inv(C)$.
	Indeed, since $f$ is universally closed we can localize on $X$ and therefore assume that $C$ is closed.
	Let now $V$ be an open neighborhood of $f\inv(C)$ inside $Y$.
	Since $f$ is closed, $f( Y \smallsetminus V )$ is a closed subset of $X$.
	Furthermore $f( Y \smallsetminus V ) \cap C = \emptyset$, so $U \coloneqq X \smallsetminus f( Y \smallsetminus V )$ is an open neighborhood of $C$ inside $X$.
	We now observe that if $y \in f\inv(U)$, then $f(y) \notin f( Y \smallsetminus V )$, which in turn implies that $y \in V$.
	Therefore, $f\inv(U) \subseteq V$.
\end{observation}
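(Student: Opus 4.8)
The statement to prove is Observation~\ref{obs:proper_implies_amenable}: that for a proper (universally closed) map $f \colon Y \to X$ and a locally closed subset $C \subseteq X$, the collection $\{ f\inv(U) \}_{U \in \cB_C}$ of preimages of open neighborhoods of $C$ forms a fundamental system of open neighborhoods of $f\inv(C)$.

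The plan is as follows. First I would reduce to the case where $C$ is closed. Since $C$ is locally closed, it is closed in some open subset $X'$ of $X$; since $f$ is universally closed, the restriction $f\inv(X') \to X'$ is again closed, and replacing $X$ by $X'$ (noting that open neighborhoods of $C$ inside $X'$ are cofinal among open neighborhoods of $C$ inside $X$ intersected with $X'$, and preimages behave accordingly) reduces us to the closed case. Second, with $C$ closed, fix an arbitrary open neighborhood $V$ of $f\inv(C)$ inside $Y$. The key move is to use closedness of $f$: the set $Y \smallsetminus V$ is closed in $Y$, so $f(Y \smallsetminus V)$ is closed in $X$. Third, I would check that $f(Y \smallsetminus V) \cap C = \emptyset$: if $x \in C$ were in the image of $Y \smallsetminus V$, say $x = f(y)$ with $y \notin V$, then $y \in f\inv(C)$, contradicting $f\inv(C) \subseteq V$. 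Hence $U \coloneqq X \smallsetminus f(Y \smallsetminus V)$ is an open set containing $C$, i.e.\ $U \in \cB_C$. Finally, I would verify the containment $f\inv(U) \subseteq V$: if $y \in f\inv(U)$ then $f(y) \notin f(Y \smallsetminus V)$, which forces $y \notin Y \smallsetminus V$, i.e.\ $y \in V$.

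There is essentially no hard part here — the argument is the standard tube-lemma-style manipulation, and indeed the excerpt itself already contains the complete proof inline. The only point requiring a moment of care is the localization step at the beginning: one must make sure that when passing from $X$ to an open $X'$ containing $C$ as a closed subset, the cofinality claim about neighborhoods is genuinely unaffected, which it is because any open neighborhood of $C$ in $X$ meets $X'$ in an open neighborhood of $C$ in $X'$, and conversely an open neighborhood of $C$ in $X'$ is automatically open in $X$. After that, everything is formal point-set topology using only that $f$ is closed (the full strength of universal closedness is used solely to survive the localization).

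Given that the proof is already spelled out in the body of the Observation, I would present it essentially verbatim, perhaps streamlining the exposition slightly, and would not introduce any new machinery. In particular I would not invoke Lurie's proper base change \cite[Corollary 7.3.1.18]{HTT}, since that is a downstream consumer of this lemma rather than an input to it.
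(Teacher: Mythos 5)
Your proof is correct and follows exactly the same argument as the paper's inline justification: localize to reduce to the case where $C$ is closed, then take $U = X \smallsetminus f(Y \smallsetminus V)$ and check $f\inv(U) \subseteq V$. The only additions are welcome elaborations of details the paper leaves implicit (the cofinality of neighborhoods under the localization step, and the verification that $f(Y \smallsetminus V) \cap C = \emptyset$).
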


Before proving the sought after proper base change for hyperconstructible hypersheaves, we need a strengthening of the notion of finality introduced in \cref{Excellent_at_S_stratified space}.

\begin{defin}\label{def:hereditary_excellent}
	Let $(X,P)$ be a conically stratified space and let $S \subseteq P$ be a subset.
	We say that $(X,P)$ is \emph{hereditary final at $S$} if for every open subset $U \subseteq X$, the stratified space $(U,P)$ is final at $S$.
\end{defin}

\begin{eg}
	Locally finitely triangulable stratified spaces $(X,P)$ are hereditary final at every locally closed subset of $P$.
	This is proven in \cite[Proposition 2.4.7]{Porta_Teyssier_Stokes}.
\end{eg}

\begin{prop} \label{prop:constructible_base_change}
	Let $f \colon (Y,Q) \to (X,P)$ be a morphism of conically stratified spaces with locally weakly contractible strata and let $S \subseteq P$ be a locally closed subset.
	Let $\varphi \colon Q \to P$ be the underlying morphism of posets and set $R \coloneqq \varphi\inv(S)$.
	Consider the induced commutative square
	\[ \begin{tikzcd}
		(Y_R,R) \arrow{r}{j} \arrow{d}{g} & (Y,Q) \arrow{d}{f} \\
		(X_S,S) \arrow{r}{i} & (X,P) \ .
	\end{tikzcd} \]
	Assume that:
	\begin{enumerate}\itemsep=0.2cm
		\item the underlying morphism $f \colon Y \to X$ is proper;
		\item $(Y,Q)$ is hereditary final at $R$.
	\end{enumerate}
	Then for every presentable $\infty$-category $\cE$ satisfying the assumptions of \cref{thm:exodromy} and every $F \in \Cons_Q^{\hyp}(Y;\cE)$ the canonical map
	\[ i^{\ast,\hyp}( f_\ast(F) ) \to g_\ast( j^{\ast,\hyp}(F) ) \]
	is an equivalence.
\end{prop}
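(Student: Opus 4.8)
The key observation is that the statement is local on $X_S$, hence on $X$. Indeed, both $i^{\ast,\mathrm{hyp}}$ and $g_\ast(j^{\ast,\mathrm{hyp}}(-))$ produce hypersheaves on $X_S$, so it suffices to check that the canonical map is an equivalence on the stalk at every point $x \in X_S$. Fix such an $x$, lying in a stratum $X_a$ for some $a \in S$. Since $(X,P)$ is conically stratified, I may replace $X$ by a conical chart $Z \times C(W)$ around $x$, where $Z$ is an open neighborhood of $x$ in $X_a$ (which we may take weakly contractible after shrinking, using local weak contractibility of $X_a$) and $(W, P_{>a})$ is a stratified space. The properness of $f$ is preserved under this localization by \cref{obs:proper_implies_amenable}: since $f$ is universally closed, the inverse images of a fundamental system of open neighborhoods of $x$ form a fundamental system of open neighborhoods of $f\inv(x)$. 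This lets us compute the stalk $i^{\ast,\mathrm{hyp}}(f_\ast(F))_x$ as $\colim_{x \in U} (f_\ast F)(U) = \colim_{x\in U} F(f\inv(U))$, where $U$ ranges over open neighborhoods of $x$ in $X$, and by \cref{obs:proper_implies_amenable} we may let $U$ range over the conical neighborhoods $U_\varepsilon \coloneqq Z_0 \times C_\varepsilon(W)$ of $x$ for $Z_0 \in \mathcal W_x$ and $\varepsilon > 0$.

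The next step is to evaluate the right-hand side. Writing $Y_U \coloneqq f\inv(U)$, the hypothesis that $(Y,Q)$ is hereditary excellent at $R$ means precisely that $(Y_U, Q)$ is excellent at $R$ for every open $U$, so that the collection of excellent-at-$R$ neighborhoods of $(Y_U)_R = Y_R \cap Y_U$ inside $Y_U$ is cofinal among all neighborhoods. By \cref{Excellent_at_S_open}, for such a neighborhood $V$ the functor $\Exit((Y_U)_R, R) \to \Exit(V, Q)$ is final. Applying \cref{thm:exodromy} and the formula \cref{formula_for_psi} for $\Psi$ (via $F \simeq \Psi_{Y,Q}^{\mathrm{hyp}}(\Phi_{Y,Q}^{\mathrm{hyp}}(F))$ as $F$ is hyperconstructible), we get
\[ F(V) \simeq \lim_{\Exit(V,Q)} \Phi_{Y,Q}^{\mathrm{hyp}}(F)|_{\Exit(V,Q)} \simeq \lim_{\Exit((Y_U)_R, R)} \Phi_{Y,Q}^{\mathrm{hyp}}(F)|_{\Exit((Y_U)_R,R)} \simeq \big(j^{\ast,\mathrm{hyp}}(F)\big)(f\inv(U) \cap Y_R) \ , \]
using in the last step \cref{prop:Psi_restriction_stratum}-type reasoning for the inclusion of the union of strata $Y_R$, i.e.\ the compatibility of $\Psi$ with restriction to $Y_R$, which gives that $j^{\ast,\mathrm{hyp}}(F)$ is computed on $(Y_U)_R$ by the same limit. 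Taking the colimit over the conical neighborhoods $U_\varepsilon$ of $x$, the left-hand side becomes $\colim_{U} (j^{\ast,\mathrm{hyp}} F)(f\inv(U)\cap Y_R)$, and since $g \colon Y_R \to X_S$ is the restriction of the proper map $f$, \cref{obs:proper_implies_amenable} again shows that $\{f\inv(U) \cap Y_R\}$ is a fundamental system of neighborhoods of $g\inv(x)$ in $Y_R$; hence this colimit is exactly $(g_\ast(j^{\ast,\mathrm{hyp}} F))_x$.

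The main obstacle, and the point requiring the most care, is the middle identification: passing from $F(V)$ for $V$ an excellent-at-$R$ open neighborhood in $Y_U$ to the "value along $Y_R$". One must make sure the finality of $\Exit((Y_U)_R,R) \to \Exit(V,Q)$ interacts correctly with $\Psi$ applied to $\Phi_{Y,Q}^{\mathrm{hyp}}(F)$, and in particular that the restriction of $\Phi_{Y,Q}^{\mathrm{hyp}}(F)$ along the further inclusion into $\Exit(Y,Q)$ is the functor controlling $j^{\ast,\mathrm{hyp}}(F)$ — this is where \cref{cor:induced_stratification} (the inclusion $j$ of the union of strata $Y_R$ is exodromic) and \cref{2-functoriality_psi} enter, to commute $j^{\ast,\mathrm{hyp}}$ past $\Psi$. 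A secondary subtlety is cofinality bookkeeping: one needs that as $U$ shrinks through conical charts, the excellent-at-$R$ neighborhoods $V$ of $(Y_U)_R$ can be chosen compatibly so that the double colimit collapses correctly; this follows because hereditary excellency is stated for \emph{all} open $U$, so the relevant index categories are filtered and the interchange of limits and filtered colimits is harmless (and in fact the inner limits are along a fixed $\Exit(Y_R,R)$-type diagram once we fix the germ, by the excellency-finality argument). Once these compatibilities are in place, the chain of equivalences above assembles into the desired isomorphism $i^{\ast,\mathrm{hyp}}(f_\ast F) \xrightarrow{\ \sim\ } g_\ast(j^{\ast,\mathrm{hyp}} F)$.
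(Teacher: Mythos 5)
Your overall strategy — use properness via \cref{obs:proper_implies_amenable} to control neighborhoods of preimages, hereditary excellency to replace them by excellent neighborhoods, and exodromy of $j$ to identify $F$ on those with $j^{\ast,\mathrm{hyp}}(F)$ on the trace in $Y_R$ — is exactly the paper's. But your very first reduction contains a genuine gap: you check the map $i^{\ast,\mathrm{hyp}}(f_\ast F) \to g_\ast(j^{\ast,\mathrm{hyp}}F)$ on stalks, on the grounds that "both sides are hypersheaves on $X_S$". For the generality of coefficients $\cE$ allowed in \cref{thm:exodromy}, the point-stalk functors on $\HSh(X_S;\cE)$ are \emph{not} known to be jointly conservative: this is precisely the kind of hypothesis the paper has to impose explicitly (cf.\ condition (2) of \cref{thm:exodromy} and \cref{joint_conservativeness}), and it does not follow from it, since joint conservativity is not preserved by tensoring with an arbitrary presentable $\cE$. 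Nor can you invoke the exodromy equivalence to reduce to evaluation on objects of $\Pi_\infty^\Sigma(X_S,S)$, because neither side of the comparison map is known a priori to be hyperconstructible on $(X_S,S)$ — indeed, hyperconstructibility of the pushforward is exactly what this proposition is meant to feed into.

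The paper sidesteps this by proving a stronger, sectionwise statement: for \emph{every} open $U \subseteq X_S$ one shows that the presheaf pullback satisfies
\[ i\inv(f_\ast F)(U) \;\simeq\; \colim_{V \in \cB_U} F(f\inv(V)) \;\simeq\; \lim_{\Pi_\infty^\Sigma(g\inv(U),R)} \Phi_{Y,Q}^{\mathrm{hyp}}(F)\big|_{\Pi_\infty^\Sigma(Y_R,R)} \;\simeq\; g_\ast(j^{\ast,\mathrm{hyp}}F)(U) \ , \]
and since the target is already a hypersheaf, this identifies $i\inv(f_\ast F)$ with its hypersheafification $i^{\ast,\mathrm{hyp}}(f_\ast F)$. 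The good news is that your computation at the stalk of $x$ runs verbatim with $x$ replaced by an arbitrary open $U \subseteq X_S$ and the fiber $f\inv(x)$ replaced by $g\inv(U)=f\inv(U)$ (after first localizing so that $S$ is closed downwards, which is what makes "excellent neighborhoods of $g\inv(U)$" extractable from hereditary excellency); so the fix is a reorganization rather than a new idea. The secondary point you flagged — interleaving the system $\{f\inv(U)\}$ with the system of excellent neighborhoods so the colimit collapses to a constant — is handled in the paper by observing that both are fundamental systems of neighborhoods of the same subset $g\inv(U)$, which is essentially the argument you sketch.
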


\begin{proof}
	The statement is local on $X$.
	We can therefore suppose that $S$ is a closed downwards subset of $P$.
	Since source and target are hypersheaves, it is enough to prove that for every open subset $U$ of $X_S$ the induced morphism
	\[ i\inv(f_\ast(F))(U) \to g_\ast(j^{\ast, \hyp}(F))(U) \]
	is an equivalence.
	Let $\cB_U$ for the collection of open neighborhoods of $U$ inside $X$.
	Since $f$ is proper and $U$ is locally closed inside $X$, \cref{obs:proper_implies_amenable} shows that $\{f\inv(V)\}_{V \in \cB_U}$ is a fundamental system of open neighborhoods of $g\inv(U) = f\inv(U)$ inside $Y$.
	Since $(Y,Q)$ is hereditary final at $R$, the collection $\cB_{g\inv(U)}^{\mathrm{exc}}$ of final open neighborhoods of $g\inv(U)$ inside $Y$ is a fundamental system of open neighborhoods for $g\inv(U)$.
	Thus, \cref{thm:exodromy} and \cref{lem:RKE} provide the following chain of natural equivalences:
	\[ i\inv(f_\ast(F))(U) \simeq \colim_{V \in \cB_{U}} F(f^{-1}(V))\simeq \colim_{W \in \cB_{g\inv(U)}^{\mathrm{exc}}} F(W) \simeq \colim_{W \in \cB_{g\inv(U)}^{\mathrm{exc}}} \lim_{\Pi_\infty(W,Q)} \Phi_{Y,Q}^{\hyp}(F) \ . \]
	Since each $W \in \cB_{g\inv(U)}^{\mathrm{exc}}$ is final at $R$, the functor $\Pi_\infty(g\inv(U), R) \to \Pi_\infty(W,Q)$ is final.
	Therefore the colimit on the right is constant, and we deduce 
	\[
i\inv(f_\ast(F))(U) \simeq 	\lim_{\Pi_\infty(g\inv(U), R)} \Phi_{Y,Q}^{\hyp}(F)|_{\Pi_\infty(Y_R, R)}
	\]
	Since $j : Y_R \to Y$ is exodromic, we deduce
	\[ i\inv(f_\ast(F))(U) \simeq \lim_{\Pi_\infty(g\inv(U),R)} \Phi_{Y_R,R}^{\hyp}(j^{\ast,\hyp}(F)) \simeq j^{\ast,\hyp}(F)(g\inv(U)) \simeq g_\ast (j^{\ast,\hyp}(F))(U) \ . \]
	The conclusion follows.
\end{proof}

\begin{rem}
	In the above proof, properness is only used in the form of \cref{obs:proper_implies_amenable}.
	It would therefore be enough to ask that for every $x \in X$ the collection $\{f\inv(V)\}_{V \in \cB_x}$ form a fundamental system of open neighborhood for the fiber $f\inv(x)$ inside $Y$ (where $\cB_x$ denotes the collection of open neighborhoods of $x$ inside $X$).
\end{rem}

\begin{defin}
	We say that a morphism between conically stratified spaces $f \colon (Y,Q) \to (X,P)$ is a \emph{weak stratified bundle} if for every $p \in P$ every point $x \in X_p$ admits an open neighborhood $U$ in $X_p$ such that there exists a conically stratified space $(W,R)$ and an isomorphism of stratified spaces
	\[ (f\inv(U),Q) \xrightarrow{\sim} (U \times W, R) \]
	over $U$.
\end{defin}

\begin{prop} \label{prop:stratified_bundle}
	Let $f \colon (Y,Q) \to (X,P)$ be a morphism of conically stratified spaces with locally weakly contractible strata. 
	Let $\cE$ be a presentable $\infty$-category.
	Assume that:
	\begin{enumerate}\itemsep=0.2cm
		\item the underlying morphism $f \colon Y \to X$ is proper;
		
		\item $f$ is a weak stratified bundle whose fibres are conically stratified spaces with locally weakly contractible strata;
		
		\item $(Y,Q)$ is hereditary final at every locally closed subset of $Q$;
		
		\item the assumptions of \cref{thm:exodromy} are satisfied.
	\end{enumerate}
	Then 
	\begin{enumerate}\itemsep=0.2cm
	\item[(a)] the functor $f_\ast \colon \HSh(Y;\cE) \to \HSh(X;\cE)$ maps  $\Cons_Q^{\hyp}(Y;\cE)$ in  $\Cons_P^{\hyp}(X;\cE)$;
	
	\item[(b)] for every $F \in \Cons_Q^{\hyp}(Y;\cE)$ and every $x \in X$ there is a canonical equivalence
	\[ f_\ast(F)_x \simeq \Gamma(Y_x, j_x^{\ast, \hyp}(F)) \ , \]
	where $j_x \colon Y_x \hookrightarrow Y$ denotes the inclusion of the fiber.	
	\end{enumerate}
\end{prop}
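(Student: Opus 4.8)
\textbf{Proof plan for Proposition \ref{prop:stratified_bundle}.}

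The plan is to reduce the statement to a purely local computation around a point $x \in X_p$, where the weak stratified bundle hypothesis lets us replace $f$ by a product projection. First I would fix $x \in X_p$ and choose, using hypothesis (2), an open neighborhood $U$ of $x$ in $X_p$ together with an isomorphism of stratified spaces $(f^{-1}(U),Q) \xrightarrow{\sim} (U \times W, R)$ over $U$, where $(W,R)$ is the fibre $(Y_x,Q)$ with its induced stratification. Using the conical structure of $(X,P)$ at $x$, we may further intersect $U$ with a conical chart, so that without loss of generality $X = Z \times C(Y')$ for some weakly contractible (and locally weakly contractible) $Z$, with $x$ lying over the cone point. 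The first task is to show that $f_\ast(F)$ is hyperconstructible; for this it suffices, by \cref{cor:criterion_constructibility}, to verify conditions (i) and (ii) of that corollary on the conical chart, and both follow from the corresponding conditions for $F$ on $f^{-1}$ of those open sets together with the fact that $f^{-1}$ of a product chart is again a product chart (using the bundle trivialization) and that $F$ is hyperconstructible on $(Y,Q)$.

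For the stalk formula, I would compute $f_\ast(F)_x$ as a filtered colimit over open neighborhoods $V$ of $x$ in $X$ of $F(f^{-1}(V))$. Since $f$ is proper, \cref{obs:proper_implies_amenable} guarantees that $\{f^{-1}(V)\}_{V \ni x}$ is cofinal among open neighborhoods of the fibre $Y_x = f^{-1}(x)$ inside $Y$. Now invoke hypothesis (3): since $(Y,Q)$ is hereditary excellent at every locally closed subset of $Q$, in particular it is excellent at $R = \varphi^{-1}(p)$ restricted to the fibre, so the collection of excellent open neighborhoods of $Y_x$ forms a cofinal subsystem. Along such an excellent neighborhood $W$, \cref{thm:exodromy} and \cref{formula_for_psi} give $F(W) \simeq \lim_{\Pi_\infty^\Sigma(W,Q)} \Phi_{Y,Q}^{\mathrm{hyp}}(F)$, and finality of $\Pi_\infty^\Sigma(Y_x,R) \to \Pi_\infty^\Sigma(W,Q)$ makes this limit equal to $\lim_{\Pi_\infty^\Sigma(Y_x,R)} \Phi_{Y,Q}^{\mathrm{hyp}}(F)|_{\Pi_\infty^\Sigma(Y_x,R)}$. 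This is exactly the pattern of the proof of \cref{prop:constructible_base_change}, and essentially that proposition, applied with $S = \{p\}$, already yields $f_\ast(F)_x \simeq g_\ast(j_x^{\ast,\mathrm{hyp}}(F))_x$ where $g \colon Y_x \to \{x\}$; but $g_\ast$ to a point is just global sections, so $f_\ast(F)_x \simeq \Gamma(Y_x, j_x^{\ast,\mathrm{hyp}}(F))$ after identifying $\Gamma(Y_x,-)$ with $\lim_{\Pi_\infty^\Sigma(Y_x,R)} \Phi_{Y_x,R}^{\mathrm{hyp}}(-)$ via \cref{thm:exodromy} and the exodromicity of $j_x$ (\cref{cor:induced_stratification}).

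Concretely, I would first establish the stalk formula — deriving it as a direct consequence of \cref{prop:constructible_base_change} applied to the square with $S$ the closure of $\{p\}$ in $P$, or rather to the locally closed stratum $\{p\}$ after localizing, noting that the hereditary excellence hypothesis is precisely what \cref{prop:constructible_base_change} needs — and then deduce hyperconstructibility of $f_\ast(F)$ from the stalk formula plus the restriction-to-strata criterion: on each stratum $X_p$, the hyperrestriction $i_p^{\ast,\mathrm{hyp}} f_\ast(F)$ has stalks computed fibrewise and, using the local triviality and that fibres have locally weakly contractible strata, one checks it is locally hyperconstant via \cite[Proposition 3.1]{HPT} (or by the homotopy-invariance criterion \cref{cor:criterion_constructibility}). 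The main obstacle I anticipate is the bookkeeping in the reduction step: one must simultaneously localize $X$ to a conical chart $Z \times C(Y')$ \emph{and} trivialize $f$ over a neighborhood in the stratum $X_p$, and then check that the product structures are compatible, i.e.\ that $f^{-1}$ of a conical chart, after shrinking, is again a conical chart of $(Y,Q)$ whose cone-factor splits off the bundle fibre $W$. This compatibility — and the verification that the resulting excellent neighborhoods of the fibre are cofinal — is where the hypotheses (2) and (3) must be used in tandem and is the technically delicate part; once it is in place, the cohomological computation is a formal consequence of \cref{thm:exodromy}, \cref{formula_for_psi}, and \cref{obs:proper_implies_amenable}, exactly as in \cref{prop:constructible_base_change}.
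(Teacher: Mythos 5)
Your proposal follows essentially the same route as the paper's proof: first reduce to a trivially stratified base via \cref{prop:constructible_base_change} (this is exactly what hypotheses (1) and (3) provide), then use the weak-bundle trivialization to replace $f$ by a projection $X \times W \to X$, and conclude both local hyperconstancy and the stalk formula from the exodromy formula together with the product decomposition $\Pi_\infty^\Sigma(U \times W, R) \simeq \Pi_\infty(U) \times \Pi_\infty^\Sigma(W,R)$ over weakly contractible opens. One caveat: your first paragraph's claim that $f^{-1}$ of a conical chart of $X$ is again a product chart is not justified by the weak-bundle hypothesis (which only trivializes $f$ over open subsets of strata), but your concrete plan in the final paragraph does not rely on this and matches the paper's argument.
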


\begin{proof}
	Using \cref{prop:constructible_base_change}, we immediately reduce to the case where $P$ is trivial.
	Since both statements are local on the target and since $f$ is a weak stratified bundle, we can assume that $(Y,Q) \simeq (X\times W, R)$ for some conically stratified space $(W,R)$ and that $f$ coincides with the canonical projection to $X$.
	Let $F \in \Cons_Q^{\hyp}(Y;\cE)$.
	We first show that $f_\ast(F)$ is locally hyperconstant on $X$.	
	To do this, \cite[Proposition 3.1]{HPT} ensures that it is enough to prove that for every inclusion $U \subseteq V$ of weakly contractible open subsets in $X$,  the restriction map
	\[ f_\ast(F)(V) \to f_\ast(F)(U) \]
	is an equivalence.
	Since $F$ is hyperconstructible, \cref{cor:Psi_open_restriction} and \cref{thm:exodromy} allow to rewrite this map as
	\[ \lim_{\Pi_\infty(V \times W,R)} \Phi_{X \times W, R}^{\hyp}(F) \to \lim_{\Pi_\infty(U \times W, R)} \Phi_{X \times W, R}^{\hyp}(F) \ . \]
	Since $\Pi_\infty$ commutes with finite products, we have
	\[ \Pi_\infty(V \times W, R) \simeq \Pi_\infty(V) \times \Pi_\infty(W,R) , \qquad \Pi_\infty(U \times W, R) \simeq \Pi_\infty(U) \times \Pi_\infty(W,R) \ . \]
	\personal{Mauro : I don't think we can drop the assumption that $(W,R)$ is conically stratified because products in $\Cat_\infty$ also need to be derived. Concretely, if $K$ and $H$ are two simplicial sets, it's not enough for one of the two to be a quasi-category for $K \times H$ to be a product in $\Cat_\infty$.}
	Since the map $\Pi_\infty(U) \to \Pi_\infty(V)$ is an equivalence, the conclusion follows.\\ \indent
	Let now $x\in X$.
	To compute $f_\ast(F)_x$, we can further suppose that $X$ is weakly contractible.
In that case, the local hyperconstancy of $f_\ast(F)$ combined with \cite[Proposition 3.1]{HPT} ensures that the canonical map
\[
f_\ast(F)(X)\to f_\ast(F)_x 
\]
is an equivalence.
On the other hand, \cref{cor:Psi_open_restriction} combined with the fact that the morphism $Y_x=\{x\}\times W \to Y$ is exodromic gives a chain of equivalences
\begin{align*}
f_\ast(F)(X)& \simeq \lim_{\Pi_\infty(X \times W,R)} \Phi_{X \times W, R}^{\hyp}(F)\simeq \lim_{\Pi_\infty(\{x\} \times W,R)} \Phi_{X \times W, R}^{\hyp}(F)  \\
                 & \simeq 
\lim_{\Pi_\infty(Y_x,R)} \Phi_{Y_x, R}^{\hyp}(j_x^{\ast, \hyp} (F))\simeq \Gamma(Y_x, j_x^{\ast, \hyp}(F))
\end{align*}
The proof of \cref{prop:stratified_bundle} is thus complete.
\end{proof}


\def\cprime{$'$}
\providecommand{\bysame}{\leavevmode\hbox to3em{\hrulefill}\thinspace}
\providecommand{\MR}{\relax\ifhmode\unskip\space\fi MR }
\providecommand{\MRhref}[2]{%
	\href{http://www.ams.org/mathscinet-getitem?mr=#1}{#2}
}
\providecommand{\href}[2]{#2}

\end{document}